\documentclass[a4paper,11pt,reqno]{amsart}

\usepackage[utf8]{inputenc}
\usepackage[english]{babel}
\usepackage{amsmath}
\usepackage{amssymb}
\usepackage{amsfonts}
\usepackage{amsthm}
\usepackage{mathtools}
\usepackage{bbm}
\usepackage{bm}
\usepackage{enumitem}
\usepackage{xcolor}
\usepackage[colorlinks=false]{hyperref}


\newcommand{\eps}{\varepsilon}
\newcommand{\epsj}{{\varepsilon_j}}
\newcommand{\mc}{\mathcal}
\newcommand{\xeps}{{u^\eps}}
\newcommand{\xepsd}{{\dot{u}^\eps}}
\newcommand{\xepsdd}{{\ddot{u}^\eps}}
\newcommand{\xepsj}{{u^\epsj}}
\newcommand{\xepsjd}{{\dot{u}^\epsj}}
\newcommand{\xepsjdd}{{\ddot{u}^\epsj}}

\newcommand{\M}{\mathbb{M}}
\newcommand{\V}{\mathbb{V}}
\newcommand{\R}{\mathbb{R}}
\newcommand{\N}{\mathbb{N}}
\newcommand{\Q}{\mathbb{Q}}

\renewcommand{\to}{\rightarrow}
\renewcommand{\d}{\,\mathrm{d}}
\DeclareMathOperator*{\essup}{ess\,sup}
\DeclareMathOperator{\dist}{dist}

\numberwithin{equation}{section}
\newtheorem{thm}{Theorem}[section]
\newtheorem{defi}[thm]{Definition}
\newtheorem{prop}[thm]{Proposition}
\newtheorem{lemma}[thm]{Lemma}
\newtheorem{cor}[thm]{Corollary}

\theoremstyle{definition}
\newtheorem{rmk}[thm]{Remark}

\oddsidemargin 5pt
\evensidemargin 5pt
\textwidth16.cm
\textheight22.5cm
\flushbottom

\begin{document}
	
	\author[F. Riva, G. Scilla and F. Solombrino]{Filippo Riva, Giovanni Scilla and Francesco Solombrino}
	
	\title [IBV solutions to rate-independent systems]{Inertial Balanced Viscosity (IBV) solutions to infinite-dimensional rate-independent systems}
	
	\begin{abstract}
		A suitable notion of weak solution to infinite-dimensional rate-independent systems, called Inertial Balanced Viscosity (IBV) solution, is introduced. The key feature of such notion is that the energy dissipated at jump discontinuities takes both into account inertial and viscous effects. Under a general set of assumptions it is shown that IBV solutions arise as vanishing inertia and viscosity limits of second order dynamic evolutions as well as of the corresponding time-incremental approximations. Relevant examples coming from applications, such as Allen-Cahn type evolutions and Kelvin-Voigt models in linearized elasticity, are considered.
			\end{abstract}
	
	\maketitle
	
	{\small
		\keywords{\noindent {\bf Keywords:} Inertial Balanced Viscosity solutions, rate-independent systems, vanishing inertia and viscosity limit, minimizing movements scheme, variational methods
		}
		\par
		\subjclass{\noindent {\bf 2020 MSC:} 35A15, 49J40, 49S05, 70G75
			
		}
	}

	\pagenumbering{arabic}
	
	\medskip
	
	\tableofcontents
	
	\section{Introduction}

This paper is devoted to the asymptotic analysis, as the parameter $\eps$ tends to $0$, of the solutions to the abstract second order Cauchy problem
	\begin{equation}\label{mainprob}
		\begin{cases}
			\eps^2 \mathbb{M}\xepsdd(t)+\eps \mathbb{V}\xepsd(t)+\partial^Z\mc R(\xepsd(t))+\partial^U\mc E(t,\xeps(t))\ni 0, \quad\text{in $U^*$,}\quad \text{for a.e. }t\in [0,T],\\
			\xeps(0)=u^\eps_0\in D,\quad\xepsd(0)=u^\eps_1\in V.
		\end{cases}
	\end{equation}
Above, $\mc E \colon [0,T]\times U\to [0,+\infty]$ is a time-dependent energy functional with proper domain $D\subseteq U$, $\mc R\colon Z\to [0,+\infty)$ is a positively one-homogeneous (hence rate-independent) dissipation potential, $\V\colon V\to V^*$ is a linear operator, usually modelling viscosity, and $\M\colon U^*\to U^*$ is an inertial tensor; the symbol $\partial^X$ instead denotes the (Frech\'et) subdifferential with respect to the topology of $X\in\{U,Z\}$. It is common to assume that $U,V,Z$ are Banach spaces satisfying the embedding
\begin{equation*}
	U \stackrel{}{\hookrightarrow} V \stackrel{}{\hookrightarrow}Z.
\end{equation*}	
The Reader may have in mind the prototypical situation $U=H^1_0(\Omega)$, $V=L^2(\Omega)$ and $Z=L^1(\Omega)$, and the concrete nonlinear differential inclusion
\begin{equation*}
	\eps^2 \xepsdd(t)+\eps\xepsd(t) +{\rm Sign}(\xepsd(t))-\Delta \xeps(t)+\mathcal{W}'(\xeps(t))\ni f(t), \quad \mbox{ in $(0,T)\times\Omega$}.
\end{equation*}
Here, ${\rm Sign}$ denotes the multivalued signum operator, $f$ is a forcing term, while $\mc W$ is a possibly nonconvex nonlinear function, typically a double-well potential.

The interest in the limit behaviour of \eqref{mainprob} has a twofold motivation. On one side \eqref{mainprob} provides an abstract framework for a broad class of mechanical models driven by Newton's law of dynamics, see for instance \cite[Section~1.2]{BachoViscoplast}, \cite[Chapter~5]{MielkRoubbook}, \cite{RosThom} and references therein. The limit as $\eps\to 0$ corresponds to the so-called \emph{slow loading} regime. On the other side, this asymptotic analysis allows one to recover a (mechanically motivated) notion of weak solution to the rate-independent doubly nonlinear differential inclusion
	\begin{equation}\label{riprob}
	\begin{cases}
		\partial^Z\mc R(\dot{u}(t))+\partial^U\mc E(t,u(t))\ni 0, \quad\text{in $U^*$,}\quad \text{for a.e. }t\in [0,T],\\
		u(0)=u_0\in D,
	\end{cases}
\end{equation}
which has been the object of an intensive study in the last decades.

In this latter context, several notions of weak solutions have been developed to cope with possible lack of differentiability or even continuity of the evolutions induced by a nonconvex driving energy $\mc E$. A first concept was given by the seminal notion of \emph{energetic solution} \cite{MielkTheil1, MielkTheil}, which recasted \eqref{riprob} as a coupling of a global stability condition and an energy-dissipation balance. This notion however  excludes evolutions of local minimizers, which are instead physically reasonable, and so it turns out to be not suitable to capture the correct energetic behaviour at jumps. 
To overcome this issue, \emph{Balanced Viscosity (BV) solutions} were introduced later on in \cite{MielkRosSav09, MielkRosSav12, MielkRosSav16}, coupling a local stability condition with an energy-dissipation balance with a viscous transition cost accounting for the energy dissipated at jumps. Remarkably, these solutions arise as a vanishing viscosity limit of a first order singular perturbation of \eqref{riprob}, namely system \eqref{mainprob} with $\M=0$, i.e. when inertia is neglected. We also mention \cite{MielkRosmulti, Rind} for recent developments of this analysis.

However, based on mechanics principles, it would be desirable to have a notion of solution to \eqref{riprob} where the role of the vanishing inertia is also taken into account. In this sense, a natural attempt to recover a proper notion is considering the dynamic problem \eqref{mainprob} in the limit $\eps\to 0$. When the parameter $\eps$ is fixed, the well-posedeness of system \eqref{mainprob} has attracted some attention in recent times (see \cite{BachoViscoplast}, or \cite{RosThom} for a related investigation in the context of multi-rate systems). Its asymptotic analysis instead has been so far limited to some case studies with convex potential energy $\mc E$ \cite{DMSap, DMSca,LazNar,LazRosThomToad, Rivquas, Sca}, while a general perspective has been pursued only for a related abstract model with quadratic potential energy (but with additional multi-rate structure), see \cite{MielkPetr} and \cite[Section~5.1.2.2]{MielkRoubbook}; the nonconvex case has been finally investigated only in a finite-dimensional setting \cite{GidRiv,RivScilSol}.

The purpose of the present paper is to extend the above mentioned results to an infinite-dimensional abstract setting complying with relevant applications. In doing so we have to face some major difficulties.
\begin{itemize}
	\item A crucial point, both for achieving compactness of dynamic solutions and for the characterization of the dissipation cost emerging at jumps, is an a priori bound on the total variation of the limit function. Now, in infinite dimension, this one (which stems out of the rate-independent dissipation cost $\mc R$) is provided in a weaker topology than the energy domain $U$, namely in the topology of the space $Z$ which is usually also non reflexive (a typical choice is $Z=L^1(\Omega)$). Therefore both tasks require a fine understanding of the different topological settings coming into play.
	\item A further problem is that one usually is forced to take into account driving energy functionals $\mc E$ which are only weakly lower semicontinuous and possibly nonsmooth. Hence the convergence of all the terms appearing in the local stability condition as well as in the energy-dissipation balance may require additional care in comparison to the simplified finite-dimensional setting where all the involved topologies are equivalent.
\end{itemize}

Handling with these (and other) issues results in a general set of assumptions ensuring the convergence of the solutions to \eqref{mainprob} to a weak solution to \eqref{riprob} which we are able to completely characterize from an energetic point of view. The limit evolution we recover is an \emph{Inertial Balanced Viscosity (IBV) solution} in the sense introduced in \cite{RivScilSol}, namely it complies with the local stability condition
	\begin{equation*}
	\partial^Z\mc R(0)+\partial^U\mc E(t,u(t))\ni 0,\qquad\text{ at continuity points of }u,
\end{equation*}
and the viscoinertial energy-dissipation balance 
	\begin{equation*}
	\mc E(t,u^+(t))+V_\mc R(u_{\rm co};s,t)+\sum_{r\in J_u\cap [s,t]}c^{\M,\V}(r;u^-(r),u^+(r))=\mc E(s,u^-(s))+\int_{s}^{t}\partial_t\mc E(r,u(r))\d r,
\end{equation*}
for all times $0\le s\le t\le T$. Above, $V_\mc R(u_{\rm co};s,t)$ is the $\mc R$-variation of the continuous part of $u$ in the time-interval $[s,t]$, while $c^{\M,\V}$ is a suitable viscoinertial cost concentrated at discontinuity times $J_u$ of the evolution $u$. This dissipation, which is typical of the nonconvex setting, is reminiscent of both inertial and viscous effects and is rigorously defined in Section~\ref{subsec:cost}. A crucial difference from the vanishing-viscosity setting, which thus distinguishes between IBV and BV solutions, is that the presence of the inertial term gives rise to a \emph{rate-dependent} transition cost defined on an infinite time-horizon.

The assumptions on the potential energy $\mc E$ we consider are listed in Section~\ref{subsec:poten} and they can be divided into two groups. 
\begin{itemize}
	\item Assumptions \ref{hyp:E1}-\ref{hyp:E5} are crucial to establish compactness of the dynamic evolutions. Observe that \ref{hyp:E5} encompasses a wide class of nonconvex energies although they are required to fulfil a mild notion of nonconvexity, namely $\lambda$-convexity, which is a customary feature in this kind of problems. On the one hand this clearly introduces a restriction on the focus of our analysis; on the other hand this is a fair assumption met by a large class of significant problems. Relevant examples are discussed in Section~\ref{sec:Applications}. Among them we find evolutions driven by Allen-Cahn type energies or Kelvin-Voigt models in linearized elasticity.
	
	In the special case of a convex driving energy, conditions \ref{hyp:E1}-\ref{hyp:E4} alone are enough to completely characterize the limit evolution as an energetic solution to \eqref{riprob} and indeed as a classic one if the energy is even uniformly convex. Thus, in a convex framework no inertial nor viscous effects survive in the slow-loading limit procedure.
	\item The second group of assumptions \ref{hyp:E6}-\ref{hyp:E10} (which are still satisfied by the aforementioned applications) is needed in the nonconvex setting for the task of characterizing jump discontinuities as optimal transitions for the viscoinertial dissipation cost. This behaviour can be heuristically seen blowing up the time-scale around discontinuity points. 
	
	In turn, this calls for finer a priori estimates on the solutions which we recover as a consequence of additional \lq\lq continuity-type\rq\rq properties of the energy and its subdifferential. Again, this results in a fair and general set of assumptions. Some care is only to be devoted to the stronger compactness requirement \ref{hyp:E10} that, as we highlight in the concrete applications of Section~\ref{sec:Applications}, is usually the outcome of elliptic regularity.
\end{itemize}

Although most of the focus is on nonconvex energy functionals, since in this framework IBV solutions come actually into play (see Theorem~\ref{mainthm:nonconvex}), we stress that our results are also new in the convex setting (Theorems~\ref{mainthm:convex}, \ref{mainthm:unifconvex}), where they provide a mechanical validation of energetic solutions.

Finally, in the last part of the paper we investigate the compliance of the notion of IBV solutions with discrete approximation schemes, which can be also useful for computational purposes. Via a Minimizing Movements approach, we namely consider the discrete-in-time approximation (see \eqref{schemeincond}) of the second order problem \eqref{mainprob} and we perform a simultaneous limit in the time-step $\tau$ and in the inertial-viscous parameter $\eps$ in the regime $\tau/\eps\to 0$. Under this assumption we retrieve analogous results as for the continuous-in-time approximation. Indeed in Theorem~\ref{mainthm:nonconvexdiscr} we show that the standard interpolants converge in general to an IBV solution to the rate-independent system \eqref{riprob}, which actually turns out to be an energetic or even a classic one if the potential energy is convex (or uniformly convex), as stated in Theorems~\ref{mainthm:convexdiscr} and \ref{mainthm:unifconvexdiscr}.
	
	\subsection{Plan of the paper} 
	The paper is structured as follows. After recalling some preliminary material in Section~\ref{sec:Notation}, we fix the functional framework and the set of assumptions we consider in Section~\ref{sec:setting}. Section~\ref{sec:IBV} is devoted to the rigorous definition of IBV solutions and to the statement of our main results. Several examples and applications included in our abstract analysis are depicted in Section~\ref{sec:Applications}. The technical content of the paper is confined in Sections~\ref{sec:slowloading} and \ref{sec:multiscale}, where we prove convergence to an IBV solution of the dynamic evolutions \eqref{mainprob} and of the corresponding Minimizing Movements approximation \eqref{schemeincond}, respectively.
	
	At the end of the paper we attach two appendices containing auxiliary material. In Appendix~\ref{App:Existence} we consistly discuss the existence theory for the dynamic problem \eqref{mainprob} in the case $V=U$, as our setting slightly differs from the existing literature \cite{BachoViscoplast}. Appendix~\ref{App:Chainrule} instead collects useful chain-rule formulas in a nonsmooth setting.

	\section{Notation and preliminaries}\label{sec:Notation}
		
	For any normed space $(X,\|\cdot\|_X)$ we denote by $(X^*,\|\cdot\|_{X^*})$ its topological dual and by $\langle x^*, x\rangle_X$ the duality product between $x^*\in X^*$ and $x\in X$. If $H$ is a Hilbert space, we still use the symbol $\langle h_1,h_2\rangle_H$ for the scalar product between $h_1$ and $h_2$, but we write $|h|_H$ for the norm of $h\in H$. Strong and weak convergence in $X$ are denoted with arrows $\xrightarrow{X}$ and $\xrightharpoonup[]{X}$, respectively. For $R>0$, we denote by $B^X_R(x)$ the open ball in $X$ of radius $R$ centered at $x\in X$, and by $\overline{B}^X_R(x)$ its strong closure. If a ball is centered at $x=0$, we use the shortcuts $B^X_R$ and $\overline{B}^X_R$.
	
	Given two normed spaces $X, Y$, we write $X\hookrightarrow Y$ whenever $X$ is continuously embedded in $Y$. If the embedding is dense we use the symbol $\stackrel{d}{\hookrightarrow}$, while by $\hookrightarrow\hookrightarrow$ we mean a compact embedding.
	
	We use standard notations for Lebesgue, Sobolev and Bochner spaces. By $B([a,b];X)$ we mean the set of everywhere defined measurable functions $f\colon [a,b]\to X$ which are bounded in $X$, while we denote by $C_{\rm w}([a,b];X)$ the subset of $B([a,b];X)$ composed by functions which are continuous with respect to the weak topology of $X$. Furthermore, we denote by $AC([a,b];X)$ the set of absolutely continuous functions $f$ from $[a,b]$ to $X$, namely satisfying the inequality
	\begin{equation*}
		\|f(t)-f(s)\|_X\le \left|\int_s^t\varphi(r)\d r\right|,\quad\text{for all }s,t\in [a,b],
	\end{equation*}
for some nonnegative function $\varphi\in L^1(a,b)$. We refer to \cite[Appendix~2]{Brez} for more details on these functional spaces.
	
	Given a subset $A\subset X$, we denote with $\chi_A\colon X\to [0,+\infty]$ its characteristic function , defined as
		\begin{equation*}
				\chi_A(x):=\begin{cases}
						0,&\text{if $x\in A$},\\
						+\infty, &\text{if $x\notin A$}.
			\end{cases}
		\end{equation*}
	The indicator function $\bm 1_A$ of the subset $A$ is instead defined as 
	\begin{equation*}
		\bm 1_A(x):=\begin{cases}
			1,&\text{if $x\in A$},\\
			0, &\text{if $x\notin A$}.
		\end{cases}
	\end{equation*}
 
\subsection{Fréchet and convex subdifferentials}
	We briefly recall some basic definitions in convex analysis (see for instance \cite{Rocka}). Here, $X$ is a generic normed space. Given a function $\Phi\colon X\to(-\infty,+\infty]$, its \emph{Fréchet subdifferential} $\partial^X \Phi\colon X\rightrightarrows X^*$ at a point $x\in X$ is defined as
	\begin{equation*}
		\partial^X \Phi(x)=\left\{\zeta\in X^*:\,\liminf\limits_{y\stackrel{X}{\rightarrow} x} \frac{\Phi(y)-\Phi(x)-\langle\zeta,y-x\rangle_X}{\|y-x\|_X}\ge 0\right\},
	\end{equation*}
with the convention that $\partial^X \Phi(x)=\emptyset$ whenever $\Phi(x)=+\infty$, namely if $x\notin{\rm dom}\Phi:=\{\Phi<+\infty\}$. Note that $\partial^X \Phi(x)$ is a closed convex subset of $X^*$.

If $\Phi$ is convex, then the Fréchet subdifferential coincides with the \emph{convex subdifferential}, namely
	\begin{equation*}
	\partial^X \Phi(x)=\left\{\zeta\in X^*:\,\Phi(y)\ge \Phi(x)+\langle \zeta,y-x\rangle_X,  \quad\text{for every $y\in X$}\right\}.
\end{equation*}
An analogous result, which we need for our scopes, holds true under the following notion of $\lambda$-convexity:
\begin{lemma}\label{lem:lemma3}
	Let $X$ be a Banach space and let $H$ be a Hilbert space such that $X\hookrightarrow H$. Let $\Phi:X\to(-\infty,+\infty]$ and assume that for some $\lambda\geq0$ the map $X\ni x\mapsto \Phi(x)+\frac{\lambda}{2}|x|_H^2$ is convex. Then it holds 
	\begin{equation*}
		\partial^X \Phi(x) = \partial^X \Phi_x^\lambda(x)\,,\quad \text{for every } x\in X,
	\end{equation*}
	where we set $\Phi_x^\lambda(y):=\Phi(y)+\frac{\lambda}{2}|y-x|_H^2$. 
	
	In particular we have
	\begin{equation}\label{lambdaconvexineq}
		\zeta \in \partial^X \Phi(x) \iff  \Phi(y) \geq \Phi(x) + \langle \zeta, y-x \rangle_X - \frac{\lambda}{2} |y-x|_H^2\,,\,\, \text{ for every } y\in X\,.
	\end{equation}
\end{lemma}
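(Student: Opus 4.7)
The plan is to exploit two complementary features of the quadratic perturbation $y \mapsto \frac{\lambda}{2}|y-x|_H^2$ near the point $x$: on the one hand it is of higher order with respect to the $X$-norm (because of the embedding $X\hookrightarrow H$), so it leaves the Fréchet subdifferential at $x$ unchanged; on the other hand it exactly compensates the $\lambda$-convexity assumption, restoring honest convexity of $\Phi_x^\lambda$ and thus making the convex subdifferential characterization available.

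First I would check that $\Phi_x^\lambda$ is convex. Expanding $|y-x|_H^2=|y|_H^2-2\langle y,x\rangle_H+|x|_H^2$, one writes
\begin{equation*}
\Phi_x^\lambda(y)=\left[\Phi(y)+\frac{\lambda}{2}|y|_H^2\right]-\lambda\langle y,x\rangle_H+\frac{\lambda}{2}|x|_H^2,
\end{equation*}
which is the sum of the convex function (by hypothesis) and an affine function of $y$, hence convex. Noting that $\Phi_x^\lambda(x)=\Phi(x)$, the standard convex subdifferential characterization recalled just before the lemma gives that $\zeta\in\partial^X\Phi_x^\lambda(x)$ is equivalent precisely to the inequality in \eqref{lambdaconvexineq}. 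Thus the second assertion of the lemma will follow from the equality $\partial^X\Phi(x)=\partial^X\Phi_x^\lambda(x)$.

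To establish this equality I would return to the definition of Fréchet subdifferential and show that the perturbation is negligible in the defining liminf. Letting $C>0$ be the embedding constant so that $|y-x|_H\le C\|y-x\|_X$, we have
\begin{equation*}
0\le \frac{\tfrac{\lambda}{2}|y-x|_H^2}{\|y-x\|_X}\le \frac{\lambda C^2}{2}\|y-x\|_X \xrightarrow[y\to x]{} 0.
\end{equation*}
Writing $\Phi_x^\lambda(y)-\Phi_x^\lambda(x)=\Phi(y)-\Phi(x)+\frac{\lambda}{2}|y-x|_H^2$ and inserting this into the quotient that defines $\partial^X\Phi_x^\lambda(x)$, the two liminf expressions defining $\partial^X\Phi(x)$ and $\partial^X\Phi_x^\lambda(x)$ differ only by a term converging to $0$ as $y\xrightarrow{X} x$. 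Hence the two subdifferentials coincide, and combining with the first step yields \eqref{lambdaconvexineq}.

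I do not expect any genuine obstacle here: the statement is essentially the observation that a $C^1$ perturbation (in fact with vanishing derivative at $x$, once one measures distances in $X$) does not alter the Fréchet subdifferential, together with the trivial fact that adding the same perturbation turns a $\lambda$-convex functional into a convex one. The only mild care to be taken is to use the embedding constant to pass from Hilbert norm estimates to $X$-norm estimates, so that the quadratic term indeed qualifies as a remainder in the Fréchet quotient.
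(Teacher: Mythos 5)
Your proof is correct and follows essentially the same route as the paper, which simply adapts \cite[Remark~2.4.4]{AGS2008} after noting precisely your key observation that $|y-x|_H^2/\|y-x\|_X\to 0$ as $y\xrightarrow{X}x$ thanks to the embedding $X\hookrightarrow H$. Your write-up just spells out the two ingredients (invariance of the Fréchet quotient under the higher-order perturbation, and convexity of $\Phi_x^\lambda$ giving the global inequality) that the paper leaves to the cited reference.
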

\begin{proof}
	We may adapt the argument of \cite[Remark~2.4.4]{AGS2008} to our case once we note that
	\begin{equation*}
		\lim_{y\stackrel{X}{\rightarrow} x} \frac{|y-x|_H^2}{\|y-x\|_X}=0
	\end{equation*}
	being $X\hookrightarrow H$.
\end{proof}

We also recall a sum rule for convex subdifferentials, whose proof can be found in \cite[Theorem~7.1]{MNRT}.

\begin{lemma}\label{lem:lemma4}
	Let $X$ be a normed space, and let $\Phi,\Psi:X\to(-\infty,+\infty]$ be convex functions. If any of the following holds:
	\begin{itemize}
		\item[i)] $\Phi$ is finite and continuous at some $\bar{x}\in {\rm dom}\Phi\cap {\rm dom}\Psi$;
		\item[ii)] $X$ is a Banach space, $\Phi$ and $\Psi$ are lower semicontinuous and $\R^+({\rm dom}\Phi-{\rm dom}\Psi)$ is a subspace of $X$; 
	\end{itemize}
	then
	\begin{equation*}
		\partial^X(\Phi+\Psi)(x) = \partial^X \Phi(x) + \partial^X \Psi(x)\,, \quad \text{for all } x\in {\rm dom}\Phi\cap {\rm dom}\Psi\,.
	\end{equation*}
\end{lemma}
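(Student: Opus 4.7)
The inclusion $\partial^X \Phi(x) + \partial^X \Psi(x) \subseteq \partial^X(\Phi+\Psi)(x)$ is immediate: adding the defining inequalities for $\zeta_1 \in \partial^X\Phi(x)$ and $\zeta_2 \in \partial^X\Psi(x)$ yields $\zeta_1 + \zeta_2 \in \partial^X(\Phi + \Psi)(x)$. The substance of the lemma is therefore the reverse inclusion, for which I would follow the classical Moreau--Rockafellar separation strategy.

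Fix $\zeta \in \partial^X(\Phi + \Psi)(x)$. In the product space $X \times \R$ I introduce the convex sets
\[
A = \{(y,r) \in X\times\R : r > \Phi(y) - \Phi(x) - \langle \zeta, y-x\rangle_X\}, \qquad B = \{(y,r) \in X\times\R : r \leq \Psi(x) - \Psi(y)\}.
\]
The convex subgradient inequality for $\zeta$ forces $A \cap B = \emptyset$: any $(y,r)$ in the intersection would give $\Phi(y)+\Psi(y) < \Phi(x)+\Psi(x)+\langle \zeta,y-x\rangle_X$. Under hypothesis (i), continuity of $\Phi$ at $\bar x$ ensures that $A$ is nonempty and open, so the geometric Hahn--Banach theorem furnishes a closed hyperplane separating $A$ from $B$, with normal of the form $(-\zeta_1,\alpha)\in X^*\times \R$. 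A direct inspection (using finiteness of both $\Phi$ and $\Psi$ near $\bar x$ to rule out a vertical separator) yields $\alpha>0$; rescaling by $\alpha$ one reads off $\zeta_1 \in \partial^X \Phi(x)$ and $\zeta - \zeta_1 \in \partial^X \Psi(x)$.

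Under hypothesis (ii), the set $A$ may have empty interior and the geometric Hahn--Banach step breaks down. In this case I would instead invoke the Attouch--Brezis duality theorem: for proper, convex, lower semicontinuous functions on a Banach space, the qualification that $\R^+({\rm dom}\Phi - {\rm dom}\Psi)$ is a subspace guarantees that the infimal convolution $\Phi^* \square \Psi^*$ is exact and coincides with $(\Phi + \Psi)^*$. Evaluating this identity at $\zeta$ and coupling it with Fenchel--Young duality at $x$ (which becomes an equality precisely at subgradients) produces a decomposition $\zeta = \zeta_1 + \zeta_2$ together with $\zeta_1 \in \partial^X \Phi(x)$ and $\zeta_2 \in \partial^X \Psi(x)$.

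The main obstacle is case (ii): the algebraic qualification $\R^+({\rm dom}\Phi - {\rm dom}\Psi) = \text{subspace}$ is purely combinatorial, yet it must be turned into a topological statement about conjugates. This passage genuinely requires a Banach-space tool such as Attouch--Brezis (or, equivalently, the Robinson--Ursescu open mapping theorem), and has no elementary separation-based substitute; by contrast, in case (i) the existence of an interior point of ${\rm dom}\Phi$ near $\bar x$ makes the Hahn--Banach argument essentially routine. Both the verification $\alpha>0$ in (i) and the extraction of the correct subgradients from the Fenchel--Young equality in (ii) are standard once the separation/duality framework is in place.
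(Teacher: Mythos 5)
The paper offers no proof of this lemma at all: it is stated as a recalled fact with a pointer to \cite{MNRT} (Theorem~7.1 there), so there is no internal argument to measure yours against. Your outline is the classical Moreau--Rockafellar/Attouch--Br\'ezis route, which is indeed how such sum rules are proved, and it is essentially correct: the easy inclusion, the epigraph--hypograph separation under (i), and the reduction of (ii) to exactness of the infimal convolution $(\Phi+\Psi)^*=\Phi^*\,\square\,\Psi^*$ combined with the Fenchel--Young equality at a subgradient all work as you describe. Two small points should be tightened. First, the strict epigraph $A$ need not be open: a convex function finite and continuous at $\bar x$ can fail to be upper semicontinuous at boundary points of ${\rm dom}\,\Phi$, so the correct claim is that $A$ has nonempty \emph{interior}; one separates $\mathrm{int}\,A$ from $B$ and then uses $A\subseteq\overline{\mathrm{int}\,A}$ (valid for a convex set with nonempty interior) to see that the same hyperplane separates $A$ from $B$. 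Second, the Attouch--Br\'ezis theorem in its standard form requires $\R^+({\rm dom}\,\Phi-{\rm dom}\,\Psi)$ to be a \emph{closed} subspace of the Banach space $X$; your argument for (ii) therefore rests on that stronger qualification (which is how the condition appears in the usual references), and you should carry the closedness explicitly rather than the purely algebraic statement. Neither point undermines the strategy; both are routine repairs.
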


	We finally introduce the \emph{Fenchel conjugate} of the (proper) function $\Phi\colon X\to (-\infty,+\infty]$, namely the convex, lower semicontinuous function
	\begin{equation*}
		\Phi^*\colon X^*\to (-\infty,+\infty],\quad \text{defined as }\quad \Phi^*(\zeta):=\sup\limits_{y\in X} \{\langle\zeta,y\rangle_X-\Phi(y)\}.
	\end{equation*}
	If $\Phi$ is convex, the very definition yields that for every $\zeta\in X^*$ and $x\in X$ the Fenchel conjugate $\Phi^*$ satisfies
	\begin{equation}\label{Fenchelprop}
		\Phi^*(\zeta)+\Phi(x)\ge \langle{\zeta},{x}\rangle_X,\quad\text{ with equality if and only if }\quad \zeta\in \partial^X \Phi(x).
	\end{equation}

	\subsection{Functions of bounded $\mathcal{R}$-variation}
	
	We recall here a suitable generalization of functions of bounded variation useful to deal with potentials $\mc R\colon X\to [0,+\infty)$ satisfying condition \ref{hyp:R1} in Section~\ref{sec:setting}, condition which will be assumed throughout the whole paper.
	
	We say that a function $f:[a,b]\to X$ is a \emph{function of bounded $\mathcal{R}$-variation} in $[a,b]$, and we write $f\in BV_{\mathcal{R}}([a,b];X)$, if its $\mathcal{R}$-variation
		\begin{equation*}
		V_{\mathcal{R}}(f;a,b):=\sup\left\{\sum_{k=1}^n\mathcal{R}(f(t_k)-f(t_{k-1})):\, a=t_0<t_1<\dots<t_{n-1}<t_n=b\right\}\,,
	\end{equation*}
	is finite. In the classical case $\mc R(\cdot)=\|\cdot\|_X$ we write $V_X(f;a,b)$ for the usual variation.
	
	Notice that, by virtue of \eqref{Rbounds}, we have $f\in BV_{\mathcal{R}}([a,b];X)$ if and only if $f\in BV([a,b];X)$ in the classical sense. In particular, if $X$ is a Banach space, any $f\in BV_{\mathcal{R}}([a,b];X)$ is regulated, that is, it admits left and right \emph{strong} limits at every $t\in[a,b]$:
	\begin{equation*}
		f^+(t):=\lim_{s\to t^+}f(s), \quad \mbox{ and }\quad f^-(t):=\lim_{s\to t^-}f(s)\,,\quad\text{in }X,
	\end{equation*}
	with the convention $f^-(a):=f(a)$ and $f^+(b):=f(b)$. Moreover, its pointwise jump set $J_f\subseteq [a,b]$ is at most countable. For future purposes, we also introduce the \emph{essential} jump set
	\begin{equation}\label{essjump}
		J_f^{\rm e}:=\{t\in J_f: f^+(t)\neq f^-(t)\}.
	\end{equation}
	
	{It is also well known (see for instance \cite{Mor}) that any $f\in BV_{\mc R}([a,b];X)$ can be uniquely decomposed as
		\begin{equation}
			f=f_{\rm co}+ f_{J},
			\label{eq:decompositionf}
		\end{equation}
		with $f_{\rm co}$ being a continuous function from $[a,b]$ to $X$, and $f_J$ a purely jump function. Moreover, if we set $v(t):=V_{\mc R}(f;a,t)$, there also holds
		\begin{equation}\label{vcovj}
			V_{\mc R}(f_{\rm co};a,t)=v_{\rm co}(t),\quad\text{and }\quad V_{\mc R}(f_{J};a,t)=v_{J}(t),\qquad\text{ for all }t\in [a,b].
		\end{equation}

	\section{Abstract setting}\label{sec:setting}
	We list below the main assumptions we will use throughout the paper. Several situations covered by this set of assumptions are depicted in Section~\ref{sec:Applications}.
	
\subsection{Functional framework} Let $U$ be a reflexive and separable Banach space. Let $V,W$ be Hilbert spaces and let $Z$ be a Banach space such that
\begin{equation}
U \stackrel{d}{\hookrightarrow} V \stackrel{d}{\hookrightarrow} W \stackrel{d}{\hookrightarrow} Z \quad \mbox{and} \quad  U \hookrightarrow\hookrightarrow W\,.
\label{eq:embeddings}
\end{equation}	
In particular, also $V,W$ and $Z$ are separable. Moreover, up to identify $W$ with $W^*$, we have
\begin{align*}
	U \stackrel{d}{\hookrightarrow} V \stackrel{d}{\hookrightarrow}& W \stackrel{d}{\hookrightarrow} Z\\
	&\parallel\\
	Z^* \stackrel{d}{\hookrightarrow} &W^* \stackrel{d}{\hookrightarrow} V^* \stackrel{d}{\hookrightarrow} U^*\,.
\end{align*}
The above chain of inclusions yields that all the involved duality products coincide whenever they are meaningful, for instance
\begin{equation*}
	\begin{split}
		&\langle \eta, u \rangle_U = \langle \eta, u \rangle_Z,\qquad\text{for all }\eta\in Z^*\,,\, u\in U,\\
		&\langle w, v \rangle_V = \langle w, v \rangle_W,\qquad \text{for all }w\in W=W^*\,,\, v\in V.
	\end{split}
\end{equation*}

\subsection{Mass and viscosity operators}	In the dynamic problem \eqref{mainprob} the inertial term is described by a linear operator $\mathbb{M}\colon U^*\to U^*$, which represents a \emph{mass distribution}, such that
	\begin{equation}\label{mass}
		\text{\parbox{13cm}{$\mathbb{M}$ is an automorphism of $U^*$, and the restriction $\mathbb{M}_{|_W}\colon W\to W$ is continuous, symmetric and positive-definite.} }
	\end{equation}
	 	This assumption implies that $|w|_{\mathbb{M}}:=\sqrt{\langle \mathbb{M}w,w\rangle_W}$ defines an equivalent norm on $W$.  
	 	
	The presence of \emph{viscosity} is also modeled by the linear operator $\mathbb{V}\colon V\to V^*$, which we require to satisfy
	\begin{equation}\label{viscosity}
			\text{$\mathbb V$ is continuous, symmetric and positive-definite.}
	\end{equation}
As before, by setting $|v|_\mathbb{V}:=\sqrt{\langle \mathbb{V}v,v\rangle_V}$ we define an equivalent norm on $V$. 

We finally observe that \eqref{viscosity} yields that the viscous operator $\mathbb V$ admits the inverse $\V^{-1}\colon V^*\to V$, which is still a linear, continuous, symmetric and positive-definite operator.

\subsection{Rate-independent dissipation}	
Rate-independent effects are included by considering a \emph{rate-independent dissipation potential} $\mc R\colon Z\to[0,+\infty)$. We make the following assumption:
	\begin{enumerate}[label=\textup{(R\arabic*)}, start=1]
		\item \label{hyp:R1}  the function $\mc R$ is convex, positively homogeneous of degree one and there exist two positive constants $\rho_2\ge\rho_1>0$ for which
	\begin{equation}\label{Rbounds}
		\rho_1\|z\|_Z\le \mc R(z)\le \rho_2\|z\|_Z,\quad\text{ for every }z\in Z.
	\end{equation}
	\end{enumerate}
	Assumption \ref{hyp:R1} implies subadditivity, namely 
	\begin{equation*}
		\mc R (z_1+z_2)\leq \mc R(z_1)+\mc R(z_2)\,, \quad \mbox{ for every }z_1,z_2\in Z\,,
	\end{equation*} 
whence we infer that $\mc R$ is Lipschitz continuous on $Z$, indeed
\begin{equation*}
|\mc R(z_1) - \mc R(z_2)| \leq \max\{\mc R(z_1-z_2), \mc R(z_2-z_1)\} \leq \rho_2\|z_1 - z_2\|_Z\,, \quad \mbox{ for every }z_1,z_2\in Z\,.
\end{equation*}
In particular, this fact together with the convexity of $\mc R$ implies the weak lower semicontinuity of $\mc R$ on $Z$. 

	Furthermore, since $\mc R$ is positively one-homogeneous, for every $z \in Z$ its subdifferential $\partial^Z\mc R(z)$ can be characterized by
	\begin{equation}
		\partial^Z \mc R(z) = \{\eta\in \partial^Z\mc R(0):\,\,  \langle \eta,z\rangle_Z=\mc R(z)\}\subseteq \partial^Z\mc R(0).
		\label{2.2mielke}
	\end{equation}
	By \eqref{Rbounds} we also notice that there holds
	\begin{equation}\label{boundedness}
		\partial^Z\mc R(0)\subseteq \overline{B}^{Z^*}_{\rho_2}.
	\end{equation}
	It is also well-known that $\partial^Z\mc R(0)$ coincides with the proper domain of the Fenchel conjugate $\mc R^*$ of $\mc R$, indeed it actually holds
	\begin{equation}\label{eq:Rstar}
		\mc R^*(\eta)= \chi_{\partial^Z\mc R(0)}(\eta), \qquad\text{for all }\eta\in Z^*.
	\end{equation}

Due to one-homogeneity, the (convex) subdifferential of $\mc R$ is invariant with respect to the underlying topology, as stated in the next proposition.
\begin{prop} Assume \eqref{eq:embeddings}. Then for any functional $\mc R\colon Z\to [0,+\infty)$ satisfying \ref{hyp:R1} the following identities hold:
\begin{equation}\label{eqsubdtopology}
\partial^Z\mc R(u) = \partial^U\mc R(u)\,,\quad \text{for all } u\in U\,, \qquad \partial^Z\mc R(v) = \partial^V\mc R(v)\,,\quad \text{ for all } v\in V\,.
\end{equation}
\end{prop}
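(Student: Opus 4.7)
My plan is to exploit the one-homogeneity of $\mc R$ to reduce the problem to comparing subdifferentials at $0$, and then use the density of $U$ in $Z$ (and of $V$ in $Z$) from \eqref{eq:embeddings} together with the growth bound \eqref{Rbounds} to identify these sets. Throughout the argument I will tacitly identify $Z^*$ with a subspace of $V^*$ and of $U^*$ via the dual embeddings in \eqref{eq:embeddings}, so all duality pairings agree whenever they make sense.

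I first focus on the equality at $0$, namely $\partial^Z\mc R(0) = \partial^U\mc R(0)$. Since $\mc R$ is convex and $U\hookrightarrow Z$ continuously, any element of $\partial^Z\mc R(0)\subseteq Z^*$ restricts to an element of $U^*$ that still satisfies the subgradient inequality for all $y\in U\subseteq Z$, yielding the inclusion $\partial^Z\mc R(0)\subseteq\partial^U\mc R(0)$. For the reverse inclusion, given $\eta\in\partial^U\mc R(0)\subseteq U^*$, the convex characterization of the subdifferential (applicable thanks to convexity of $\mc R$, or equivalently Lemma~\ref{lem:lemma3} with $\lambda=0$) gives $\mc R(y)\ge\langle\eta,y\rangle_U$ for every $y\in U$. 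Evaluating this at $y$ and at $-y$ and using \eqref{Rbounds}, I get the two-sided bound
\begin{equation*}
-\rho_2\|y\|_Z \le -\mc R(-y) \le \langle\eta,y\rangle_U \le \mc R(y) \le \rho_2\|y\|_Z\,,\qquad\text{for every }y\in U,
\end{equation*}
so $\eta$ is continuous with respect to the $Z$-norm on $U$. Since $U$ is dense in $Z$ by \eqref{eq:embeddings}, $\eta$ extends uniquely to some $\tilde\eta\in Z^*$ with $\|\tilde\eta\|_{Z^*}\le\rho_2$. The subgradient inequality $\mc R(y)\ge\langle\tilde\eta,y\rangle_Z$ then propagates from $U$ to all of $Z$ by density and by the joint continuity of the two sides (Lipschitz continuity of $\mc R$ on $Z$, continuity of the duality pairing), proving $\tilde\eta\in\partial^Z\mc R(0)$.

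To conclude, I invoke the one-homogeneity characterization \eqref{2.2mielke}, which gives
\begin{equation*}
\partial^U\mc R(u) = \{\eta\in\partial^U\mc R(0): \langle\eta,u\rangle_U=\mc R(u)\}\,,\qquad \partial^Z\mc R(u) = \{\eta\in\partial^Z\mc R(0): \langle\eta,u\rangle_Z=\mc R(u)\}\,,
\end{equation*}
for $u\in U$. Combined with $\partial^U\mc R(0)=\partial^Z\mc R(0)$ established above and with the agreement $\langle\eta,u\rangle_U=\langle\eta,u\rangle_Z$ for $\eta\in Z^*$ and $u\in U$, this yields the first identity in \eqref{eqsubdtopology}. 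The identity for $V$ is proved verbatim, the only ingredient required being that $V$ is densely embedded in $Z$, which is true by composing the dense embeddings in \eqref{eq:embeddings}.

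The only mildly delicate point is the extension step: one must be careful to use both the upper bound $\mc R(y)\le\rho_2\|y\|_Z$ \emph{and} the inequality applied to $-y$ to obtain a two-sided $Z$-continuity estimate on $\eta$, since the subgradient inequality at $0$ is one-sided. Once this is in hand, density and the polynomial identity \eqref{2.2mielke} do the rest, with no need to invoke any Hahn--Banach-type extension beyond uniform continuity on a dense subspace.
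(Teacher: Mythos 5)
Your proof is correct. The two key mechanisms coincide with the paper's: one inclusion by restricting a $Z^*$-subgradient to $U$, and the reverse inclusion by deriving $\mc R(y)\ge\langle\eta,y\rangle_U$ from one-homogeneity, obtaining the two-sided bound $|\langle\eta,y\rangle_U|\le\rho_2\|y\|_Z$ via \eqref{Rbounds} applied to $y$ and $-y$, and extending $\eta$ to $Z^*$ by density of $U$ in $Z$. Where you diverge is the final verification: the paper stays at the general base point $u$ and checks $\eta\in\partial^Z\mc R(u)$ directly by approximating an arbitrary $z\in Z$ with $y_n\in U$ and passing to the limit in the subgradient inequality (using the $Z$-Lipschitz continuity of $\mc R$), whereas you first settle the equality at $0$ and then transfer to arbitrary $u$ via the one-homogeneity characterization \eqref{2.2mielke} applied in both topologies. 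Both routes are sound; yours avoids the approximation argument at general $u$ at the price of needing \eqref{2.2mielke} for $\partial^U\mc R$ as well, which the paper states only for $\partial^Z\mc R$ — this is harmless, since that characterization depends only on convexity and positive one-homogeneity of $\mc R$ restricted to $U$, but it is worth saying explicitly. Your observation that $V$ is dense in $Z$ by composing the dense embeddings in \eqref{eq:embeddings} correctly covers the second identity, which the paper likewise dismisses as analogous.
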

\begin{proof}
We only prove the first assertion, the other one being analogous.
Let $\eta  \in \partial^Z\mc R(u)$ for some $u\in U$. Then, for all $y\in U$ we have
\begin{equation*}
\mc R(y) \geq \mc R(u) + \langle \eta, y-u \rangle_Z = \mc R(u) + \langle \eta, y-u \rangle_U\,,
\end{equation*}
whence $\eta  \in \partial^U\mc R(u)$. Thus, $\partial^Z\mc R(u)\subseteq \partial^U\mc R(u)$.

To prove the reverse inclusion, let $\eta  \in \partial^U\mc R(u)$ for some $u\in U$. We first show that $\eta$ can be extended to an element of $Z^*$. Indeed, since $\mc R$ is one-homogeneous, from the very definition of subdifferential we have $\mc R(y)\geq \langle \eta, y \rangle_U$ for every $y\in U$, whence by \eqref{Rbounds} we get $|\langle \eta, y \rangle_U|\leq \rho_2 \|y\|_Z$ for every $y\in U$. Since $U$ is dense in $Z$, we obtain that $\eta$ belongs to $Z^*$. 

Now, let $z\in Z$ and $y_n\in U$ be such that $y_n\stackrel{Z}{\longrightarrow} z$. We then have
\begin{equation*}
\begin{split}
\mc R(z)& = \lim_{n\to+\infty} \mc R(y_n) \geq \lim_{n\to+\infty} \left( \mc R(u) + \langle \eta, y_n - u \rangle_U\right)  = \lim_{n\to+\infty} \left( \mc R(u) + \langle \eta, y_n - u \rangle_Z\right)  \\
& = \mc R(u) + \langle \eta, z - u \rangle_Z \,,
\end{split}
\end{equation*}  
which gives $\eta \in \partial^Z\mc R(u)$. This concludes the proof.  
\end{proof}
	
	\subsection{Potential energy}\label{subsec:poten}
	Both the dynamic and the rate-independent systems \eqref{mainprob} and \eqref{riprob} are driven by the time-dependent \emph{potential energy} $\mc E\colon [0,T]\times U\to [0,+\infty]$, with proper domain ${\rm dom}\,\mc E = [0,T]\times D$, for some nonempty set $D\subseteq U$. We assume that $\mc E$ possesses the following properties:
	\begin{enumerate}[label=\textup{(E\arabic*)}]
		\item \label{hyp:E1} (\textbf{Weak Lower Semicontinuity}) for every $t\in[0,T]$ the function $\mc E(t,\cdot)$ is weakly lower semicontinuous in $U$;
		\item \label{hyp:E2} (\textbf{Power control}) for every $u\in D$ the function $\mc E(\cdot,u)$ is absolutely continuous in $[0,T]$, and there holds 
		\begin{equation*}
			\left|{\partial_t} \mc E(t,u)\right|\le b(t)(\mc E(t,u)+1),\quad\text{for a.e. $t\in[0,T]$ and for every $u\in D$},
		\end{equation*}
		where $b\in L^1(0,T)$ is nonnegative;
		\item \label{hyp:E3} (\textbf{Coercivity}) the function $\mc E(0,\cdot)$ has bounded sublevels;
		\item \label{hyp:E4} (\textbf{Power continuity}) for all $M>0$ there exist $\gamma_M\in L^1(0,T)$ and a modulus of continuity $\omega_M\colon [0,+\infty)\to[0,+\infty)$ such that for almost every $t\in[0,T]$ and for all $u_1, u_2\in \{\mc E(0,\cdot)\le M\}$ there holds
		\begin{equation*}
			|\partial_t\mc E(t,u_1)-\partial_t\mc E(t,u_2)|\le\gamma_M(t)\omega_M(|u_1-u_2|_W);
		\end{equation*}
		\item \label{hyp:E5} (\textbf{$\lambda$-convexity}) there exists $\lambda\ge0$ such that for every $t\in[0,T]$ the map $ U\ni u \mapsto \mc E(t,u) + \frac{\lambda}{2}|u|_W^2 $ is convex.
	\end{enumerate}
\begin{rmk}\label{rmk:gronwall}
	By employing Gr\"onwall inequality, assumption \ref{hyp:E2} ensures that
	\begin{equation}\label{eq:ineqgronwall}
		\mc E(t,u)+1\le e^{\left|\int_{s}^{t}b(r)\d r\right|}(\mc E(s,u)+1),\qquad\text{for every }s,t\in [0,T].
	\end{equation}
	By using \ref{hyp:E1}, the above inequality easily implies that $\mc E$ is weakly lower semicontinuous in the product space $[0,T]\times U$. Moreover, by combining \eqref{eq:ineqgronwall} with \ref{hyp:E3}, there follows that sublevels of $\mc E(t,\cdot)$ are bounded \emph{uniformly} with respect to $t\in [0,T]$.	
\end{rmk} 

In the case of nonconvex energies (namely, when the parameter $\lambda$ in \ref{hyp:E5} is necessarily positive) we will need to add the following less standard assumptions:
\begin{enumerate}[label=\textup{(E\arabic*)}, start=6]
	\item \label{hyp:E6} (\textbf{Subgradient control}) for all $C_1>0$ there exists $C_2>0$ such that for every $(t,u)\in [0,T]\times D$ there holds
	\begin{equation*}
		\mc E(t,u)\le C_1\implies\sup\limits_{\xi\in\partial^U\mc E(t,u)}\|\xi\|_{U^*} \le C_2;
	\end{equation*}
	\item \label{hyp:E7} (\textbf{Continuity}) for every $t\in [0,T]$ the restriction of $\mc E(t,\cdot)$ to $D$ is strongly continuous in $U$, namely
	\begin{equation}\label{eq:strongcont}
		u_n\stackrel{U}{\rightarrow}u,\,\text{ with }u_n,u\in D \implies \mc E(t,u_n)\to \mc E(t,u);
	\end{equation} 
	\item \label{hyp:E8}(\textbf{Improved convergence}) for every $t\in [0,T]$ it holds
		\begin{equation*}
			u_n\stackrel{U}{\rightharpoonup}u\,\text{ and }\mc E(t,u_n)\to \mc E(t,u),\,\text{ with }u_n,u\in D\implies u_n\stackrel{U}{\rightarrow}u,
		\end{equation*}
	namely weak convergence of elements in $D$ together with convergence of energies imply strong convergence;
	\item \label{hyp:E9}(\textbf{Time-continuity of subdifferential}) for all $M>0$ there exists a modulus of continuity $\widetilde{\omega}_M\colon[0,+\infty)\to [0,+\infty)$ such that
	\begin{equation*}
		\partial^U\mc E(t,u)\subseteq\partial^U\mc E(s,u)+\overline{B}^{Z^*}_{\widetilde{\omega}_M(|t-s|)},\quad\text{for every }s,t\in [0,T]\,\text{and }u\in \overline{B}^U_M;
	\end{equation*}
\item \label{hyp:E10} (\textbf{Improved coercivity}) for all $C>0$ the set 
\begin{equation*}
	\left\{u\in U:\, \mc E(0,u)+\min\limits_{\xi\in\partial^U\mc E(0,u)\cap Z^*}\|\xi\|_{Z^*}\le C\right\},
\end{equation*}
is \emph{precompact} in $Z^*$.
\end{enumerate}

\begin{rmk}\label{rmk:precompact}
	If \ref{hyp:E2} and \ref{hyp:E9} are in force, then the validity of \ref{hyp:E10} can be extended in a uniform way to all times $t\in [0,T]$. Indeed, it can be proved that for all $C>0$ there exists $D>0$ such that
	\begin{equation*}
	\bigcup_{t\in [0,T]}\!\!\left\{u\in U:\, \mc E(t,u)+\!\!\min\limits_{\xi\in\partial^U\mc E(t,u)\cap Z^*}\|\xi\|_{Z^*}\le C\right\}\!\subseteq\!\left\{u\in U:\, \mc E(0,u)+\!\!\!\min\limits_{\xi\in\partial^U\mc E(0,u)\cap Z^*}\|\xi\|_{Z^*}\le D\right\}\!.
	\end{equation*}
\end{rmk}
	
	\section{Inertial Balanced Viscosity solutions}\label{sec:IBV}
	
	We begin this section by introducing the notion of solution to the dynamic problem \eqref{mainprob} we intend to consider. It is based on a suitable energy equality in the spirit of De Giorgi's \emph{energy-dissipation principle} (see also \cite{BachoViscoplast}, \cite[Section~5]{MielkRoubbook}, \cite{RivScilSol} and \cite{RosThom}), in which the key idea consists in keeping together all the dissipative effects, modelled by means of the viscous and the rate-independent potentials. 
	
	To this aim, we thus introduce the functional $\mc R_\eps\colon V\to [0,+\infty)$ defined as
	\begin{equation}\label{eq:Reps}
		\mc R_\eps(v):= \frac\eps 2|v|^2_\V+\mc R(v).
	\end{equation}
	By using \eqref{eq:Rstar}, it is well known (see for instance \cite[Section~2.3]{MielkRosSav16}) that its Fenchel conjugate $\mc R^*_\eps$ can be explicitely computed:
	\begin{equation}\label{eq:Reps*}
		\mc R_\eps^*(\zeta)=\frac{1}{2\eps}\dist^2_{\V^{-1}}(\zeta;\partial^Z\mc R(0))=\frac{1}{2\eps}\min\limits_{\eta\in \partial^Z\mc R(0)}|\zeta-\eta|_{\V^{-1}}^2,\quad\text{ for }\zeta\in V^*.
	\end{equation}
	Notice that $\mc R^*_\eps$ is continuous in $V^*$ and convex.
	
	Moreover, by Lemma~\ref{lem:lemma4} and \eqref{eqsubdtopology} one can write
	\begin{equation}\label{eq:split}
		\partial^V\mc R_\eps(v)=\eps\V v+\partial^Z\mc R(v),\qquad\text{for all }v\in V,
	\end{equation}
so that the dynamic differential inclusion can be equivalently rewritten as
	\begin{equation*}
		\eps^2 \mathbb{M}\xepsdd(t)+\partial^V\mc R_\eps(\xepsd(t))+\partial^U\mc E(t,\xeps(t))\ni 0, \quad\text{in $U^*$,}\quad \text{for a.e. }t\in [0,T].
	\end{equation*}
By using \eqref{Fenchelprop}, it is thus natural to expect, at least at a formal level (assuming everything is regular and smooth), that solutions to \eqref{mainprob} should satisfy in any subinterval interval $[s,t]\subseteq [0,T]$ an energy balance of the form
\begin{align*}
	&\quad\,\int_s^t {\mc R}_\eps(\xepsd(r))+\mc R_\eps^*(-\eps^2 \mathbb{M}\xepsdd(r)-\xi^\eps(r))\d r= \int_s^t \langle{-\eps^2 \mathbb{M}\xepsdd(r)-\xi^\eps(r)},{\xepsd(r)}\rangle_V\d r\\
	=& -\frac{\varepsilon^2}{2}|\xepsd(t)|_{\mathbb{M}}^2+\frac{\varepsilon^2}{2}|\xepsd(s)|_{\mathbb{M}}^2- \mc E(t,\xeps(t))+ \mc E(s,\xeps(s))+  \int_s^t \partial_t\mc E(r,\xeps(r))\,\mathrm{d}r,
\end{align*}
where $\xi^\eps(t)$ is an element of $\partial^U\mc E(t,\xeps(t))$.

Previous considerations lead to the following definition.
	\begin{defi}[\textbf{Dissipative dynamic solutions}]\label{def:dynsol}		
	Given initial data $u_0^\eps\in D$, $u_1^\eps\in V$, we say that a function $u^\eps\colon[0,T]\to D$ is a \emph{dissipative dynamic solution} to \eqref{mainprob} if the following conditions are satisfied:
	\begin{itemize}
		\item[$\bullet$] $u^\eps\in C_{\rm w}([0,T];U)\cap W^{1,2}(0,T;V)\cap C^1_{\rm w}([0,T];W)\cap W^{2,2}(0,T;U^*)$, and \\
		$\mc E(\cdot,u^\eps(\cdot))\in L^\infty(0,T)$;
		\item[$\bullet$] $u^\eps(0)=u^\eps_0$ and $\dot u^\eps(0)=u^\eps_1$;
		\item[$\bullet$] for almost every $t\in [0,T]$ there holds
		\begin{equation}\label{dineq}
			\eps^2 \mathbb{M}\xepsdd(t)+\eps \mathbb{V}\xepsd(t)+\eta^\eps(t)+\xi^\eps(t)= 0, \quad\text{in $U^*$,}
		\end{equation}
		for some (measurable) $\eta^\eps(t)\in\partial^Z\mc R(\xepsd(t))$ and $\xi^\eps(t)\in\partial^U\mc E(t,\xeps(t))$;
		\item[$\bullet$] there exists a set $N\subseteq (0,T]$ of null measure such that for all $t\in [0,T]\setminus N$ the energy-dissipation balance
		\begin{equation}\label{eq:energybalance}
			\begin{aligned}
				&\frac{\varepsilon^2}{2}|\xepsd(t)|_{\mathbb{M}}^2+ \mc E(t,\xeps(t))+\int_0^t {\mc R}_\eps(\xepsd(r))+\mc R_\eps^*(-\eps^2 \mathbb{M}\xepsdd(r)-\xi^\eps(r))\,\mathrm{d}r \\
				= \,&\frac{\varepsilon^2}{2}|u^\eps_1|_{\mathbb{M}}^2 +  \mc E(0,u_0^\eps) +  \int_0^t \partial_t\mc E(r,\xeps(r))\,\mathrm{d}r,
			\end{aligned}
		\end{equation}
	holds true.
	\end{itemize}
If the energy-dissipation balance \eqref{eq:energybalance} is actually satisfied for all $t\in [0,T]$, we call $u^\eps$ an \emph{exact} dissipative dynamic solution.
\end{defi}
\begin{rmk}
	By using equation \eqref{dineq} together with \eqref{eq:split} and \eqref{Fenchelprop}, the energy-dissipation balance \eqref{eq:energybalance} implies the validity of the following (integral) chain-rule formula for every $s,t\in [0,T]\setminus N$ with $s\le t$:
	\begin{equation}\label{eq:chainrule}
		\begin{aligned}
		\int_s^t\langle\eps^2\M\ddot{u}^\eps(r)+\xi^\eps(r),\dot{u}^\eps(r)\rangle_V\d r=& \frac{\varepsilon^2}{2}|\xepsd(t)|_{\mathbb{M}}^2+ \mc E(t,\xeps(t))-\frac{\varepsilon^2}{2}|\xepsd(s)|_{\mathbb{M}}^2- \mc E(s,\xeps(s))\\
		&-\int_s^t \partial_t\mc E(r,\xeps(r))\,\mathrm{d}r.
	\end{aligned}
	\end{equation}
\end{rmk}
\begin{rmk}
	Since $u^\eps\in C_{\rm w}([0,T];U)$ and $\dot u^\eps\in  C_{\rm w}([0,T];W)$, by weak lower semicontinuity the energy-dissipation balance \eqref{eq:energybalance} yields the inequality
	\begin{equation}\label{energyineq}
		\begin{aligned}
			&\frac{\varepsilon^2}{2}|\xepsd(t)|_{\mathbb{M}}^2+ \mc E(t,\xeps(t))+\int_s^t {\mc R}_\eps(\xepsd(r))+\mc R_\eps^*(-\eps^2 \mathbb{M}\xepsdd(r)-\xi^\eps(r))\,\mathrm{d}r \\
			\le \,&\frac{\varepsilon^2}{2}|\xepsd(s)|_{\mathbb{M}}^2 +  \mc E(s,\xeps(s)) +  \int_s^t \partial_t\mc E(r,\xeps(r))\,\mathrm{d}r,
		\end{aligned}
	\end{equation}
	for all $t\in [0,T]$ and for all $s\in [0,T]\setminus N$ with $s\le t$.
\end{rmk}

	Existence of dissipative dynamic solutions to \eqref{mainprob} has been proved in \cite[Theorem~2.4]{BachoViscoplast} in a more general functional framework than \eqref{eq:embeddings}, but requiring slightly stronger assumptions on the potential energy $\mc E$. To the best of our knowledge, existence of such solutions is not known just under our assumptions. Nevertheless, in the case $V=U$, even existence of \emph{exact} dissipative dynamic solutions can be proved assuming solely conditions \ref{hyp:E1}-\ref{hyp:E6}; a key role is played by the validity of a suitable chain-rule formula (see Lemma~\ref{lemma:crappendix}). For self-containedness we present the proof in Appendix~\ref{App:Existence}, and we refer again to \cite[Theorem~2.4]{BachoViscoplast} or to \cite[Theorem~5.6]{RosThom} for similar results. We however point out that the situation $V=U$ still includes interesting applications, such as Kelvin-Voigt models in viscoelasticity (see Section~\ref{subsec:vectorialcase}).
	
	 We finally observe that, exploiting \ref{hyp:E2}, \ref{hyp:E3} and \eqref{Rbounds}, a standard application of Gr\"onwall's Lemma to the energy-dissipation inequality \eqref{energyineq} (see \cite[Proposition~3.3]{GidRiv} for more details) yields the following uniform bounds.
	\begin{prop}\label{prop:unifbounds}
		Assume \eqref{eq:embeddings}, \eqref{mass} and \eqref{viscosity}. Let the rate-independent dissipation potential $\mc R$ satisfy \ref{hyp:R1} and let the potential energy $\mc E$ satisfy \ref{hyp:E1}-\ref{hyp:E3}. Let $u^\eps$ be a dissipative dynamic solution to \eqref{mainprob} with initial data $u^\eps_0\in D$, $u_1^\eps\in V$, and assume that $\eps u_1^\eps$ is uniformly bounded in $W$ and $\mc E(0,u_0^\eps)$ is uniformly bounded. Then there exists a constant $\overline{C}>0$, independent of $\eps$, such that
		\begin{itemize}
			\item[(i)] $\sup\limits_{t\in [0,T]}\mc E(t, u^\eps(t))\le \overline{C}$;
			\item[(ii)] $\sup\limits_{t\in [0,T]}\mc \| u^\eps(t)\|_U\le \overline{C}$;
			\item[(iii)] $\sup\limits_{t\in [0,T]}\mc \eps| \dot u^\eps(t)|_\mathbb{M}\le \overline{C}$;
			\item[(iv)] $\displaystyle\int_{0}^{T}\|\dot u^\eps(r)\|_Z\d r\le \overline{C}$;
			\item[(v)] $\displaystyle\eps\int_{0}^{T}|\dot u^\eps(r)|^2_\mathbb{V}\d r\le \overline{C}$.
		\end{itemize}
	If in addition \ref{hyp:E6} is in force, then there also holds 
	\begin{itemize}
		\item[(vi)] $\essup\limits_{t\in [0,T]}\|\xi^\eps(t)\|_{U^*}\le \overline{C}$.
	\end{itemize}
	\end{prop}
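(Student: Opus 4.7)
The plan is to apply a Grönwall-type argument to the energy-dissipation inequality \eqref{energyineq} specialized to $s=0$. Discarding the nonnegative Fenchel-conjugate term and using the pointwise lower bound $\mc R_\eps(v)\ge \frac{\eps}{2}|v|_\V^2+\rho_1\|v\|_Z$ (which follows from \eqref{eq:Reps} and \eqref{Rbounds}), I would first obtain
\begin{equation*}
\frac{\eps^2}{2}|\xepsd(t)|_\M^2+\mc E(t,\xeps(t))+\frac{\eps}{2}\int_0^t|\xepsd(r)|_\V^2\d r+\rho_1\int_0^t\|\xepsd(r)\|_Z\d r\le \frac{\eps^2}{2}|u_1^\eps|_\M^2+\mc E(0,u_0^\eps)+\int_0^t\partial_t\mc E(r,\xeps(r))\d r,
\end{equation*}
valid for every $t\in[0,T]$ by \eqref{energyineq}. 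By hypothesis the initial data are controlled uniformly in $\eps$, so the first two terms on the right are bounded by some constant $C_0$.

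Next I would estimate the power term via \ref{hyp:E2}:
\begin{equation*}
\int_0^t\partial_t\mc E(r,\xeps(r))\d r\le \int_0^t b(r)\bigl(\mc E(r,\xeps(r))+1\bigr)\d r.
\end{equation*}
Setting $\phi(t):=\mc E(t,\xeps(t))+1$ (well defined since $\xeps(t)\in D$), the inequality collapses to $\phi(t)\le C_0+1+\int_0^tb(r)\phi(r)\d r$, after dropping the other nonnegative terms on the left. The scalar Grönwall inequality then produces a uniform bound $\phi(t)\le (C_0+1)\exp\!\bigl(\|b\|_{L^1}\bigr)$, which is (i). Plugging (i) back into the previous inequality yields (iii), (iv) and (v) immediately, since the power term is now controlled by a constant. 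For (ii), I would combine (i) with \ref{hyp:E3} and the observation in Remark~\ref{rmk:gronwall} that \ref{hyp:E2} transfers the bounded-sublevels property from $\mc E(0,\cdot)$ to $\mc E(t,\cdot)$ uniformly in $t\in[0,T]$. Finally, (vi) is a direct consequence of \ref{hyp:E6} applied to the uniform energy bound (i), since $\xi^\eps(t)\in\partial^U\mc E(t,\xeps(t))$ for a.e.\ $t$ by \eqref{dineq}.

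The only real point to watch is that the LHS contains $\mc E(t,\xeps(t))$ pointwise while the RHS contains it under an integral, which is precisely the structure required by Grönwall; once the inequality is rearranged into this form the rest is purely mechanical. No subtlety arises from the fact that \eqref{energyineq} only holds for $s\in[0,T]\setminus N$, because we fix $s=0$ (the initial time is excluded from $N\subset(0,T]$ by Definition~\ref{def:dynsol}). Overall I do not expect a serious obstacle in this proposition: everything is driven by the clean energy-dissipation structure and by \ref{hyp:E2}, and no finer information on $\partial^U\mc E$ beyond the elementary \ref{hyp:E6} is needed.
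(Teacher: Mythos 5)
Your argument is correct and is exactly the proof the paper intends: the authors themselves state that the bounds follow from "a standard application of Grönwall's Lemma to the energy-dissipation inequality \eqref{energyineq}" (citing \cite[Proposition~3.3]{GidRiv}), which is precisely your route — take $s=0$ in \eqref{energyineq}, discard the nonnegative conjugate term, bound $\mc R_\eps$ from below via \eqref{Rbounds}, control the power by \ref{hyp:E2}, apply Grönwall for (i), and read off (ii)--(vi) from \ref{hyp:E3}, Remark~\ref{rmk:gronwall} and \ref{hyp:E6}. Your remark that $s=0\notin N$ correctly disposes of the only delicate point.
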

	
	\subsection{Viscous contact potential and viscoinertial cost}\label{subsec:cost}
	In the asymptotic analysis of the dynamic problem \eqref{mainprob} as $\eps\to 0$ a pivotal role is played by the so-called \emph{viscous contact potential} $p_\V\colon V\times V^*\to [0,+\infty)$, introduced in \cite{MielkRosSav09, MielkRosSav12, MielkRosSav16} and defined as  
\begin{equation}\label{eq:contactpotential}
	p_\V(v,\zeta):=\inf\limits_{\eps>0}(\mc R_\eps(v)+\mc R_\eps^*(\zeta)), \qquad\text{for }(v,\zeta)\in V\times V^*.
\end{equation}
By using formulas \eqref{eq:Reps} and \eqref{eq:Reps*}, an easy computation yields the explicit expression
\begin{equation}\label{eq:contactpotexpl}
	 p_\V(v,\zeta)=\mc R(v)+|v|_\V\dist_{\V^{-1}}(\zeta;\partial^Z\mc R(0)), \qquad\text{for all }(v,\zeta)\in V\times V^*.
\end{equation}
We collect here the main properties of the viscous contact potential $p_\mathbb{V}$, whose proofs are a direct consequence of the explicit formula \eqref{eq:contactpotexpl}. For more details we refer to \cite{MielkRosSav16}.

\begin{prop}\label{prop:visccont}
	Assume \eqref{viscosity} and \ref{hyp:R1}. Then the \emph{viscous contact potential} $p_\V$ defined in \eqref{eq:contactpotential} fulfils the following properties:
	\begin{itemize}
	 \item[(a)] for all $\zeta\in V^*$ the map $p_\V(\cdot,\zeta)$ is convex and positively one-homogeneous;
	 \item[(b)] for all $v\in V$ the map $p_\V(v,\cdot)$ is convex;
	 \item[(c)] $p_\V(v,\zeta)\ge\max\left\{\mc R(v),\langle\zeta,v\rangle_V\right\}$ for all $(v,\zeta)\in V\times V^*$;
	 \item[(d)] $p_\V(v(\cdot),\zeta(\cdot))\in L^1(a,b)$ whenever $v\in L^2(a,b;V)$ and $\zeta\in L^2(a,b;V^*)$;
	 \item[(e)] $p_\V(v,\zeta)\le C|v|_V(1+|\zeta|_{V^*})$ for all  $(v,\zeta)\in V\times V^*$.		
	\end{itemize}
\end{prop}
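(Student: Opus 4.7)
The plan is to reduce everything to the explicit formula \eqref{eq:contactpotexpl}: once it is in hand, every one of (a)-(e) is a short manipulation using convex analysis together with \ref{hyp:R1}, \eqref{viscosity}, \eqref{boundedness}, and the chain of embeddings \eqref{eq:embeddings}. So the first step is to justify \eqref{eq:contactpotexpl} itself.

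To this end, fix $(v,\zeta)\in V\times V^*$ and, using \eqref{eq:Reps} and \eqref{eq:Reps*}, write
\begin{equation*}
\mc R_\eps(v)+\mc R_\eps^*(\zeta) = \mc R(v) + \tfrac{\eps}{2}|v|_\V^2 + \tfrac{1}{2\eps}\,\delta^2,\quad \text{where }\delta:=\dist_{\V^{-1}}(\zeta;\partial^Z\mc R(0)).
\end{equation*}
Minimizing the quantity $\tfrac{\eps}{2}|v|_\V^2+\tfrac{1}{2\eps}\delta^2$ over $\eps>0$ by elementary calculus (optimizer $\eps^\ast = \delta/|v|_\V$ when both quantities are positive, limiting value $0$ in the degenerate cases $|v|_\V=0$ or $\delta=0$), I recover exactly the expression $|v|_\V\,\delta$, which upon adding $\mc R(v)$ gives \eqref{eq:contactpotexpl}. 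Note that $\partial^Z\mc R(0)\subseteq Z^*\hookrightarrow V^*$ by \eqref{boundedness} and \eqref{eq:embeddings}, so $\delta$ is a well-defined nonnegative number.

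With \eqref{eq:contactpotexpl} at hand, the properties follow almost mechanically. For (a), for every fixed $\zeta$ the map $v\mapsto |v|_\V$ is a norm, hence convex and positively $1$-homogeneous, and the same is true of $\mc R$ by \ref{hyp:R1}; the property is stable under sum. For (b), $\mc R(v)$ and $|v|_\V$ are constants in $\zeta$, while $\zeta\mapsto \dist_{\V^{-1}}(\zeta;\partial^Z\mc R(0))$ is convex as a distance function from the convex set $\partial^Z\mc R(0)$. For (c), $p_\V(v,\zeta)\ge \mc R(v)$ is immediate; for the other bound pick any $\eta\in\partial^Z\mc R(0)$, use $\mc R(v)\ge \langle\eta,v\rangle_V$ (which follows from $\eta\in\partial^Z\mc R(0)$ combined with \eqref{eqsubdtopology}), and apply the weighted Cauchy-Schwarz inequality $\langle\zeta-\eta,v\rangle_V\le |\zeta-\eta|_{\V^{-1}}|v|_\V$ (whose verification reduces to the identity $|\V v|_{\V^{-1}}=|v|_\V$); taking infimum over $\eta$ yields $p_\V(v,\zeta)\ge \langle\zeta,v\rangle_V$.

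For (d) and (e) I would just combine $\mc R(v)\le\rho_2\|v\|_Z\le C|v|_\V$ (from \eqref{Rbounds} and the embedding $V\hookrightarrow Z$), the equivalence of $|\cdot|_\V$ with $\|\cdot\|_V$ and of $|\cdot|_{\V^{-1}}$ with $\|\cdot\|_{V^*}$, and the bound $\delta\le |\zeta|_{\V^{-1}}+C$ obtained from \eqref{boundedness}. This immediately gives the pointwise estimate in (e); for (d), the product $|v(\cdot)|_\V\,\delta(\zeta(\cdot))$ is integrable by the Cauchy-Schwarz inequality in $L^2(a,b)$, and the $\mc R(v(\cdot))$ summand is in $L^1$ by the same linear bound. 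I do not expect any genuine obstacle here: the only care required is the bookkeeping of the degenerate case in the minimization over $\eps$ and the systematic use of the norm equivalences induced by $\V$ and $\V^{-1}$.
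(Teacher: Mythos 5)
Your proposal is correct and follows exactly the route the paper intends: the paper itself only remarks that the properties are "a direct consequence of the explicit formula \eqref{eq:contactpotexpl}" (referring to \cite{MielkRosSav16} for details), and your argument simply supplies those details — the elementary minimization over $\eps$ giving \eqref{eq:contactpotexpl}, the convexity/homogeneity of $\mc R$ and of the norms $|\cdot|_\V$, $|\cdot|_{\V^{-1}}$, the subdifferential inequality $\mc R(v)\ge\langle\eta,v\rangle$ for $\eta\in\partial^Z\mc R(0)$ together with the weighted Cauchy--Schwarz inequality for (c), and \eqref{Rbounds} with \eqref{boundedness} for (d)--(e). No gaps; the only cosmetic points are that for (c) one needs merely the coincidence of the duality products $\langle\cdot,\cdot\rangle_Z$ and $\langle\cdot,\cdot\rangle_V$ rather than \eqref{eqsubdtopology}, and for (d) one should note in passing that $t\mapsto p_\V(v(t),\zeta(t))$ is measurable because $p_\V$ is continuous on $V\times V^*$.
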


Besides the viscous contact potential, which describes the residual viscous effects, in order to measure the rate-dependent inertial effects surviving in the limit $\eps\to 0$ we need to introduce a suitable \emph{viscoinertial cost}. Differently from the finite-dimensional setting studied in \cite{RivScilSol}, where this concept has been proposed (Definition~3.5 therein), in the current infinite-dimensional setting some care is needed for a proper definition.

Here we discuss how to rigorously formulate this notion. For the sake of brevity, for all $a<b$ we define the set
\begin{equation}\label{eq:G}
	\begin{aligned}
		G(a,b):=\Big\{v\colon [a,b]\to D:\,&  v\in C_{\rm w}([a,b];U)\cap W^{1,2}(a,b;V)\cap  C^1_{\rm w}([a,b];W)
		\cap W^{2,2}(a,b;U^*),\\
		&\text{and }\mc E(0,v(\cdot))\in B([a,b]) \Big\},
	\end{aligned}
\end{equation}
namely the set of functions possessing the same regularity than dissipative dynamic solutions to \eqref{mainprob}. Now, for all $t\in [0,T]$, for all $u_1,u_2\in U$ and for all $\alpha,\beta\ge 0$ we set
\begin{equation}\label{eq:costalphabeta}
	c^\alpha_\beta(t;u_1,u_2):=\inf\left\{\int_{0}^{\sigma}p_\mathbb{V}(\dot{v}(r),-\mathbb{M}\ddot{v}(r)-\xi(r))\d r:\, \sigma>0,\, (v,\xi)\in AD^\alpha_\beta(t;u_1,u_2;\sigma)\right\},
\end{equation}
where the set of admissible functions is given by
\begin{equation}\label{eq:admfunct}
	\begin{aligned}
		AD^\alpha_\beta(t;u_1,u_2;\sigma):=\Big\{& (v,\xi)\in G(0,\sigma)\times L^\infty(0,\sigma;U^*):\,\text{conditions \eqref{eq:adm} below are satisfied} \Big\}.
	\end{aligned}		
\end{equation}

The constants $\overline{C}$ and $\widetilde{C}$ appearing in the conditions below, characterizing the set of admissible functions, are the ones introduced in Propositions~\ref{prop:unifbounds} and \ref{prop:inequality0}:
\begin{subequations}\label{eq:adm}
\begin{align}
	&\bullet\,\int_{0}^{\sigma}\|\dot v(r)\|_Z\d r\le 2(\overline{C}\vee\widetilde{C});\label{eq:adm1}\\
	&\bullet\, \xi(r)\in\partial^U\mc E(t,v(r))+\overline{B}_\alpha^{Z^*}\text{ for almost every }r\in[0,\sigma];\label{eq:adm2}\\
	&\bullet\, v(0)\in\overline{B}_\beta^{U}(u_1),\, v(\sigma)\in\overline{B}_\beta^{U}(u_2),\text{ and } \dot v(0),\dot{v}(\sigma)\in\overline{B}_\beta^{V};\label{eq:adm3}\\
	&\nonumber\bullet\, \mathbb{M}\ddot{v}+\xi\in L^2(0,\sigma;V^*), \text{ and there holds }\\
	&\quad\, \int_{0}^{\sigma}\langle \mathbb{M}\ddot{v}(r)+\xi(r), \dot{v}(r)\rangle_V\d r\le \frac 12|\dot{v}(\sigma)|_\M^2+\mc E(t,v(\sigma))-\frac 12|\dot{v}(0)|_\M^2-\mc E(t,v(0))+2(\overline{C}\vee\widetilde{C})\alpha.\label{eq:adm4} 
\end{align}
\end{subequations}

Here, with a little abuse of notation, in the case $\alpha=0$ or $\beta=0$ we mean $\overline{B}_0^{X}(x)=\{x\}$.

\begin{rmk}\label{rmk:V=U}
	Observe that in the case $V=U$ we can equivalently rewrite \eqref{eq:G}  in the simpler form
	\begin{equation}\label{eq:Gsimpl}
		\begin{aligned}
			G(a,b)=\Big\{v\colon [a,b]\to D:\,& v\in W^{1,2}(a,b;U)\cap W^{2,2}(a,b;U^*), \text{ and }\mc E(0,v(\cdot))\in B([a,b])\Big\}.
		\end{aligned}
	\end{equation}
	Moreover, if \ref{hyp:E7} is in force, condition $\mc E(0,v(\cdot))\in B([a,b])$ in \eqref{eq:Gsimpl} can be dropped: indeed, since $W^{1,2}(a,b;U)\subseteq C([a,b];U)$, in this case the map $r\mapsto\mc E(0,v(r))$ is continuous, and hence bounded, in $[a,b]$.
	
	Assuming \ref{hyp:E1},\ref{hyp:E2},\ref{hyp:E5} and \ref{hyp:E6}, we can also rewrite \eqref{eq:admfunct} as 
	\begin{equation*}
		\begin{aligned}
			AD^\alpha_\beta(t;u_1,u_2;\sigma)\!=\!\Big\{& (v,\xi)\in G(0,\sigma)\times L^\infty(0,\sigma;U^*):\text{conditions \eqref{eq:adm1}-\eqref{eq:adm3} are satisfied}  \Big\}.
		\end{aligned}		
	\end{equation*}
Condition \eqref{eq:adm4} is indeed automatically satisfied by using Lemma~\ref{lemma:chainruleequality} in the Appendix~\ref{App:Chainrule}. To see this, pick any $(v,\xi)\in G(0,\sigma)\times L^\infty(0,\sigma;U^*)$ satisfying \eqref{eq:adm1} and \eqref{eq:adm2}, and let $\xi_t\in \partial^U\mc E(t,v(\cdot))$ such that $\|\xi-\xi_t\|_{L^\infty(0,\sigma;Z^*)}\le \alpha$ be given by \eqref{eq:adm2}. Then, by using \eqref{eq:chainruleequality} there holds
\begin{align*}
	&\quad\,\int_{0}^{\sigma}\langle \mathbb{M}\ddot{v}(r)+\xi(r), \dot{v}(r)\rangle_U\d r= \int_{0}^{\sigma}\langle \mathbb{M}\ddot{v}(r)+\xi_t(r), \dot{v}(r)\rangle_U\d r+\int_{0}^{\sigma}\langle \xi(r)-\xi_t(r), \dot{v}(r)\rangle_Z\d r\\
	&\le \frac 12|\dot{v}(\sigma)|_\M^2+\mc E(t,v(\sigma))-\frac 12|\dot{v}(0)|_\M^2-\mc E(t,v(0))+\|\xi-\xi_t\|_{L^\infty(0,\sigma;Z^*)}\int_{0}^{\sigma}\|\dot{v}(r)\|_Z\d r\\
	&\le \frac 12|\dot{v}(\sigma)|_\M^2+\mc E(t,v(\sigma))-\frac 12|\dot{v}(0)|_\M^2-\mc E(t,v(0))+2(\overline{C}\vee\widetilde{C})\alpha.
\end{align*}
\end{rmk}

We now possess all the ingredients in order to introduce the notion of \emph{viscoinertial cost} which will govern the jump transient of Inertial Balanced Viscosity solutions (see Definition~\ref{def:IBV} below). 
\begin{defi}[\textbf{Viscoinertial cost}]
	For any $t\in [0,T]$, $u_1,u_2\in U$, the \emph{viscoinertial cost} related to $\M,$ $\V$, $\mc R$ and $\mc E$ is defined by
	\begin{equation}\label{eq:cost}
		c^{\M,\V}(t;u_1,u_2):=\sup\limits_{\alpha,\beta>0}c^\alpha_\beta(t;u_1,u_2)=\lim\limits_{(\alpha,\beta)\to(0,0)}c^\alpha_\beta(t;u_1,u_2),
	\end{equation}
 where $c^\alpha_\beta(t;u_1,u_2)$ has been introduced in \eqref{eq:costalphabeta}.
\end{defi}

We point out that the equality in \eqref{eq:cost}, namely the fact that the cost can be computed as a limit, follows since the functions $\alpha\mapsto c^\alpha_\beta(t;u_1,u_2)$ and $\beta\mapsto c^\alpha_\beta(t;u_1,u_2)$ are nonincreasing in $[0,+\infty)$.

It will also be useful to introduce the following more precise version of the viscoinertial cost, in which the boundary data of the admissible functions are attained:
\begin{equation}\label{eq:cost0}
	c^{\M,\V}_0(t;u_1,u_2):=\sup\limits_{\alpha>0}c^\alpha_0(t;u_1,u_2)=\lim\limits_{\alpha\to 0}c^\alpha_0(t;u_1,u_2).
\end{equation}
Note that by monotonicity for all $t\in [0,T]$ and $u_1,u_2\in U$ there holds
\begin{equation}\label{eq:ordcost}
	c^{\M,\V}(t;u_1,u_2)\le c^{\M,\V}_0(t;u_1,u_2).
\end{equation}

We conclude this section by showing a crucial property possessed by the viscoinertial cost, namely that it always provides an upper bound for the energy gap between two points $u_1,u_2\in D$.

\begin{prop}\label{prop:uppercost}
	Assume \eqref{viscosity}, \ref{hyp:R1} and let \ref{hyp:E7} be in force. Then for all $t\in [0,T]$ and for all $u_1,u_2\in D$ there holds
	\begin{equation*}
		\mc E(t,u_1)-\mc E(t,u_2)\le c^{\M,\V}(t;u_1,u_2).
	\end{equation*}
\end{prop}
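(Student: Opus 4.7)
The plan is to chain together three ingredients: the lower bound on the viscous contact potential in Proposition~\ref{prop:visccont}(c), the defining inequality \eqref{eq:adm4} in the admissible set, and the strong continuity of $\mc E(t,\cdot)$ on $D$ granted by \ref{hyp:E7}. Fix $t\in[0,T]$ and $u_1,u_2\in D$. If $AD^\alpha_\beta(t;u_1,u_2;\sigma)=\emptyset$ for every $\alpha,\beta,\sigma>0$ there is nothing to prove since $c^{\M,\V}(t;u_1,u_2)=+\infty$. Otherwise, pick $\alpha,\beta>0$, $\sigma>0$, and an arbitrary admissible pair $(v,\xi)\in AD^\alpha_\beta(t;u_1,u_2;\sigma)$.

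First I would apply Proposition~\ref{prop:visccont}(c) pointwise in $r\in[0,\sigma]$ to get $p_\V(\dot v(r),-\M\ddot v(r)-\xi(r))\ge \langle -\M\ddot v(r)-\xi(r),\dot v(r)\rangle_V$, and then integrate and invoke \eqref{eq:adm4} to obtain
\begin{equation*}
\int_0^\sigma p_\V(\dot v(r),-\M\ddot v(r)-\xi(r))\,\mathrm dr\ge \mc E(t,v(0))-\mc E(t,v(\sigma))+\tfrac12|\dot v(0)|_\M^2-\tfrac12|\dot v(\sigma)|_\M^2-2(\overline C\vee\widetilde C)\alpha.
\end{equation*}
Since $|\dot v(0)|_\M^2\ge 0$ we can discard it. For the term at $\sigma$ I would use condition \eqref{eq:adm3} ($\dot v(\sigma)\in \overline B_\beta^V$) together with the continuous embedding $V\hookrightarrow W$ and the equivalence of $|\cdot|_\M$ and $|\cdot|_W$ on $W$ (guaranteed by \eqref{mass}) to get $\tfrac12|\dot v(\sigma)|_\M^2\le C_\M\beta^2$ for some constant $C_\M>0$ independent of $v$.

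Next, since $v(0),v(\sigma)\in D$ with $\|v(0)-u_1\|_U\le\beta$ and $\|v(\sigma)-u_2\|_U\le\beta$ by \eqref{eq:adm3}, I would define
\begin{equation*}
\omega_t(\beta):=\sup\{|\mc E(t,w)-\mc E(t,u_1)|:w\in D,\,\|w-u_1\|_U\le\beta\}+\sup\{|\mc E(t,w)-\mc E(t,u_2)|:w\in D,\,\|w-u_2\|_U\le\beta\},
\end{equation*}
which satisfies $\omega_t(\beta)\to 0$ as $\beta\to 0^+$ thanks to \ref{hyp:E7}. Consequently $\mc E(t,v(0))\ge\mc E(t,u_1)-\omega_t(\beta)$ and $\mc E(t,v(\sigma))\le\mc E(t,u_2)+\omega_t(\beta)$, yielding
\begin{equation*}
\int_0^\sigma p_\V(\dot v(r),-\M\ddot v(r)-\xi(r))\,\mathrm dr\ge \mc E(t,u_1)-\mc E(t,u_2)-2\omega_t(\beta)-C_\M\beta^2-2(\overline C\vee\widetilde C)\alpha.
\end{equation*}

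Taking the infimum over $\sigma>0$ and $(v,\xi)\in AD^\alpha_\beta(t;u_1,u_2;\sigma)$ gives $c^\alpha_\beta(t;u_1,u_2)\ge\mc E(t,u_1)-\mc E(t,u_2)-2\omega_t(\beta)-C_\M\beta^2-2(\overline C\vee\widetilde C)\alpha$, and finally passing to the limit $(\alpha,\beta)\to(0,0)$ via \eqref{eq:cost} yields the claim. No step is genuinely hard; the only point that needs care is the passage from values of $\mc E(t,\cdot)$ at $v(0),v(\sigma)$ to values at $u_1,u_2$, which is exactly where the strong continuity hypothesis \ref{hyp:E7} on the energy restricted to $D$ is essential (weak lower semicontinuity alone would only give one inequality in the wrong direction).
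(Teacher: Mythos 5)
Your argument is correct and follows essentially the same route as the paper's proof: Proposition~\ref{prop:visccont}$(c)$ combined with \eqref{eq:adm4} to bound the contact-potential integral from below by the energy gap of the competitor, and \ref{hyp:E7} together with \eqref{eq:adm3} to pass from the values at $v(0),v(\sigma)$ to those at $u_1,u_2$. The only (immaterial) difference is that you bound the integral from below for \emph{every} admissible pair and then take the infimum, whereas the paper selects a near-optimal competitor and chains the same inequalities in the opposite direction.
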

\begin{proof}
	Fix $t\in [0,T]$ and without loss of generality let $c^{\M,\V}(t;u_1,u_2)<+\infty$, so that in particular $c^\alpha_\beta(t;u_1,u_2)<+\infty$ for all $\alpha,\beta>0$ as well. Let us fix $\delta,\alpha>0$, and by continuity let $\beta>0$ be such that
	\begin{equation}\label{eq:vicino}
		|\mc E(t,u_i)-\mc E(t,x)|\le\frac\delta 8, \quad\text{for all $x\in \overline{B}^U_\beta(u_i)\cap D$},\qquad |y|_\M^2\le \frac\delta 4, \quad\text{for all $y\in \overline{B}^V_\beta$}.
	\end{equation}
	By definition \eqref{eq:costalphabeta}, let us now choose $\sigma>0$ and $(v,\xi)\in AD^\alpha_\beta(t;u_1,u_2;\sigma)$ such that
	\begin{equation}\label{eq:sup}
		\int_{0}^{\sigma}p_\V(\dot v(r),-\M\ddot v(r)-\xi(r))\d r\le c^\alpha_\beta(t;u_1,u_2)+\frac \delta2.
	\end{equation}
	Then, using \eqref{eq:vicino} and \eqref{eq:sup}, and exploiting \eqref{eq:adm4} and $(c)$ in Proposition~\ref{prop:visccont} we obtain
	\begin{align*}
		\mc E(t,u_1)-\mc E(t,u_2)&\le \frac 12 |\dot v(0)|_\M^2+\mc E(t,v(0))-\frac 12 |\dot v(\sigma)|_\M^2-\mc E(t,v(\sigma))+\frac\delta 2\\
		&\le \int_{0}^{\sigma}\langle -\M\ddot v(r)-\xi(r),\dot v(r)\rangle_V\d r+2(\overline{C}\vee\widetilde{C})\alpha+\frac\delta 2\\
		&\le \int_{0}^{\sigma}p_\V( \dot v(r), -\M\ddot v(r)-\xi(r))\d r+2(\overline{C}\vee\widetilde{C})\alpha+\frac\delta 2\\
		&\le c^\alpha_\beta(t;u_1,u_2)+2(\overline{C}\vee\widetilde{C})\alpha+\delta\le  c^{\M,\V}(t;u_1,u_2)+2(\overline{C}\vee\widetilde{C})\alpha+\delta.
	\end{align*}
	We thus conclude by sending $\delta\to 0$ and $\alpha\to 0$.
\end{proof}

\subsection{Definition of IBV solutions}
Once the viscoinertial cost has been introduced, we are now in a position to define the core notion of this paper, namely the concept of \emph{Inertial Balanced Viscosity solution} to the rate-independent system \eqref{riprob}. In order to make the definition more clear, we first state the following lemma.
	\begin{lemma}\label{lemma:localstability}
	Assume \eqref{eq:embeddings}, let the rate-independent dissipation potential $\mc R$ satisfy \ref{hyp:R1} and let the potential energy $\mc E$ satisfy \ref{hyp:E5}. Then for $(t,\bar u)\in [0,T]\times D$ the following two conditions are equivalent:
	\begin{itemize}
		\item[$(a)$] $\partial^Z\mc R(0)+\partial^U\mc E(t,\bar u)\ni 0$, namely there exists $\bar\xi_t\in \partial^U\mc E(t,\bar u)$ satisfying $-\bar \xi_t\in \partial^Z\mc R(0)$;
		\item[$(b)$] $\mc E(t,\bar u)\le \mc E(t,x)+\mc R(x-\bar u)+\frac \lambda 2|x-\bar u|_W^2$ for all $x\in U$.
	\end{itemize}
\end{lemma}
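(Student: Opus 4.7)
The plan is to rewrite condition $(b)$ as saying that $\bar u$ minimizes the sum of two convex functionals on $U$, namely $\Phi(x):=\mc E(t,x)+\frac{\lambda}{2}|x-\bar u|_W^2$ and $\Psi(x):=\mc R(x-\bar u)$, and then to translate the resulting optimality condition $0\in\partial^U(\Phi+\Psi)(\bar u)$ via the sum rule Lemma~\ref{lem:lemma4}, identifying each piece as $\partial^U\mc E(t,\bar u)$ and $\partial^Z\mc R(0)$ respectively.

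\textbf{Direction $(a)\Rightarrow(b)$.} Given $\bar\xi_t\in\partial^U\mc E(t,\bar u)$ with $-\bar\xi_t\in\partial^Z\mc R(0)$, I would first apply the $\lambda$-convex characterization \eqref{lambdaconvexineq} from Lemma~\ref{lem:lemma3} to obtain, for every $x\in U$,
\begin{equation*}
\mc E(t,x)\ge \mc E(t,\bar u)+\langle\bar\xi_t,x-\bar u\rangle_U-\frac{\lambda}{2}|x-\bar u|_W^2.
\end{equation*}
Since $-\bar\xi_t\in\partial^Z\mc R(0)\subseteq Z^*$, convex subdifferential inequality for $\mc R$ together with $\mc R(0)=0$ gives $\mc R(z)\ge\langle-\bar\xi_t,z\rangle_Z$ for every $z\in Z$. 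Applied to $z=x-\bar u\in U\hookrightarrow Z$ and using the compatibility of the duality pairings, this yields $\langle\bar\xi_t,x-\bar u\rangle_U\ge -\mc R(x-\bar u)$. Summing the two inequalities produces $(b)$.

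\textbf{Direction $(b)\Rightarrow(a)$.} Condition $(b)$ rewrites as $\Phi(\bar u)+\Psi(\bar u)\le\Phi(x)+\Psi(x)$ for every $x\in U$, since $\Phi(\bar u)=\mc E(t,\bar u)$ and $\Psi(\bar u)=\mc R(0)=0$. Both $\Phi$ and $\Psi$ are convex on $U$: $\Phi$ because $\mc E(t,\cdot)+\frac{\lambda}{2}|\cdot|_W^2$ is convex by \ref{hyp:E5} and we only added an affine term and a constant; $\Psi$ because $\mc R$ is convex. Moreover $\Psi$ is finite and continuous on the whole of $U$, because $\mc R$ is Lipschitz on $Z$ by \eqref{Rbounds} and $U\hookrightarrow Z$. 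Hence assumption $(i)$ of Lemma~\ref{lem:lemma4} is fulfilled at $\bar u$, and the optimality $0\in\partial^U(\Phi+\Psi)(\bar u)$ decomposes as $-\bar\xi_t=\bar\eta_t$ for some $\bar\xi_t\in\partial^U\Phi(\bar u)$ and $\bar\eta_t\in\partial^U\Psi(\bar u)$.

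It then remains to identify the two subdifferentials with the desired objects. For $\Phi$, I would apply Lemma~\ref{lem:lemma3} with the Hilbert space $W$ and the function $\mc E(t,\cdot)$: the lemma gives precisely $\partial^U\Phi(\bar u)=\partial^U\mc E(t,\bar u)$, where the added quadratic term vanishes to second order. For $\Psi$, translation invariance of the subdifferential together with \eqref{eqsubdtopology} yields $\partial^U\Psi(\bar u)=\partial^U\mc R(0)=\partial^Z\mc R(0)$. Substituting these identifications gives $\bar\xi_t\in\partial^U\mc E(t,\bar u)$ and $-\bar\xi_t\in\partial^Z\mc R(0)$, proving $(a)$. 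The only real subtlety in the argument is verifying the qualification hypothesis for the sum rule, which however is immediate here thanks to the global continuity of $\Psi$ on $U$.
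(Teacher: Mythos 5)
Your proof is correct and follows essentially the same route as the paper: both recast condition $(b)$ as the minimality of $\bar u$ for the convex functional $x\mapsto\mc E(t,x)+\mc R(x-\bar u)+\frac\lambda2|x-\bar u|_W^2$ and then decompose $\partial^U$ of this sum via Lemma~\ref{lem:lemma4}, Lemma~\ref{lem:lemma3} and \eqref{eqsubdtopology}. The only (harmless) difference is that you prove $(a)\Rightarrow(b)$ by adding the two subdifferential inequalities directly, whereas the paper obtains both implications at once from the equality of sets produced by the sum rule.
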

\begin{proof}
	By definition of convex subdifferential, condition $(b)$ is equivalent to
	\begin{equation*}
		0\in \partial^U\Big(\mc E(t,\cdot)+\mc R(\cdot -\bar u)+\frac \lambda 2|\cdot-\bar u|_W^2\Big)(\bar u).
	\end{equation*}
	By using Lemmas~\ref{lem:lemma3} and \ref{lem:lemma4}, the right-hand side of the above line can be rewritten as
	\begin{align*}
		&\partial^U\Big(\mc E(t,\cdot)+\frac \lambda 2|\cdot-\bar u|_W^2\Big)(\bar u)+\partial^U\Big(\mc R(\cdot -\bar u)\Big)(\bar u)\\
		=\,& \partial^U\mc E(t,\bar u)+\partial^U\mc R(0)=\partial^U\mc E(t,\bar u)+\partial^Z\mc R(0),
	\end{align*}
	where we exploited \eqref{eqsubdtopology} in the last equality. Hence we conclude.
\end{proof}

\begin{defi}[\textbf{IBV solutions}]\label{def:IBV}
	Given an initial datum $u_0\in D$, we say that a function $u\colon[0,T]\to D$ is an \emph{Inertial Balanced Viscosity (IBV) solution} to the rate-independent system \eqref{riprob} if the following conditions are satisfied:
	\begin{itemize}
		\item[$\bullet$] $u\in B([0,T];U)\cap BV([0,T];Z)$;
		\item[$\bullet$] $u(0)=u_0$;
		\item[$\bullet$] the \emph{local stability condition}
		\begin{equation}\label{eq:LS}
			\partial^Z\mc R(0)+\partial^U\mc E(t,u(t))\ni 0,\qquad\text{for every } t\in [0,T]\setminus J_u,
		\end{equation}
	holds true at each continuity point of $u$;
	\item[$\bullet$] the \emph{viscoinertial energy-dissipation balance}
	\begin{equation}\label{eq:energybalanceIBV}
		\mc E(t,u^+(t))+V_\mc R(u_{\rm co};s,t)+\sum_{r\in J_u^{\rm e}\cap [s,t]}c^{\M,\V}(r;u^-(r),u^+(r))=\mc E(s,u^-(s))+\int_{s}^{t}\partial_t\mc E(r,u(r))\d r,
	\end{equation}
	holds true for all $0\le s\le t\le T$, where $u_{\rm co}$ is the continuous part of $u$ (in the topology of $Z$) introduced in \eqref{eq:decompositionf}, $J_u^{\rm e}$ is the essential jump set defined in \eqref{essjump} and the viscoinertial cost $c^{\M,\V}(r;u^-(r),u^+(r))$ has been defined in \eqref{eq:cost}.
	\end{itemize}
If the viscoinertial energy-dissipation balance \eqref{eq:energybalanceIBV} is satisfied with $c^{\M,\V}_0(r;u^-(r),u^+(r))$, defined in \eqref{eq:cost0}, in place of $c^{\M,\V}(r;u^-(r),u^+(r))$, we call $u$ a \emph{precise IBV solution} to the rate-independent system \eqref{riprob}. 
\end{defi}
\begin{rmk}\label{rmk:upm}
	We observe that the above definition is well posed in our framework \eqref{eq:embeddings}. Indeed, being $u$ in $BV([0,T];Z)$, for all $t\in [0,T]$ there exist the strong limits in $Z$
	\begin{subequations}\label{limitpm}
	\begin{equation}\label{limitzeta}
		u(s)\xrightarrow[s\to t^\pm]{Z}u^\pm(t).
	\end{equation}
	Moreover, since $u$ also belongs to $B([0,T];U)$, the same limits hold true in the weak topology of $U$ as well (see also Lemma~\ref{lem:lemma5}), namely
	\begin{equation}\label{limitu}
		u(s)\xrightharpoonup[s\to t^\pm]{U}u^\pm(t).
	\end{equation}

	We now show that $u^\pm(t)$ necessarily are in the proper domain $D$ of the energy $\mc E(t,\cdot)$ for all $t\in [0,T]$. First notice that assumption \eqref{eq:embeddings} yields
	\begin{equation}\label{limitvw}
		u(s)\xrightharpoonup[s\to t^\pm]{V}u^\pm(t),\qquad\text{and}\qquad u(s)\xrightarrow[s\to t^\pm]{W}u^\pm(t).
	\end{equation}
\end{subequations}
Then, by means of Lemma~\ref{lemma:localstability}, the local stability condition \eqref{eq:LS} is equivalent to the global $\lambda$-stability
\begin{equation*}
	\mc E(t,u(t))\le \mc E(t,x)+\mc R(x-u(t))+\frac \lambda 2|x-u(t)|_W^2,\quad\text{ for all $x\in U$ and for all $t\in [0,T]\setminus J_u$}.
\end{equation*}
By using \eqref{limitpm} and recalling the weak lower semicontinuity of $\mc E$, we now deduce
\begin{itemize}
	\item[$\bullet$] $\mc E(t,u^+(t))\le \mc E(t,x)+\mc R(x-u^+(t))+\frac \lambda 2|x-u^+(t)|_W^2$ for all $x\in U$ and for all $t\in [0,T]$,
	\item[$\bullet$] $\mc E(t,u^-(t))\le \mc E(t,x)+\mc R(x-u^-(t))+\frac \lambda 2|x-u^-(t)|_W^2$ for all $x\in U$ and for all $t\in (0,T]$,
\end{itemize}
which, recalling that $u_0\in D$, yield $u^\pm(t)\in D$ for all $t\in [0,T]$ as well.
\end{rmk}

\subsection{Statement of the main results}\label{subsec:mainthms}
We now collect the main results of the paper, regarding the asymptotic behaviour of \emph{dissipative dynamic solutions} to \eqref{mainprob} as $\eps\to 0$. The first theorem deals with the more general case of nonconvex potential energy $\mc E$: in this setting, jumps of the limit evolution are expected, and moreover both viscous and inertial effects are needed for their description. For this reason, \emph{IBV solutions} to \eqref{riprob} naturally come into play. If the energy $\mc E$ is convex, Theorem~\ref{mainthm:convex} instead states that only rate-independent effects survive in the asymptotic analysis, thus the more classical notion of (essential) \emph{energetic solution} \cite{MielkTheil1, MielkTheil} is enough to capture all the features of the limit evolution. Finally, in the particular case of uniformly convex potential energy depicted in Theorem~\ref{mainthm:unifconvex}, the limit function turns out to be (absolutely) continuous, and hence classic solutions to the rate-independent system \eqref{riprob} are obtained.

The proof of the theorems below will be given in Section~\ref{sec:slowloading}.

\begin{thm}[\textbf{Nonconvex case $\lambda>0$}]\label{mainthm:nonconvex}
	Assume \eqref{eq:embeddings}. Let the mass and viscosity operators $\mathbb{M}$ and $\mathbb{V}$ satisfy \eqref{mass} and \eqref{viscosity}. Let the rate-independent dissipation potential $\mc R$ satisfy \ref{hyp:R1} and let the potential energy $\mc E$ satisfy \ref{hyp:E1}-\ref{hyp:E10} for a \emph{positive} parameter $\lambda$ in \ref{hyp:E5}.
	
	Let $u^\eps$ be a dissipative dynamic solution to \eqref{mainprob} with initial data $u^\eps_0\in D$, $u_1^\eps\in V$ in the sense of Definition~\ref{def:dynsol}, and assume that
	\begin{equation*}
		\eps u^\eps_1\xrightarrow[\eps\to 0]{V}0,\qquad\text{and}\qquad u^\eps_0\xrightarrow[\eps\to 0]{U}u_0,\qquad\text{for some $u_0\in D$.}
	\end{equation*}

	Then there exists an \emph{Inertial Balanced Viscosity (IBV) solution} $u\colon [0,T]\to D$ to the rate-independent system \eqref{riprob} in the sense of Definition~\ref{def:IBV} such that, up to subsequences, there hold:
	\begin{itemize}
		\item $u^\eps(t)\xrightharpoonup[\eps\to 0]{U}u(t),\quad$ for all $t\in [0,T]$;
		\item $u^\eps(t)\xrightarrow[\eps\to 0]{U}u(t),\quad$ for almost every $t\in [0,T]$;
		\item $\eps\dot{u}^\eps(t)\xrightarrow[\eps\to 0]{V}0,\quad$ for almost every $t\in [0,T]$.
	\end{itemize}

Moreover, if we assume $D=U=V$, then the function $u$ is actually a \emph{precise IBV solution} to \eqref{riprob}.
\end{thm}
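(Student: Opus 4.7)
My strategy combines the classical vanishing-viscosity scheme of \cite{MielkRosSav16} with a blow-up at jumps on the inertial time scale, as already implemented in finite dimension in \cite{RivScilSol}. I would organize the argument in four stages. \emph{Stage 1 (Compactness).} Proposition~\ref{prop:unifbounds} provides all the basic a priori estimates. In particular $(iv)$ gives a BV bound in $Z$, so a vector-valued Helly selection principle produces a subsequence and a limit $u\in BV([0,T];Z)$ with $u^\eps(t)\to u(t)$ strongly in $Z$ for every $t\in[0,T]$. The uniform bound $(ii)$ together with $U\stackrel{d}{\hookrightarrow}Z$ upgrades this to weak $U$-convergence for every $t$, so $u\in B([0,T];U)$; that $u(t)\in D$ for every $t$ follows a posteriori from the stability condition established in Stage 2, via Remark~\ref{rmk:upm}. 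Estimate $(v)$, rewritten as $\|\sqrt\eps\,\xepsd\|_{L^2(V)}\le\overline C$, forces $\eps\xepsd\to0$ in $L^2(0,T;V)$ and hence (up to a further subsequence) for a.e.\ $t$; the a.e.\ strong $U$-convergence will follow at the end from \ref{hyp:E7}, \ref{hyp:E8} and the pointwise energy convergence extracted from Stage 3.

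\emph{Stage 2 (Local stability).} I would rewrite \eqref{dineq} as $-\xi^\eps(t)=\eps^2\M\xepsdd(t)+\eps\V\xepsd(t)+\eta^\eps(t)$ with $\eta^\eps(t)\in\partial^Z\mc R(0)\subseteq\overline B^{Z^*}_{\rho_2}$, localize around a continuity point $t^\star\notin J_u$, and average on a shrinking window: the term $\eps\V\xepsd$ vanishes by $(v)$, integration by parts on $\eps^2\M\xepsdd$ produces only $O(\eps)$ boundary contributions thanks to $(iii)$, and the $\mc R$-variation bound $(iv)$ controls the mean of $\eta^\eps$. Exploiting the weak-$*$ closedness of $\partial^Z\mc R(0)$ in $Z^*$, the uniform $U^*$-bound $(vi)$ coming from \ref{hyp:E6}, and the graph-closedness of $\partial^U\mc E(t^\star,\cdot)$---a consequence of \ref{hyp:E5}, \ref{hyp:E7} and \ref{hyp:E9} combined with the a.e.\ strong $U$-convergence---one then extracts $\xi^\star\in\partial^U\mc E(t^\star,u(t^\star))$ with $-\xi^\star\in\partial^Z\mc R(0)$, which is precisely \eqref{eq:LS}.

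\emph{Stage 3 (Viscoinertial energy--dissipation balance).} I would pass to the limit in \eqref{eq:energybalance}. On the right-hand side, $\eps^2|u_1^\eps|_\M^2\to0$ by assumption, $\mc E(0,u_0^\eps)\to\mc E(0,u_0)$ by \ref{hyp:E7}, and the power integral converges via \ref{hyp:E2}, \ref{hyp:E4} and the pointwise strong $W$-convergence. On the left-hand side the key pointwise inequality
\[
\mc R_\eps(\xepsd)+\mc R_\eps^*(-\eps^2\M\xepsdd-\xi^\eps)\ \ge\ p_\V(\xepsd,-\eps^2\M\xepsdd-\xi^\eps),
\]
encoded in the very definition \eqref{eq:contactpotential} of the viscous contact potential, turns the dissipation integral into an object amenable to lsc analysis. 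Its liminf is bounded below by the $\mc R$-variation of the continuous part $u_{\rm co}$ (via Reshetnyak-type lsc, using property $(c)$ of Proposition~\ref{prop:visccont}) plus, at each essential jump $r\in J_u^{\rm e}$, a contribution $c^{\M,\V}(r;u^-(r),u^+(r))$. The jump contribution is produced by selecting $r_\eps\to r$ and rescaling $v^\eps(s):=u^\eps(r_\eps+\eps s)$ on a window of length $\sigma_\eps$: exploiting \ref{hyp:E6}, \ref{hyp:E9}, \ref{hyp:E10} and the uniform estimates of Proposition~\ref{prop:unifbounds} one verifies that, for $\eps$ small and any $\alpha,\beta>0$, the pair $(v^\eps,\xi^\eps(r_\eps+\eps\cdot))$ lies in $AD^\alpha_\beta(r;u^-(r),u^+(r);\sigma_\eps)$; Fatou combined with \eqref{eq:costalphabeta}--\eqref{eq:cost} yields the claimed lower bound. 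This gives one direction of \eqref{eq:energybalanceIBV}; the reverse inequality comes by telescoping the $\lambda$-stable form $(b)$ of Lemma~\ref{lemma:localstability} on the continuous part---invoking \ref{hyp:E2} to handle the time derivative---and controlling the jumps through Proposition~\ref{prop:uppercost}.

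\emph{Stage 4 (Precise IBV) and main obstacle.} Under the extra assumption $D=U=V$ the simplification of $AD^\alpha_\beta$ recorded in Remark~\ref{rmk:V=U} removes the side-condition \eqref{eq:adm4} from the admissibility class, so the blow-up profiles of Stage 3 are directly admissible with $\beta=0$ and the lower bound sharpens to $c^{\M,\V}_0$, upgrading $u$ to a \emph{precise} IBV solution. The main technical obstacle is clearly the blow-up step in Stage 3: verifying \eqref{eq:adm1}--\eqref{eq:adm4} for the rescaled profiles requires a delicate interplay between the BV bound---which only lives in the weak space $Z$---the $U$-compactness needed at the window endpoints (via \ref{hyp:E8}), and the improved coercivity \ref{hyp:E10} in $Z^*$ that ultimately reconciles the two. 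This is precisely where the infinite-dimensional analysis departs from the simpler finite-dimensional treatment of \cite{RivScilSol}, and where the full strength of the assumptions \ref{hyp:E6}--\ref{hyp:E10} is brought to bear.
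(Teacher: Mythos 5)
Your overall architecture (compactness, stability, limit energy balance, blow-up at jumps) matches the paper's, but two steps hide genuine gaps. First, in Stage~3 the identification of the diffuse part of the dissipation with $V_{\mc R}(u_{\rm co};s,t)$ is not a consequence of Reshetnyak-type lower semicontinuity: lsc only gives $\liminf\int\mc R(\xepsd)\ge V_{\mc R}(u;s,t)$, i.e.\ a \emph{lower} bound for the limit measure $\mu$ by the variation. The hard direction is the matching \emph{upper} bound for the diffuse part $\mu_{\rm d}$, and your proposed ``telescoping'' of the $\lambda$-stable inequality fails in infinite dimension: each application of Lemma~\ref{lemma:localstability}(b) leaves a quadratic remainder $\frac{\lambda}{2}|u^+(t)-u^-(s)|_W^2$, and since $W\hookrightarrow Z$ (not the reverse) the sum of these $W$-norms squared is \emph{not} controlled by the $BV([0,T];Z)$ bound. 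The paper's Proposition~\ref{prop:prop6.11} resolves this by writing $|w|_W^2=\langle w,w\rangle_Z\le\|w\|_{Z^*}\|w\|_Z$, invoking \ref{hyp:E10} through Lemma~\ref{lemma:limz*} to get $u^\pm(\cdot)\in Z^*$ with $\|u^+(t+h)-u^-(t-h)\|_{Z^*}\to0$, and then running a Radon--Nikodym/Besicovitch density argument to compare $\mu_{\rm d}$ with $(v_{\rm co})'$ and to kill the power term $\int\gamma\,\omega(\cdot)$ against the Cantor part. You cite \ref{hyp:E10} only for the admissibility of blow-up profiles, whereas its essential role is precisely here; without this duality-plus-density argument the balance \eqref{eq:energybalanceIBV} does not close.

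Second, Stage~4 is incorrect as stated: under $D=U=V$ the rescaled profiles $v_j(s)=u^{\eps_j}(r_{\eps_j}+\eps_j s)$ satisfy $v_j(0)\to u^-(r)$ and $\dot v_j(0)\to0$ but do \emph{not} attain these values exactly, so they belong to $AD^\alpha_\beta$ only for $\beta>0$ and are never ``directly admissible with $\beta=0$''; dropping \eqref{eq:adm4} via Remark~\ref{rmk:V=U} does not touch the boundary condition \eqref{eq:adm3}. The precise-IBV upgrade requires the gluing construction of Proposition~\ref{prop:mugec0}: one appends unit-length cubic connecting arcs at both ends of $v_j$ so that the modified profile attains $u^\pm(r)$ with zero velocity, selects measurable subgradients along the arcs (possible only because $D=U$ and \ref{hyp:E6}--\ref{hyp:E7} hold), and shows via Proposition~\ref{prop:visccont}(e) that the arcs' contribution to $\int p_\V$ vanishes as $j\to+\infty$. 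This construction, not the simplification of the admissibility class, is the content of the final assertion of the theorem.
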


\begin{thm}[\textbf{Convex case $\lambda=0$}]\label{mainthm:convex}
	Assume \eqref{eq:embeddings}. Let the mass and viscosity operators $\mathbb{M}$ and $\mathbb{V}$ satisfy \eqref{mass} and \eqref{viscosity}. Let the rate-independent dissipation potential $\mc R$ satisfy \ref{hyp:R1} and let the potential energy $\mc E$ satisfy \ref{hyp:E1}-\ref{hyp:E5}, with $\lambda=0$ in \ref{hyp:E5}.
	
		Let $u^\eps$ be a dissipative dynamic solution to \eqref{mainprob} with initial data $u^\eps_0\in D$, $u_1^\eps\in V$ in the sense of Definition~\ref{def:dynsol}, and assume that
	\begin{equation*}
		\eps u^\eps_1\xrightarrow[\eps\to 0]{W}0,\qquad u^\eps_0\xrightharpoonup[\eps\to 0]{U}u_0,\qquad\text{and}\qquad \mc E(0,u^\eps_0)\to\mc E(0,u_0),
	\end{equation*}
	for some $u_0$ satisfying the (global) stability condition
	\begin{equation}\label{eq:GS0}
		\mc E(0,u_0)\le \mc E(0,x)+\mc R(x-u_0),\quad\text{for all $x\in U$}.
	\end{equation}
	
	Then there exists a function $u\colon [0,T]\to D$ satisfying $u\in B([0,T];U)\cap BV([0,T];Z)$ such that, up to subsequences, there hold:
	\begin{itemize}
		\item $u^\eps(t)\xrightharpoonup[\eps\to 0]{U}u(t),\quad$ for all $t\in [0,T]$;
		\item $\eps\dot{u}^\eps(t)\xrightarrow[\eps\to 0]{V}0,\quad$ for almost every $t\in [0,T]$.
	\end{itemize}
The limit function $u$ is an \emph{essential energetic solution} to the rate-independent system \eqref{riprob}, namely $u(0)=u_0$ and it fulfils the following global stability condition and rate-independent energy-dissipation balance
\begin{equation}\label{eq:essensol}
	\begin{cases}
		\mc E(t,u(t))\le \mc E(t,x)+\mc R(x-u(t)),\quad\text{for all $x\in U$ and for all }t\in [0,T]\setminus J_u,\\
		\mc E(t,u^+(t))+V_\mc R^{\rm e}(u;s,t)=\mc E(s,u^-(s))+\int_{s}^{t}\partial_t\mc E(r,u(r))\d r,\quad\text{for all }0\le s\le t\le T,
	\end{cases}
\end{equation}
where the essential $\mc R$-variation $V_\mc R^{\rm e}$ is defined as 
\begin{equation*}
	V_\mc R^{\rm e}(u;s,t):=V_\mc R(u_{\rm co};s,t)+\sum_{r\in J_u^{\rm e}\cap [s,t]}\mc R(u^+(r)-u^-(r)).
\end{equation*}
\end{thm}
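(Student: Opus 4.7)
The first step is to exploit Proposition~\ref{prop:unifbounds}, whose hypotheses are guaranteed by the initial bounds $\eps u^\eps_1\to 0$ in $W$ and $\mc E(0,u^\eps_0)\to\mc E(0,u_0)$. This yields $\{u^\eps\}$ uniformly bounded in $B([0,T];U)$, uniform bounds on its $\mc R$-variation in $Z$, together with $\eps|\dot u^\eps|_\M\le \overline{C}$ pointwise and $\eps\int_0^T|\dot u^\eps|_\V^2\d r\le \overline{C}$. A Helly selection principle adapted to the $\mc R$-variation then produces a subsequence (not relabeled) and a function $u\in B([0,T];U)\cap BV_\mc R([0,T];Z)$ such that $u^\eps(t)\to u(t)$ in $Z$ for every $t\in [0,T]$. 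Using the $U$-boundedness together with the compact embedding $U\hookrightarrow\hookrightarrow W$, a further extraction gives $u^\eps(t)\rightharpoonup u(t)$ in $U$ for every $t$. The bound $\eps\int_0^T|\dot u^\eps|_\V^2\d r\le \overline{C}$ in turn gives $\eps\dot u^\eps\to 0$ in $L^2(0,T;V)$, whence almost everywhere along a subsequence. Finally, $u(0)=u_0$ follows directly from $u^\eps_0\rightharpoonup u_0$.

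\textbf{Global stability at continuity points.} The plan for the first line of \eqref{eq:essensol} is to test \eqref{dineq} against $x-u^\eps(t)$ for arbitrary $x\in U$. By convexity of $\mc E(t,\cdot)$ one has $\mc E(t,u^\eps(t))-\mc E(t,x)\le -\langle\xi^\eps(t),x-u^\eps(t)\rangle_U$, and the one-homogeneity of $\mc R$ implies $\eta^\eps(t)\in\partial^Z\mc R(0)$, hence $\langle\eta^\eps(t),x-u^\eps(t)\rangle_Z\le \mc R(x-u^\eps(t))$. Substituting $\xi^\eps(t)=-\eps^2\M\ddot u^\eps(t)-\eps\V\dot u^\eps(t)-\eta^\eps(t)$ from \eqref{dineq} produces
\begin{equation*}
\mc E(t,u^\eps(t))\le \mc E(t,x)+\mc R(x-u^\eps(t))+\eps^2\langle\M\ddot u^\eps(t),x-u^\eps(t)\rangle+\eps\langle\V\dot u^\eps(t),x-u^\eps(t)\rangle_V.
\end{equation*}
Integrating over $[t-h,t+h]$ and integrating by parts on the inertial term, every $\eps$-dependent contribution vanishes as $\eps\to 0$: the boundary terms $\eps^2\langle\M\dot u^\eps,x-u^\eps\rangle$ are $O(\eps)$ by Proposition~\ref{prop:unifbounds}(iii), the kinetic integral $\eps^2\int|\dot u^\eps|_\M^2\d r$ is $O(\eps)$ via (v) combined with $V\hookrightarrow W$, and the viscous integral is $O(\eps^{1/2})$ by Cauchy--Schwarz and (v). Dividing by $2h$ and sending $h\to 0$, Fatou's lemma with weak-$U$ lower semicontinuity of $\mc E(t,\cdot)$ handles the left-hand side, while continuity of $r\mapsto \mc E(r,x)$ (from \ref{hyp:E2}) and of $r\mapsto \mc R(x-u(r))$ (from the $Z$-continuity of $u$ outside $J_u$) handles the right-hand side, yielding $\mc E(t,u(t))\le \mc E(t,x)+\mc R(x-u(t))$ at every $t\in [0,T]\setminus J_u$.

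\textbf{Energy-dissipation balance.} The $\le$ direction is derived by discarding $\mc R_\eps^*\ge 0$ in \eqref{eq:energybalance} and using $\mc R_\eps\ge \mc R$ together with $\int_0^t\mc R(\dot u^\eps)\d r=V_\mc R(u^\eps;0,t)$. Passing to the limit $\eps\to 0$ then requires: weak-$U$ lower semicontinuity of $\mc E(t,\cdot)$; lower semicontinuity of $V_\mc R$ with respect to pointwise $Z$-convergence; the vanishing $\eps^2|u^\eps_1|_\M^2\to 0$ (from $\eps u^\eps_1\to 0$ in $W$); the assumed $\mc E(0,u^\eps_0)\to \mc E(0,u_0)$; and, for the power integral, the compact embedding $U\hookrightarrow\hookrightarrow W$ (giving $u^\eps(r)\to u(r)$ in $W$ pointwise) combined with \ref{hyp:E4} and dominated convergence. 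Since $V^{\rm e}_\mc R\le V_\mc R$, this yields the $\le$ part of the balance in \eqref{eq:essensol}. For the reverse inequality, the standard telescoping argument is invoked: on a partition $s=r_0<\ldots<r_n=t$ made of continuity points of $u$, apply the global stability at $r_i$ with test $x=u(r_{i+1})$, rewrite $\mc E(r_i,u(r_{i+1}))=\mc E(r_{i+1},u(r_{i+1}))-\int_{r_i}^{r_{i+1}}\partial_t\mc E(r,u(r_{i+1}))\d r$, sum over $i$ and pass to the refinement limit using \ref{hyp:E4}; this supplies the reverse bound with $V_\mc R(u_{\rm co};s,t)$. At each $r\in J^{\rm e}_u$, passing global stability through a continuity-point sequence $t_n\to r^-$ via weak-$U$ lower semicontinuity and testing with $x=u^+(r)$ yields $\mc E(r,u^-(r))\le \mc E(r,u^+(r))+\mc R(u^+(r)-u^-(r))$, matching exactly the essential-jump contribution in $V^{\rm e}_\mc R$. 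The principal technical obstacle is the vanishing of the inertial and viscous residuals in the stability derivation, which is resolved by the uniform-in-$t$ bounds of Proposition~\ref{prop:unifbounds}.
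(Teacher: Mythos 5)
Your compactness and stability steps follow the paper's own route (Proposition~\ref{prop:unifbounds}, the $\mc R$-variation version of Helly's theorem, and the integrated stability inequality with $O(\sqrt{\eps})$ remainder of Proposition~\ref{prop:estimate}), so there is nothing to add there. Where you genuinely diverge is in the energy-dissipation balance: the paper proves the upper estimate \eqref{ineq-}, introduces the defect measure $\mu$ of \eqref{eneq1}, identifies its diffuse part with $V_\mc R(u_{\rm co};\cdot,\cdot)$ through the Radon--Nikodym argument of Proposition~\ref{prop:prop6.11}, and finally computes $\mu(\{t\})=\mc R(u^+(t)-u^-(t))$ at essential jumps as in Proposition~\ref{prop:costconvex}. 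You instead close the balance with the classical Mielke--Theil two-sided estimate: the same upper bound from \eqref{eq:energybalance}, and a lower bound obtained by telescoping the global stability over partitions of continuity points plus the jump inequality $\mc E(r,u^-(r))\le\mc E(r,u^+(r))+\mc R(u^+(r)-u^-(r))$. This is a legitimate and more elementary route in the convex case (it avoids the measure-theoretic blow-up entirely); its drawback is that it does not generalize to $\lambda>0$, which is why the paper sets up the defect-measure machinery once and reuses it. When assembling the telescoping estimate, note that subadditivity of $\mc R$ and the decomposition \eqref{eq:decompositionf} are what guarantee $\sum_i\mc R(u(r_{i+1})-u(r_i))\le V_\mc R^{\rm e}(u;s,t)$ for partitions avoiding $J_u$, and that the convergence of $\sum_i\int_{r_i}^{r_{i+1}}\partial_t\mc E(r,u(r_{i+1}))\d r$ relies on $u(r_{i+1})\to u^+(r)=u(r)$ in $W$ for a.e.\ $r$ together with \ref{hyp:E4} and dominated convergence.

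There is one step in your limit passage for the upper estimate that, as written, would fail. In \eqref{eq:energybalance} the term $\mc E(s,u^\eps(s))$ sits on the \emph{right-hand side}, so weak lower semicontinuity of $\mc E(s,\cdot)$ gives an inequality in the wrong direction: you need $\liminf_{\eps}\mc E(s,u^\eps(s))\le\mc E(s,u(s))$, not $\ge$. The paper resolves this by first establishing \eqref{energyliminf}, i.e.\ $\liminf_j\mc E(s,u^{\eps_j}(s))=\mc E(s,u(s))$ for a.e.\ $s$, which is a byproduct of the stability limit: passing to the limit in \eqref{eq:estimate} with Fatou applied to $\liminf_j\mc E(r,u^{\eps_j}(r))$ gives $\liminf_j\mc E(s,u^{\eps_j}(s))\le\mc E(s,x)+\mc R(x-u(s))$ for a.e.\ $s$, and choosing $x=u(s)$ closes the gap. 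Your localized stability argument produces only the inequality for $\mc E(s,u(s))$ itself, not for the liminf of the approximants, so you must extract this intermediate pointwise inequality before dividing by $2h$; only then may you pick $s$ among the (full-measure set of) times where the liminf identity holds, pass to a sub-subsequence realizing it, and obtain \eqref{eq:claim}. With that repair, and the extension to $u^\pm$ via Lemma~\ref{lemma:energy+-}, your argument is complete.
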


\begin{thm}[\textbf{Uniformly convex case}]\label{mainthm:unifconvex}
	Assume \eqref{eq:embeddings}. Let the mass and viscosity operators $\mathbb{M}$ and $\mathbb{V}$ satisfy \eqref{mass} and \eqref{viscosity}. Let the rate-independent dissipation potential $\mc R$ satisfy \ref{hyp:R1} and let the potential energy $\mc E$ satisfy \ref{hyp:E1}-\ref{hyp:E4}, with \ref{hyp:E5} replaced with the following uniform convexity condition:
	\begin{enumerate}[label=\textup{(E\arabic*')}, start=5]
		\item\label{hyp:E5'}there exists $\mu>0$ such that for all $t\in [0,T]$, $\theta\in (0,1)$, $u_1,u_2\in U$ one has
		\begin{equation*}
			\mc E(t,\theta u_1+(1-\theta u_2))\le \theta\mc E(t,u_1)+(1-\theta)\mc E(t,u_2)-\frac\mu 2\theta(1-\theta)\|u_1-u_2\|^2_X,
		\end{equation*}
		for some Banach space $X$ with $U\hookrightarrow X$.
	\end{enumerate}

Let $u^\eps$ be a dissipative dynamic solution to \eqref{mainprob} with initial data $u^\eps_0\in D$, $u_1^\eps\in V$ in the sense of Definition~\ref{def:dynsol}, and assume that
\begin{equation*}
	\eps u^\eps_1\xrightarrow[\eps\to 0]{W}0,\qquad u^\eps_0\xrightharpoonup[\eps\to 0]{U}u_0,\qquad\text{and}\qquad \mc E(0,u^\eps_0)\to\mc E(0,u_0),
\end{equation*}
for some $u_0$ satisfying the (global) stability condition \eqref{eq:GS0}. 

 Then Theorem~\ref{mainthm:convex} applies, and the limit function $u$ in addition belongs to $C([0,T];X)$, and so also to $C_{\rm w}([0,T];U)$. Moreover $u$ is an \emph{energetic solution} of the rate-independent system \eqref{riprob}, namely for all $t\in [0,T]$ there holds
	\begin{equation}\label{eq:ensol}
		\begin{cases}
			\mc E(t,u(t))\le \mc E(t,x)+\mc R(x-u(t)),\quad\text{for all $x\in U$ };\\
			\mc E(t,u(t))+V_\mc R(u;0,t)=\mc E(0,u_0)+\int_{0}^{t}\partial_t\mc E(r,u(r))\d r.
		\end{cases}
	\end{equation}
	
	If in addition the following stronger version of \ref{hyp:E4} is in force:
	\begin{enumerate}[label=\textup{(E\arabic*')}, start=4]
		\item\label{hyp:E4'} for all $M>0$ there exist $\gamma_M\in L^1(0,T)$ such that for almost every $t\in[0,T]$ and for all $u_1, u_2\in \{\mc E(0,\cdot)\le M\}$ there holds
	\begin{equation}\label{lipschitzpartialt}
		|\partial_t\mc E(t,u_1)-\partial_t\mc E(t,u_2)|\le\gamma_M(t)\|u_1-u_2\|_X,
	\end{equation}
	\end{enumerate}
	then $u$ actually belongs to $AC([0,T];X)$.
	
	If finally one also has $X\hookrightarrow Z$ and $X$ is reflexive, then $u$ satisfies the rate-independent system \eqref{riprob} in the classical sense, namely
	\begin{equation}\label{rieq}
		\begin{cases}
			\partial^Z\mc R(\dot{u}(t))+\partial^U\mc E(t,u(t))\ni 0, \quad\text{in $U^*$,}\quad \text{for a.e. }t\in [0,T],\\
			u(0)=u_0.
		\end{cases}
	\end{equation}
\end{thm}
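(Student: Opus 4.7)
My plan is to use \ref{hyp:E5'} to apply Theorem~\ref{mainthm:convex} as a starting point, and then exploit uniform convexity at three successive levels: to eliminate essential jumps and upgrade \eqref{eq:essensol} to \eqref{eq:ensol}, to obtain absolute continuity in $X$ under \ref{hyp:E4'}, and finally to recover the classical differential inclusion \eqref{rieq} under the reflexivity assumption.

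\textbf{Step 1: continuity in $X$ and energetic formulation.} Since \ref{hyp:E5'} with $\theta=\tfrac12$ implies ordinary convexity of $\mc E(t,\cdot)$, Theorem~\ref{mainthm:convex} furnishes an essential energetic solution $u$ satisfying \eqref{eq:essensol}. By the weak lower semicontinuity argument of Remark~\ref{rmk:upm} the global stability condition is lifted from $t\notin J_u$ to both $u^{\pm}(t)$ for every $t\in[0,T]$. At a putative $t\in J_u^{\rm e}$, I test the stability of $u^{-}(t)$ against $m=\tfrac12(u^-(t)+u^+(t))\in U$, use the $1$-homogeneity of $\mc R$ and \ref{hyp:E5'} with $\theta=\tfrac12$ to bound $\mc E(t,m)$, and combine with the jump identity $\mc E(t,u^-(t))-\mc E(t,u^+(t))=\mc R(u^+(t)-u^-(t))$ (forced by evaluating \eqref{eq:essensol} at $s=t$). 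The linear $\mc R$-terms cancel and one is left with $0\le -\tfrac{\mu}{4}\|u^+(t)-u^-(t)\|_X^2$, whence $u^+(t)=u^-(t)$ in $X$ and, by injectivity of the embedding $U\hookrightarrow X$, also in $U$ and $Z$. Therefore $J_u^{\rm e}=\emptyset$, $u\in C([0,T];X)$, and \eqref{eq:essensol} reduces to \eqref{eq:ensol}. Boundedness of $u$ in the reflexive space $U$ combined with continuity in $X$ yields $u\in C_{\rm w}([0,T];U)$.

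\textbf{Step 2: absolute continuity in $X$ under \ref{hyp:E4'}.} For $s<t$, testing stability at $u(s)$ against $m=\tfrac12(u(s)+u(t))$ and repeating the convexity computation of Step~1 (now without a jump identity) gives
\begin{equation*}
\tfrac{\mu}{4}\|u(t)-u(s)\|_X^2\le \mc R(u(t)-u(s))+\mc E(s,u(t))-\mc E(s,u(s)).
\end{equation*}
I then decompose $\mc E(s,u(t))-\mc E(s,u(s))=[\mc E(t,u(t))-\mc E(s,u(s))]-\int_s^t\partial_t\mc E(r,u(t))\d r$, insert the balance \eqref{eq:ensol}, and use $\mc R(u(t)-u(s))\le V_\mc R(u;s,t)$ to cancel the rate-independent terms. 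What remains is
\begin{equation*}
\tfrac{\mu}{4}\|u(t)-u(s)\|_X^2\le \int_s^t\!\big(\partial_t\mc E(r,u(r))-\partial_t\mc E(r,u(t))\big)\d r\le \Big(\int_s^t\gamma_M\Big)\sup_{r\in[s,t]}\|u(r)-u(t)\|_X,
\end{equation*}
where I applied \ref{hyp:E4'}. Taking the supremum over the left endpoint on $[s,t]$ absorbs the last factor and yields the Lipschitz-type estimate $\|u(t)-u(s)\|_X\le \tfrac{4}{\mu}\int_s^t\gamma_M(r)\d r$, which is precisely $u\in AC([0,T];X)$.

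\textbf{Step 3: classical form under $X\hookrightarrow Z$ reflexive.} Reflexivity of $X$ yields a.e.\ $X$-differentiability with $\dot u\in L^1(0,T;X)$, while $X\hookrightarrow Z$ gives $V_\mc R(u;0,t)=\int_0^t\mc R(\dot u(r))\d r$. Differentiating \eqref{eq:ensol} in time produces $\tfrac{d}{dt}\mc E(t,u(t))=\partial_t\mc E(t,u(t))-\mc R(\dot u(t))$ almost everywhere. A measurable selection from the global stability condition furnishes $\xi(t)\in\partial^U\mc E(t,u(t))$ with $-\xi(t)\in\partial^Z\mc R(0)\subseteq Z^*$, and the chain-rule formula of Appendix~\ref{App:Chainrule} gives $\tfrac{d}{dt}\mc E(t,u(t))=\partial_t\mc E(t,u(t))+\langle\xi(t),\dot u(t)\rangle_Z$ a.e. Equating the two expressions one gets $\langle-\xi(t),\dot u(t)\rangle_Z=\mc R(\dot u(t))$; by \eqref{Fenchelprop}, \eqref{eq:Rstar} and the characterization \eqref{2.2mielke}, this forces $-\xi(t)\in\partial^Z\mc R(\dot u(t))$, i.e.\ \eqref{rieq}.

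\textbf{Main obstacle.} The delicate core of the argument is the quadratic absorption of Steps~1 and~2: uniform convexity produces a term quadratic in $\|\cdot\|_X$ on the left, whereas the energy balance only supplies a variation-in-$Z$ bound on the right, so the stability condition must be tested at the correct base point (from $u^-(t)$ in Step~1, from $u(s)$ in Step~2) in order to cancel the linear $\mc R$-contributions and leave only a $\gamma_M$-controlled remainder. A second technical point lies in Step~3, where one must verify that the regularity of $u$ and the $Z^*$-valued selection $\xi(t)$ fit the scope of the chain-rule lemma of Appendix~\ref{App:Chainrule}.
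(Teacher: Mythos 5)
Your Steps 2 and 3 essentially reproduce the paper's argument (the self-absorption of $\sup_{r\in[s,t]}\|u(r)-u(t)\|_X$ in Step 2 is a clean substitute for the Gr\"onwall-type lemma the paper cites from \cite{GidRiv}, and Step 3 matches the paper's use of Lemma~\ref{lemma:chainrule}). The problem is Step 1, which contains a genuine gap: from $J_u^{\rm e}=\emptyset$ you cannot conclude $u\in C([0,T];X)$, nor even \eqref{eq:ensol}. The midpoint test at an essential jump point correctly forces $u^+(t)=u^-(t)$, but (a) this says nothing about removable discontinuities, i.e.\ points of $J_u\setminus J_u^{\rm e}$ where $u(t)\neq u^\pm(t)$, so you cannot replace $\mc E(t,u^+(t))$ by $\mc E(t,u(t))$ and $V_\mc R^{\rm e}(u;0,t)$ by $V_\mc R(u;0,t)$ in \eqref{eq:essensol}; and (b) even granting continuity of $t\mapsto u^\pm(t)$, the left/right limits are only strong in $Z$ and $W$ and weak in $U$, and since $X$ is an arbitrary Banach space with $U\hookrightarrow X$ (it could be $U$ itself), strong $X$-continuity does not follow. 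As written, your Step 2 cannot repair this because it is circular: it invokes the balance \eqref{eq:ensol}, which is exactly what Step 1 was supposed to deliver.

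The fix — and the paper's actual route — is to run your Step 2 estimate \emph{first}, for arbitrary $s<t$ rather than at jump points, using only ingredients available before continuity is known: the global stability of $u(s)$ (valid for a.e.\ $s$, and at $s=0$), the one-sided energy-dissipation inequality \eqref{eq:claim} in place of \eqref{eq:ensol} (the inequality goes the right way, since $\mc R(u(t)-u(s))-V_\mc R(u;s,t)\le 0$), and the weaker hypothesis \ref{hyp:E4} with its bounded modulus $\omega_M$, which yields $\frac{\mu}{4}\|u(t)-u(s)\|_X^2\le C\int_s^t\gamma_{\bar C}(r)\,\mathrm{d}r$ for a.e.\ $s\le t$. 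Extending to all $s$ by approximation from good points gives $u\in C([0,T];X)$ directly, whence $u\in C_{\rm w}([0,T];U)$ and, via $U\hookrightarrow\hookrightarrow W$, $u\in C([0,T];W)\cap C([0,T];Z)$; this kills \emph{all} jumps at once and turns \eqref{eq:essensol} into \eqref{eq:ensol}. Only then, under \ref{hyp:E4'}, does one rerun the estimate with $\gamma_M\|u(r)-u(t)\|_X$ on the right and absorb as in your Step 2 to get $AC([0,T];X)$.
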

	
	\subsection{Time-incremental minimization scheme}\label{subsec:minmov}
	
	In this section we instead collect our results concerning the convergence of the Minimizing Movements scheme associated to \eqref{mainprob} to an IBV solution. We start by fixing some notation and by introducing the scheme we want to analyze.
		
	Let $\tau\in (0,1)$ be a fixed time step such that $\frac{T}{\tau}\in\mathbb{N}$. We consider the corresponding induced partition $\Pi_\tau:=\{t^k\}_k$ of the time-interval $[0,T]$, defined by $t^k:=k\tau$ where $k=0,1,\dots,T/\tau$. For future use we also define $t^{-1}:=-\tau$ and we set $\mathcal{K}_\tau:=\{1,\dots,T/\tau\}$ and $\mc K_\tau^0:=\mc K_\tau\cup\{0\}$. \par	
	Given $u_0^\varepsilon\in D$ and $u_1^\varepsilon\in V$, we construct a recursive sequence $\{u^k_{\tau,\varepsilon}\}_{k\in \mc K_\tau}$ by solving the following iterated minimum problem:
	\begin{subequations}\label{schemeincond}
		\begin{equation}
			u^k_{\tau,\varepsilon}\in \mathop{\rm arg\,min}\limits_{u\in U}\mathcal{F}_{\tau,\varepsilon}(t^k,u, u^{k-1}_{\tau,\varepsilon}, u^{k-2}_{\tau,\varepsilon}),\quad \text{for }k\in \mc K_\tau,
			\label{scheme}
		\end{equation}
		with initial conditions 
		\begin{equation}
			u^0_{\tau,\varepsilon}:=u_0^\varepsilon\,,\quad u^{-1}_{\tau,\varepsilon}:= u_0^\eps-\tau u_1^\varepsilon\,,
		\end{equation}
	\end{subequations}
	where the functional is defined as
	\begin{align*}
		\mathcal{F}_{\tau,\varepsilon}(t^k,u, u^{k-1}_{\tau,\varepsilon},u^{k-2}_{\tau,\varepsilon}):= \frac{\varepsilon^2}{2\tau^2}|u-2u_{\tau,\varepsilon}^{k-1}+u_{\tau,\varepsilon}^{k-2}|^2_{\mathbb{M}}
		+\tau\mc R_\varepsilon\left(\frac{{u-u_{\tau,\varepsilon}^{k-1}}}{\tau}\right) + \mc E(t^{k},u)\,,
	\end{align*}
	and $\mc R_\varepsilon$ has been introduced in \eqref{eq:Reps}. 
	We observe that the existence of a minimum in \eqref{scheme} easily follows from the direct method, since $u\mapsto\mathcal{F}_{\tau,\varepsilon}(t^k,u, u^{k-1}_{\tau,\varepsilon},u^{k-2}_{\tau,\varepsilon})$ is coercive and lower semicontinuous with respect to the weak topology of $U$. Furthermore, if $\tau/\eps$ is small enough, the minimum is unique by strict convexity of the functional $\mathcal{F}_{\tau,\varepsilon}(t^k,\cdot, u^{k-1}_{\tau,\varepsilon},u^{k-2}_{\tau,\varepsilon})$ (if \ref{hyp:E5} is in force).\par  
	By defining
	\begin{equation}\label{eq:vdiscr}
		v_{\tau,\varepsilon}^k:=\frac{u_{\tau,\varepsilon}^k-u_{\tau,\varepsilon}^{k-1}}{\tau},\quad\text{for }k\in \mc K^0_\tau,
	\end{equation}
 we notice that the Euler Lagrange equation solved by $u_{\tau,\varepsilon}^k$ reads as
	\begin{equation}\label{EL}
		\eps^2 \mathbb{M}\frac{v_{\tau,\varepsilon}^k-v^{k-1}_{\tau,\varepsilon}}{\tau}
		+\partial^V\mc R_\varepsilon(v_{\tau,\varepsilon}^k)+\partial^U\mc E(t^k,u_{\tau,\varepsilon}^k)\ni 0, \quad \mbox{ in $U^*$,} \quad \text{for all } k\in\mathcal{K}_\tau\,,
	\end{equation}
namely there exist $\eta^k_{\tau,\eps}\in \partial^Z\mc R(v^k_{\tau,\eps})$ and $\xi^k_{\tau,\eps}\in\partial^U\mc E(t^k,u^k_{\tau,\eps})$ such that
	\begin{equation}\label{eq:EL2}
	\eps^2 \mathbb{M}\frac{v_{\tau,\varepsilon}^k-v^{k-1}_{\tau,\varepsilon}}{\tau}
	+\eps\V v^k_{\tau,\eps}+\eta^k_{\tau,\eps}+\xi^k_{\tau,\eps}=0, \quad \mbox{ in $U^*$,} \quad \text{for all } k\in\mathcal{K}_\tau\,.
\end{equation}

We now introduce the standard (piecewiese constant, piecewise affine and piecewise quadratic) interpolants of $u^k_{\tau,\eps}$: 
\begin{subequations}\label{interpolants}
\begin{align}
	&\overline{u}_{\tau,\varepsilon}(t):=u^{k}_{\tau,\varepsilon},&&\text{for } t\in(t^{k-1},t^k],\,\,k\in\mathcal{K}^0_\tau\,;\\
	& \underline{u}_{\tau,\varepsilon}(t):=u^{k-1}_{\tau,\varepsilon},&&\text{for } t\in[t^{k-1},t^k),\,\, k\in\mathcal{K}^0_\tau\,;\\
	&\widehat{u}_{\tau,\varepsilon}(t):= v^k_{\tau,\varepsilon}(t-t^{k-1})+u^{k-1}_{\tau,\varepsilon},&&\text{for } t\in(t^{k-1},t^k],\,\, k\in\mc K^0_\tau\,;\\
	&\widetilde{u}_{\tau,\varepsilon}(t):=u^0_\eps-\frac{\tau}{2}u_1^\eps+\int_{0}^{t}\dot{\widetilde{u}}_{\tau,\eps}(r)\d r,&&\text{for }t\in [0,T],\\
	&\text{ where }\quad\dot{\widetilde{u}}_{\tau,\eps}(t):=\frac{v^k_{\tau,\varepsilon}-v^{k-1}_{\tau,\varepsilon}}{\tau}(t-t^{k-1})+v^{k-1}_{\tau,\varepsilon},&&\text{for } t\in(t^{k-1},t^k],\quad k\in\mc K_\tau. 
\end{align}
\end{subequations}
Our interest lies in the asymptotic behaviour of such interpolants as $(\tau,\eps)\to (0,0)$ in the regime
\begin{equation}\label{eq:regime}
	\lim\limits_{(\tau,\eps)\to (0,0)}\frac{\tau}{\eps}=0.
\end{equation}

For the general result in the nonconvex case, we will need the following technical Lipschitz continuity-type assumption for $\partial^U\mc E$:
\begin{enumerate}[label=\textup{(E\arabic*)}, start=11]
	\item \label{hyp:E11} (\textbf{Continuity of subdifferential}) for all $M>0$ there esists $C_M>0$ such that for any $t\in [0,T]$ and $u_1,u_2\in \overline{B}_M^U\cap D$ there holds
	\begin{equation*}
		\partial^U\mc E(t,u_1)\subseteq \partial^U\mc E(t,u_2)+\overline{B}_{C_M\|u_1-u_2\|_U}^{U^*}.
	\end{equation*}
\end{enumerate}
The convergence to an IBV solution will be in general recovered in the special case $D=U=V$, as stated in Theorem~\ref{mainthm:nonconvexdiscr}. This requirement, as well as \ref{hyp:E11}, is still consistent with most of the examples we will present in Section~\ref{sec:Applications}.

Instead, the convergence results in the convex case (Theorems~\ref{mainthm:convexdiscr} and \ref{mainthm:unifconvexdiscr}) will hold in the general setting $D\subseteq U\subseteq V$. Furthermore, as in the continuous-in-time framework, because of convexity only the first group of assumptions \ref{hyp:E1}-\ref{hyp:E4} will be needed.

The proof of the theorems below will be given in Section~\ref{sec:multiscale}.

\begin{thm}[\textbf{Nonconvex case $\lambda>0$}]\label{mainthm:nonconvexdiscr}
	Assume \eqref{eq:embeddings}. Let the mass and viscosity operators $\mathbb{M}$ and $\mathbb{V}$ satisfy \eqref{mass} and \eqref{viscosity}. Let the rate-independent dissipation potential $\mc R$ satisfy \ref{hyp:R1} and let the potential energy $\mc E$ satisfy \ref{hyp:E1}-\ref{hyp:E11} for a \emph{positive} parameter $\lambda$ in \ref{hyp:E5}. Assume $D=U=V$.
	
	Let $\widetilde{u}_{\tau,\eps},\widehat{u}_{\tau,\eps},\overline{u}_{\tau,\eps},\underline{u}_{\tau,\eps}$ be the interpolants \eqref{interpolants} of the Minimizing Movements scheme \eqref{schemeincond} with initial data $u^\eps_0\in U$, $u_1^\eps\in U$ for a sequence $(\tau,\eps)\to (0,0)$ satisfying \eqref{eq:regime}, and assume that
	\begin{equation*}
		\eps u^\eps_1\xrightarrow[\eps\to 0]{U}0,\qquad\text{and}\qquad u^\eps_0\xrightarrow[\eps\to 0]{U}u_0.
	\end{equation*}
	
	Then there exists a \emph{precise IBV solution} $u\colon [0,T]\to U$ to the rate-independent system \eqref{riprob} such that, up to subsequences, there hold:
	\begin{itemize}
		\item $\widetilde{u}_{\tau,\eps}(t),\widehat{u}_{\tau,\eps}(t),\overline{u}_{\tau,\eps}(t),\underline{u}_{\tau,\eps}(t)\xrightharpoonup[(\tau,\eps)\to (0,0)]{U}u(t),\quad$ for all $t\in [0,T]$;
		\item $\widetilde{u}_{\tau,\eps}(t),\widehat{u}_{\tau,\eps}(t),\overline{u}_{\tau,\eps}(t),\underline{u}_{\tau,\eps}(t)\xrightarrow[(\tau,\eps)\to (0,0)]{U}u(t),\quad$ for almost every $t\in [0,T]$;
		\item $\eps\dot{\widetilde{u}}_{\tau,\eps}(t),\eps\dot{\widehat{u}}_{\tau,\eps}(t)\xrightarrow[(\tau,\eps)\to (0,0)]{U}0,\quad$ for almost every $t\in [0,T]$.
	\end{itemize}
\end{thm}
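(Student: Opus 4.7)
The plan is to mirror the proof of the continuous-time convergence (Theorem~\ref{mainthm:nonconvex}) via a discrete energy-dissipation inequality for the scheme. First I would exploit the minimality of $u^k_{\tau,\eps}$ against the competitor $u^{k-1}_{\tau,\eps}$ in \eqref{scheme} together with the Fenchel--Young inequality applied to the Euler--Lagrange relation \eqref{eq:EL2}, which after telescoping produces a discrete analogue of the energy balance \eqref{eq:energybalance} up to error terms of order $\tau/\eps$. Under the regime \eqref{eq:regime} these errors vanish, and feeding in \ref{hyp:E2}-\ref{hyp:E3} and a discrete Gr\"onwall argument yields the discrete counterparts of the bounds of Proposition~\ref{prop:unifbounds}; \ref{hyp:E6} then supplies a uniform $U^*$-bound on the selection $\xi^k_{\tau,\eps}\in\partial^U\mc E(t^k,u^k_{\tau,\eps})$.

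With these estimates in hand, the uniform $\mc R$-variation bound on $\widehat u_{\tau,\eps}$ combined with the compact embedding $U\hookrightarrow\hookrightarrow W$ lets me apply a Helly-type selection to extract, along a subsequence, a pointwise limit $u\in B([0,T];U)\cap BV([0,T];Z)$. All four interpolants in \eqref{interpolants} converge to the same limit at every $t$, since they differ by quantities bounded by $\tau|\dot{\widehat u}_{\tau,\eps}|$, and $\tau/\eps\to 0$ together with the bound $\sup_t\eps|\dot{\widehat u}_{\tau,\eps}|_\M\le C$ makes this discrepancy vanish. The a.e.\ strong convergence in $U$ and the a.e.\ convergence $\eps\dot{\widehat u}_{\tau,\eps}(t)\to 0$ in $U=V$ are then established as in the continuous setting, combining the bound on $\eps\int_0^T|\dot{\widehat u}_{\tau,\eps}|_\V^2\,\d r$ with \ref{hyp:E7}-\ref{hyp:E8}. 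At each continuity point of $u$, passing to the limit in \eqref{eq:EL2} via the Lipschitz-type property \ref{hyp:E11} identifies the weak limit of $\xi^k_{\tau,\eps}$ as an element of $\partial^U\mc E(t,u(t))$, which together with $\partial^Z\mc R(0)\ni-\eta^k_{\tau,\eps}$ in the vanishing-inertia limit yields the local stability \eqref{eq:LS}.

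The upper bound in the energy-dissipation balance \eqref{eq:energybalanceIBV} is obtained by passing to the $\liminf$ in the discrete energy inequality, using weak lower semicontinuity of $\mc E$ (Remark~\ref{rmk:gronwall}) and standard lower semicontinuity for the $\mc R$-variation and for the viscous contact potential (one rewrites the discrete dissipation sum via a Fenchel--Young identity and invokes a Reshetnyak-type lemma). The matching lower bound on continuous pieces comes from combining the local stability condition with the chain rule of Lemma~\ref{lemma:crappendix}, while Proposition~\ref{prop:uppercost} applied at each jump gives $\mc E(r,u^-(r))-\mc E(r,u^+(r))\le c^{\M,\V}_0(r;u^-(r),u^+(r))$.

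The hardest step, and the technical heart of the proof, is identifying the jump contribution precisely as $c^{\M,\V}_0$, not merely as a lower bound. For each essential jump point $r\in J_u^{\rm e}$ I plan a blow-up argument: choose a discrete time window $[t^{k_-(\eps)},t^{k_+(\eps)}]$ straddling $r$ along which $\widehat u_{\tau,\eps}$ realizes the transition from a neighborhood of $u^-(r)$ to one of $u^+(r)$, and rescale time as $s\mapsto r+\eps s$ to obtain $v_\eps(s):=\widehat u_{\tau,\eps}(r+\eps s)$. Using the a priori estimates and the assumption $D=U=V$ (which, by Remark~\ref{rmk:V=U}, simplifies the admissible class so that condition \eqref{eq:adm4} is automatic), I would verify that the pair $(v_\eps,\xi_\eps)$ provides, for every $\alpha>0$, an admissible competitor in $AD^\alpha_0(r;u^-(r),u^+(r);\sigma_\eps)$ with vanishing endpoint discrepancy; sending $\eps\to 0$ and then $\alpha\to 0$ delivers the lower bound $\sum_{r\in J_u^{\rm e}\cap[s,t]}c^{\M,\V}_0(r;u^-(r),u^+(r))$. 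Combined with the previous upper bound this forces equality in \eqref{eq:energybalanceIBV} and identifies $u$ as a precise IBV solution.
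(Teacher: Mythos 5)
Your overall architecture (discrete energy inequality, Helly compactness, stability, defect measure, blow\nobreakdash-up at jumps) matches the paper's, but two steps as written would fail. First, you propose to obtain the lower energy estimate on the continuous part ``by combining the local stability condition with the chain rule of Lemma~\ref{lemma:crappendix}''. That lemma requires $u\in W^{1,2}(a,b;U)\cap W^{2,2}(a,b;U^*)$, whereas the limit of the scheme is only in $B([0,T];U)\cap BV([0,T];Z)$, so no such chain rule is available. The real obstruction in infinite dimension is the $\lambda$-convexity remainder $\tfrac{\lambda}{2}|u^+(t)-u^-(s)|_W^2$ produced by the stability inequality: Proposition~\ref{prop:prop6.11} absorbs it by interpolating $|{\cdot}|_W^2\le \rho_1^{-1}\|\cdot\|_{Z^*}\,\mc R(\cdot)$ and using the $Z^*$-precompactness of $\{u^\pm(t)\}$ coming from \ref{hyp:E9}--\ref{hyp:E10} (Lemma~\ref{lemma:limz*}), followed by a Besicovitch/Radon--Nikodym density argument. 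Your proposal never addresses this term and never invokes \ref{hyp:E9}--\ref{hyp:E10}, which are precisely the hypotheses that make the identification $\mu_{\rm d}=(v_{\rm co})'$ work. Relatedly, your pointwise passage to the limit in \eqref{eq:EL2} to get local stability is delicate, since the inertial term is only bounded, not vanishing, in $L^2(0,T;U^*)$; the paper instead tests the Euler--Lagrange inclusion with simple functions and passes to the limit in integrated form, with no use of \ref{hyp:E11} at this stage.

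Second, the jump blow-up has two gaps. (i) Admissible competitors must belong to $G(0,\sigma)$, in particular $\mathbb{M}\ddot v+\xi\in L^2(0,\sigma;V^*)$; the rescaled piecewise affine interpolant $\widehat u_{\tau,\eps}$ has a purely atomic second derivative and is therefore not admissible. One must rescale the piecewise quadratic interpolant $\widetilde u_{\tau,\eps}$, control the resulting mismatch in the dissipation integral via \eqref{eq:nodeineq3} and Lemma~\ref{lemma:mismatch} (this is where the regime $\tau/\eps\to0$ is really used), and replace the selection $\overline\xi_{\tau,\eps}\in\partial^U\mc E(\pi_\tau(\cdot),\overline u_{\tau,\eps})$ by one in $\partial^U\mc E(\pi_\tau(\cdot),\widetilde u_{\tau,\eps})$: this substitution is the actual role of \ref{hyp:E11}, not the stability condition. (ii) The class $AD^\alpha_0$ requires the boundary data $u^\pm(r)$ and zero end velocities to be attained \emph{exactly}; ``vanishing endpoint discrepancy'' only makes the rescaled trajectories competitors for $c^{\M,\V}$ with $\beta>0$, hence yields $\mu(\{r\})\ge c^{\M,\V}$, not $\ge c^{\M,\V}_0$. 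To reach the precise cost one must append the explicit polynomial connectors of Proposition~\ref{prop:mugec0} and check, using \ref{hyp:E6}--\ref{hyp:E7} and $D=U$, that their contribution to the $p_\V$-integral vanishes. Without these repairs the claimed identification of $u$ as a \emph{precise} IBV solution does not follow.
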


\begin{thm}[\textbf{Convex case $\lambda=0$}]\label{mainthm:convexdiscr}
	Assume \eqref{eq:embeddings}. Let the mass and viscosity operators $\mathbb{M}$ and $\mathbb{V}$ satisfy \eqref{mass} and \eqref{viscosity}. Let the rate-independent dissipation potential $\mc R$ satisfy \ref{hyp:R1} and let the potential energy $\mc E$ satisfy \ref{hyp:E1}-\ref{hyp:E5}, with $\lambda=0$ in \ref{hyp:E5}.
	
	Let $\widetilde{u}_{\tau,\eps},\widehat{u}_{\tau,\eps},\overline{u}_{\tau,\eps},\underline{u}_{\tau,\eps}$ be the interpolants \eqref{interpolants} of the Minimizing Movements scheme \eqref{schemeincond} with initial data $u^\eps_0\in D$, $u_1^\eps\in V$ for a sequence $(\tau,\eps)\to (0,0)$ satisfying \eqref{eq:regime}, and assume that
	\begin{equation*}
		\eps u^\eps_1\xrightarrow[\eps\to 0]{W}0,\qquad u^\eps_0\xrightharpoonup[\eps\to 0]{U}u_0,\qquad\text{and}\qquad \mc E(0,u^\eps_0)\to\mc E(0,u_0),
	\end{equation*}
	for some $u_0$ satisfying the (global) stability condition \eqref{eq:GS0}.	 
	
	Then there exists an \emph{essential energetic solution} $u\colon [0,T]\to D$ to the rate-independent system \eqref{riprob} satisfying $u\in B([0,T];U)\cap BV([0,T];Z)$ and such that, up to subsequences, there hold:
	\begin{itemize}
		\item $\widetilde{u}_{\tau,\eps}(t)\xrightharpoonup[(\tau,\eps)\to (0,0)]{U}u(t),\quad$ for all $t\in (0,T]$,\ \ \ \  and $\quad\widetilde{u}_{\tau,\eps}(0)\xrightarrow[(\tau,\eps)\to (0,0)]{W}u(0)$;
		\item $\widehat{u}_{\tau,\eps}(t),\overline{u}_{\tau,\eps}(t),\underline{u}_{\tau,\eps}(t)\xrightharpoonup[(\tau,\eps)\to (0,0)]{U}u(t),\quad$ for all $t\in [0,T]$;
		\item $\eps\dot{\widetilde{u}}_{\tau,\eps}(t),\eps\dot{\widehat{u}}_{\tau,\eps}(t)\xrightarrow[(\tau,\eps)\to (0,0)]{V}0,\quad$ for almost every $t\in [0,T]$.
	\end{itemize}
	If in addition $\eps u_1^\eps$ is uniformly bounded in $V$, then there also holds
	\begin{equation*}
		\widetilde{u}_{\tau,\eps}(0)\xrightharpoonup[(\tau,\eps)\to (0,0)]{V}u(0).
	\end{equation*}
\end{thm}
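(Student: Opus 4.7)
The plan is to mirror the continuous-time proof of Theorem~\ref{mainthm:convex} at the level of the variational scheme~\eqref{schemeincond}. The first step is a discrete energy-dissipation inequality. I would test the Euler-Lagrange identity \eqref{eq:EL2} against $\tau v^k_{\tau,\eps}$ and exploit: (a) convexity of $\mc E(t^k,\cdot)$, giving $\tau\langle\xi^k_{\tau,\eps},v^k_{\tau,\eps}\rangle\ge\mc E(t^k,u^k_{\tau,\eps})-\mc E(t^k,u^{k-1}_{\tau,\eps})$; (b) 1-homogeneity of $\mc R$, yielding $\langle\eta^k_{\tau,\eps},v^k_{\tau,\eps}\rangle=\mc R(v^k_{\tau,\eps})$; and (c) the algebraic identity $2\langle\M(v^k_{\tau,\eps}-v^{k-1}_{\tau,\eps}),v^k_{\tau,\eps}\rangle=|v^k_{\tau,\eps}|_\M^2-|v^{k-1}_{\tau,\eps}|_\M^2+|v^k_{\tau,\eps}-v^{k-1}_{\tau,\eps}|_\M^2$. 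Telescoping produces the discrete counterpart of \eqref{energyineq}; combined with \ref{hyp:E2}, a discrete Gronwall argument, \ref{hyp:E3}, and the assumed convergence of the initial data, this delivers the discrete analogue of Proposition~\ref{prop:unifbounds}: uniform bounds on $\mc E(t^k,u^k_{\tau,\eps})$, $\|u^k_{\tau,\eps}\|_U$, $\eps|v^k_{\tau,\eps}|_\M$, on $V_\mc R(\widehat u_{\tau,\eps};0,T)$, and on $\eps\|\dot{\widehat u}_{\tau,\eps}\|_{L^2(0,T;V)}^2$.

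The second step is compactness. By \eqref{Rbounds}, the $\mc R$-variation bound gives $\widehat u_{\tau,\eps}$ uniformly in $BV([0,T];Z)$, so Helly's selection extracts a (non-relabeled) subsequence converging pointwise in $Z$ to some $u\in BV([0,T];Z)$; the uniform $U$-bound upgrades this to pointwise weak-$U$ convergence (cf.\ Lemma~\ref{lem:lemma5}). Since $\overline u_{\tau,\eps}$, $\underline u_{\tau,\eps}$ and $\widetilde u_{\tau,\eps}$ differ from $\widehat u_{\tau,\eps}$ by controlled amounts in $Z$ (through $\tau v^k_{\tau,\eps}$), up to a further diagonal extraction at the countable jump set of $u$ all four interpolants share the pointwise weak-$U$ limit $u(t)$ for every $t$. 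For the datum $\widetilde u_{\tau,\eps}(0)=u_0^\eps-(\tau/2)u_1^\eps$, convergence in $W$ follows from $u_0^\eps\to u_0$ in $W$ (by the compact embedding $U\hookrightarrow\hookrightarrow W$) and $\tau u_1^\eps=(\tau/\eps)(\eps u_1^\eps)\to 0$ in $W$; boundedness of $\eps u_1^\eps$ in $V$ upgrades it to weak-$V$ convergence.

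The third step is the passage to the limit yielding the essential energetic solution \eqref{eq:essensol}. The core ingredient is a discrete global stability: testing the minimality of $u^k_{\tau,\eps}$ with the rescaled perturbation $x=u^{k-1}_{\tau,\eps}+s(z-u^{k-1}_{\tau,\eps})$ for arbitrary $z\in U$ and $s\in(0,1)$, exploiting convexity of $\mc E(t^k,\cdot)$ and of $\mc R_\eps$ and 1-homogeneity of $\mc R$, then dividing by $s$ and first letting $s\to 0^+$, produces a discrete $\eps$-stability in which the kinetic-viscous remainders vanish as $(\tau,\eps)\to(0,0)$ by virtue of the a priori estimates. Combining \ref{hyp:E1} on the left with \ref{hyp:E7} and continuity of $\mc R$ on the right — and using $Z$-strong convergence of $\underline u_{\tau,\eps}(t)$ at continuity points of $u$, inherited from Helly and the embedding $U\hookrightarrow\hookrightarrow W\hookrightarrow Z$ — one obtains $\mc E(t,u(t))\le\mc E(t,z)+\mc R(z-u(t))$ for all $z\in U$ and $t\in[0,T]\setminus J_u$. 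Taking $\liminf$ in the discrete energy-dissipation inequality (Ioffe's theorem on the power integral with \ref{hyp:E4}, lower semicontinuity of the $\mc R$-variation, and vanishing of $\eps|v^N_{\tau,\eps}|_\M$, $\eps|u_1^\eps|_\M$, and $\tau\eps\sum|v^k_{\tau,\eps}|_\V^2$) delivers the upper energy inequality, and the reverse inequality follows from the limit stability by the classical energetic argument.

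The main obstacle is the control of the kinetic-viscous remainders appearing in the discrete stability. In the regime $\tau/\eps\to 0$ one has $\eps/\tau\to\infty$, so terms of the form $\eps\langle\V v^k_{\tau,\eps},z-u^{k-1}_{\tau,\eps}\rangle$ and $(\eps^2/\tau)\langle\M(v^k_{\tau,\eps}-v^{k-1}_{\tau,\eps}),z-u^{k-1}_{\tau,\eps}\rangle$ are not pointwise small; they must be absorbed using the rescaling parameter $s$ and the \emph{summable} a priori estimates $\tau\eps\sum|v^k_{\tau,\eps}|_\V^2\lesssim 1$, $\sum\tau\|v^k_{\tau,\eps}\|_Z\lesssim 1$, and $\eps|v^k_{\tau,\eps}|_\M\lesssim 1$, carefully iterating the limits $s\to 0^+$ before $(\tau,\eps)\to(0,0)$ in the prescribed regime.
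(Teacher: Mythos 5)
Your overall architecture coincides with the paper's: a discrete energy--dissipation inequality obtained by testing \eqref{eq:EL2} with $\tau v^k_{\tau,\eps}$ and the resulting uniform bounds (Proposition~\ref{prop:inequality0}), Helly-type compactness plus the mismatch estimates for the four interpolants (Lemma~\ref{lemma:mismatch}, Proposition~\ref{prop:compactnessdiscr}), limit passage in stability and in the energy inequality, and finally the identification $\mu(\{t\})=\mc R(u^+(t)-u^-(t))$ via the argument of Proposition~\ref{prop:costconvex}. Steps 1 and 2 of your plan are correct and match Propositions~\ref{prop:inequality0} and \ref{prop:compactnessdiscr}.

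The gap is in Step 3, the limit stability. You claim a pointwise-in-$k$ ``discrete $\eps$-stability in which the kinetic-viscous remainders vanish as $(\tau,\eps)\to(0,0)$.'' This is false in the regime \eqref{eq:regime}. Whether you perturb the minimality with $x=u^{k-1}_{\tau,\eps}+s(z-u^{k-1}_{\tau,\eps})$, divide by $s$ and let $s\to0^+$, or equivalently test \eqref{eq:EL2} with $u^k_{\tau,\eps}-z$, the surviving remainders are $\eps\langle\V v^k_{\tau,\eps},u^k_{\tau,\eps}-z\rangle_V$ and $\frac{\eps^2}{\tau}\langle\M(v^k_{\tau,\eps}-v^{k-1}_{\tau,\eps}),u^k_{\tau,\eps}-z\rangle_W$; the $s$-rescaling does not remove them, since after division by $s$ the terms linear in $s$ persist unchanged. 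Pointwise in $k$ the only available controls are $\eps|v^k_{\tau,\eps}|_V\le C\sqrt{\eps/\tau}$ (from item $(v')$ of Proposition~\ref{prop:inequality0}) and $\frac{\eps^2}{\tau}|v^k_{\tau,\eps}|_{\M}\le C\frac{\eps}{\tau}$ (from $(iii')$), and both blow up because $\eps/\tau\to+\infty$. The summable a priori estimates you list only become effective after multiplying by $\tau$, summing over $k$ against a piecewise constant test function in time, and --- for the inertial term --- performing an Abel summation by parts so that only boundary terms $\eps^2\langle\M v^n_{\tau,\eps},\cdot\rangle_W=O(\eps)$ and the quantity $\eps^2\sum_k\tau|v^k_{\tau,\eps}|_V^2=O(\eps)$ remain; this is exactly the content of \eqref{eq:estimatediscr}, which produces an \emph{integrated} stability with an $O(\sqrt{\eps})$ error, hence stability only at Lebesgue points (a.e.\ $t$), to be propagated afterwards to $u^{\pm}(t)$ and to all $t\notin J_u$ by weak lower semicontinuity. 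Your write-up omits the summation by parts and asserts the pointwise conclusion for every $t\notin J_u$ directly, which the estimates do not support. A minor additional issue: you invoke \ref{hyp:E7}, which is not among the hypotheses of the theorem; it is in fact unnecessary, since the competitor $z$ is fixed (so \ref{hyp:E2} handles $\mc E(\pi_\tau(r),z)$) and $\mc R$ is Lipschitz on $Z$.
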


\begin{thm}[\textbf{Uniformly convex case}]\label{mainthm:unifconvexdiscr}
	Assume \eqref{eq:embeddings}. Let the mass and viscosity operators $\mathbb{M}$ and $\mathbb{V}$ satisfy \eqref{mass} and \eqref{viscosity}. Let the rate-independent dissipation potential $\mc R$ satisfy \ref{hyp:R1} and let the potential energy $\mc E$ satisfy \ref{hyp:E1}-\ref{hyp:E4} and \ref{hyp:E5'}.
	
	Let $\widetilde{u}_{\tau,\eps},\widehat{u}_{\tau,\eps},\overline{u}_{\tau,\eps},\underline{u}_{\tau,\eps}$ be the interpolants \eqref{interpolants} of the Minimizing Movements scheme \eqref{schemeincond} with initial data $u^\eps_0\in D$, $u_1^\eps\in V$ for a sequence $(\tau,\eps)\to (0,0)$ satisfying \eqref{eq:regime}, and assume that
	\begin{equation*}
		\eps u^\eps_1\xrightarrow[\eps\to 0]{W}0,\qquad u^\eps_0\xrightharpoonup[\eps\to 0]{U}u_0,\qquad\text{and}\qquad \mc E(0,u^\eps_0)\to\mc E(0,u_0),
	\end{equation*}
	for some $u_0$ satisfying the (global) stability condition \eqref{eq:GS0}. 
	
	Then Theorem~\ref{mainthm:convexdiscr} applies, and the limit function $u$ in addition belongs to $C([0,T];X)$, and is an \emph{energetic solution} of the rate-independent system \eqref{riprob}.
	
	If in addition \ref{hyp:E4'} is in force
	then $u$ actually belongs to $AC([0,T];X)$.	If finally one also has $X\hookrightarrow Z$ and $X$ is reflexive, then $u$ is a \emph{classic solution} to the rate-independent system \eqref{riprob}.
\end{thm}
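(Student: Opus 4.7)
The plan is to mirror the proof of the continuous-time uniformly convex analogue, Theorem~\ref{mainthm:unifconvex}, using Theorem~\ref{mainthm:convexdiscr} as the starting point instead of Theorem~\ref{mainthm:convex}. Since \ref{hyp:E5'} is strictly stronger than \ref{hyp:E5} with $\lambda=0$, Theorem~\ref{mainthm:convexdiscr} applies and yields, up to subsequences, an essential energetic solution $u\in B([0,T];U)\cap BV([0,T];Z)$ together with the stated convergences of the interpolants. What remains is to upgrade $u$ to a bona fide energetic solution living in $C([0,T];X)$, then to an $AC$ curve under \ref{hyp:E4'}, and finally to a classical solution of \eqref{rieq} under the reflexivity assumption on $X$.

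The crucial step is to rule out essential jumps via uniform convexity. First, I would propagate the global stability from continuity points to both one-sided limits $u^\pm(t)$: this is standard, using that continuity points are dense in $[0,T]$, that $u(s)\rightharpoonup u^\pm(t)$ weakly in $U$ along such points (by the $U$-bound on $u$ combined with strong $Z$-convergence, cf. Remark~\ref{rmk:upm}), and invoking \ref{hyp:E1}, \ref{hyp:E2}, and continuity of $\mc R$ on $Z$. Next, by testing the stability at $u^\pm(t)$ with $(1-\theta)u^\pm(t)+\theta y$ and sending $\theta\to 0^+$, \ref{hyp:E5'} refines it to the uniformly convex version
\begin{equation*}
\mc E(t,u^\pm(t))\le\mc E(t,y)+\mc R(y-u^\pm(t))-\tfrac{\mu}{2}\|y-u^\pm(t)\|_X^2\,,\qquad \text{for every } y\in U.
\end{equation*}
At any $t\in J_u^{\rm e}$, restricting the essential energetic balance \eqref{eq:essensol} to the single instant $[t,t]$ gives the jump equality $\mc E(t,u^+(t))-\mc E(t,u^-(t))+\mc R(u^+(t)-u^-(t))=0$; plugging $y=u^+(t)$ into the strong stability at $u^-(t)$ then forces $\tfrac{\mu}{2}\|u^+(t)-u^-(t)\|_X^2\le 0$, and the injectivity of the embeddings $U\hookrightarrow X$ and $U\hookrightarrow Z$ yields $u^+(t)=u^-(t)$ in $Z$, contradicting $t\in J_u^{\rm e}$. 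Hence $J_u^{\rm e}=\emptyset$ and, after redefining $u$ on the at most countable set $J_u$, both the pointwise global stability and the standard energy balance \eqref{eq:ensol} are recovered.

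Continuity of $u$ in $X$ then follows from a routine argument: along any $s_n\to t$, the energy balance together with \ref{hyp:E2} yields $\mc E(t,u(s_n))\to\mc E(t,u(t))$, and inserting $y=u(s_n)$ into the strong stability at $u(t)$ gives $\|u(s_n)-u(t)\|_X^2\to 0$. For the $AC$ claim under \ref{hyp:E4'}, a Gr\"onwall argument on the energy balance between two times $s<t$, combined with the uniformly convex stability and the Lipschitz estimate \eqref{lipschitzpartialt}, yields a bound of the form $\|u(t)-u(s)\|_X\le C\int_s^t\gamma_M(r)\d r$, from which absolute continuity of $u$ in $X$ is standard. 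Finally, if $X\hookrightarrow Z$ and $X$ is reflexive, AC curves in $X$ admit a.e. derivatives in $X\subseteq Z$; differentiating the energy balance and using the chain rule from Appendix~\ref{App:Chainrule} together with the Fenchel identity \eqref{Fenchelprop} turns the pairing of global stability and energy balance into the pointwise differential inclusion \eqref{rieq}.

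The main technical obstacle I foresee is the propagation of the stability condition to $u^\pm(t)$ at \emph{every} $t\in[0,T]$, not only at continuity points. This step hinges on the weak $U$-compactness provided by the uniform $U$-bound on $u$ being compatible with the weaker $BV([0,T];Z)$ information through the chain \eqref{eq:embeddings}; once this identification is secured, the jump-annihilation argument is algebraic and the subsequent $AC$ and classical regularity upgrades are essentially mechanical variants of those in the continuous-time proof of Theorem~\ref{mainthm:unifconvex}.
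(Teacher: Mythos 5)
Your proposal is correct, but it reaches $C([0,T];X)$ by a different route than the paper. You start from the essential energetic solution delivered by Theorem~\ref{mainthm:convexdiscr}, upgrade the stability at $u^\pm(t)$ to its uniformly convex form, and then annihilate jumps by combining the single-instant balance $\mc E(t,u^+(t))+\mc R(u^+(t)-u^-(t))=\mc E(t,u^-(t))$ with the strong stability at $u^-(t)$ tested with $y=u^+(t)$; continuity in $X$ then follows from energy convergence along $s_n\to t$. The paper instead never isolates the jump set: it takes the integral energy inequality \eqref{eq:claim} (supplied here by Proposition~\ref{prop:EBdiscr}) and the global stability of Proposition~\ref{prop:lambdastabdiscr}, and runs the chain of estimates \eqref{boundcont} with Lemma~\ref{lemma:strongstability} to obtain the quantitative two-point bound $\|u(t)-u(s)\|_X^2\le C\int_s^t\gamma_{\bar C}(r)\,\mathrm{d}r$ for all $t$ and a.e.\ $s\le t$, extended to all $s$ by lower semicontinuity; this single estimate yields a modulus of continuity directly, and the same template with \ref{hyp:E4'} and the nonlinear Gr\"onwall lemma of \cite[Lemma~5.6]{GidRiv} gives the $AC$ upgrade. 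Both arguments hinge on the same key lemma (the strengthened stability under \ref{hyp:E5'}); yours is perhaps more transparent about \emph{why} uniform convexity forbids jumps, while the paper's is more economical, since the quantitative estimate does double duty for continuity and absolute continuity and avoids the redefinition of $u$ on $J_u$ and the separate verification that the variation function is continuous at continuity points of $u$. Your final steps (Gr\"onwall for $AC$, chain rule plus \eqref{2.2mielke} for the classical inclusion) coincide with the paper's.
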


\section{Examples and applications}\label{sec:Applications}	
In this section we present some examples which can be included in our abstract analysis.
\subsection{Scalar case}\label{subsec:scalarcase}
We consider the differential inclusions of Allen-Cahn type
\begin{subequations}\label{eq:incl}
\begin{eqnarray}
& \eps^2 \xepsdd(t)+\eps \xepsd(t)+{\rm Sign}(\xepsd(t))-\Delta \xeps(t)+\partial\mathcal{W}(\xeps(t))\ni f(t), \quad \mbox{ in $(0,T)\times\Omega$,} \label{eq:inclA}\\
& \eps^2 \xepsdd(t)-\eps \Delta\xepsd(t)+{\rm Sign}(\xepsd(t))-\Delta \xeps(t)+\partial\mathcal{W}(\xeps(t))\ni f(t), \quad \mbox{ in $(0,T)\times\Omega$, } \label{eq:inclB} \end{eqnarray}
\end{subequations}
complemented with homogeneous Dirichlet boundary conditions and initial data. 

By ${\rm Sign}$ we mean the multivalued operator 
\begin{equation*}
{\rm Sign}(v):=
\begin{cases}
1\,, & \mbox{ if $v>0$, } \\
-1\,, & \mbox{ if $v<0$, } \\
[-1,1]\,, & \mbox{ if $v=0$. }
\end{cases}
\end{equation*}

Here, $\Omega$ is a bounded open subset of $\R^d$, with $d\ge 2$ (obviously the one-dimensional case can be treated in a simpler way), while the nonlinearity $\mc W\colon \R\to [0,+\infty]$ is lower semicontinuous and $\widetilde\lambda$-convex and the forcing term $f$ belongs to $AC([0,T];L^2(\Omega))$.

To model \eqref{eq:inclA} and \eqref{eq:inclB} we choose
\begin{equation}\label{eq:framework}
\begin{split}
&\bullet\quad U= H^1_0(\Omega)\,, \quad V =
\begin{cases}
L^2(\Omega)\,, & \mbox{ in case \eqref{eq:inclA}} \\
H^1_0(\Omega)\,, & \mbox{ in case \eqref{eq:inclB}}
\end{cases}\,,
\quad W=L^2(\Omega)\,, \quad Z=L^1(\Omega)\,, \\
&\bullet\quad \mathbb{M}\xi = \xi\,, \quad \quad \,\,\,\, \mathbb{V}v =
\begin{cases}
v\,, & \mbox{ in case \eqref{eq:inclA}} \\
-\Delta v\,, & \mbox{ in case \eqref{eq:inclB}}
\end{cases}\,, \\
&\bullet\quad \mathcal{R}(z) = \int_\Omega |z(x)|\,\mathrm{d}x\,.
\end{split}
\end{equation}
It is immediate to check \eqref{eq:embeddings}, \eqref{mass}, \eqref{viscosity} and \ref{hyp:R1}. Furthermore note that $Z^*=L^\infty(\Omega)$.

Finally, we introduce the potential energy
\begin{equation*}
\mathcal{E}(t,u) = \begin{cases}\displaystyle
	\int_\Omega\!\! \left(\frac{1}{2} |\nabla u|^2+\mathcal{W}(u)-f(t)u\right)\mathrm{d}x+C_0,&\text{if }u\in D:=\left\{u\in H^1_0(\Omega):\, \mc W(u)\in L^1(\Omega)\right\},\\
	+\infty,&\text{otherwise,}
\end{cases}
\end{equation*}
where $C_0\ge 0$ is a suitable constant so that $\mathcal{E}(t,u)$ is nonnegative (such $C_0$ exists since $\mc W$ is nonnegative and the leading term of $\mathcal{E}$ is quadratic). 

We now check conditions \ref{hyp:E1}-\ref{hyp:E5}:
\begin{itemize}
	\item Condition \ref{hyp:E1} directly follows by the weak lower semicontinuity of the norm together with Fatou's Lemma.
	\item We observe that $\partial_t \mc E(t,u)= -\int_\Omega \dot{f}(t)u\,\mathrm{d}x$ for almost every $t\in[0,T]$ and for all $u\in D$, and thus there holds
	\begin{equation*}
		\left|{\partial_t} \mc E(t,u)\right|\le |\dot{f}(t)|_{L^2(\Omega)}|u|_{L^2(\Omega)}\le C |\dot{f}(t)|_{L^2(\Omega)} (\mc E(t,u)+1)\,,
	\end{equation*}
	where in the last inequality we employed the quadratic growth (in the gradient of $u$) of $\mc E(t,\cdot)$. This gives \ref{hyp:E2}. 
	\item As for \ref{hyp:E3}, we notice that $\mc E(0,u)\leq C$ implies $|\nabla u|^2_{L^2(\Omega)}\leq C+|{f}(0)|_{L^2(\Omega)}|u|_{L^2(\Omega)}$,  and so $\|u\|_{H^1_0(\Omega)}\le C$ by Poincar\'e inequality. 
	\item We actually prove the validity of \ref{hyp:E4'} with $X=W=L^2(\Omega)$. Indeed for almost every $t\in[0,T]$ and for all $u_1,u_2\in D$ one has
	\begin{equation*}
		|\partial_t\mc E(t,u_1)-\partial_t\mc E(t,u_2)|=|\langle \dot{f}(t), u_1-u_2\rangle_{L^2(\Omega)}|\le|\dot{f}(t)|_{L^2(\Omega)}|u_1-u_2|_{L^2(\Omega)}. 
	\end{equation*}
	\item For what concerns \ref{hyp:E5}, since $\mathcal{W}$ is $\tilde{\lambda}$-convex and $\mc E(t,\cdot)$ is quadratic in $\nabla u$, for $\vartheta\in[0,1]$ and $u_1,u_2\in H^1_0(\Omega)$ it follows
	\begin{equation}\label{eq:exlambda}
		\begin{aligned}
			\mc E(t,\vartheta u_1 +(1-\vartheta)u_2) & \leq \vartheta \mc E(t,u_1) + (1-\vartheta) \mc E(t,u_2) \\
			&\quad + \frac{\vartheta}{2}(1-\vartheta) \left[\tilde{\lambda}|u_1-u_2|^2_{L^2(\Omega)} - |\nabla (u_1-u_2)|^2_{L^2(\Omega)}\right] \\
			& \leq \vartheta \mc E(t,u_1) + (1-\vartheta) \mc E(t,u_2) + \frac{\tilde{\lambda}-C_P^2}{2} \vartheta (1-\vartheta) |u_1 - u_2|^2_{L^2(\Omega)}\,,
		\end{aligned}
	\end{equation}
	where we employed Poincar\'e inequality with constant $C_P$. We thus have the following cases:
	\begin{itemize}
		\item[$(j)$] if $\lambda:=\tilde{\lambda}-C_P^2<0$, then the energy actually satisfies \ref{hyp:E5'} with $X=L^2(\Omega)$;
		\item[$(jj)$]  if $\lambda=\tilde{\lambda}-C_P^2=0$, then the energy is convex;
		\item[$(jjj)$]  if $\lambda=\tilde{\lambda}-C_P^2>0$, then the energy is non convex.
	\end{itemize}
\end{itemize}

If $(j)$ holds, for both systems \eqref{eq:inclA} and \eqref{eq:inclB} we can apply Theorem~\ref{mainthm:unifconvex} in its stronger form (since \ref{hyp:E4'} and \ref{hyp:E5'} are in force with $X=W=L^2(\Omega)$), namely the limit equation \eqref{rieq} is satisfied in the classical sense. In the same way, we can also apply Theorem~\ref{mainthm:unifconvexdiscr}.

If $(jj)$ holds, then the driving energy $\mc E(t,\cdot)$ is convex and we can apply Theorems~\ref{mainthm:convex} and \ref{mainthm:convexdiscr} for both systems \eqref{eq:inclA} and \eqref{eq:inclB}.

In the nonconvex case $(jjj)$ we also need to check conditions \ref{hyp:E6}-\ref{hyp:E10} in order to apply Theorem~\ref{mainthm:nonconvex}. To this aim we strenghten the assumptions requiring $\partial \Omega\in C^2$ and a more regular forcing term
\begin{equation}\label{eq:fZ*}
	f\in AC([0,T];L^\infty(\Omega)),
\end{equation}
and considering an \emph{everywhere finite} $\widetilde\lambda$-convex function $\mc W\colon \R\to [0,+\infty)$, which thus is locally Lipschitz, whose derivative satisfies the following growth condition for almost every $x\in \R$:
\begin{equation}\label{eq:(*)}
|\mathcal{W}'(x)| \leq C(1+|x|^{q-1}) \quad \mbox{ for some $q\in \left(1,\frac{d+2}{d-2}\right)\cap\left(1,\frac{2d-2}{d-2}\right]$,}
\end{equation}
where we convene that $\frac{d+2}{d-2}=\frac{2d-2}{d-2}=+\infty$ for $d=2$. Observe that the above condition implies
\begin{equation}
|\mathcal{W}(x)| \leq C(1+|x|^{q})\,,\quad\text{for all }x\in \R,
\label{eq:(**)}
\end{equation}
and notice that 
\begin{equation}\label{eq:q}
	2(q-1)\le 2^*,\quad q< 2^*,\quad \frac{2\cdot 2^*}{q-1}>d,
\end{equation}
where $2^*=\frac{2d}{d-2}$ denotes the Sobolev exponent. In particular, by Sobolev Embedding one has $\mathcal{W}(u)\in L^1(\Omega)$ for all $u\in H^1_0(\Omega)$, and so under these stronger assumptions there holds $D=U=H^1_0(\Omega)$. 
\begin{rmk}
	The standard double-well potential $\mathcal{W}(x)=\frac{1}{4}(1-x^2)^2$ fulfils  \eqref{eq:(*)} for $d=2$ and $d=3$.
\end{rmk}

  Let us now check conditions \ref{hyp:E6}-\ref{hyp:E10}:
  \begin{itemize}
  	\item for all $(t,u)\in[0,T]\times H^1_0(\Omega)$, under the current assumptions one has 
  	\begin{equation*}
  		\partial^U\mc E(t,u)=-\Delta u + \partial^U\mc W(u)-f(t),
  	\end{equation*}
  	with 
  	\begin{equation*}
  		\partial^U \mc W(u)=\left\{\widetilde{\xi}\in L^2(\Omega):\, \widetilde{\xi}(x)\in \partial\mc W(u(x))\text{ for a.e. }x\in\Omega\right\}.
  	\end{equation*}
  	Hence, by using \eqref{eq:(*)}, \eqref{eq:q} and Sobolev Embedding, for all $\widetilde{\xi}\in 	\partial^U \mc W(u)$ one deduces
  	\begin{equation*}
  		\|-\Delta u + \widetilde{\xi}-f(t)\|_{H^{-1}(\Omega)} \leq C (|\nabla u|_{L^2(\Omega)}+\| u\|^{q-1}_{H^1_0(\Omega)}+|f(t)|_{L^2(\Omega)}+1)\,,
  	\end{equation*}
  	and so \ref{hyp:E6} follows by means of \ref{hyp:E2} and \ref{hyp:E3} (see also Remark~\ref{rmk:gronwall}).
  	\item condition \ref{hyp:E7}, namely strong continuity of $\mc E(t,\cdot)$ in $H^1_0(\Omega)$, directly follows from \eqref{eq:(**)} and \eqref{eq:q} by an application of Dominated Convergence Theorem.
  	\item To show \ref{hyp:E8}, pick a sequence $u_n\xrightharpoonup[]{H^1_0(\Omega)} u$ such that $\mc E(t,u_n)\to\mc E(t,u)$. Then it holds
  	\begin{equation*}
  		\lim_{n\to+\infty} \left(\frac{1}{2} \int_\Omega |\nabla u_n|^2\,\mathrm{d}x + \int_\Omega \mathcal{W}(u_n)\,\mathrm{d}x \right) = \frac{1}{2} \int_\Omega |\nabla u|^2\,\mathrm{d}x + \int_\Omega \mathcal{W}(u)\,\mathrm{d}x\,.
  	\end{equation*}
  	Since both terms are weakly lower semicontinuous, it easily follows that separately
  	\begin{equation*}
  		\frac{1}{2} \int_\Omega |\nabla u_n|^2\,\mathrm{d}x \to \frac{1}{2} \int_\Omega |\nabla u|^2\,\mathrm{d}x, \quad \mbox{  and  }\quad \int_\Omega \mathcal{W}(u_n)\,\mathrm{d}x\to\int_\Omega \mathcal{W}(u)\,\mathrm{d}x\,,
  	\end{equation*}
  	whence $u_n\xrightarrow{H^1_0(\Omega)}u$.
  	\item Assumption \ref{hyp:E9} is a byproduct of \eqref{eq:fZ*}, since for all $s,t\in[0,T]$ and $u\in H^1_0(\Omega)$ one has 
  	\begin{equation*}
  		\partial^U\mc E(t,u)=-\Delta u + \partial^U\mc W(u)-f(t) = \partial^U\mc E(s,u) + f(s) - f(t),
  	\end{equation*}
  	and 
  	\begin{equation*}
  		\|f(s)-f(t)\|_{L^\infty(\Omega)} \leq C \left|\int_s^t \varphi(r)\,\mathrm{d}r\right|, \quad \mbox{ for some nonnegative $\varphi\in L^1(0,T)$. }
  	\end{equation*}
  \item Finally, we show the validity of \ref{hyp:E10}. Let $\{u_n\}_{n\in\N}\subseteq H^1_0(\Omega)$ be such that
  \begin{equation}\label{eq:boundE10}
  	\mc E(0,u_n) \leq C, \quad \mbox{ and } \quad \|-\Delta u_n + \widetilde{\xi}_n-f(0)\|_{L^\infty(\Omega)} \leq C\,\quad\text{for some }\widetilde{\xi}_n\in \partial^U\mc W(u_n).
  \end{equation}
  We need to show that, up to subsequences, $u_n$ converges in $Z^*=L^\infty(\Omega)$. By \eqref{eq:boundE10}, we know that $\|u_n\|_{H^1_0(\Omega)}\leq C$ and
  \begin{equation}\label{eq:(star)}
  	- \Delta u_n = - \widetilde{\xi}_n+f(0)+g_n, \quad \mbox{ in $H^{-1}(\Omega)$},
  \end{equation}
  with $\|g_n\|_{L^\infty(\Omega)}\le C$. Since by Sobolev Embedding $\|u_n\|_{L^r(\Omega)}\leq C$ for all $r\in[1,2^*)$, by using \eqref{eq:(*)} and \eqref{eq:q} we deduce that $\| \widetilde{\xi}_n\|_{L^{\frac{r}{q-1}}(\Omega)}\leq C$, and so the right-hand side of \eqref{eq:(star)} is uniformly bounded in $L^{\frac{r}{q-1}}(\Omega)$. 
  
  By Calder\'on-Zygmund elliptic regularity theory, for all $r\in(q-1,2^*)$ we thus obtain 
  \begin{equation*}
  	u_n\in W^{2,\frac{r}{q-1}}(\Omega), \quad  \mbox{with $\|u_n\|_{W^{2,\frac{r}{q-1}}(\Omega)}\leq C$.}
  \end{equation*}
  Now, by choosing $r$ close enough to $2^*$ such that $\frac{2r}{q-1}>d$ (such $r$ always exists due to \eqref{eq:q}), by Sobolev Embedding we then obtain $u_n\in C^{0,\alpha}(\overline{\Omega})$ with $\|u_n\|_{C^{0,\alpha}(\overline{\Omega})}\leq C$ for some $\alpha\in(0,1)$. We finally conclude by applying Ascoli-Arzel\'a Theorem.
  \end{itemize}

Summing up, if $(jjj)$ above holds, then we can apply Theorem~\ref{mainthm:nonconvex} under the additional assumptions \eqref{eq:fZ*} and \eqref{eq:(*)}. In case \eqref{eq:inclA}, one obtains an IBV solution in the limit; in case \eqref{eq:inclB}, since there holds $D=U=V$, the limit function is a precise IBV solution. 

Finally, in order to apply also Theorem~\ref{mainthm:nonconvexdiscr}, we are only left to check the validity of \ref{hyp:E11}. To this aim, we need to additionally require that $\mc W\in C^1(\mathbb{R})$ satisfies
\begin{equation}\label{eq:loclip}
	|\mc W'(x){-}\mc W'(y)|\le C(1{+}|x|^r{+}|y|^r)|x{-}y|,\text{ for all }x,y\in \mathbb{R},\text{ for some finite }r\in \left(0,\frac{4}{d-2}\right].
\end{equation} 
Notice that there holds
\begin{equation}\label{eq:propr}
	\frac{rd}{2}\le 2^*.
\end{equation}
\begin{rmk}
	One can directly check that the standard double-well potential $\mathcal{W}(x)=\frac{1}{4}(1-x^2)^2$ fulfils  \eqref{eq:loclip} with $r=2$, which lies in the admissible range for $d=2$ and $d=3$.
\end{rmk}

Since in this case $\partial^U\mc E(t,u)=\left\{-\Delta u+\mc W'(u)-f(t)\right\}$ for all $(t,u)\in[0,T]\times H^1_0(\Omega)$, in view of the standard inequality
\begin{equation*}
	\|-\Delta(u_1-u_2)\|_{H^{-1}(\Omega)}\le \|u_1-u_2\|_{H^1_0(\Omega)},\quad\text{for }u_1,u_2\in H^1_0(\Omega),
\end{equation*}
we just need to check
\begin{equation*}
	\|\mc W'(u_1)-\mc W'(u_2)\|_{H^{-1}(\Omega)}\le C_M\|u_1-u_2\|_{H^1_0(\Omega)},\quad\text{for }u_1,u_2\in H^1_0(\Omega)\text{ with }\|u_i\|_{H^1_0(\Omega)}\le M.
\end{equation*}
We prove the above inequality in the case $d>2$, being the case $d=2$ simpler. By means of \eqref{eq:loclip}, exploiting \eqref{eq:propr}, and by using H\"older inequality and Sobolev embedding, for an arbitrary $\varphi\in H^1_0(\Omega)$ we can estimate
\begin{align*}
	&\qquad\,|\langle \mc W'(u_1)-\mc W'(u_2),\varphi\rangle_{H^1_0(\Omega)}|\le C\int_{\Omega}|u_1-u_2|(1+|u_1|^r+|u_2|^r)|\varphi|\d x\\
	&\le C\left(\|u_1-u_2\|_{H^1_0(\Omega)}\|\varphi\|_{H^1_0(\Omega)}+\sum_{i=1}^2\int_{\Omega}|u_1-u_2||\varphi||u_i|^r\right)\\
	&\le C\left(\|u_1-u_2\|_{H^1_0(\Omega)}\|\varphi\|_{H^1_0(\Omega)}+\|u_1-u_2\|_{L^{2^*}(\Omega)}\|\varphi\|_{L^{2^*}(\Omega)}\sum_{i=1}^2\|u_i\|^r_{L^{rd/2}(\Omega)}\right)\\
	&\le C M^r\|u_1-u_2\|_{H^1_0(\Omega)}\|\varphi\|_{H^1_0(\Omega)},
\end{align*}
 and we conclude.

\subsubsection{$p$-Laplacian}

We slightly modify previous problem by considering the $p$-Laplacian instead of the standard Laplacian in \eqref{eq:incl}. We still are in the framework described in \eqref{eq:framework}, with the only change given by the choice
\begin{equation*}
U=W^{1,p}_0(\Omega),  \quad \mbox{ with $p>d$,}
\end{equation*}
and we observe that in this regime
\begin{equation}\label{eq:Holdembedding}
W^{1,p}_0(\Omega)\hookrightarrow C^{0,\alpha}(\overline{\Omega}), \quad \mbox{for some $\alpha\in(0,1)$.}
\end{equation}
The potential energy now takes the form
\begin{equation*}
	\mathcal{E}(t,u) = \begin{cases}\displaystyle
		\int_\Omega\!\! \left(\frac{1}{p} |\nabla u|^p{+}\mathcal{W}(u){-}f(t)u\right)\!\mathrm{d}x+C_0,&\text{if }u\in D:=\left\{u\in W^{1,p}_0(\Omega):\, \mc W(u)\in L^1(\Omega)\right\},\\
		+\infty,&\text{otherwise,}
	\end{cases}
\end{equation*}
where $f$ and $\mc W$ are as in the first part of  Section~\ref{subsec:scalarcase}.

The validity of \ref{hyp:E1}-\ref{hyp:E4'} can be checked as in Section~\ref{subsec:scalarcase}, while in the three cases appearing in condition \ref{hyp:E5} now one has to choose $\lambda=\widetilde{\lambda}$. Indeed, the $p$-integrability with respect to $\nabla u$ in the energy $\mc E$ does not allow for a quadratic reminder term in the inequality \eqref{eq:exlambda}. Thus, if $\mc W$ is convex we can apply Theorems~\ref{mainthm:convex} and \ref{mainthm:convexdiscr}, while if it is uniformly convex we can also apply Theorems~\ref{mainthm:unifconvex} and \ref{mainthm:unifconvexdiscr}.

If $\mc W$ is not convex, as before, we need to require \eqref{eq:fZ*} and $\mc W$ to be everywhere finite. However, since $p>d$, due to \eqref{eq:Holdembedding} there is no need to assume the growth condition \eqref{eq:(*)}. Conditions \ref{hyp:E6}-\ref{hyp:E9} now follow arguing exactly as in Section~\ref{subsec:scalarcase}, while \ref{hyp:E10} is automatically satisfied by \eqref{eq:Holdembedding} together with Ascoli-Arzel\'a Theorem. Thus, Theorem~\ref{mainthm:nonconvex} can be applied.  

\begin{rmk}
The regime $p\in(1,d]$, $p\neq 2$, is subtler and requires some care. Firstly, we observe that in case \eqref{eq:inclB} we need to require $p>2$ in order to have $U=W^{1,p}_0(\Omega) \stackrel{d}{\hookrightarrow}H^1_0(\Omega)= V$, while in case \eqref{eq:inclA} we need $p>\frac{2d}{d+2}$ in order to have $U=W^{1,p}_0(\Omega) \hookrightarrow\hookrightarrow L^2(\Omega)= W$.

Furthermore, a suitable growth condition on $\mc W$ has to be imposed for the validity of \ref{hyp:E1}-\ref{hyp:E10}. If, for instance, $\mc W\in C^1(\R)$ and for all $x\in \R$ satisfies
\begin{equation*}
|\mathcal{W}'(x)| \leq C(1+|x|^{q-1}) \quad \mbox{ for some $q\in [1,p^*]\cap\left(1,1+\frac{p^*}{2}\right)$,}
\end{equation*}
then \ref{hyp:E1}-\ref{hyp:E9} can be checked as before, with the usual caveats for \ref{hyp:E4'} and \ref{hyp:E5'}. As regards \ref{hyp:E10}, which we recall has to be checked only if $\mc W$ is not convex, one may argue similarly as before by applying the regularity theory for the $p$-Laplace equation (see \cite{DiBen}). 
\end{rmk}

\subsection{Vectorial case}\label{subsec:vectorialcase}

We may also consider the vectorial situation, presenting an application in Kelvin-Voigt viscoelasticity. Let $\Omega\subseteq \R^d$ be a bounded open set with Lipschitz boundary, and let $\partial_D\Omega$ be a subset of $\partial\Omega$ with positive $\mc H^{d-1}$-measure. Setting $\partial_N\Omega:=\partial\Omega\setminus\partial_D\Omega$, we consider the system
\begin{equation}\label{eq:inclC}
\begin{cases}
	\eps^2 M \xepsdd(t)+\nu{\rm Dir}(\xepsd(t)) - {\rm div}(\varepsilon \mathbb{D}e(\xepsd(t)) + \mathbb{C}e(\xeps(t))){+}\partial\mathcal{W}(\xeps(t))\ni f(t),& \text{in $(0,T)\times\Omega$,}\\
	u^\eps(t)=\dot u^\eps(t)=0,&\text{in $(0,T)\times\partial_D\Omega$},\\
	(\varepsilon \mathbb{D}e(\xepsd(t)) + \mathbb{C}e(\xeps(t)))n_\Omega=0,& \text{in $(0,T)\times\partial_N\Omega$,}
\end{cases}
\end{equation}
where $e(u)$ stands for the symmetric gradient $\frac{\nabla u+\nabla u^T}{2}$, while ${\rm Dir}$ represents the higher dimensional counterpart of ${\rm Sign}$, namely
\begin{equation*}
{\rm Dir}(v):=
\begin{cases}\displaystyle
\frac{v}{|v|}\,, & \mbox{ if $v\neq 0$, }\\
\overline{B}_1\,, & \mbox{ if $v=0$. }
\end{cases}
\end{equation*}

In \eqref{eq:inclC}, $M$ is a symmetric and positive-definite $d\times d$ matrix, $\nu>0$ is a positive number modelling a friction coefficient, while $\mathbb{C}, \mathbb{D}: \Omega \to \R^{d\times d\times d\times d}$ are fourth order tensors both satisfying the following usual assumptions in linearized elasticity (below $\mathbb{T}=\mathbb{C}$ or $\mathbb{T}=\mathbb{D}$): 
\begin{itemize}
\item $\mathbb{T}$ is uniformly continuous in $\Omega$;
\item $\mathbb{T}(x)A\in \R^{d\times d}_{\rm sym}$ for all $x\in\Omega$ and $A\in \R^{d\times d}$;
\item $\mathbb{T}(x)A=\mathbb{T}(x)A_{\rm sym}$ for all $x\in\Omega$ and $A\in \R^{d\times d}$;
\item $\mathbb{T}(x)A:B=\mathbb{T}(x)B:A$ for all $x\in\Omega$ and $A,B\in \R^{d\times d}$;
\item $\mathbb{T}(x)A:A\geq c_{\mathbb{T}} |A_{\rm sym}|^2$ for some $c_{\mathbb{T}}>0$ and for all $x\in\Omega$ and $A\in \R^{d\times d}$. 
\end{itemize}
The nonlinearity $\mc W\colon \R^d\to [0,+\infty]$ is assumed to be lower semicontinuous and $\widetilde\lambda$-convex and the forcing term $f$ belongs to $AC([0,T];L^2(\Omega;\R^d))$.

If $d=2$, system \eqref{eq:inclC} may model the dynamic evolution of a thin bidimensional viscoelastic material anchored on $\partial_D\Omega$ which slips over a flat rigid substrate under the effect of the time-dependent volume force $f(t)$, and the contact between the material and the substrate produces Coulomb friction with friction coefficient $\nu$.

In this framework we choose
\begin{equation*}
\begin{split}
& \bullet\quad U=V= \{u\in H^1(\Omega;\R^d):\, u=0\text{ $\mc H^{d-1}$-a.e. in }\partial_D\Omega\}\,, \quad W=L^2(\Omega;\R^d)\,, \quad Z=L^1(\Omega;\R^d)\,, \\
& \bullet\quad \mathbb{M}\xi =M \xi\,, \quad \quad \,\,\,\,\,\,\,\,\,\,\,\, \mathbb{V}v = - {\rm div}(\mathbb{D}e(v))\,, \\
& \bullet\quad \mathcal{R}(z) = \nu\|z\|_{L^1(\Omega;\R^d)}\,,
\end{split}
\end{equation*}
and
\begin{equation*}
	\mathcal{E}(t,u) = \begin{cases}\displaystyle
		\int_\Omega \left(\frac{1}{2}\mathbb{C}e(u):e(u) +\mathcal{W}(u)-f(t)\cdot u\right)\mathrm{d}x+C_0,&\text{if }u\in D,\\
		+\infty,&\text{otherwise,}
	\end{cases}
\end{equation*}
with $D=\left\{u\in U:\, \mc W(u)\in L^1(\Omega)\right\}$.

As before, conditions \eqref{eq:embeddings}, \eqref{mass}, and \ref{hyp:R1} are immediately satisfied, while \eqref{viscosity}, \ref{hyp:E1}-\ref{hyp:E4'} and \ref{hyp:E5} are verified arguing as in the scalar case by using Korn--Poincar\'e inequality in place of Poincar\'e inequality (the three cases $(j)$, $(jj)$ and $(jjj)$ are now distinguished by $\lambda:=\widetilde{\lambda}-c_{\mathbb C}K_P^2$, where $K_P$ denotes the Korn--Poincar\'e constant).

In cases $(j)$ and $(jj)$ we can apply Theorems~\ref{mainthm:unifconvex} (and \ref{mainthm:unifconvexdiscr}) and \ref{mainthm:convex} (and \ref{mainthm:convexdiscr}), respectively, for system \eqref{eq:inclC}. In case $(jjj)$ we also assume $\partial\Omega\in C^2$ and $\partial_D\Omega$ smooth, $f\in AC([0,T];L^\infty(\Omega;\R^d))$ and we require $\mc W\in C^1(\R^d)$ to be nonnegative, $\widetilde{\lambda}$-convex and to satisfy for all $x\in \R^d$ the following inequality:
\begin{equation} \label{eq:(***)}
	|\nabla\mathcal{W}(x)| \leq C(1+|x|^{q-1}), \quad \mbox{ for some $q\in \left(1,\frac{d}{d-2}\right)$.}
\end{equation}

The validity of \ref{hyp:E6}-\ref{hyp:E9} now follows as in the scalar case, while the validity of \ref{hyp:E10} requires the stronger assumption \eqref{eq:(***)} compared to \eqref{eq:(*)}, which also ensures $D=U$. Indeed, unlike the scalar framework, the regularity theory for weak solutions to vectorial systems (see \cite[Theorem~7.2]{GiaqMart}) only enhances the regularity of the gradient, and not of second order derivatives. Thus, arguing as in Section~\ref{subsec:scalarcase}, the regularity result \cite[Theorem~7.2]{GiaqMart} yields $u_n\in W^{1,\frac{r}{q-1}}(\Omega;\R^d)$ with $\|u_n\|_{W^{1,\frac{r}{q-1}}(\Omega;\R^d)}\leq C$ for all $r\in (2(q-1),2^*)$. Since $q<\frac{d}{d-2}$, we now can pick $r\in[1,2^*)$ such that $\frac{r}{q-1}>d$, and then we conclude by using again Sobolev Embedding. 

Thus, in case $(jjj)$, Theorem~\ref{mainthm:nonconvex} can be applied (in its stronger form $D=U=V$) for the vectorial system \eqref{eq:inclC} under the stronger assumption \eqref{eq:(***)}.

Finally, if in addition we assume
\begin{equation*}
	|\nabla\mc W(x){-}\nabla\mc W(y)|\le C(1{+}|x|^r{+}|y|^r)|x{-}y|,\text{ for all }x,y\in \mathbb{R}^d,\text{ for some finite }r\in \left(0,\frac{4}{d-2}\right],
\end{equation*} 
then \ref{hyp:E11} can be checked as in the scalar case so that Theorem~\ref{mainthm:nonconvexdiscr} may be applied as well.
	
	\section{Vanishing inertia and viscosity limit of the continuous dynamic problem}\label{sec:slowloading}

	This section is devoted to the proof of Theorems~\ref{mainthm:nonconvex}, \ref{mainthm:convex} and \ref{mainthm:unifconvex}. If not stated otherwise, from now on we assume \eqref{eq:embeddings}, as well as \eqref{mass} and \eqref{viscosity} for the operators $\mathbb{M}$ and $\mathbb{V}$, hypothesis \ref{hyp:R1} for the dissipation potential $\mc R$ and hypotheses \ref{hyp:E1}-\ref{hyp:E5} for the energy $\mc E$.
	
	For any $\eps>0$, let $u^\eps$ be a dissipative dynamic solution to \eqref{mainprob} with initial data $u^\eps_0\in D$, $u_1^\eps\in V$. In Section~\ref{subsec:unifbound} we provide a compactness result for the family $\{u^\eps\}_{\eps>0}$ by means of a suitable version of Helly's Selection Theorem, exploiting the uniform bounds of Proposition~\ref{prop:unifbounds}. Section~\ref{subsec:stabcond} deals with the passage to the limit of equation \eqref{dineq} towards the local stability condition \eqref{eq:LS} as $\eps\to 0$. We will actually employ the equivalent global $\lambda$-stability condition described in Lemma~\ref{lemma:localstability}. In Section~\ref{subsec:enbal} we then pass to the limit the energy-dissipation balance \eqref{eq:energybalance}, showing the validity of a limit energy balance in which the jump costs are described by means of an atomic measure $\mu$. We finally characterize such measure in Section~\ref{subsec:jumps}, separating the different cases of nonconvex, convex, and uniformly convex potential energy $\mc E$, and thus obtaining the results stated in Section~\ref{subsec:mainthms}.

\subsection{Compactness}\label{subsec:unifbound}

We begin by stating and proving a suitable version of Helly's Selection Theorem, which is compatible with the uniform bounds obtained in Proposition~\ref{prop:unifbounds}.

\begin{lemma}\label{helly}
	Let $X,Y$ be Banach spaces such that $X\stackrel{d}{\hookrightarrow} Y$ and $X$ is reflexive and separable. Then, given a sequence of functions $f_n\colon [a,b]\to X$ such that for a positive constant $C>0$ independent of $n$ it holds
	\begin{equation*}
		\sup\limits_{t\in [a,b]}\|f_n(t)\|_X\le C,\quad\text{ and }\quad V_Y(f_n;a,b)\le C,
	\end{equation*}
 	there exist a function $f\in B([a,b];X)\cap BV([a,b];Y) $ and a subsequence (not relabelled) such that
\begin{equation}\label{weakconvhelly}
	f_n(t)\xrightharpoonup[n\to +\infty]{X}f(t),\quad\text{ for every }t\in [a,b].
\end{equation}
\end{lemma}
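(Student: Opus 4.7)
The plan is to argue as in the classical proof of Helly's Selection Theorem, combining a scalar Helly step for the variation functions with a diagonal extraction exploiting reflexivity of $X$.

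\textbf{Step 1 (Scalar Helly on variation functions).} I would first introduce $V_n(t):=V_Y(f_n;a,t)$, which is nondecreasing and bounded by $C$ uniformly in $n$ and $t$. The classical scalar Helly Selection Theorem produces a subsequence (not relabelled) and a nondecreasing function $V\colon[a,b]\to[0,C]$ with $V_n(t)\to V(t)$ for every $t\in[a,b]$. The set $J$ of discontinuity points of $V$ is at most countable, and I fix once and for all a countable dense set $D\subseteq[a,b]$ with $J\cup\{a,b\}\subseteq D$.

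\textbf{Step 2 (Diagonal extraction on $D$).} For every $t\in D$ the sequence $\{f_n(t)\}$ is bounded in the reflexive space $X$. A standard diagonal argument, together with weak sequential compactness of bounded sets in reflexive spaces (Eberlein--\v Smulian), extracts a further subsequence, still not relabelled, and elements $f(t)\in X$ with $\|f(t)\|_X\le C$ such that $f_n(t)\xrightharpoonup{X}f(t)$ for every $t\in D$.

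\textbf{Step 3 (Extension to all of $[a,b]$).} This is the delicate point: I need pointwise weak convergence for \emph{every} $t\in[a,b]$, not just on $D$. The key estimate is
\begin{equation*}
\|f_n(t)-f_n(s)\|_Y\le |V_n(t)-V_n(s)|,\qquad s,t\in[a,b].
\end{equation*}
Fix $t\in[a,b]\setminus D$, so $t\notin J$ and $V$ is continuous at $t$. Pick $t_j\in D$ with $t_j\to t$. By boundedness and reflexivity, any subsequence of $\{f_n(t)\}$ admits a further weakly convergent subsequence in $X$ with some limit $x\in X$. Continuity of the embedding $X\hookrightarrow Y$ transfers the weak convergence to $Y$, so weak lower semicontinuity of $\|\cdot\|_Y$ gives, along the chosen subsequence,
\begin{equation*}
\|x-f(t_j)\|_Y\le\liminf_{n\to+\infty}\|f_n(t)-f_n(t_j)\|_Y\le |V(t)-V(t_j)|.
\end{equation*}
Since $V(t_j)\to V(t)$, this shows $f(t_j)\to x$ \emph{strongly} in $Y$ as $j\to+\infty$. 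The limit of $\{f(t_j)\}$ in $Y$ is therefore independent of the chosen subsequence, hence all weak-$X$ cluster points of $\{f_n(t)\}$ coincide (as elements of $Y$, and thus of $X$ by injectivity of the embedding). Define $f(t)$ to be this common value; then the whole sequence $\{f_n(t)\}$ converges weakly in $X$ to $f(t)$, again by reflexivity and uniqueness of the cluster point.

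\textbf{Step 4 (Verification of the regularity of $f$).} From $\|f(t)\|_X\le\liminf_n\|f_n(t)\|_X\le C$ I get $f\in B([a,b];X)$. For the $Y$-variation, any partition $a=s_0<\dots<s_N=b$ satisfies, using weak-$X$ (hence weak-$Y$) convergence and weak lower semicontinuity of $\|\cdot\|_Y$,
\begin{equation*}
\sum_{k=1}^N\|f(s_k)-f(s_{k-1})\|_Y\le\liminf_{n\to+\infty}\sum_{k=1}^N\|f_n(s_k)-f_n(s_{k-1})\|_Y\le\liminf_{n\to+\infty}V_Y(f_n;a,b)\le C,
\end{equation*}
so $f\in BV([a,b];Y)$ and the proof is complete.

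The main obstacle is Step 3: weak compactness in $X$ only gives weak-$X$ subsequential limits at each point, whereas the BV information lives in the coarser space $Y$. Bridging the two via the continuous embedding $X\hookrightarrow Y$, and using continuity of $V$ off the countable set $J$, is what forces uniqueness of the weak cluster point and upgrades pointwise extraction on $D$ to pointwise convergence on all of $[a,b]$.
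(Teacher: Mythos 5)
Your proof is correct, but it takes a genuinely different route from the paper's. The paper scalarizes the problem: since $X$ is reflexive and separable, $X^*$ is separable, and density of $X$ in $Y$ together with reflexivity allows one to pick a countable $\Q$-vector space $S\subseteq Y^*$ dense in $X^*$; the classical scalar Helly theorem plus a diagonal argument is then applied to the functions $t\mapsto\langle q,f_n(t)\rangle$ for $q\in S$, whose total variations are controlled by $\|q\|_{Y^*}V_Y(f_n;a,b)$, and the limit $f(t)$ is recovered for \emph{every} $t$ at once by extending the $\Q$-linear map $q\mapsto F^q(t)$ to all of $X^*$ and invoking reflexivity. You instead run the ``metric-space'' version of Helly: scalar Helly on the variation functions $V_n$, weak sequential compactness on a countable set $D$ containing the jump set of the limit variation $V$, and then the key observation that continuity of $V$ at $t\notin D$ forces every weak-$X$ cluster point of $\{f_n(t)\}$ to coincide with the strong $Y$-limit of $\{f(t_j)\}$, hence to be unique, so the whole bounded sequence converges weakly. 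Your argument does not use separability of $X$ (Eberlein--\v{S}mulian suffices) nor, explicitly, density of the embedding, both of which the paper's construction relies on; conversely, the paper's route avoids the cluster-point uniqueness discussion and the splitting of $[a,b]$ into $D$ and its complement. Your Step 3 correctly handles the one delicate point, namely upgrading extraction on $D$ to convergence everywhere, and Step 4 is the same lower-semicontinuity argument as in the paper.
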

\begin{proof}
 Since the dual space $X^*$ is separable, there exists a countable subset $S$ of $Y^*$ dense in $X^*$. Without loss of generality we may assume that $ S$ is a vector space over $\Q$. For all $q\in {S}$ we now define
	\begin{equation*}
		F^q_n(t):=\langle q,f_n(t)\rangle_X=\langle q,f_n(t)\rangle_Y.
	\end{equation*}
	It is immediate to check that
	\begin{align*}
		|F^q_n(t)|&\le\|q\|_{X^*}\|f_n(t)\|_X\le C \|q\|_{X^*},\\
		V_{\mathbb{R}}(F^q_n;a,b)&\le \|q\|_{Y^*}V_Y(f_n;a,b)\le C \|q\|_{Y^*}.
	\end{align*}
By using the classical Helly's Selection Theorem together with a diagonal argument we may extract a subsequence such that for every $q\in {S}$ there holds
\begin{equation*}
	F_n^q(t)\xrightarrow[n\to +\infty]{} F^q(t), \quad \text{ for every }t\in [a,b].
\end{equation*}
It is now easy to show that, for a fixed $t\in [a,b]$, the map ${S}\ni q\mapsto F^q(t)$ is linear (over $\Q$) and satisfies $|F^q(t)|\le C \|q\|_{X^*}$. By density of ${S}$ in $X^*$, it can thus be extended to the whole $X^*$ as a linear and continuous functional. Since $X$ is reflexive, then there exists an element $f(t)\in X$ such that $F^q(t)=\langle q,f(t)\rangle_X$ for all $q\in X^*$.

Let us now prove \eqref{weakconvhelly}. Fix $q\in X^*$ and for $\delta>0$ let $q_\delta\in {S}$ such that $\|q-q_\delta\|_{X^*}\le \delta$. Then one has
\begin{align*}
	&\limsup_{n\to +\infty}|\langle q,f_n(t)\rangle_X-\langle q,f(t)\rangle_X|\le \limsup_{n\to +\infty}|\langle q-q_\delta,f_n(t)-f(t)\rangle_X+\langle q_\delta,f_n(t)-f(t)\rangle_X|\\
	&\le\limsup_{n\to +\infty}\Big(\|q-q_\delta\|_{X^*}(\|f_n(t)\|_X+\|f(t)\|_X)+F_n^{q_\delta}(t)-F^{q_\delta}(t)\Big)\le \delta(C+\|f(t)\|_X),
\end{align*}
and by the arbitrariness of $\delta$ one concludes.

The fact that $f$ belongs to $ B([a,b];X)\cap BV([a,b];Y)$ is now a byproduct of \eqref{weakconvhelly} due to the weak lower semicontinuity of the norm and of the total variation.
\end{proof}

By applying Lemma~\ref{helly} with $X=U$ and $Y=Z$, and exploiting $(ii)$, $(iv)$, and $(v)$ of Proposition~\ref{prop:unifbounds}, we now deduce the following compactness result.

\begin{prop}\label{prop:compact}
	Assume $\eps u_1^\eps$ is uniformly bounded in $W$ and $\mc E(0,u_0^\eps)$ is uniformly bounded. Then there exists a function $u\colon [0,T]\to D$ such that $u\in B([0,T];U)\cap BV([0,T];Z)$ and there exists a subsequence $\eps_j\to 0$ such that:
	\begin{itemize}
		\item[(a)] $\xepsj(t)\xrightharpoonup[j\to +\infty]{U}u(t),\quad\text{ for every }t\in [0,T]$;
			\item[(b)] $\epsj\xepsjd(t)\xrightarrow[j\to +\infty]{V}0,\quad\text{ for almost every }t\in [0,T]$.
	\end{itemize}
\end{prop}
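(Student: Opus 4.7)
The plan is to apply Lemma~\ref{helly} directly, using the uniform bounds of Proposition~\ref{prop:unifbounds}, and then extract (b) from the $L^2$-in-time control in (v). The hypotheses of Proposition~\ref{prop:unifbounds} are in force since $\eps u_1^\eps$ is uniformly bounded in $W$ and $\mc E(0,u_0^\eps)$ is uniformly bounded, so we may freely use (i)-(v) throughout.

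\smallskip
\textbf{Step 1 (compactness in $U$ and $Z$-variation bound).} I would apply Lemma~\ref{helly} with $X=U$ and $Y=Z$: the embedding $U\stackrel{d}{\hookrightarrow}Z$ from \eqref{eq:embeddings} holds, and $U$ is reflexive and separable by assumption. The pointwise $U$-bound
\begin{equation*}
\sup_{t\in[0,T]}\|u^\eps(t)\|_U\le\overline{C}
\end{equation*}
is exactly (ii). For the $Z$-variation, I would observe that $u^\eps\in W^{1,2}(0,T;V)\hookrightarrow AC([0,T];Z)$ by \eqref{eq:embeddings}, so for every partition $0=t_0<t_1<\dots<t_n=T$,
\begin{equation*}
\sum_{k=1}^n\|u^\eps(t_k)-u^\eps(t_{k-1})\|_Z\le\int_0^T\|\dot u^\eps(r)\|_Z\d r\le\overline{C},
\end{equation*}
thanks to (iv); hence $V_Z(u^\eps;0,T)\le\overline{C}$. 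Lemma~\ref{helly} then yields a subsequence $\epsj\to 0$ and a limit $u\in B([0,T];U)\cap BV([0,T];Z)$ with $\xepsj(t)\xrightharpoonup{U}u(t)$ for every $t\in[0,T]$, which is precisely (a).

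\smallskip
\textbf{Step 2 (limit takes values in $D$).} To see $u(t)\in D$ for every $t$, I would use the weak lower semicontinuity of $\mc E(t,\cdot)$ in $U$ from \ref{hyp:E1}: combining it with the uniform energy bound (i) gives
\begin{equation*}
\mc E(t,u(t))\le\liminf_{j\to+\infty}\mc E(t,\xepsj(t))\le\overline{C}<+\infty,
\end{equation*}
whence $u(t)\in D={\rm dom}\,\mc E(t,\cdot)$.

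\smallskip
\textbf{Step 3 (proof of (b)).} For the statement on $\eps\dot u^\eps$, I would use (v) to compute, for any $\eps>0$,
\begin{equation*}
\int_0^T|\eps\dot u^\eps(r)|_{\V}^2\d r=\eps^2\int_0^T|\dot u^\eps(r)|_{\V}^2\d r\le\eps\,\overline{C}\xrightarrow[\eps\to 0]{}0,
\end{equation*}
so $\eps\dot u^\eps\to 0$ strongly in $L^2(0,T;V)$. Along a further (not relabelled) subsequence this gives $\epsj\xepsjd(t)\to 0$ in $V$ for almost every $t\in[0,T]$, which is (b).

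\smallskip
The only delicate point is the bookkeeping: both the a priori bounds and the applicability of Lemma~\ref{helly} must be lined up cleanly (reflexivity and separability of $U$, dense embedding $U\hookrightarrow Z$, bound on the $Z$-variation coming from the $L^1$-in-time estimate on $\dot u^\eps$). There is no genuine analytic obstacle here beyond assembling these ingredients, since all the hard work is packaged in Proposition~\ref{prop:unifbounds} and Lemma~\ref{helly}.
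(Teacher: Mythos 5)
Your argument is correct and coincides with the paper's proof, which likewise applies Lemma~\ref{helly} with $X=U$, $Y=Z$ using bounds $(ii)$ and $(iv)$ of Proposition~\ref{prop:unifbounds} (the $Z$-variation bound via $\int_0^T\|\dot u^\eps\|_Z\,\mathrm{d}r$), and obtains $(b)$ from $(v)$ by passing to a further subsequence after the $L^2(0,T;V)$ convergence of $\eps\dot u^\eps$. The supplementary observation that $u$ is $D$-valued via \ref{hyp:E1} and bound $(i)$ is exactly the detail the paper leaves implicit.
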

\begin{rmk}
	Notice that assumption \eqref{eq:embeddings} implies that $(a)$ also holds as a weak limit in $V$ and as a strong limit in $W$ and $Z$. Analogously, $(b)$ holds as a strong limit in $W$ and $Z$.
\end{rmk}

Since the limit function $u$ belongs to $B([0,T];U)\cap BV([0,T];Z)$, by arguing as in Remark~\ref{rmk:upm} we infer the existence of the right and left limits $u^\pm(t)$ for all $t\in [0,T]$ in the sense of \eqref{limitpm}.
	
	\subsection{Limit passage in the stability condition}\label{subsec:stabcond}	
	Next proposition shows how equation \eqref{dineq} is related to an integral form of the global $\lambda$-stability condition perturbed by a factor $\sqrt{\eps}$.	
	
	\begin{prop}\label{prop:estimate}
			Assume $\eps u_1^\eps$ is uniformly bounded in $W$ and $\mc E(0,u_0^\eps)$ is uniformly bounded. Then for any $g\colon[0,T]\to U$ of the form
			\begin{equation}\label{simplefunction}
				g(t)=\sum_{i=1}^{N}u_i\bm 1_{(s_i,t_i)}(t)+\bar u\bm 1_{[0,T]\setminus\bigcup_{i=1}^N(s_i,t_i)}(t),
			\end{equation}
			where $N\in \N$, $u_i, \bar u\in U$ and $(s_i,t_i)$ are disjoint open intervals contained in $[0,T]$, there exists a positive costant $C_g>0$ such that the following inequality holds true for every $0\le s\le t\le T$:
			\begin{equation}\label{eq:estimate}
				\int_{s}^{t}\mc E(r,u^\eps(r))\d r\le \int_{s}^{t}\Big(\mc E(r,g(r))+\mc R(g(r)-u^\eps(r))+\frac \lambda 2 |g(r)-u^\eps(r)|_W^2\Big)\d r+C_g\sqrt{\eps}.
			\end{equation}
	\end{prop}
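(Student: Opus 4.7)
My starting point is the $\lambda$-convexity of $\mc E(t,\cdot)$. By Lemma~\ref{lem:lemma3}, assumption \ref{hyp:E5} applied to $\xi^\eps(t)\in\partial^U\mc E(t,u^\eps(t))$ with test point $g(t)$ yields, pointwise in $t$ (trivially if $g(t)\notin D$),
\begin{equation*}
\mc E(t,u^\eps(t))\le \mc E(t,g(t))-\langle\xi^\eps(t),g(t)-u^\eps(t)\rangle_U+\tfrac\lambda 2|g(t)-u^\eps(t)|_W^2.
\end{equation*}
Substituting $-\xi^\eps=\eps^2\M\ddot u^\eps+\eps\V\dot u^\eps+\eta^\eps$ from \eqref{dineq}, and exploiting that $\eta^\eps(t)\in\partial^Z\mc R(\dot u^\eps(t))\subseteq\partial^Z\mc R(0)$ (see \eqref{2.2mielke}) gives $\langle\eta^\eps(t),g(t)-u^\eps(t)\rangle_Z\le \mc R(g(t)-u^\eps(t))$. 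Integrating the resulting pointwise inequality over $[s,t]$ reduces the claim to showing
\begin{equation*}
|I_1^\eps|+|I_2^\eps|\le C_g\sqrt\eps,\qquad I_1^\eps:=\int_s^t\langle\eps^2\M\ddot u^\eps,g-u^\eps\rangle_U\d r,\quad I_2^\eps:=\int_s^t\langle\eps\V\dot u^\eps,g-u^\eps\rangle_V\d r.
\end{equation*}

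The viscous term is the one actually producing the $\sqrt\eps$ scaling. By Cauchy--Schwarz in time with the split $\eps=\sqrt\eps\cdot\sqrt\eps$,
\begin{equation*}
|I_2^\eps|\le\sqrt\eps\,\Bigl(\int_s^t\eps|\dot u^\eps|_\V^2\d r\Bigr)^{\!1/2}\!\Bigl(\int_s^t|g(r)-u^\eps(r)|_\V^2\d r\Bigr)^{\!1/2},
\end{equation*}
and both factors are $O(1)$: the first by Proposition~\ref{prop:unifbounds}(v), the second because $g$ takes finitely many values in $U\hookrightarrow V$ and $u^\eps$ is bounded in $U\hookrightarrow V$ by Proposition~\ref{prop:unifbounds}(ii). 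Hence $|I_2^\eps|\le C_g\sqrt\eps$.

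For the inertial term I would partition $[s,t]$ along the $2N+1$ endpoints of the intervals $(s_i,t_i)$, so that on each subinterval $[a,b]\subseteq[s,t]$ the function $g$ is identically some constant $c\in\{u_1,\dots,u_N,\bar u\}$. Since $\dot g\equiv 0$ on $(a,b)$, an integration by parts (justified by $u^\eps\in W^{2,2}(0,T;U^*)\cap W^{1,2}(0,T;V)$, together with the compatibility of the duality pairings under \eqref{eq:embeddings}; this is the chain-rule-type computation of Appendix~\ref{App:Chainrule}) yields
\begin{equation*}
\int_a^b\langle\eps^2\M\ddot u^\eps,c-u^\eps\rangle_U\d r=\eps^2\langle\M\dot u^\eps,c-u^\eps\rangle_W\Big|_a^b+\int_a^b\eps^2|\dot u^\eps|_\M^2\d r.
\end{equation*}
The boundary terms are bounded by $\eps(\eps|\dot u^\eps|_\M)|c-u^\eps|_\M\le\eps\,\overline C\,C_g$, using Proposition~\ref{prop:unifbounds}(iii) and the uniform bound of $c-u^\eps$ in $W$. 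The integral term is estimated via $|\cdot|_\M\le C|\cdot|_\V$ on $V$ (valid since $V\hookrightarrow W$ and $\M$ defines an equivalent norm on $W$), whence $\eps^2\int_a^b|\dot u^\eps|_\M^2\d r\le C\eps\cdot\eps\int_0^T|\dot u^\eps|_\V^2\d r\le C\overline C\,\eps$ by Proposition~\ref{prop:unifbounds}(v). Summing over the at most $2N+1$ subintervals produces $|I_1^\eps|\le C_g\eps\le C_g\sqrt\eps$, and combining the two estimates closes the argument.

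The main obstacle is making the integration by parts in $I_1^\eps$ rigorous: the natural duality is $\langle\ddot u^\eps,\cdot\rangle_U$ with $\ddot u^\eps\in L^2(0,T;U^*)$, while the relevant boundary pairing lives in $W$ via $\dot u^\eps\in C_{\rm w}([0,T];W)$. One needs to check that the formula $\tfrac{d}{dr}\langle\M\dot u^\eps(r),c-u^\eps(r)\rangle_W=\langle\M\ddot u^\eps,c-u^\eps\rangle_U-|\dot u^\eps|_\M^2$ holds in the distributional sense on each subinterval, which is exactly the kind of compatibility statement supplied by the chain rule of Appendix~\ref{App:Chainrule} (cf.\ Lemma~\ref{lemma:chainruleequality}); everything else in the argument is a bookkeeping exercise.
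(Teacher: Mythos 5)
Your proposal is correct and follows essentially the same route as the paper's proof: the pointwise $\lambda$-convexity inequality tested against $g$, Cauchy--Schwarz with the $\sqrt\eps\cdot\sqrt\eps$ split for the viscous term via Proposition~\ref{prop:unifbounds}(v), and integration by parts for the inertial term with bounds (ii), (iii), (v). The only (immaterial) difference is bookkeeping: you partition $[s,t]$ at the endpoints of the $(s_i,t_i)$ so that $g$ is constant on each piece, whereas the paper decomposes $g$ as $\bar u$ plus the corrections $u_i-\bar u$ on $(s_i,t_i)$ before integrating by parts; both yield the same boundary and bulk terms.
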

\begin{proof}
	Without loss of generality we may assume $\bar u, u_i\in D$, otherwise the inequality is trivial.
	
	By testing equation \eqref{dineq} with $u^\eps(t)-g(t)\in U$ and exploiting \eqref{lambdaconvexineq} since $\eta^\eps(t)\in \partial^Z\mc R(0)$ and $\xi^\eps(t)\in \partial^U\mc E(t,u^\eps(t))$, for almost every $t\in [0,T]$ we obtain
	\begin{align*}
		0\ge&\, \eps^2\langle\mathbb M\ddot{u}^\eps(t), u^\eps(t)-g(t)\rangle_U+\eps\langle\mathbb V\dot{u}^\eps(t), u^\eps(t)-g(t)\rangle_V-\mc R(g(t)-u^\eps(t))\\
		&+\mc E(t,u^\eps(t))-\mc E(t,g(t))-\frac \lambda 2|g(t)-u^\eps(t)|^2_W.
	\end{align*}
By integrating the above inequality in $[s,t]$ we thus infer
 \begin{align*}
 	\int_{s}^{t}\mc E(r,u^\eps(r))\d r\le&\int_{s}^{t}\Big(\mc E(r,g(r))+\mc R(g(r)-u^\eps(r))+\frac \lambda 2 |g(r)-u^\eps(r)|_W^2\Big)\d r\\
 	&+\underbrace{\eps^2\left|\int_{s}^{t}\langle\mathbb M\ddot{u}^\eps(r), u^\eps(r)-g(r)\rangle_U\d r\right|}_{=:I^\eps}+\underbrace{\eps\left|\int_{s}^{t}\langle\mathbb V\dot{u}^\eps(r), u^\eps(r)-g(r)\rangle_V\d r\right|}_{=:II^\eps}.
 \end{align*}
We conclude if we show that $I^\eps+II^\eps\le C\sqrt{\eps}$.

We start estimating the second term by means of $(ii)$ and $(v)$ in Proposition~\ref{prop:unifbounds}:

\begin{align*}
	II^\eps &\le C\eps\int_{s}^{t}|\dot{u}^\eps(r)|_\mathbb V(\|u^\eps(r)\|_U+\|g(r)\|_U)\d r \\
	&\le C\eps(1+\|g\|_{L^\infty(0,T;U)})\left(\int_{s}^{t}|\dot{u}^\eps(r)|^2_\mathbb V\d r\right)^\frac 12\le C_g\sqrt{\eps}.
\end{align*}

As regards $I^\eps$, we first rewrite it exploiting the explicit form of $g$ and integrating by parts in time:

\begin{align*}
	I^\eps&=\eps^2\left|\int_{s}^{t}\langle\mathbb M\ddot{u}^\eps(r), u^\eps(r)-\bar u\rangle_U\d r-\sum_{i=1}^N\int_{s_i}^{t_i}\langle\mathbb M\ddot{u}^\eps(r), u_i-\bar u\rangle_U\d r\right|\\
	&=\eps^2\Bigg|\langle\mathbb M\dot{u}^\eps(t), u^\eps(t)-\bar u\rangle_W-\langle\mathbb M\dot{u}^\eps(s), u^\eps(s)-\bar u\rangle_W-\int_{s}^{t}\langle\mathbb M\dot{u}^\eps(r), \dot u^\eps(r)\rangle_W\d r\\
	&\qquad\qquad\qquad-\sum_{i=1}^N\langle\mathbb M(\dot{u}^\eps(t_i)-\dot{u}^\eps(s_i)), u_i-\bar u\rangle_W\Bigg|.
\end{align*}
By using $(ii)$ and $(iii)$ in Proposition~\ref{prop:unifbounds}, the first term (and analogously the second one) can now be bounded by
\begin{equation*}
	C\eps^2|\dot{u}^\eps(t)|_\mathbb M(\|u^\eps(t)\|_U+\|\bar u\|_U)\le C\eps(1+\|g\|_{L^\infty(0,T;U)})=C_g\eps.
\end{equation*}
In the same way we also bound the last term. We finally notice that the third term can be estimated by exploiting $(v)$ in Proposition~\ref{prop:unifbounds}. Indeed we have
\begin{align*}
	\eps^2\left|\int_{s}^{t}\langle\mathbb M\dot{u}^\eps(r), \dot u^\eps(r)\rangle_W\d r\right|\le C\eps^2\int_{s}^{t}| \dot u^\eps(r)|_\mathbb{V}^2\d r\le C\eps,
\end{align*}
and so we conclude.
\end{proof}

By sending $\eps\to 0$ in \eqref{eq:estimate}, in Proposition~\ref{prop:lambdastab} we obtain the validity of the global $\lambda$-stability for the limit function $u$. In particular, Lemma~\ref{lemma:localstability} yields that $u$ satisfies \eqref{eq:LS}.

To obtain also the full energy convergence \eqref{energyconvergence} below, we will employ the following approximation result in Bochner spaces. We refer to \cite[Lemma~A0]{Brez} for a proof.
\begin{lemma}\label{lemma:approximation}
	Let $X$ be a Banach space, and let $F\subseteq X$ be a closed subset. Let $f\in L^1(a,b;X)$ be such that $f(t)\in F$ for almost every $t\in (a,b)$. Then for all $\delta>0$ there exists a simple function $g\colon(a,b)\to X$ of the form
	\begin{equation*}
		g(t)=\sum_{i=1}^{N}x_i\bm 1_{(s_i,t_i)}(t)+\bar x\bm 1_{(a,b)\setminus\bigcup_{i=1}^N(s_i,t_i)}(t),
	\end{equation*}
	where $N\in \N$, $x_i, \bar x\in F$ and $(s_i,t_i)$ are disjoint open intervals contained in $(a,b)$, such that 
	\begin{equation*}
		\|f-g\|_{L^1(a,b;X)}\le \delta.
	\end{equation*}
\end{lemma}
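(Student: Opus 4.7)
The plan is to reduce the statement to two standard approximation steps performed in cascade: first, approximate $f$ in $L^1(a,b;X)$ by a simple function whose values lie in $F$; second, replace the measurable sets underlying that simple function by finite unions of open intervals, arranged in the prescribed form with a single default value $\bar x\in F$ on the complement. The upshot is to absorb half of the error budget $\delta$ into each step.

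For the first step, I would exploit that every Bochner-integrable map is, by Pettis's measurability theorem, essentially separably valued. Combined with $f(t)\in F$ a.e.\ and the closedness of $F$, the essential range of $f$ lies in some separable subset of $F$. I pick a countable dense sequence $\{z_k\}_{k\in\N}$ inside it and, for each $n$, define $f_n(t):=z_{k_n(t)}$, where $k_n(t)$ is the smallest index $k\leq n$ with $\|f(t)-z_k\|_X\leq 1/n$ (and a distinguished $z_0\in F$ if no such $k$ exists). Each $f_n$ is then a simple function in the classical sense with values in $F$, and by construction $\|f_n(t)-f(t)\|_X\to 0$ for a.e.\ $t$, dominated by the integrable function $\|f(t)\|_X+\|z_0\|_X$; dominated convergence yields $\|f_n-f\|_{L^1(a,b;X)}\to 0$. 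Choose $n$ large so that $\|f_n-f\|_{L^1(a,b;X)}<\delta/2$, and write $f_n=\sum_{j=0}^{M}z_j\bm 1_{A_j}$ with $(A_j)$ a Borel partition of $(a,b)$ and all $z_j\in F$.

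For the second step I designate $\bar x:=z_0$, and for every $j\neq 0$ I use outer regularity of Lebesgue measure to pick a finite union of open intervals $V_j\subset(a,b)$ with $|A_j\triangle V_j|<\eta$ for a tolerance $\eta>0$ to be fixed. Disjointifying by setting $U_j:=V_j\setminus\bigcup_{i<j,\,i\neq 0} V_i$ keeps $U_j$ a finite union of open intervals (trimming preserves this class) and one checks $|A_j\triangle U_j|\le C_M\,\eta$ for a combinatorial constant depending only on $M$. Listing the constituent open intervals of $\bigsqcup_{j\neq 0} U_j$ as disjoint $(s_i,t_i)_{i=1}^N$ with associated values $x_i\in\{z_j\}_{j\neq 0}\subseteq F$, and assigning $\bar x$ to the complement, produces $g$ in the required form. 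A direct estimate gives
\begin{equation*}
\|g-f_n\|_{L^1(a,b;X)}\le \sum_{j\neq 0}\|z_j-\bar x\|_X\,|A_j\triangle U_j|\le C_M\,\eta\sum_{j\neq 0}\|z_j-\bar x\|_X,
\end{equation*}
a finite sum independent of $\eta$, which I make smaller than $\delta/2$ by choosing $\eta$ small. The triangle inequality then yields $\|g-f\|_{L^1(a,b;X)}<\delta$.

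The main obstacle is the first step: since $F$ is merely closed — not a subspace, not necessarily convex — one cannot project a generic $X$-valued simple approximation onto $F$ and keep the $L^1$-error under control. The "quantize to a countable dense subset of the essential range" device circumvents this, trading linear structure for density inside $F$. The interval-approximation step is routine Lebesgue measure theory, and the only mild subtlety there is the bookkeeping required to gather all the intervals into a single list and single out one element of $F$ as the default value, which is a purely cosmetic rearrangement.
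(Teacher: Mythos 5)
Your proof is correct. The paper does not prove this lemma itself but delegates it to \cite[Lemma~A0]{Brez}, and your argument is exactly the standard one behind that reference: density of simple functions (via essential separable-valuedness of a strongly measurable $f$) followed by approximation of the level sets by finite unions of open intervals. Your ``quantize onto a countable dense subset of the essential range'' step is moreover the right way to supply the one detail not literally contained in the generic density statement, namely that the values of $g$ can be kept inside the closed set $F$; the only cosmetic point is that the disjointified sets $U_j$ are finite unions of open intervals only up to finitely many endpoints, which is harmless since those points can be assigned the default value $\bar x$ without affecting the $L^1$ estimate.
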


\begin{prop}\label{prop:lambdastab}
	Assume $\eps u_1^\eps$ is uniformly bounded in $W$ and $\mc E(0,u_0^\eps)$ is uniformly bounded. Then the limit function $u$ obtained in Proposition~\ref{prop:compact} satisfies the global $\lambda$-stability
	\begin{itemize}
	\item[($\lambda$GS)] $\mc E(t,u(t))\le \mc E(t,x)+\mc R(x-u(t))+\frac \lambda 2|x-u(t)|_W^2$ for all $x\in U$ and for a.e. $t\in [0,T]$.
\end{itemize}
In particular the same holds true for the right and left limits $u^\pm(t)$:
	\begin{itemize}
	\item[($\lambda$GS$^+$)] $\mc E(t,u^+(t))\le \mc E(t,x)+\mc R(x-u^+(t))+\frac \lambda 2|x-u^+(t)|_W^2$ for all $x\in U$ and for all $t\in [0,T]$;
	\item[($\lambda$GS$^-$)] $\mc E(t,u^-(t))\le \mc E(t,x)+\mc R(x-u^-(t))+\frac \lambda 2|x-u^-(t)|_W^2$ for all $x\in U$ and for all $t\in (0,T]$.
\end{itemize}
Hence, one can write
\begin{equation}\label{globalstability}
	\mc E(t,u(t))\le \mc E(t,x)+\mc R(x-u(t))+\frac \lambda 2|x-u(t)|_W^2,\quad\text{ for all $x\in U$ and for all $t\in [0,T]\setminus J_u$}.
\end{equation}
Moreover it holds
\begin{equation}\label{energyliminf}
	\liminf\limits_{j\to +\infty}\mc E(t,u^\epsj(t))=\mc E(t,u(t)),\quad\text{ for a.e. }t\in [0,T].
\end{equation}
Finally, assuming in addition \ref{hyp:E7}, up to possibly extracting a further subsequence there holds
\begin{equation}\label{energyconvergence}
	\lim\limits_{j\to +\infty}\mc E(t,u^\epsj(t))=\mc E(t,u(t)),\quad\text{ for a.e. }t\in [0,T].
\end{equation}
In particular, if also \ref{hyp:E8} is in force, one has
\begin{equation}\label{eq:strongconvergence}
	\xepsj(t)\xrightarrow[j\to +\infty]{U}u(t),\quad\text{ for a.e. }t\in [0,T].
\end{equation}
\end{prop}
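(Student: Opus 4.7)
The plan is to derive ($\lambda$GS) and the liminf equality \eqref{energyliminf} directly from the integral estimate \eqref{eq:estimate} via Fatou's lemma, Lebesgue differentiation, and carefully chosen piecewise constant test functions; the subsequence upgrade to full energy convergence follows from an $L^1$-convergence argument.

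\emph{Stability conditions.} I apply Proposition~\ref{prop:estimate} to a simple $g$ of the form \eqref{simplefunction} and pass $\epsj\to 0$ in \eqref{eq:estimate}. Fatou combined with the pointwise weak lower semicontinuity $\mc E(r,u(r))\le\liminf_j\mc E(r,u^\epsj(r))$ (from \ref{hyp:E1} and Proposition~\ref{prop:compact}(a)) controls the left hand side, while dominated convergence controls the right hand side, using the strong convergences $u^\epsj(r)\to u(r)$ in $W$ and $Z$ (from Proposition~\ref{prop:compact}(a), the compact embedding $U\hookrightarrow\hookrightarrow W$, and uniform $U$-boundedness) together with continuity of $\mc R$ and the $W$-norm. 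Specializing to a constant $g\equiv x\in U$ (the case $N=0$) and applying the Lebesgue differentiation theorem on both sides---with absolute continuity of $\mc E(\cdot,x)$ from \ref{hyp:E2} ensuring $\frac{1}{t-s}\int_s^t\mc E(r,x)\d r\to\mc E(t_0,x)$---yields ($\lambda$GS) at a.e. $t_0$. The extensions ($\lambda$GS$^\pm$) follow by picking $t_n\to t^\pm$ in $[0,T]\setminus J_u$ where ($\lambda$GS) holds and passing to the limit via \eqref{limitpm}: weak lower semicontinuity handles the left hand side, while continuity of $\mc R$, the $W$-norm, and $\mc E(\cdot,x)$ in time handles the right hand side. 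Finally \eqref{globalstability} is immediate from ($\lambda$GS$^\pm$) at continuity points $t\notin J_u$, where $u^+(t)=u(t)=u^-(t)$.

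\emph{Liminf equality \eqref{energyliminf}.} The bound $\liminf_j\mc E(t,u^\epsj(t))\ge\mc E(t,u(t))$ is pointwise by weak lower semicontinuity. For the reverse inequality at a.e. $t_0\in (0,T)$, I fix $h>0$ small enough that $[t_0-h,t_0+h]\subseteq[0,T]$, apply \eqref{eq:estimate} on this interval with the constant test function $g\equiv u(t_0)\in D$, and pass $\epsj\to 0$. Fatou gives
\begin{equation*}
\int_{t_0-h}^{t_0+h}\liminf_j\mc E(r,u^\epsj(r))\d r\le\int_{t_0-h}^{t_0+h}\Big[\mc E(r,u(t_0))+\mc R(u(t_0)-u(r))+\tfrac{\lambda}{2}|u(t_0)-u(r)|_W^2\Big]\d r.
\end{equation*}
Dividing by $2h$ and sending $h\to 0$, the right hand side tends to $\mc E(t_0,u(t_0))+\mc R(0)+0=\mc E(t_0,u(t_0))$ at every continuity point of $u$ in $Z$ (hence a.e. $t_0$), by absolute continuity of $\mc E(\cdot,u(t_0))$ from \ref{hyp:E2} and continuity of $\mc R$ and $|\cdot|_W$ (with $u(r)\to u(t_0)$ in $W$ too, since $u$ is $U$-bounded and $U\hookrightarrow\hookrightarrow W$); the left hand side tends to $\liminf_j\mc E(t_0,u^\epsj(t_0))$ at Lebesgue points of the measurable map $r\mapsto\liminf_j\mc E(r,u^\epsj(r))$, which again are a.e.

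\emph{Upgrade to \eqref{energyconvergence} and \eqref{eq:strongconvergence}.} To get the subsequence convergence I first establish integral convergence $\int_0^T\mc E(r,u^\epsj(r))\d r\to\int_0^T\mc E(r,u(r))\d r$. The $\liminf\ge$ part comes from integrating the pointwise weak lower semicontinuity; the $\limsup\le$ part comes from a recovery sequence: partition $[0,T]$ into intervals $\{I_k^n\}_k$ of mesh $\le 1/n$, pick $\tau_k^n\in I_k^n$ with $\mc E(\tau_k^n,u(\tau_k^n))\le|I_k^n|^{-1}\int_{I_k^n}\mc E(r,u(r))\d r$ (such a point exists by the definition of mean value), and set $g_n(r):=u(\tau_k^n)$ on $I_k^n$. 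The absolute continuity \ref{hyp:E2} controls the mismatch between $\mc E(r,u(\tau_k^n))$ and $\mc E(\tau_k^n,u(\tau_k^n))$ across $I_k^n$, yielding $\int_0^T\mc E(r,g_n(r))\d r\le\int_0^T\mc E(r,u(r))\d r+o(1)$, while the other terms in \eqref{eq:estimate} with $g=g_n$ vanish as $n\to\infty$ because $u(\tau_k^n)\to u(r)$ in $W$ and $Z$ for a.e. $r$. Setting then $f_j(r):=\mc E(r,u^\epsj(r))-\mc E(r,u(r))$, the pointwise $\liminf_j f_j\ge 0$ gives $f_j^-\to 0$ a.e., and since $f_j^-\le\mc E(\cdot,u(\cdot))\in L^\infty$, dominated convergence yields $\int f_j^-\d r\to 0$; combined with $\int f_j\d r\to 0$ this forces $f_j^+\to 0$ in $L^1(0,T)$, hence $f_j\to 0$ a.e. along a subsequence---which is \eqref{energyconvergence}---and \eqref{eq:strongconvergence} follows from \ref{hyp:E8} applied to the weak convergence in $U$ together with this pointwise energy convergence. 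The main subtlety is the construction of the energy-matching recovery sequence without continuity of $\mc E$ in its second variable, resolved by the mean-value-based choice of $\tau_k^n$ combined with the time-absolute continuity \ref{hyp:E2}.
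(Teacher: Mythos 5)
Your proof is correct, and for most of the statement it follows the same route as the paper: constant test functions in \eqref{eq:estimate}, Fatou plus dominated convergence, Lebesgue differentiation for ($\lambda$GS) and \eqref{energyliminf}, one-sided approximation from $[0,T]\setminus J_u$ for ($\lambda$GS$^\pm$) and \eqref{globalstability}, and the standard $f_j^\pm$ argument to upgrade $L^1$-convergence of the energies to a.e. convergence along a subsequence. The one step where you genuinely diverge is the $\limsup$ inequality behind \eqref{energyconvergence}. The paper invokes the Bochner approximation Lemma~\ref{lemma:approximation} to produce simple functions converging to $u$ in $L^1(0,T;U)$ with values in the sublevel set $\{\mc E(0,\cdot)\le\overline C\}$, and then needs the strong continuity \ref{hyp:E7} to pass $\mc E(r,u_n(r))\to\mc E(r,u(r))$ under the integral. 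You instead sample $u$ at mean-value points $\tau_k^n$ of each partition interval, so that by \eqref{eq:ineqgronwall} one gets $\int_0^T\mc E(r,g_n(r))\d r+T\le e^{\max_k\int_{I_k^n}b(r)\d r}\bigl(\int_0^T\mc E(r,u(r))\d r+T\bigr)$, with the exponential factor tending to $1$ by absolute continuity of the integral of $b$; the $\mc R$- and $W$-terms vanish by dominated convergence since $u(\tau_{k(r)}^n)\to u(r)$ strongly in $Z$ and $W$ at every $r\notin J_u$. This buys you \eqref{energyconvergence} using only \ref{hyp:E1}--\ref{hyp:E5}, i.e.\ without \ref{hyp:E7}, which is a mild sharpening; the paper's route is shorter given that Lemma~\ref{lemma:approximation} is needed elsewhere anyway. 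Two small points worth making explicit: the constant $C_{g_n}$ in \eqref{eq:estimate} grows with the number of pieces of $g_n$, so the order of limits you adopt ($\eps_j\to0$ first for fixed $n$, then $n\to\infty$) is essential; and the mean-value selection of $\tau_k^n$ presupposes measurability of $r\mapsto\mc E(r,u(r))$, which the paper also takes for granted when writing \eqref{eq:intstab}.
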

\begin{proof}
	Without loss of generality let us fix $x\in D$. By choosing $g(t)=x$ in Proposition~\ref{prop:estimate}, from \eqref{eq:estimate} we obtain
	\begin{equation*}
		\int_{s}^{t}\mc E(r,u^\eps(r))\d r\le \int_{s}^{t}\Big(\mc E(r,x)+\mc R(x-u^\eps(r))+\frac \lambda 2 |x-u^\eps(r)|_W^2\Big)\d r+C\sqrt{\eps}.
	\end{equation*}
By exploiting weak lower semicontinuity of $\mc E$ together with Fatou's Lemma on the left-hand side, and using Lebesgue Dominated Convergence Theorem on the right-hand side (we recall that from \eqref{eq:embeddings} we infer strong convergence of $u^\epsj(t)$ in $W$) we deduce
\begin{equation}\label{eq:intstab}
	\int_{s}^{t}\mc E(r,u(r))\d r\le \int_{s}^{t}\liminf_{j\to +\infty}\mc E(r,u^{\eps_j}(r))\d r\le \int_{s}^{t}\Big(\mc E(r,x)+\mc R(x-u(r))+\frac \lambda 2 |x-u(r)|_W^2\Big)\d r.
\end{equation}
From the arbitrariness of $s$ and $t$, the above inequality yields the validity of $(\lambda GS)$ for all $\bar t\in [0,T]\setminus J_u$ such that $\bar t$ is a Lebesgue point of the map $r\mapsto \mc E(r,u(r))$, and thus for almost every $t\in [0,T]$. The validity of $(\lambda GS^\pm)$ and \eqref{globalstability} now directly follows.

By arguing in the same way, \eqref{eq:intstab} also yields for all $x\in U$ and almost every $t\in [0,T]$
\begin{equation*}
	\mc E(t,u(t))\le \liminf_{j\to +\infty}\mc E(t,u^{\epsj}(t))\le \mc E(t,x)+\mc R(x-u(t))+\frac \lambda 2|x-u(t)|_W^2.
\end{equation*}
By choosing $x=u(t)$ one obtains \eqref{energyliminf}.

We now prove \eqref{energyconvergence}, assuming in addition \ref{hyp:E7}. Due to $(i)$ in Proposition~\ref{prop:unifbounds}, and recalling  \eqref{eq:ineqgronwall}, we obtain that $u(t)$ belongs to the closed set $F:=\{\mc E(0,\cdot)\le \overline C\}$ for all $t\in [0,T]$. Now, Lemma~\ref{lemma:approximation} gives us a sequence of simple functions $u_n\colon [0,T]\to F$ of the form \eqref{simplefunction} such that $u_n(t)\xrightarrow[n\to +\infty]{U} u(t)$ for almost every $t\in [0,T]$. Proposition~\ref{prop:estimate} yields
\begin{equation*}
	\int_{0}^{T}\mc E(r,u^\eps(r))\d r\le \int_{0}^{T}\Big(\mc E(r,u_n(r))+\mc R(u_n(r)-u^\eps(r))+\frac \lambda 2 |u_n(r)-u^\eps(r)|_W^2\Big)\d r+C_n\sqrt{\eps}.
\end{equation*}
By passing to the limit along the subsequence $\eps_j$, arguing similarly as before we deduce
\begin{align*}
	\int_{0}^{T}\mc E(r,u(r))\d r&\le\liminf\limits_{j\to +\infty}\int_{0}^{T}\mc E(r,u^\epsj(r))\d r\le\limsup\limits_{j\to +\infty}\int_{0}^{T}\mc E(r,u^\epsj(r))\d r\\
	&\le \int_{0}^{T}\Big(\mc E(r,u_n(r))+\mc R(u_n(r)-u(r))+\frac \lambda 2 |u_n(r)-u(r)|_W^2\Big)\d r.
\end{align*}
By exploiting the fact that $u_n$ is valued in $F$, using again \eqref{eq:ineqgronwall} and due to \eqref{eq:strongcont} one can apply Lebesgue Dominated Convergence Theorem to the right-hand side above sending $n\to +\infty$, thus deducing
\begin{equation}\label{eq:integralconvergence}
	\lim\limits_{j\to +\infty}\int_{0}^{T}\mc E(r,u^\epsj(r))\d r=\int_{0}^{T}\mc E(r,u(r))\d r.
\end{equation}
Since by weak lower semicontinuity there holds $\mc E(r,u(r))\le \liminf\limits_{j\to +\infty}\mc E(r,u^\epsj(r))$ everywhere in $[0,T]$, equality \eqref{eq:integralconvergence} yields that $\mc E(\cdot,u^\eps_j(\cdot))$ converges to $\mc E(\cdot,u(\cdot))$ as $j\to +\infty$ in the sense of $L^1(0,T)$. As a byproduct we finally deduce \eqref{energyconvergence}, whence \eqref{eq:strongconvergence} if also \ref{hyp:E8} is in force, and we conclude.
\end{proof}
	
	We conclude this section by stating and proving two useful lemmas regarding further properties of the right and left limits $u^\pm$.
	
		\begin{lemma}\label{lemma:energy+-}
		Assume $\eps u_1^\eps$ is uniformly bounded in $W$ and $\mc E(0,u_0^\eps)$ is uniformly bounded, and let $u$ be the limit function obtained in Proposition~\ref{prop:compact}. Then there holds
		\begin{equation*}
			\lim\limits_{\substack{r\to t^\pm\\ r\notin J_u}}\mc E(r,u(r))= \mc E(t,u^\pm (t)),\qquad\text{for every }t\in [0,T].
		\end{equation*}
	\end{lemma}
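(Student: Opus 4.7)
The plan is to prove both a liminf inequality and a limsup inequality, whose combination yields the claimed limit. I focus on the right-limit case; the left-limit case is entirely symmetric, and the corner cases $t=T$ (right limit) and $t=0$ (left limit) are vacuous by the convention $u^+(T)=u(T)$, $u^-(0)=u(0)$ recalled in Section~\ref{sec:Notation}.

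Fix a sequence $\{r_n\}\subseteq [0,T]\setminus J_u$ with $r_n\to t^+$. From Remark~\ref{rmk:upm} together with the embeddings \eqref{eq:embeddings}, we have $u(r_n)\xrightharpoonup[]{U} u^+(t)$, $u(r_n)\xrightarrow[]{W} u^+(t)$, $u(r_n)\xrightarrow[]{Z} u^+(t)$, and moreover $u^+(t)\in D$. The liminf inequality then follows at once from the joint weak lower semicontinuity of $\mc E$ on $[0,T]\times U$ observed in Remark~\ref{rmk:gronwall}, giving $\mc E(t,u^+(t))\le \liminf_{n\to+\infty}\mc E(r_n,u(r_n))$.

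For the reverse inequality I exploit the bilateral nature of the global $\lambda$-stability at continuity points. Since $r_n\notin J_u$, Proposition~\ref{prop:lambdastab} (see \eqref{globalstability}) yields, upon testing the stability with the competitor $x=u^+(t)\in U$, the bound $\mc E(r_n,u(r_n)) \le \mc E(r_n, u^+(t)) + \mc R(u^+(t)-u(r_n)) + \tfrac{\lambda}{2}|u^+(t)-u(r_n)|_W^2$. Passing to the limit term by term: the first contribution converges to $\mc E(t,u^+(t))$ by the absolute continuity of $r\mapsto \mc E(r,u^+(t))$ granted by \ref{hyp:E2} (which applies precisely because $u^+(t)\in D$); the second vanishes by the Lipschitz continuity of $\mc R$ on $Z$ together with the strong $Z$-convergence of $u(r_n)$; the third vanishes by the strong $W$-convergence. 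Hence $\limsup_{n\to+\infty}\mc E(r_n,u(r_n))\le \mc E(t,u^+(t))$, and combining with the liminf bound gives the claim.

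No substantial obstacle is anticipated: the argument is a continuity transfer through $\lambda$-stability, and the only subtlety is verifying the admissibility of $u^+(t)$ as a test element (so that both the stability and the absolute continuity in \ref{hyp:E2} may be invoked), which is already recorded in Remark~\ref{rmk:upm}. The choice to approach $t$ along points outside $J_u$ is exactly what makes the upper stability inequality available along the approximating sequence.
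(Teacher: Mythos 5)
Your proof is correct and follows essentially the same route as the paper: the liminf bound via weak lower semicontinuity of $\mc E$ along the weak $U$-convergence $u(r)\rightharpoonup u^\pm(t)$, and the limsup bound by testing the global $\lambda$-stability \eqref{globalstability} at the continuity points $r\notin J_u$ with competitor $x=u^\pm(t)$, then letting the $\mc R$- and $W$-remainders vanish by the strong $Z$- and $W$-convergences from Remark~\ref{rmk:upm}. The only point worth flagging is that you correctly identified the one genuine prerequisite, namely $u^\pm(t)\in D$ so that \ref{hyp:E2} applies to $r\mapsto\mc E(r,u^\pm(t))$, which is exactly what the paper relies on as well.
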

	\begin{proof}
		Fix $t\in [0,T]$. By using \eqref{globalstability} we know that
		\begin{equation*}
			\mc E(r,u(r))\le \mc E(r, u^\pm(t))+\mc R(u^\pm(t)-u(r))+\frac \lambda 2|u^\pm(t)-u(r)|^2_W,\quad\text{ for all }r\in [0,T]\setminus J_u.
		\end{equation*}
		Hence, by recalling \eqref{limitzeta} and \eqref{limitu}, by semicontinuity we obtain
		\begin{align*}
			\mc E(t,u^\pm (t))&\le \liminf\limits_{\substack{r\to t^\pm\\ r\notin J_u}}\mc E(r,u(r))\le \limsup\limits_{\substack{r\to t^\pm\\ r\notin J_u}}\mc E(r,u(r))\\
			&\le \lim\limits_{\substack{r\to t^\pm\\ r\notin J_u}}\left(\mc E(r, u^\pm(t))+\mc R(u^\pm(t)-u(r))+\frac \lambda 2|u^\pm(t)-u(r)|^2_W\right)=\mc E(t,u^\pm (t)),
		\end{align*}
		and we conclude.
	\end{proof}
	
	\begin{lemma}\label{lemma:limz*}
		Assume in addition \ref{hyp:E9} and \ref{hyp:E10}. Assume $\eps u_1^\eps$ is uniformly bounded in $W$ and $\mc E(0,u_0^\eps)$ is uniformly bounded, and let $u$ be the limit function obtained in Proposition~\ref{prop:compact}. Then for all $t\in (0,T)$ there hold
		\begin{equation}\label{limitzeta*}
			u^+(s)\xrightarrow[s\to t^\pm]{Z^*}u^\pm(t),\quad\text{ and }\quad u^-(s)\xrightarrow[s\to t^\pm]{Z^*}u^\pm(t).
		\end{equation}
	\end{lemma}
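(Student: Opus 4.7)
The idea is to combine the global $\lambda$-stability of the left and right limits, which guarantees that they lie in the set described by assumption \ref{hyp:E10}, with the fact that this set is precompact in $Z^*$. The already known convergence in $W$ (coming from $u\in BV([0,T];Z)$ together with the weak convergence in $U$ and the embedding $U\hookrightarrow\hookrightarrow W$) will then pin down the $Z^*$-limit uniquely.

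More precisely, fix $t\in(0,T)$ and a sequence $s_n\to t^+$ (the case $s_n\to t^-$ is symmetric). By Proposition~\ref{prop:lambdastab} and Lemma~\ref{lemma:localstability}, each $u^+(s_n)$ and $u^-(s_n)$ satisfies the local stability condition at time $s_n$: there exist $\xi_n^\pm\in\partial^U\mc E(s_n,u^\pm(s_n))$ with $-\xi_n^\pm\in\partial^Z\mc R(0)$. By \eqref{boundedness} we have $\|\xi_n^\pm\|_{Z^*}\leq \rho_2$, so $\xi_n^\pm\in\partial^U\mc E(s_n,u^\pm(s_n))\cap Z^*$. Combining this with the uniform energy bound (which follows from Proposition~\ref{prop:unifbounds}(i), the Gr\"onwall-type inequality \eqref{eq:ineqgronwall}, and weak lower semicontinuity applied to $u^\pm(s_n)$ as the weak $U$-limit of an $\eps_j$-subsequence), we obtain a constant $C$, independent of $n$, such that
\begin{equation*}
\mc E(s_n,u^\pm(s_n))+\min_{\xi\in\partial^U\mc E(s_n,u^\pm(s_n))\cap Z^*}\|\xi\|_{Z^*}\leq C.
\end{equation*}
Invoking Remark~\ref{rmk:precompact}, whose hypotheses \ref{hyp:E2} and \ref{hyp:E9} are in force, the family $\{u^\pm(s_n)\}_n$ is thus precompact in $Z^*$.

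To conclude, observe that from \eqref{eq:embeddings} and the identification $W=W^*$ we get $Z^*\hookrightarrow W^*=W$, so strong convergence in $Z^*$ implies strong convergence in $W$. We already know from the $BV([0,T];Z)$-regularity of $u$, the embedding $W\hookrightarrow Z$, and the observation that $u^\pm(s)\to u^+(t)$ in $Z$ as $s\to t^+$ (indeed, $u^+$ is right-continuous and $u^-(s)$ also approaches $u^+(t)$ since both take values from the right-neighborhood of $t$), together with the uniform $U$-bound on the family, that $u^\pm(s_n)\to u^+(t)$ strongly in $W$. Hence every $Z^*$-convergent subsequence of $\{u^\pm(s_n)\}$ must have limit $u^+(t)$, and precompactness then forces the full sequence to converge to $u^+(t)$ in $Z^*$. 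This yields \eqref{limitzeta*}.

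The only mildly delicate point is keeping track of topology identifications: one must check that the stability vectors $\xi_n^\pm$ indeed belong to $Z^*$ (not just to $U^*$), which is exactly what \eqref{boundedness} provides, and that $Z^*$-convergence and $W$-convergence are compatible via the dual chain $Z^*\hookrightarrow W$. Once this is clear, the argument reduces to the standard ``precompactness plus uniqueness of subsequential limits'' pattern.
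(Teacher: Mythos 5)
Your proposal is correct and follows essentially the same route as the paper: the global $\lambda$-stability of $u^\pm$ combined with \eqref{boundedness} places the one-sided limits in the set of Remark~\ref{rmk:precompact}, hence in a $Z^*$-precompact family, and the limit is then identified through the already established strong $W$-convergence via $Z^*\hookrightarrow W^*=W$. The only cosmetic difference is that the paper applies the precompactness to the whole families $\{u^\pm(t)\}_{t\in(0,T)}$ at once rather than along a fixed sequence $s_n\to t^\pm$, which changes nothing.
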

\begin{proof}
	By weak lower semicontinuity of $\mc E$, the bound $(i)$ in Proposition~\ref{prop:unifbounds} together with $(a)$ in Proposition~\ref{prop:compact} and \eqref{limitu} yields
	\begin{equation*}
		\sup\limits_{t\in (0,T)}\mc E(t, u^\pm(t))\le C.
	\end{equation*}
Furthermore, by exploiting $(\lambda GS^\pm)$ and Lemma~\ref{lemma:localstability}, for all $t\in (0,T)$ we obtain
\begin{equation*}
	\min\limits_{\xi\in\partial^U\mc E(t,u^\pm(t))\cap Z^*}\|\xi\|_{Z^*}\le \sup\limits_{\eta\in\partial^Z\mc R(0)}\|\eta\|_{Z^*}\le \rho_2,
\end{equation*}
where in the last inequality we used \eqref{boundedness}.

This means that the families $\{u^+(t)\}_{t\in (0,T)}$ and $\{u^-(t)\}_{t\in (0,T)}$ are contained in the set
\begin{equation*}
	\bigcup_{t\in (0,T)}\left\{u\in U:\, \mc E(t,u)+\min\limits_{\xi\in\partial^U\mc E(t,u)\cap Z^*}\|\xi\|_{Z^*}\le C\right\},
\end{equation*}
which by Remark~\ref{rmk:precompact} is precompact in $Z^*$. We now conclude since the possible limit points are indentified by means of \eqref{limitvw} (recall $W=W^*$).
\end{proof}
	
	\subsection{Limit passage in the energy-dissipation balance}\label{subsec:enbal}	
	We now focus on the asymptotic behaviour of the energy-dissipation balance \eqref{eq:energybalance}  (actually of the inequality \eqref{energyineq}) as $\eps\to 0$.

	\begin{prop}\label{prop:ineqpm}
		 Assume $\eps u_1^\eps$ is uniformly bounded in $W$ and $\mc E(0,u_0^\eps)$ is uniformly bounded. Then the limit function $u$ obtained in Proposition~\ref{prop:compact} satisfies the following inequalities:
		\begin{subequations}\label{eq:ineqpm}
			\begin{equation}\label{ineq+}
				\mc E(t,u^+(t))+V_\mc R(u;s+,t+)\le \mc E(s,u^+(s))+\int_{s}^{t}\partial_t\mc E(r,u(r))\d r,\quad\text{for all }0\le s\le t\le T;
			\end{equation}
			\begin{equation}\label{ineq-}
			\mc E(t,u^+(t))+V_\mc R(u;s-,t+)\le \mc E(s,u^-(s))+\int_{s}^{t}\partial_t\mc E(r,u(r))\d r,\quad\text{for all }0<s\le t\le T.
		\end{equation}
		\end{subequations}
	If in addition $\eps u_1^\eps\xrightarrow[\eps\to 0]{W}0$ and $\mc E(0,u_0^\eps)\xrightarrow[\eps\to 0]{}\mc E(0,u_0)$, then \eqref{ineq-} holds true also in $s=0$.
	\end{prop}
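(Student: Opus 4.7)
The plan is to pass to the limit $\eps_j\to 0$ in the energy-dissipation inequality \eqref{energyineq} applied between carefully chosen approximating times, and then to send these times to $s$ and $t$ from the appropriate sides. I first introduce the set $G\subseteq[0,T]$ consisting of times $r$ outside $N\cup J_u$ at which both the energy convergence \eqref{energyconvergence} and $\eps_j\dot u^{\eps_j}(r)\to 0$ in $V$ hold, and which are Lebesgue points of $r\mapsto\partial_t\mc E(r,u(r))$. By Proposition~\ref{prop:compact}$(b)$ and Proposition~\ref{prop:lambdastab} the complement of $G$ has zero measure, so $G$ is dense and admits sequences approaching any time from either side.

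For \eqref{ineq+} I pick $s_n,t_n\in G$ with $s_n\searrow s$, $t_n\searrow t$, $s_n\le t_n$, and apply \eqref{energyineq} with these endpoints. The inertial term $\frac{\eps^2}{2}|\dot u^\eps(t_n)|_\M^2$ on the left is nonnegative and can be dropped; \eqref{energyconvergence} at $t_n$ yields $\mc E(t_n,u^{\eps_j}(t_n))\to\mc E(t_n,u(t_n))$; and since $\mc R_\eps\ge\mc R$ and $\mc R_\eps^*\ge 0$, one has
\begin{equation*}
\int_{s_n}^{t_n}\!\mc R_{\eps_j}(\dot u^{\eps_j})+\mc R_{\eps_j}^*(-\eps_j^2\M\ddot u^{\eps_j}-\xi^{\eps_j})\,\d r\ge\int_{s_n}^{t_n}\!\mc R(\dot u^{\eps_j}(r))\,\d r=V_\mc R(u^{\eps_j};s_n,t_n),
\end{equation*}
whose $\liminf$ as $j\to\infty$ is bounded below by $V_\mc R(u;s_n,t_n)$ thanks to the pointwise strong convergence of $u^{\eps_j}$ in $Z$ (Proposition~\ref{prop:compact}$(a)$ together with \eqref{eq:embeddings}) and the lower semicontinuity of $V_\mc R$ with respect to pointwise convergence. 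On the right, $\frac{\eps_j^2}{2}|\dot u^{\eps_j}(s_n)|_\M^2\to 0$ since $s_n\in G$; $\mc E(s_n,u^{\eps_j}(s_n))\to\mc E(s_n,u(s_n))$ by \eqref{energyconvergence}; and the power integral converges by \ref{hyp:E4}, the uniform bound in Proposition~\ref{prop:unifbounds}$(i)$ combined with \ref{hyp:E2}, and dominated convergence (with dominating function $\propto b(r)$).

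Once the $\eps_j$-limit has been taken, letting $n\to\infty$ and invoking Lemma~\ref{lemma:energy+-} gives $\mc E(t_n,u(t_n))\to\mc E(t,u^+(t))$ and $\mc E(s_n,u(s_n))\to\mc E(s,u^+(s))$; the variation converges to $V_\mc R(u;s+,t+)$ by the very definition of that quantity, and the power term passes to the limit by continuity of the integral with respect to its endpoints. This yields \eqref{ineq+}. For \eqref{ineq-} the only change is to choose $s_n\in G$ with $s_n\nearrow s$, so that Lemma~\ref{lemma:energy+-} delivers $\mc E(s,u^-(s))$ on the right; the rest of the argument is unchanged. Finally, the extension of \eqref{ineq-} to $s=0$ under the strengthened initial data exploits \eqref{energyineq} (or better \eqref{eq:energybalance}) starting directly from $s=0$: the initial kinetic term $\frac{\eps^2}{2}|u_1^\eps|_\M^2\to 0$ by assumption on $\eps u_1^\eps$, $\mc E(0,u_0^\eps)\to\mc E(0,u_0)=\mc E(0,u^-(0))$ by hypothesis, and the remaining limits are handled as above. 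The main subtlety is the simultaneous control of the dissipation: one must combine the chain of inequalities $\mc R_\eps\ge\mc R$ with the identity $\int\mc R(\dot u^\eps)\,\d r=V_\mc R(u^\eps;\cdot,\cdot)$ and then invoke lower semicontinuity of $V_\mc R$ under pointwise $Z$-convergence, while ensuring that the approximating endpoints $s_n,t_n$ can be chosen inside the good set $G$ before passing to the $\eps_j$-limit.
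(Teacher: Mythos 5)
Your overall strategy coincides with the paper's: pass to the limit in \eqref{energyineq} between ``good'' endpoints (outside the null sets, where the kinetic term vanishes and the energies converge), discard $\mc R_\eps^*\ge 0$ and use $\mc R_\eps\ge\mc R$ together with lower semicontinuity of the $\mc R$-variation under pointwise $Z$-convergence, and only afterwards send the endpoints to $s$ and $t$ from the appropriate sides via Lemma~\ref{lemma:energy+-}. The limit passages you list for the inertial, dissipative and power terms are all the ones the paper uses.

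There is, however, one point that does not match the stated hypotheses: you invoke the full energy convergence \eqref{energyconvergence} at the points of your good set $G$, but \eqref{energyconvergence} is only established under the additional assumption \ref{hyp:E7}, which is \emph{not} among the hypotheses of Proposition~\ref{prop:ineqpm} (and cannot be added, since the proposition is needed for the convex case of Theorem~\ref{mainthm:convex}, which only assumes \ref{hyp:E1}--\ref{hyp:E5}). The fix is the one the paper uses: at the left-hand endpoint $t_n$ only a lower bound is needed, so weak lower semicontinuity \ref{hyp:E1} suffices; at the right-hand endpoint $s_n$ one uses \eqref{energyliminf}, which holds for a.e.\ $s_n$ without \ref{hyp:E7}, and extracts a further subsequence of $\eps_j$ along which $\mc E(s_n,u^{\eps_j}(s_n))\to\mc E(s_n,u(s_n))$ (harmless, since the resulting inequality concerns only $u$). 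With this replacement your argument is complete; the remaining ingredients (density of the good set, $V_\mc R(u^{\eps_j};s_n,t_n)=\int_{s_n}^{t_n}\mc R(\dot u^{\eps_j})\,\d r$, dominated convergence for the power term, and the treatment of $s=0$ under the strengthened initial data) are exactly as in the paper.
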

	\begin{proof}
		Let us fix $t\in [0,T]$ and pick $s\ge t$ such that \eqref{energyineq}, \eqref{energyliminf}, and $(b)$ in Proposition~\ref{prop:compact} hold true. By recalling that $\mc R_\eps(v)+\mc R_\eps^*(\zeta)\ge \mc R(v)$, from \eqref{energyineq} we thus know
		\begin{equation*}
			\mc E(t,\xepsj(t))+\int_s^t {\mc R}(\xepsjd(r))\d r \le \frac{\varepsilon_j^2}{2}|\xepsjd(s)|_{\mathbb{M}}^2 +  \mc E(s,\xepsj(s)) +  \int_s^t \partial_t\mc E(r,\xepsj(r))\,\mathrm{d}r.
		\end{equation*}
	Letting $j\to +\infty$, by exploiting the weak lower semicontinuity of $\mc E$ and of the $\mc R$-variation, and by using \eqref{energyliminf}, $(b)$ in Proposition~\ref{prop:compact} and assumption \ref{hyp:E4} we obtain
	\begin{equation}\label{eq:claim}
			\mc E(t,u(t))+V_\mc R(u;s,t)\le \mc E(s,u(s))+\int_{s}^{t}\partial_t\mc E(r,u(r))\d r,\quad \text{for all $t\in [0,T]$ and for a.e. $s\in [0,t]$.}
	\end{equation}
Observe that the choice $s=0$ can be done if $\eps u_1^\eps\xrightarrow[\eps\to 0]{W}0$ and $\mc E(0,u_0^\eps)\xrightarrow[\eps\to 0]{}\mc E(0,u_0)$.

In order to prove \eqref{ineq+} (and similarly \eqref{ineq-}) we now consider sequences $t_k\to t^+$, $s_k\to s^+$ such that $s_k\le t_k$, $s_k,t_k\notin J_u$, and \eqref{eq:claim} holds. By means of Lemma~\ref{lemma:energy+-}, we now conclude by letting $k\to +\infty$.
	\end{proof}
	
	As a simple corollary, we deduce the existence of a so-called \emph{defect measure} filling the gap in the inequalities \eqref{eq:ineqpm}.
	
	\begin{cor}
		Assume $\eps u_1^\eps\xrightarrow[\eps\to 0]{W}0$ and $\mc E(0,u_0^\eps)\xrightarrow[\eps\to 0]{}\mc E(0,u_0)$. Then there exists a positive Radon measure $\mu$ on $[0,T]$ such that
		\begin{equation}\label{eneq1}
			\mc E(t,u^+(t))+\mu([s,t])= \mc E(s,u^-(s))+\int_{s}^{t}\partial_t\mc E(r,u(r))\d r,\quad \text{for all $0\le s\le t\le T$.}
		\end{equation}
	\end{cor}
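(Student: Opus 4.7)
The plan is to produce $\mu$ as the Lebesgue--Stieltjes measure of a suitable cumulative distribution function. Concretely, I would set
\begin{equation*}
G(t) := \mc E(0, u_0) - \mc E(t, u^+(t)) + \int_0^t \partial_t\mc E(r, u(r))\d r, \qquad t\in [0, T],
\end{equation*}
extended to $\R$ by $G\equiv 0$ on $(-\infty,0)$ and $G\equiv G(T)$ on $(T,+\infty)$, and then verify that $G$ is nondecreasing and right-continuous so that the associated Stieltjes measure $\mu:=\mu_G$ is a positive Radon measure on $[0,T]$ whose value on closed intervals, $\mu([s,t]) = G(t) - G(s^-)$, reproduces the right-hand side of \eqref{eneq1} once one recalls the convention $u^-(0) = u_0$.

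Monotonicity of $G$ on $[0,T]$ is immediate from \eqref{ineq+}, which gives $G(t) - G(s) \ge V_\mc R(u; s+, t+) \ge 0$ for $0\le s \le t\le T$; the non-negativity $G(0)\ge 0$ needed to glue the extension at the origin follows from \eqref{ineq-} evaluated at $s=t=0$, now that the strengthened assumption on the initial data permits the choice $s=0$ there.

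The substantive step is the identification of the one-sided limits
\begin{equation*}
\lim_{r\to t^+}\mc E(r, u^+(r)) = \mc E(t, u^+(t)), \qquad \lim_{r\to t^-}\mc E(r, u^+(r)) = \mc E(t, u^-(t)),
\end{equation*}
from which both the right-continuity of $G$ and the explicit formula $G(s^-) = \mc E(0,u_0) - \mc E(s,u^-(s)) + \int_0^s\partial_t\mc E(r,u(r))\d r$ will follow. I would proceed as follows: since $u\in BV([0,T];Z)$, the map $u^+$ is right-continuous into $Z$, and its left limit at $t$ coincides with $u^-(t)$ because $u^+$ and $u$ agree off the countable jump set; combined with the uniform $U$-bound, which by the arguments in Remark~\ref{rmk:upm} also applies to $u^\pm$, this upgrades to weak convergence in $U$ and, via $U\hookrightarrow\hookrightarrow W$, to strong convergence in $W$. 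Joint weak lower semicontinuity of $\mc E$ (Remark~\ref{rmk:gronwall}) delivers the $\liminf$ inequality, while for the $\limsup$ I would test $(\lambda\mathrm{GS}^+)$ (respectively $(\lambda\mathrm{GS}^-)$) at time $r_n\to t^+$ (respectively $t^-$) against the fixed state $u^+(t)$ (respectively $u^-(t)$), which lies in $D$ by Remark~\ref{rmk:upm}, and pass to the limit using \ref{hyp:E2} to handle the $\mc E(r_n,\cdot)$-term, the Lipschitz continuity of $\mc R$ on $Z$, and the strong $W$-convergence just established.

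The main obstacle is exactly this last passage: since $u^+$ is only \emph{weakly} continuous into $U$, one cannot deduce continuity of $\mc E(\cdot, u^+(\cdot))$ from \ref{hyp:E1}--\ref{hyp:E2} alone, and the argument crucially exploits both the $\lambda$-stability produced in Proposition~\ref{prop:lambdastab} and the compact embedding $U\hookrightarrow\hookrightarrow W$ to promote one-sided weak semicontinuity into a genuine two-sided limit.
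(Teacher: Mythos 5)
Your argument is correct and is essentially the paper's proof: the measure is obtained from the monotone function $t\mapsto \mc E(t,u^+(t))-\int_0^t\partial_t\mc E(r,u(r))\d r$ (you work with its negative, shifted by a constant, which yields the same $\mu$), and the identification of its one-sided limits is the only substantive step. The limit identification you re-derive via $(\lambda\mathrm{GS}^\pm)$, the compact embedding $U\hookrightarrow\hookrightarrow W$ and weak lower semicontinuity is exactly the content of Lemma~\ref{lemma:energy+-}, which the paper simply invokes at this point.
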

\begin{proof}
	From \eqref{ineq+} it follows that the function $f(t):=	\mc E(t,u^+(t))-\int_{0}^{t}\partial_t\mc E(r,u(r))\d r$ is nonincreasing on $[0,T]$, and so (see for instance \cite{AFP}) there exists a positive Radon measure $\mu$ such that
	\begin{equation*}
		f^+(t)+\mu([s,t])=f^-(s), \quad \text{for all $0\le s\le t\le T$.}
	\end{equation*}
By using Lemma~\ref{lemma:energy+-} we deduce $f^\pm(t)=\mc E(t,u^\pm(t))-\int_{0}^{t}\partial_t\mc E(r,u(r))\d r$ and we conclude.
\end{proof}

Next proposition is a crucial step towards the characterization of $u$ as an IBV solution to the rate-independent system \eqref{riprob}. We indeed show that the diffuse part of the just obtained defect measure $\mu$ coincides with the $\mc R$-variation of the continuous part of $u$.

\begin{prop}\label{prop:prop6.11}
	In the nonconvex case $\lambda>0$ assume \ref{hyp:E9} and \ref{hyp:E10}. Assume $\eps u_1^\eps\xrightarrow[\eps\to 0]{W}0$ and $\mc E(0,u_0^\eps)\xrightarrow[\eps\to 0]{}\mc E(0,u_0)$, and let $u$ be the limit function obtained in Proposition~\ref{prop:compact}. Then for every $0\le s\le t\le T$ there holds
	\begin{equation*}
		\mc E(t,u^+(t))+V_\mc R(u_{\rm co};s,t)+\sum_{r\in J_u^{\rm e}\cap [s,t]}\mu(\{r\})= \mc E(s,u^-(s))+\int_{s}^{t}\partial_t\mc E(r,u(r))\d r,
	\end{equation*}
where $u_{\rm co}$ has been introduced in \eqref{eq:decompositionf}, and $J_u^{\rm e}$ in \eqref{essjump}.
\end{prop}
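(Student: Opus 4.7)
The plan is to establish the equivalent identity
\[
\mu([s,t]) = V_\mc R(u_{\rm co};s,t) + \sum_{r\in J_u^{\rm e}\cap[s,t]}\mu(\{r\}),
\]
which, compared with \eqref{eneq1}, yields the claim. I would split $\mu = \mu_{\rm d} + \mu_{\rm a}$ into its diffuse and atomic parts and treat each piece separately. The atoms are immediately identified by specializing \eqref{eneq1} to $s=t$, which gives $\mu(\{t\}) = \mc E(t,u^-(t))-\mc E(t,u^+(t))$; this vanishes at every $t\notin J_u^{\rm e}$ because there $u^+(t)=u^-(t)$. Hence $\mu_{\rm a}$ is concentrated on $J_u^{\rm e}$ and the remaining task is to prove $\mu_{\rm d}([s,t])=V_\mc R(u_{\rm co};s,t)$.

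For the lower bound, Proposition~\ref{prop:ineqpm} yields $V_\mc R(u;s-,t+) \leq \mu([s,t])$, i.e.\ $dv \leq d\mu$ as Radon measures with $v(t):=V_\mc R(u;0,t)$. Using the decomposition \eqref{eq:decompositionf}--\eqref{vcovj} together with the observation that the jumps of $v$ outside $J_u^{\rm e}$ contribute nothing to the left/right limits (since $u^\pm$ agree there), the diffuse parts inherit the inequality and give $V_\mc R(u_{\rm co};s,t) \leq \mu_{\rm d}([s,t])$.

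The heart of the proof is the reverse inequality. For $s,t\in[0,T]\setminus J_u$, I would select a partition $s=t_0<\dots<t_n=t$ contained in $[0,T]\setminus J_u$ (possible since $J_u$ is at most countable) and, at each subinterval, apply the global $\lambda$-stability \eqref{globalstability} at $t_{i-1}$ tested against $u(t_i)\in U$:
\[
\mc E(t_{i-1},u(t_{i-1})) \leq \mc E(t_{i-1},u(t_i)) + \mc R(u(t_i)-u(t_{i-1})) + \tfrac{\lambda}{2}|u(t_i)-u(t_{i-1})|_W^2.
\]
Combining with \eqref{eneq1} on $[t_{i-1},t_i]$ and using \ref{hyp:E4} to control $\int_{t_{i-1}}^{t_i}[\partial_t\mc E(r,u(r))-\partial_t\mc E(r,u(t_i))]\,\mathrm{d}r$ by $\omega_M(\delta_i)\int_{t_{i-1}}^{t_i}\gamma_M(r)\,\mathrm{d}r$ with $\delta_i:=\sup_{r\in[t_{i-1},t_i]}|u(r)-u(t_i)|_W$, then summing (with $\mu(\{t_i\})=0$), one obtains
\[
\mu([s,t]) \leq \sum_i \mc R(u(t_i)-u(t_{i-1})) + \tfrac{\lambda}{2}\sum_i|u(t_i)-u(t_{i-1})|_W^2 + \max_i\omega_M(\delta_i)\int_s^t\gamma_M(r)\,\mathrm{d}r.
\]
As the mesh tends to zero, taking the supremum over partitions of $[0,T]\setminus J_u$ makes the first sum converge to $V_\mc R(u_{\rm co};s,t)$ (since $u_J$ is locally constant on continuity intervals when evaluated off $J_u$), while the error term vanishes because $u$ is strongly continuous in $W$ at continuity points thanks to the compact embedding $U\hookrightarrow\hookrightarrow W$ and the uniform $U$-bound.

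The main obstacle is the quadratic $W$-remainder $\sum_i|u(t_i)-u(t_{i-1})|_W^2$, which is the delicate infinite-dimensional feature absent in \cite{RivScilSol}. I would treat it via $\sum_i|\Delta u_i|_W^2 \leq \max_i|\Delta u_i|_W \cdot \sum_i|\Delta u_i|_W$, relying on the uniform $U$-bound from Proposition~\ref{prop:compact} together with an Ehrling-type interpolation to keep the running sum bounded while the maximum vanishes with the mesh. Once this is established, one obtains $\mu_{\rm d}([s,t]) \leq V_\mc R(u_{\rm co};s,t)$ for $s,t\notin J_u$, matching the lower bound. The general case $s,t\in[0,T]$ is then recovered by approximating from within the dense set $[0,T]\setminus J_u$, using Lemma~\ref{lemma:energy+-} together with the atomic structure identified at the outset.
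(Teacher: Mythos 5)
Your overall skeleton matches the paper's: decompose $\mu$ into diffuse and atomic parts, identify the atoms with $J_u^{\rm e}$ via \eqref{eneq1} and \eqref{ineq-}, and obtain the inequality $V_\mc R(u_{\rm co};s,t)\le\mu_{\rm d}([s,t])$ from $v'\le\mu$ as Radon measures. The gap is in the reverse inequality, which is precisely where the infinite-dimensional difficulty sits. Your control of the quadratic remainder $\sum_i|u(t_i)-u(t_{i-1})|_W^2$ by $\max_i|\Delta u_i|_W\cdot\sum_i|\Delta u_i|_W$ requires $\sum_i|\Delta u_i|_W$, i.e.\ the $W$-variation of $u$, to stay bounded as the mesh refines; but the a priori variation bound (Proposition~\ref{prop:unifbounds}(iv) and Lemma~\ref{helly}) is only in $Z$, and since $W\hookrightarrow Z$ the $Z$-variation does \emph{not} control the $W$-variation. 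An Ehrling interpolation $|\cdot|_W\le\delta\|\cdot\|_U+C_\delta\|\cdot\|_Z$ does not rescue this: the $U$-terms are only bounded individually, so their sum grows like the number of partition points, and no choice of $\delta$ depending on the mesh closes the estimate. Moreover, neither $\max_i|\Delta u_i|_W$ nor your error term $\max_i\omega_M(\delta_i)$ tends to zero when a subinterval $[t_{i-1},t_i]$ contains an essential jump of $u$, which cannot be excluded even though the nodes lie in $[0,T]\setminus J_u$. A symptom of the gap is that your argument never invokes \ref{hyp:E9} or \ref{hyp:E10}, which appear in the hypotheses of the proposition exactly for this step. (A minor additional slip: the supremum of $\sum_i\mc R(\Delta u_i)$ over partitions with nodes off $J_u$ is the \emph{essential} $\mc R$-variation, including the jump contributions, not $V_\mc R(u_{\rm co};s,t)$; this is repairable by subtracting the atoms, but worth noting.)

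The paper's proof replaces the global partition estimate by a measure-theoretic localization: it shows the Radon--Nikodym bound $\frac{\d(v_{\rm co})'}{\d\mu_{\rm d}}\ge1$ $\mu_{\rm d}$-a.e. The quadratic term is handled by the duality estimate $|u^+(t)-u^-(s)|_W^2\le\|u^+(t)-u^-(s)\|_{Z^*}\,\|u^+(t)-u^-(s)\|_Z\le\rho_1^{-1}\|u^+(t)-u^-(s)\|_{Z^*}\,v'([s,t])$, so that it is of the same order as $v'([s,t])$ times a prefactor which vanishes as $s,t\to$ the blow-up point thanks to Lemma~\ref{lemma:limz*} (strong $Z^*$-continuity of $u^\pm$, which is where \ref{hyp:E9}--\ref{hyp:E10} enter through the precompactness in $Z^*$ of stable states). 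The $\omega_M$-error term is then shown to be $o(\mu_{\rm d}([t-h,t+h]))$ by a Besicovitch differentiation argument splitting into the Cantor and absolutely continuous regimes, rather than by uniform smallness over a partition. You would need to incorporate both of these devices to make your approach work.
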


\begin{proof}
	We split $\mu$ in the sum of its diffuse part $\mu_{\rm d}$ and its atomic part $\mu_{J}$, and we first observe that $\mu_{J}$ charges exactly the essential jump set $J^{\rm e}_u$ of $u$. Indeed, if $\mu_{J}(\{t\})>0$, then \eqref{eneq1} implies that $\mc E(t,u^-(t))>\mc E(t,u^+(t))$, hence $t$ belongs to $J^{\rm e}_u$. On the other hand, by using \eqref{ineq-} we deduce that for $t\in J^{\rm e}_u$ there holds $\mu_{J}(\{t\})=\mc E(t,u^-(t))-\mc E(t,u^+(t))\ge \mc R(u^+(t)-u^-(t))>0$.
	
	Thus, \eqref{eneq1} yields
	\begin{equation*}
		\mc E(t,u^+(t))+\mu_{\rm d}([s,t])+\sum_{r\in J_u^{\rm e}\cap [s,t]}\mu(\{r\})= \mc E(s,u^-(s))+\int_{s}^{t}\partial_t\mc E(r,u(r))\d r,
	\end{equation*}
and we just need to prove that $\mu_{\rm d}([s,t])=V_\mc R(u_{\rm co};s,t)$ for all $0\le s\le t\le T$ to conclude.

We set $v(t):=V_\mc R(u;0,t)$ and we denote by $v'$ its distributional derivative, which is a positive Radon measure by monotonicity. So there holds
\begin{equation*}
	v'([s,t])=V_\mc R(u,s-,t+),\quad\text{for all }0\le s\le t\le T.
\end{equation*}
Moreover, recalling \eqref{vcovj}, one has
\begin{equation*}
	(v')_{\rm d}=(v_{\rm co})',\quad\text{ and }\quad (v')_J=(v_J)'.
\end{equation*}
By using \eqref{eneq1} and \eqref{ineq-}, for every $0\le s\le t\le T$ we now obtain
\begin{align*}
	\mu([s,t])=\mc E(s,u^-(s))-\mc E(t,u^+(t))+\int_{s}^{t}\partial_t\mc E(r,u(r))\d r\ge V_\mc R(u;s-,t+)=v'([s,t]).
\end{align*}
Since both $\mu$ and $v'$ are Radon measures, the above inequality can be extended to all Borel subsets of $[0,T]$. This yields
\begin{align*}
	\mu_{\rm d}([s,t])&=\mu([s,t]\setminus J^{\rm e}_u)\ge v'([s,t]\setminus J^{\rm e}_u)\ge (v')_{\rm d}([s,t]\setminus J^{\rm e}_u)=(v')_{\rm d}([s,t])=(v_{\rm co})'([s,t])\\
	&=v_{\rm co}(t)-v_{\rm co}(s)=V_\mc R(u_{\rm co};s,t). 
\end{align*}
In order to prove the other inequality we will show that
\begin{equation}\label{RNderivative}
	\frac{\d (v_{\rm co})'}{\d\mu_{\rm d}}\ge 1,\quad \mu_{\rm d}\text{-a.e. in }(0,T),
\end{equation}
where $\frac{\d (v_{\rm co})'}{\d\mu_{\rm d}}$ is the Radon-Nikodym derivative of the measure $(v_{\rm co})'$  with respect to $\mu_{\rm d}$.

We start by observing that $v'=(v_{\rm co})'+(v_J)'$. Furthermore, since $(v_J)'$ is purely atomic while $\mu_{\rm d}$ is a diffuse measure, there must hold
\begin{equation}\label{RNderivative2}
	\frac{\d (v_{J})'}{\d\mu_{\rm d}}(t)=\lim\limits_{h\to 0^+}\frac{(v_{J})'([t-h,t+h])}{\mu_{\rm d}([t-h,t+h])}=0,\quad \text{ for $\mu_{\d}$-a.e. }t\in (0,T).
\end{equation}
Moreover, from \eqref{eneq1}, for all $0< s\le t< T$ we deduce
\begin{align*}
	\mu_{\d}([s,t])&\le \mc E(s,u^-(s))-\mc E(t,u^+(t)) +\int_{s}^{t}\partial_t\mc E(r,u(r))\d r\\
	&= \mc E(s,u^-(s))-\mc E(s,u^+(t)) +\int_{s}^{t}(\partial_t\mc E(r,u(r))-\partial_t\mc E(r,u^+(t)))\d r,
\end{align*}
whence, by means of $(\lambda GS^-)$ and \ref{hyp:E4}, we obtain
\begin{align}\label{eq:ineqprinc}
	\mu_{\d}([s,t])\le \mc R(u^+(t)-u^-(s))+\frac\lambda 2|u^+(t)-u^-(s)|^2_W +\int_{s}^{t}\gamma_{\bar C}(r)\omega_{\bar C}(|u(r)-u^+(t)|_W)\d r.
\end{align}
We will now exploit the following three facts:
\begin{itemize}
	\item[1)] by definition of $\mc R$-variation there holds
	\begin{equation}\label{eq:vardef}
		\mc R(u^+(t)-u^-(s))\le V_\mc R(u;s-,t+)=v'([s,t]);
	\end{equation}
	\item[2)] if $\lambda>0$, Lemma~\ref{lemma:limz*} ensures that $u^\pm(r)\in Z^*$ for $r\in(0,T)$, and so by using also \eqref{Rbounds} and \eqref{eq:vardef} we have
	\begin{equation}\label{eq:ineqzeta*}
		\begin{aligned}
			|u^+(t)-u^-(s)|^2_W&=\langle u^+(t)-u^-(s),u^+(t)-u^-(s)\rangle_Z\le \|u^+(t)-u^-(s)\|_{Z^*}\|u^+(t)-u^-(s)\|_Z\\
			&\le \rho_1^{-1}\|u^+(t)-u^-(s)\|_{Z^*}\mc R(u^+(t)-u^-(s))\le \rho_1^{-1}\|u^+(t)-u^-(s)\|_{Z^*} v'([s,t]);
		\end{aligned}
	\end{equation}
\item[3)] we claim that
\begin{equation}\label{eq:claim2}
	\lim\limits_{h\to 0^+}\underbrace{\frac{1}{\mu_{\d}([t-h,t+h])}\int_{t-h}^{t+h}\gamma_{\bar C}(r)\omega_{\bar C}(|u(r)-u^+(t+h)|_W)\d r}_{=:I_h(t)}=0,\text{ for $\mu_{\d}$-a.e. }t\in [0,T].
\end{equation}	
\end{itemize}
By plugging \eqref{eq:vardef} and \eqref{eq:ineqzeta*} into \eqref{eq:ineqprinc}, for every $0<s\le t< T$ we thus have 
\begin{equation*}
	\mu_{\d}([s,t])\le (1+C\lambda\|u^+(t)-u^-(s)\|_{Z^*}) v'([s,t])+\int_{s}^{t}\gamma_{\bar C}(r)\omega_{\bar C}(|u(r)-u^+(t)|_W)\d r.
\end{equation*}
Let us now fix $t\in (0,T)\setminus J_u$ such that it satisfies \eqref{RNderivative2}, \eqref{eq:claim2} and there exists $\frac{\d (v_{\rm co})'}{\d\mu_{\rm d}}(t)$. Observe that the set of times $t$ which do not fulfil the above properties has null measure with respect to $\mu_{\d}$.
Finally we obtain:
\begin{align*}
	\frac{\d (v_{\rm co})'}{\d\mu_{\rm d}}(t)&=\lim\limits_{h\to 0^+}\frac{(v_{\rm co})'([t-h,t+h])}{\mu_{\rm d}([t-h,t+h])}=\lim\limits_{h\to 0^+}\frac{v'([t-h,t+h])}{\mu_{\rm d}([t-h,t+h])}\\
	&\ge \lim\limits_{h\to 0^+}\frac{1-I_h(t)}{1+C\lambda\|u^+(t+h)-u^-(t-h)\|_{Z^*}}=1,
\end{align*}
where we also used \eqref{limitzeta*}. Hence \eqref{RNderivative} is proved, and we conclude up to show the validity of \eqref{eq:claim2}.

To this aim, we denote by $\mu_{\rm Ca}$ the Cantor part of $\mu_{\d}$, that is $\mu_{\d}=\frac{\d \mu_{\d}}{\d \mc L^1}\mc L^1+\mu_{\rm Ca}$, where $\mc L^1$ denotes the Lebesgue measure in $[0,T]$. By Besicovitch Theorem we know that for $\mu_{\rm d}$-almost every $t\in [0,T]$ there holds either
\begin{subequations}
\begin{equation}\label{case1}
	\lim\limits_{h\to 0^+}\frac{\mu_{\rm Ca}([t-h,t+h])}{\mu_{\d}([t-h,t+h])}>0,
\end{equation} 
or
\begin{equation}\label{case2}
	\text{the limit in \eqref{case1} is $0$ and }	\lim\limits_{h\to 0^+}\frac{2h}{\mu_{\d}([t-h,t+h])}<+\infty.
\end{equation}
\end{subequations}
If \eqref{case1} is in force, then 
\begin{equation}\label{eq:limit0}
	\limsup\limits_{h\to 0^+}I_h(t)\le\lim\limits_{h\to 0^+}\frac{C}{\mu_{\rm Ca}([t-h,t+h])}\int_{t-h}^{t+h}\gamma_{\bar C}(r)\d r=0,\quad\text{ for $\mu_{\rm Ca}$-a.e. t satisfying \eqref{case1},}
\end{equation}
where the limit is $0$ since $\mu_{\rm Ca}$ is singular with respect to $\mc L^1$. However, since we are considering points with positive density $\frac{\d\mu_{\rm Ca}}{\d \mu_{\d}}$, the validity of \eqref{eq:limit0} can be extended for $\mu_{\rm d}$-almost every $t$ satisfying \eqref{case1}. 

If instead \eqref{case2} is in force, by Lebesgue Differentiation Theorem for almost every t satisfying \eqref{case2} we infer
\begin{equation}\label{eq:limit02}
	\lim\limits_{h\to 0^+}I_h(t)=\lim\limits_{h\to 0^+}\frac{2h}{\mu_{\d}([t-h,t+h])}\frac {1}{2h}\int_{t-h}^{t+h}\gamma_{\bar C}(r)\omega_{\bar C}(|u(r)-u^+(t+h)|_W)\d r=0,
\end{equation}
and since we are now considering points with null density $\frac{\d\mu_{\rm Ca}}{\d \mu_{\d}}$ one obtains that actually \eqref{eq:limit02} holds for $\mu_{\rm d}$-almost every $t$ satisfying \eqref{case2}, and the proof is complete.
\end{proof}

\subsection{Characterization of the defect measure at jump points}\label{subsec:jumps}
In this section we finally conclude the proof of Theorems~\ref{mainthm:nonconvex}, \ref{mainthm:convex} by providing an explicit description of the value $\mu(\{t\})$ at (essential) jump points of $u$.

In the nonconvex situation $\lambda>0$ we show that this value coincides with the viscoinertial cost introduced in \eqref{eq:cost}, and thus we retrieve the notion of IBV solution to \eqref{riprob}. The convex case $\lambda=0$ is much simpler, and the jump value $\mu(\{t\})$ turns out to be equal to the rate-independent cost $\mc R(u^+(t)-u^-(t))$ as for (essential) energetic solutions. In the subcase of uniformly convex energies we also obtain suitable time-regularity properties of $u$, characterizing it as a classic solution to \eqref{riprob} and hence proving Theorem~\ref{mainthm:unifconvex}.

\subsubsection{\underline{Nonconvex case $\lambda>0$}}

We begin by showing that the viscoinertial cost defined in \eqref{eq:cost} provides a lower bound for the energy lost at jump points. Hence, recalling Proposition~\ref{prop:uppercost}, the limit function $u$ is an IBV solution.

\begin{prop}\label{prop:mu1}
	Assume also \ref{hyp:E6}-\ref{hyp:E10}. Assume $\eps u_1^\eps\xrightarrow[\eps\to 0]{V}0$ and $u_0^\eps\xrightarrow[\eps\to 0]{U}u_0\in D$. Then
	\begin{equation}\label{eq:mu1}
		\mu(\{t\})\ge c^{\M,\V}(t;u^-(t),u^+(t)),\quad\text{for all }t\in J_u^{\rm e}.
	\end{equation}
\end{prop}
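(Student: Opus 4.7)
The natural strategy is a blow-up in time around the jump. Fix $t\in J_u^{\rm e}$ and $\alpha,\beta>0$; it is enough to produce, for small $\eps$, admissible competitors in $AD^\alpha_\beta(t; u^-(t), u^+(t); \sigma_\eps)$ whose viscoinertial cost is (in the limit) dominated by the defect $\mu(\{t\})$, and then let $\alpha,\beta\to 0$. The competitors will come from the dynamic solutions themselves after the rescaling $v^\eps(r):=u^\eps(s_\eps+\eps r)$ on $[0,\sigma_\eps]$ with $\sigma_\eps:=(t_\eps-s_\eps)/\eps$. This rescaling turns $\eps^2\M\ddot u^\eps+\eps\V\dot u^\eps+\eta^\eps+\xi^\eps=0$ into the $\eps$-free inclusion $\M\ddot v^\eps+\V\dot v^\eps+\tilde\eta^\eps+\xi_\eps=0$ with $\xi_\eps(r):=\xi^\eps(s_\eps+\eps r)$, and, crucially, leaves the integral of $p_\V$ invariant thanks to the $1$-homogeneity in the first argument (Proposition~\ref{prop:visccont}(a)):
\begin{equation*}
\int_0^{\sigma_\eps}p_\V(\dot v^\eps(r),-\M\ddot v^\eps(r)-\xi_\eps(r))\,\d r=\int_{s_\eps}^{t_\eps}p_\V(\dot u^\eps(s),-\eps^2\M\ddot u^\eps(s)-\xi^\eps(s))\,\d s.
\end{equation*}

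The delicate choice is that of $s_\eps\nearrow t$ and $t_\eps\searrow t$: I would like $u^\eps(s_\eps)\to u^-(t)$, $u^\eps(t_\eps)\to u^+(t)$ strongly in $U$, together with $\eps\dot u^\eps(s_\eps),\eps\dot u^\eps(t_\eps)\to 0$ strongly in $V$. This is the main obstacle, since \eqref{limitpm} only yields weak $U$-convergence of $u(s)$ to $u^\pm(t)$. To upgrade it, I combine Lemma~\ref{lemma:energy+-} with \ref{hyp:E2} to obtain $\mc E(t,u(s))\to\mc E(t,u^\pm(t))$ as $s\to t^\pm$ avoiding $J_u$, and then invoke the improved-convergence hypothesis \ref{hyp:E8} to promote this to strong $U$-convergence $u(s)\to u^\pm(t)$. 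A diagonal extraction combining \eqref{eq:strongconvergence} and Proposition~\ref{prop:compact}(b) then produces $s_\eps,t_\eps$ with all the required simultaneous convergences.

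With $s_\eps,t_\eps$ in hand, admissibility is essentially automatic: \eqref{eq:adm1} follows from Proposition~\ref{prop:unifbounds}(iv) after the change of variables, \eqref{eq:adm3} is the content of the previous paragraph, \eqref{eq:adm2} follows from \ref{hyp:E9} since $\xi_\eps(r)\in\partial^U\mc E(s_\eps+\eps r,v^\eps(r))$ and $|s_\eps+\eps r-t|$ can be made arbitrarily small, and \eqref{eq:adm4} is obtained by applying the chain rule \eqref{eq:chainrule} to $u^\eps$ on $[s_\eps,t_\eps]$ and absorbing the residuals $|\mc E(s_\eps,\cdot)-\mc E(t,\cdot)|,|\mc E(t_\eps,\cdot)-\mc E(t,\cdot)|,|\int_{s_\eps}^{t_\eps}\partial_t\mc E|$ into the slack $2(\bar C\vee\widetilde C)\alpha$ via \ref{hyp:E2} and Proposition~\ref{prop:unifbounds}(i). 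To conclude, the Fenchel inequality gives $p_\V(\dot u^\eps,-\eps^2\M\ddot u^\eps-\xi^\eps)\le\mc R_\eps(\dot u^\eps)+\mc R_\eps^*(-\eps^2\M\ddot u^\eps-\xi^\eps)$, so by the energy-dissipation balance \eqref{eq:energybalance},
\begin{equation*}
\int_0^{\sigma_\eps}p_\V(\dot v^\eps,-\M\ddot v^\eps-\xi_\eps)\,\d r\le\mc E(s_\eps,u^\eps(s_\eps))-\mc E(t_\eps,u^\eps(t_\eps))+\tfrac{\eps^2}{2}|\dot u^\eps(s_\eps)|_\M^2-\tfrac{\eps^2}{2}|\dot u^\eps(t_\eps)|_\M^2+\int_{s_\eps}^{t_\eps}\partial_t\mc E(r,u^\eps(r))\,\d r.
\end{equation*}
Sending $\eps\to 0$, the kinetic terms vanish by construction, the time integral vanishes by \ref{hyp:E2} since $t_\eps-s_\eps\to 0$, and the energy gap converges to $\mc E(t,u^-(t))-\mc E(t,u^+(t))=\mu(\{t\})$ by \ref{hyp:E7} together with strong $U$-convergence of the endpoints and the identification of the atomic part of $\mu$ at essential jumps already obtained within the proof of Proposition~\ref{prop:prop6.11}. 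Since $(v^\eps,\xi_\eps)\in AD^\alpha_\beta(t;u^-(t),u^+(t);\sigma_\eps)$, the left-hand side is bounded below by $c^\alpha_\beta(t;u^-(t),u^+(t))$, and taking the supremum in $\alpha,\beta$ yields \eqref{eq:mu1}.
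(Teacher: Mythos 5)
Your proposal is correct and follows essentially the same route as the paper's proof: a parabolic-free blow-up $v^\eps(r)=u^\eps(s_\eps+\eps r)$ around the jump, invariance of the $p_\V$-integral under this rescaling by one-homogeneity, construction of endpoint times $s_\eps\nearrow t$, $t_\eps\searrow t$ with simultaneous strong $U$-convergence of positions (via Lemma~\ref{lemma:energy+-}, \ref{hyp:E2} and \ref{hyp:E8}) and vanishing of $\eps\dot u^\eps$, verification of admissibility in $AD^\alpha_\beta$ exactly as in the paper (including \ref{hyp:E9} for \eqref{eq:adm2} and the chain rule \eqref{eq:chainrule} for \eqref{eq:adm4}), and passage to the limit in the energy-dissipation balance. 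The only cosmetic difference is that the paper packages the diagonal selection of $t_j^\pm$ by citing \cite[Proposition~5.8]{ScilSol}, whereas you sketch it directly; the substance is identical.
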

\begin{proof}
	Let us fix $t\in  J_u^{\rm e}$ and $\alpha,\beta>0$. We will prove that
	\begin{equation}\label{eq:mualphabeta}
		\mu(\{t\})\ge c^\alpha_\beta(t;u^-(t),u^+(t)),
	\end{equation}
	whence \eqref{eq:mu1} follows by the arbitrariness of $\alpha$ and $\beta$.
	
	By exploiting Proposition~\ref{prop:compact}, \eqref{limitu}, \eqref{energyconvergence} and Lemma~\ref{lemma:energy+-}, with the argument of \cite[Proposition~5.8]{ScilSol} it is possible to construct two sequences $t_j^+\to t^+$, $t_j^-\to t^-$ ($t_j^-\equiv0$ if $t=0$), and to possibly extract a further subsequence (not relabelled) $\eps_j\to 0$ such that
	\begin{enumerate}
		\item  $t_j^+$ and $t_j^-$ belongs to $[0,T]\setminus N$, where $N$ is the set introduced in Definition~\ref{def:dynsol};
		\item 	$\xepsj(t^\pm_j)\xrightharpoonup[j\to +\infty]{U}u^\pm(t)$;
		\item $\epsj\xepsjd(t^\pm_j)\xrightarrow[j\to +\infty]{V}0$;
		\item $\mc E(t^\pm_j, \xepsj(t^\pm_j))\xrightarrow[j\to +\infty]{}\mc E(t,u^\pm(t))$.
	\end{enumerate}
	By means of \ref{hyp:E2} we also deduce that $\mc E(t, \xepsj(t^\pm_j))\xrightarrow[j\to +\infty]{}\mc E(t,u^\pm(t))$, and so assumption \ref{hyp:E8} yields strong convergence
	\begin{enumerate}
		\item [(5)] $\xepsj(t^\pm_j)\xrightarrow[j\to +\infty]{U}u^\pm(t)$.
	\end{enumerate}
	By using \eqref{eq:energybalance}, and by definition of the viscous contact potential $p_\V$ we now obtain
	\begin{align*}
		&\quad\,\mu(\{t\})=\mc E(t,u^-(t))-\mc E(t,u^+(t))\\
		&=\lim\limits_{j\to +\infty}\left(\frac{\epsj^2}{2}|\xepsjd(t_j^-)|^2_\M{+}\mc E(t^-_j, \xepsj(t^-_j)){+}\int_{t^-_j}^{t^+_j}\partial_t\mc E(r,u^\epsj(r))\d r{-}\frac{\epsj^2}{2}|\xepsjd(t_j^+)|^2_\M{-}\mc E(t^+_j, \xepsj(t^+_j))\right)\\
		&=\lim\limits_{j\to +\infty}\int_{t^-_j}^{t^+_j}(\mc R_\epsj(\xepsjd(r))+\mc R_\epsj^*(-\epsj^2\M\xepsjdd(r)-\xi^\epsj(r)))\d r\\
		&\ge \limsup\limits_{j\to +\infty}\int_{t^-_j}^{t^+_j}p_\V(\xepsjd(r), -\epsj^2\M\xepsjdd(r)-\xi^\epsj(r))\d r.
	\end{align*}
	Performing the change of variable $r=\epsj\tau+t_j^-$ and defining
	\begin{equation*}
		\sigma_j:=\frac{t_j^+-t_j^-}{\epsj},\quad v_j(\tau):=\xepsj(\epsj\tau+t_j^-),\quad\xi_j(\tau):=\xi^\epsj(\epsj\tau+t_j^-) ,
	\end{equation*}
	by $(a)$ in Proposition~\ref{prop:visccont} we thus deduce
	\begin{equation*}
		\mu(\{t\})\ge  \limsup\limits_{j\to +\infty}\int_{0}^{\sigma_j}p_\V(\dot v_j(\tau),-\M \ddot v_j(\tau)-\xi_j(\tau))\d\tau.
	\end{equation*}
	By \eqref{eq:costalphabeta}, we conclude the proof of \eqref{eq:mualphabeta} if we show that the pair $(v_j,\xi_j)$ belongs to the set of admissible functions $AD^\alpha_\beta(t;u^-(t), u^+(t);\sigma_j)$, defined in \eqref{eq:admfunct}, for $j$ large enough. 
	
	To this aim, we first observe that $v_j$ is in $G(0,\sigma_j)$, defined in \eqref{eq:G}, since it inherits the regularity properties of $\xepsj$, and from the bound $(i)$ in Proposition~\ref{prop:unifbounds} together with \eqref{eq:ineqgronwall}. Moreover, $\xi_j$ belongs to $L^\infty(0,\sigma_j;U^*)$ by $(vi)$ in Proposition~\ref{prop:unifbounds}. 
	
	We start checking condition \eqref{eq:adm1}, which simply follows by using $(iv)$ in Proposition~\ref{prop:unifbounds}:
	\begin{equation}\label{eq:boundvdotj}
		\int_{0}^{\sigma_j}\|\dot v_j(\tau)\|_Z\d \tau=\int_{t_j^-}^{t_j^+}\|\dot u^{\epsj}(r)\|_Z\d r\le \overline{C}.
	\end{equation}
	
	Then, by exploiting assumption \ref{hyp:E9}, for almost every $\tau\in [0,\sigma_j]$ we deduce
	\begin{align*}
		\xi_j(\tau)&=\xi^\epsj(\epsj\tau+t_j^-)\in \partial^U\mc E(\epsj\tau+t_j^-, v_j(\tau))\subseteq \partial^U\mc E(t, v_j(\tau))+\overline{B}^{Z^*}_{\omega_M(|\epsj\tau+t_j^--t|)}\\
		&\subseteq \partial^U\mc E(t, v_j(\tau))+\overline{B}^{Z^*}_{\omega_M(t_j^+-t_j^-)}\subseteq \partial^U\mc E(t, v_j(\tau))+\overline{B}^{Z^*}_{\alpha},
	\end{align*}
	where the last inclusion holds for so large $j$ that $\omega_M(t_j^+-t_j^-)\le \alpha$. Hence condition \eqref{eq:adm2} is fulfilled.
	
	We now observe that $v_j(0)=u^\epsj(t_j^-)$, $v_j(\sigma_j)=u^\epsj(t_j^+)$, $\dot v_j(0)=\epsj\dot u^\epsj(t_j^-)$ and $\dot v_j(\sigma_j)=\epsj\dot u^\epsj(t_j^+)$. Thus, condition \eqref{eq:adm3} is satisfied for $j$ large enough due to properties $(3)$ and $(5)$ above.
	
	We finally note that $\M\ddot{v}_j+\xi_j$ is in $L^2(0,\sigma_j;V^*)$ since by \eqref{dineq} we have $\eps_j^2\M\xepsjdd+\xi^\epsj\in L^2(0,T;V^*)$. Moreover, by using \eqref{eq:chainrule} we deduce
	\begin{align*}
		&\quad\,\int_{0}^{\sigma_j}\langle \mathbb{M}\ddot{v}_j(\tau)+\xi_j(\tau), \dot{v}_j(\tau)\rangle_V\d \tau=\int_{t_j^-}^{t_j^+}\langle \eps_j^2\M\xepsjdd(r)+\xi^\epsj(r), \dot{u}^\epsj(r)\rangle_V\d r\\	
		&=\frac{\varepsilon_j^2}{2}|\xepsjd(t_j^+)|_{\mathbb{M}}^2+ \mc E(t_j^+,\xepsj(t_j^+))-\frac{\varepsilon_j^2}{2}|\xepsjd(t_j^-)|_{\mathbb{M}}^2- \mc E(t_j^-,\xepsj(t_j^-))-\int_{t_j^-}^{t_j^+} \partial_t\mc E(r,\xepsj(r))\,\mathrm{d}r\\
		&=\frac 12|\dot{v}_j(\sigma_j)|_\M^2+\mc E(t,v_j(\sigma_j))-\frac 12|\dot{v}_j(0)|_\M^2-\mc E(t,v_j(0))\\
		&\quad\, +\underbrace{\int_{t}^{t_j^+} \partial_t\mc E(r,\xepsj(t_j^+))\,\mathrm{d}r+\int_{t_j^-}^{t} \partial_t\mc E(r,\xepsj(t_j^-))\,\mathrm{d}r-\int_{t_j^-}^{t_j^+} \partial_t\mc E(r,\xepsj(r))\,\mathrm{d}r}_{=:R_j}.
	\end{align*} 
	We thus prove the validity of the last condition \eqref{eq:adm4} if we show that $\lim\limits_{j\to +\infty}R_j=0$. To this aim, we estimate by means of \ref{hyp:E2}, \eqref{eq:ineqgronwall} and $(i)$ in Proposition~\ref{prop:unifbounds}, obtaining (here we set $B:=\|b\|_{L^1(0,T)}$):
	\begin{align*}
		|R_j|&\le \int_{t}^{t_j^+}b(r)e^B(\mc E(t_j^+,u^\epsj(t_j^+))+1) \d r+\int_{t_j^-}^{t}b(r)e^B(\mc E(t_j^-,u^\epsj(t_j^-))+1) \d r\\
		&\quad\,+\int_{t_j^-}^{t_j^+}b(r)(\mc E(r,u^\epsj(r))+1) \d r \le 2e^B(\overline C+1)\int_{t_j^-}^{t_j^+}b(r)\d r,
	\end{align*}
and so we conclude.	
\end{proof}

Finally, in the case $V=U$, if we also assume that the proper domain $D$ of the energy $\mc E(t,\cdot)$ coincides with the whole space $U$, then we can slightly improve the characterization of the cost at jump points, showing that
\begin{equation*}
	\mu(\{t\})=c^{\M,\V}(t;u^-(t),u^+(t))=c^{\M,\V}_0(t;u^-(t),u^+(t)),\qquad\text{for all }t\in J_u^{\rm e},
\end{equation*}
where $c^{\M,\V}_0$ is defined in \eqref{eq:cost0}. This means that the competitors in the definition of the cost attains the boundary values. Since by \eqref{eq:ordcost} one has $c^{\M,\V}(t;u^-(t),u^+(t))\le c^{\M,\V}_0(t;u^-(t),u^+(t))$ we only need to prove the reverse inequality in order to show that the limit function $u$ is a precise IBV solution.

\begin{prop} \label{prop:mugec0}
	Assume $D=U=V$ and conditions \ref{hyp:E6}-\ref{hyp:E10}. Assume $\eps u_1^\eps\xrightarrow[\eps\to 0]{U}0$ and $u_0^\eps\xrightarrow[\eps\to 0]{U}u_0$. Then
	\begin{equation*}
		\mu(\{t\})\ge c^{\M,\V}_0(t;u^-(t),u^+(t)),\quad\text{for all }t\in J_u^{\rm e}.
	\end{equation*}
\end{prop}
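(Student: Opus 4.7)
The plan is to refine the competitors $(v_j,\xi_j)$ constructed in the proof of Proposition~\ref{prop:mu1} so that they exactly attain the boundary values $u^\pm(t)$, rather than only approaching them. The hypothesis $D=U=V$ is essential here: it means that every element of $U$ automatically lies in the proper domain of $\mc E$, so we may freely interpolate between arbitrary elements of $U$ without leaving $D$ and without losing $V$-regularity, and we can directly exploit Remark~\ref{rmk:V=U}.

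Fix $t\in J_u^{\rm e}$ and $\alpha>0$. From the proof of Proposition~\ref{prop:mu1} (after possibly extracting a further subsequence) I have $(v_j,\xi_j)\in AD^\alpha_{\beta_j}(t;u^-(t),u^+(t);\sigma_j)$ with $\beta_j\to 0$,
\[
\mu(\{t\})\ge\limsup_{j\to+\infty}\int_0^{\sigma_j} p_\V\bigl(\dot v_j(\tau),-\M\ddot v_j(\tau)-\xi_j(\tau)\bigr)\,\d\tau,
\]
and, setting $w_j^-:=v_j(0)-u^-(t)$ and $w_j^+:=u^+(t)-v_j(\sigma_j)$, one has $w_j^\pm\xrightarrow{U}0$ together with $\dot v_j(0),\dot v_j(\sigma_j)\xrightarrow{V}0$. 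For a suitable $\delta_j\to 0^+$ I would prepend to $v_j$ a smooth piece on $[-\delta_j,0]$ joining $u^-(t)$ to $v_j(0)$ while matching velocities (zero at the left endpoint, $\dot v_j(0)$ at the right one), and append an analogous piece of length $\delta_j$ at the right end. A workable choice on $[-\delta_j,0]$ is
\[
\tilde v_j(r):=u^-(t)+\psi\!\Bigl(\tfrac{r}{\delta_j}+1\Bigr)w_j^-+\delta_j\,\chi\!\Bigl(\tfrac{r}{\delta_j}+1\Bigr)\dot v_j(0),
\]
with $\psi,\chi\in C^2([0,1])$ such that $\psi(0)=0,\psi(1)=1,\psi'(0)=\psi'(1)=0$ and $\chi(0)=\chi(1)=0,\chi'(0)=0,\chi'(1)=1$; on the central interval we keep $\tilde v_j=v_j$ (after translating time). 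A measurable selection $\tilde\xi_j$ is taken to coincide with $\xi_j$ on the middle and to lie in $\partial^U\mc E(t,\tilde v_j(r))$ on the two end pieces, which by \ref{hyp:E6} provides a uniform bound in $U^*=V^*$.

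After translation, $(\tilde v_j,\tilde\xi_j)$ is supported on $[0,\sigma_j+2\delta_j]$ and belongs to $AD^\alpha_0(t;u^-(t),u^+(t);\sigma_j+2\delta_j)$: condition \eqref{eq:adm3} is immediate by construction, \eqref{eq:adm2} is trivial on the end pieces, \eqref{eq:adm4} is automatic thanks to Remark~\ref{rmk:V=U}, and \eqref{eq:adm1} is preserved since the added $Z$-variation is of order $\|w_j^\pm\|_Z+\delta_j\|\dot v_j(0)\|_Z+\delta_j\|\dot v_j(\sigma_j)\|_Z\to 0$. The core estimate is that the extra cost contributed by the transition pieces vanishes. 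Using the embedding $V\hookrightarrow V^*$, on the left piece one has
\[
\|\dot{\tilde v}_j\|_V=O\!\left(\tfrac{\|w_j^-\|_V}{\delta_j}+\|\dot v_j(0)\|_V\right),\qquad \|\ddot{\tilde v}_j\|_{V^*}=O\!\left(\tfrac{\|w_j^-\|_V}{\delta_j^2}+\tfrac{\|\dot v_j(0)\|_V}{\delta_j}\right);
\]
combining property $(e)$ of Proposition~\ref{prop:visccont} with the uniform $V^*$-bound on $\tilde\xi_j$ yields
\[
\int_{-\delta_j}^0 p_\V\bigl(\dot{\tilde v}_j,-\M\ddot{\tilde v}_j-\tilde\xi_j\bigr)\,\d r\le C\bigl(\|w_j^-\|_V+\delta_j\|\dot v_j(0)\|_V\bigr)\!\left(1+\tfrac{\|w_j^-\|_V+\delta_j\|\dot v_j(0)\|_V}{\delta_j^2}\right).
\]
Choosing for instance $\delta_j:=\sqrt{\|w_j^-\|_V+\|w_j^+\|_V}+\epsj$ gives $\delta_j\to0$ and $\|w_j^\pm\|_V/\delta_j\to 0$, so the above expression (and its analogue on the right piece) vanishes as $j\to+\infty$. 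Passing to $\limsup$ then yields $\mu(\{t\})\ge c^\alpha_0(t;u^-(t),u^+(t))$, and sending $\alpha\to 0$ concludes.

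The main obstacle is the calibration of $\delta_j$. The rate-independent contribution $\int\mc R(\dot{\tilde v}_j)\,\d r$ is harmless for any $\delta_j$, thanks to one-homogeneity of $\mc R$: it equals, up to lower-order terms, $\mc R(w_j^-)+\delta_j\,\mc R(\dot v_j(0))\to 0$. The inertial contribution, however, scales like $(\|w_j^\pm\|_V/\delta_j)^2$, which forces $\delta_j\gg\sqrt{\|w_j^\pm\|_V}$; at the same time one must keep $\delta_j\to 0$ in order to still control the Bochner bound needed in \eqref{eq:adm1}. The balance $\delta_j\sim\sqrt{\|w_j^\pm\|_V}$ is exactly what makes the plan work, and it uses in a crucial way that $\dot v_j(0,\sigma_j)$ already converge to $0$ strongly in $V$ (a fact produced by the approximate continuity arguments behind Proposition~\ref{prop:mu1}).
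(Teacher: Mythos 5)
Your proposal is correct and follows essentially the same route as the paper: glue onto the blow-up competitors of Proposition~\ref{prop:mu1} two boundary-matching connector arcs whose contribution to the viscoinertial cost vanishes, exploiting $D=U=V$ and Remark~\ref{rmk:V=U} to keep admissibility. The paper takes connectors of \emph{fixed} length $1$ (cubic Hermite profiles $g(x)=x^2(3-2x)$, $h(x)=x^2(x-1)$), for which both $\dot{\widetilde v}_j$ and $\ddot{\widetilde v}_j$ on the connectors are $O(\|v_j(0)-u^-(t)\|_U+\|\dot v_j(0)\|_U)\to 0$, so the extra cost vanishes directly from Proposition~\ref{prop:visccont}$(e)$ with no calibration of $\delta_j$; your shrinking-length construction with $\delta_j\sim\sqrt{\|w_j^\pm\|_V}$ also works but is an unnecessary complication (in particular, \eqref{eq:adm1} does not force $\delta_j\to0$, since the added $Z$-variation is controlled by $\|w_j^\pm\|_Z+\|\dot v_j(0)\|_Z+\|\dot v_j(\sigma_j)\|_Z$ regardless of the connector length). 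One step you gloss over: before invoking \ref{hyp:E6} to bound $\widetilde\xi_j$ in $U^*$ on the connectors you must first establish $\sup_\tau\mc E(t,\widetilde v_j(\tau))\le C$ there; the paper proves this via a short contradiction argument using the uniform $U$-convergence of the connectors to $u^\pm(t)$ together with the strong continuity \ref{hyp:E7}, and the same argument applies verbatim to your construction.
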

\begin{proof}
	Fix $t\in J_u^{\rm e}$ and $\alpha>0$. By \eqref{eq:cost0} we conclude if we prove that
	\begin{equation}\label{eq:mualphazero}
		\mu(\{t\})\ge c^\alpha_0(t;u^-(t),u^+(t)).
	\end{equation}
	In the proof of Proposition~\ref{prop:mu1} we already showed that 
	\begin{equation}\label{eq:ineqmu}
		\mu(\{t\})\ge \limsup\limits_{j\to +\infty}\int_{0}^{\sigma_j}p_\V(\dot v_j(\tau),-\M \ddot v_j(\tau)-\xi_j(\tau))\d\tau, 
	\end{equation}
	with $(v_j,\xi_j)\in G(0,\sigma_j)\times L^\infty(0,\sigma_j;U^*)$ satisfying \eqref{eq:boundvdotj} and such that $\xi_j\in \partial^U\mc E(t,v_j(\cdot))+\overline{B}^{Z^*}_{\alpha}$ almost everywhere in $[0,\sigma_j] $ for $j$ large enough.
	
	We now set $\widetilde{\sigma}_j:=\sigma_j+2$ and we consider $\widetilde{v}_j\colon [0,\widetilde{\sigma}_j]\to U$ defined by
	\begin{equation*}
		\widetilde{v}_j(\tau):=\begin{cases}
			u^-(t)+g(\tau)(v_j(0)-u^-(t))+h(\tau)\dot{v}_j(0),&\text{if }\tau\in [0,1],\\
			v_j(\tau-1),&\text{if }\tau\in (1,\sigma_j+1),\\
			u^+(t)+g(\widetilde\sigma_j-\tau)(v_j(\sigma_j)-u^+(t))-h(\widetilde\sigma_j-\tau)\dot{v}_j(\sigma_j),&\text{if }\tau\in [\sigma_j+1,\widetilde\sigma_j],
		\end{cases}
	\end{equation*}
	where $g(x)=x^2(3-2x)$ and $h(x)=x^2(x-1)$.
	
	It is easy to see that $\widetilde{v}_j$ belongs to $W^{1,2}(0,\widetilde\sigma_j;U)\cap W^{2,2}(0,\widetilde\sigma_j;U^*)$, which coincides with $G(0,\widetilde\sigma_j)$ by Remark~\ref{rmk:V=U} since $D=U=V$ and \ref{hyp:E7} is in force. Furthermore, by construction there holds
	\begin{equation*}
		\widetilde{v}_j(0)=u^-(t),\quad 	\widetilde{v}_j(\widetilde\sigma_j)=u^+(t), \quad\dot{\widetilde{v}}_j(0)=\dot{\widetilde{v}}_j(\widetilde\sigma_j)=0,
	\end{equation*}
	whence \eqref{eq:adm3} is satisfied with $\beta=0$.
	
	For almost every $\tau\in [0,\widetilde{\sigma}_j]$ we also set
	\begin{equation*}
		\widetilde{\xi}_j(\tau):=\begin{cases}
			\xi_j(\tau-1),&\text{if }\tau\in (1,\sigma_j+1),\\
			\overline{\xi}_j(\tau),&\text{if }\tau\in [0,1]\cup[\sigma_j+1,\widetilde\sigma_j],
		\end{cases}
	\end{equation*}
	where $\overline{\xi}_j(\tau)$ is a measurable selection of $\partial^U\mc E(t,\widetilde{v}_j(\tau))$, which is nonempty by \ref{hyp:E7} since $D=U$. Recalling that $\widetilde{v}_j\in W^{1,2}(0,\widetilde{\sigma}_j;U)\subseteq C([0,\widetilde{\sigma}_j];U)$, by \ref{hyp:E6} and \ref{hyp:E7} we infer $\widetilde{\xi}_j\in L^\infty(0,\widetilde\sigma_j;U^*)$; moreover, by construction, we easily deduce the validity of \eqref{eq:adm2}.
	
	We now check condition \eqref{eq:adm1}, yielding by Remark~\ref{rmk:V=U} that $(\widetilde v_j,\widetilde\xi_j)$ belongs to the set of admissible functions $AD^\alpha_0(t;u^-(t),u^+(t);\widetilde\sigma_j)$ for $j$ large enough. By using \eqref{eq:boundvdotj} and the definition of $\widetilde{v}_j$ we estimate
	\begin{align*}
		&\quad\,\int_{0}^{\widetilde\sigma_j}\|\dot{\widetilde{v}}_j(\tau)\|_Z\d\tau=\int_{0}^{\sigma_j}\|\dot{{v}}_j(\tau)\|_Z\d\tau+\int_{0}^{1}\|\dot{\widetilde{v}}_j(\tau)\|_Z\d\tau+\int_{\sigma_j+1}^{\widetilde\sigma_j}\|\dot{\widetilde{v}}_j(\tau)\|_Z\d\tau\\
		&\le \overline{C}+\Big(\|v_j(0){-}u^-(t)\|_Z+\|\dot v_j(0)\|_Z+\|v_j(\sigma_j){-}u^+(t)\|_Z+\|\dot v_j(\sigma_j)\|_Z\Big)\max\limits_{x\in [0,1]}(|g'(x)|{+}|h'(x)|).
	\end{align*}
Since the term within brackets in the second line above vanishes as $j\to +\infty$, we infer the validity of \eqref{eq:adm1} for $j$ large enough.

By \eqref{eq:ineqmu} and \eqref{eq:costalphabeta}, we thus have
\begin{align*}
	&\quad\,\mu(\{t\})\ge \limsup\limits_{j\to +\infty}\left(\int_{0}^{\widetilde\sigma_j}\!\!\!p_\V(\dot{\widetilde v}_j(\tau),-\M \ddot{\widetilde v}_j(\tau)-\widetilde\xi_j(\tau))\d\tau-\int_{[0,1]\cup[\sigma_j+1,\widetilde\sigma_j]}\!\!\!\!\!\!\!\!\!\!\!\!\!\!\!\!\!\!\!\!\!p_\V(\dot{\widetilde v}_j(\tau),-\M \ddot{\widetilde v}_j(\tau)-\widetilde\xi_j(\tau))\d\tau\right)\\
	&\ge c^\alpha_0(t;u^-(t),u^+(t))-\liminf\limits_{j\to +\infty}\int_{[0,1]\cup[\sigma_j+1,\widetilde\sigma_j]}p_\V(\dot{\widetilde v}_j(\tau),-\M \ddot{\widetilde v}_j(\tau)-\widetilde\xi_j(\tau))\d\tau,
\end{align*}
and we conclude the proof of \eqref{eq:mualphazero} if we show that the second term vanishes.
	
	To this aim we first claim that there exists $C>0$ such that
	\begin{equation}\label{claimbound}
		\mc E(t,\widetilde{v}_j(\tau))\le C,\quad\text{for all $\tau\in[0,1]\cup[\sigma_j+1,\widetilde\sigma_j]$ and for all $j\in\N$.}
	\end{equation}
	By using \ref{hyp:E6}, the claim implies that $\|\overline{\xi}_j\|_{L^\infty((0,1)\cup(\sigma_j+1,\widetilde\sigma_j);U^*)}$ is uniformly bounded with respect to $j\in \N$. Now, by means of $(e)$ in Proposition~\ref{prop:visccont} we deduce
	\begin{align*}
		&\,\int_{0}^{1}p_\V(\dot{\widetilde v}_j(\tau),-\M \ddot{\widetilde v}_j(\tau)-\widetilde\xi_j(\tau))\d\tau\le C\int_{0}^{1}|\dot{\widetilde v}_j(\tau)|_U\left(1+|\M \ddot{\widetilde v}_j(\tau)+\widetilde\xi_j(\tau)|_{U^*}\right)\d r\\
		\le &\, C\int_{0}^{1}\left(|v_j(0)-u^-(t)|_U+|\dot v_j(0)|_U\right)\left[1+C\left(|v_j(0)-u^-(t)|_U+|\dot v_j(0)|_U\right)+\|\overline{\xi}_j\|_{L^\infty(0,1;U^*)}\right]\d\tau\\
		\le& \, C\left(|v_j(0)-u^-(t)|_U+|\dot v_j(0)|_U\right)\xrightarrow[j\to +\infty]{}0,
	\end{align*}
and an analogous estimate holds true for the integral over $(\sigma_j+1,\widetilde\sigma_j)$.

	We are thus left to prove the claim \eqref{claimbound}.
	
	If \eqref{claimbound} was false, then $M_j=\max\limits_{\tau\in [0,1]\cup[\sigma_j+1,\widetilde\sigma_j]}\mc E(t,\widetilde{v}_j(\tau))$ would diverge to $+\infty$ as $j\to +\infty$. Let $\tau_j$ such that $M_j=\mc E(t,\widetilde{v}_j(\tau_j))$; without loss of generality we may assume that $\tau_j\in [0,1]$ or $\tau_j\in [\sigma_j+1,\widetilde\sigma_j]$. Suppose the second case is in force, the other one being analogous. Since $v_j(\sigma_j)\xrightarrow[j\to +\infty]{U} u^+(t)$ and $\dot v_j(\sigma_j)\xrightarrow[j\to +\infty]{U}0$, it is easy to see that $\widetilde v_j(\tau_j)\xrightarrow[j\to +\infty]{U} u^+(t)$. Hence, by \eqref{eq:strongcont} there must hold
	\begin{equation*}
		\lim\limits_{j\to +\infty}M_j=	\lim\limits_{j\to +\infty}\mc E(t,\widetilde{v}_j(\tau_j))=\mc E(t,u^+(t))<+\infty,
	\end{equation*}
	and we reach a contradiction. Thus \eqref{claimbound} must be true and we conclude.
\end{proof}

\subsubsection{\underline{Convex case $\lambda=0$}}
This short subsection deals with the case of a convex potential energy $\mc E(t,\cdot)$, concluding the proof of Theorem~\ref{mainthm:convex}. We stress that from now on the additional assumptions \ref{hyp:E6}-\ref{hyp:E10} are not needed.
\begin{prop}\label{prop:costconvex}
	 Assume $\eps u_1^\eps\xrightarrow[\eps\to 0]{W}0$, $\mc E(0,u_0^\eps)\xrightarrow[\eps\to 0]{}\mc E(0,u_0)$ and assume that $u_0$ is globally stable, namely \eqref{eq:GS0} is satisfied. Then there holds
	 \begin{equation*}
	 	\mu(\{t\})=\mc R(u^+(t)-u^-(t)),\qquad\text{for all }t\in J^{\rm e}_u.
	 \end{equation*}
\end{prop}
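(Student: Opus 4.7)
The key observation is that evaluating the defect-measure identity \eqref{eneq1} at $s=t$ yields
\begin{equation*}
\mu(\{t\}) = \mc E(t,u^-(t)) - \mc E(t,u^+(t))\,,
\end{equation*}
so the task reduces to pinning down this energy gap at each essential jump point in the purely convex setting.

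For the lower bound $\mu(\{t\}) \ge \mc R(u^+(t)-u^-(t))$ there is nothing new to prove: it has already been obtained inside the proof of Proposition~\ref{prop:prop6.11}, as a consequence of \eqref{ineq-} applied at $s=t$, which trivially yields $V_\mc R(u;t-,t+)\ge \mc R(u^+(t)-u^-(t))$ and thus the inequality for $\mu(\{t\})$.

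For the matching upper bound I would invoke the global $\lambda$-stability for $u^-$ provided by Proposition~\ref{prop:lambdastab}. Since we are in the convex case $\lambda=0$, the quadratic reminder vanishes and the stability inequality reduces to
\begin{equation*}
\mc E(t,u^-(t)) \le \mc E(t,x) + \mc R(x-u^-(t))\,,\quad \text{for all } x\in U\,.
\end{equation*}
Testing with the admissible choice $x=u^+(t)\in D$ (admissibility of $u^+(t)\in D$ follows from Remark~\ref{rmk:upm}) produces
\begin{equation*}
\mc E(t,u^-(t)) - \mc E(t,u^+(t)) \le \mc R(u^+(t)-u^-(t))\,,
\end{equation*}
which is precisely the desired upper bound on $\mu(\{t\})$.

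The only delicate point concerns the boundary case $t=0$, since Proposition~\ref{prop:lambdastab} delivers the stability of $u^-$ only on $(0,T]$; here the standing assumption that $u_0$ itself satisfies the global stability \eqref{eq:GS0}, together with the convention $u^-(0)=u_0$, fills the gap. I do not expect any substantial obstacle: convexity of $\mc E(t,\cdot)$ forces the inertial and viscous contributions to the jump cost to collapse onto the purely rate-independent $\mc R$-cost, so no analysis of the viscoinertial transition at the jump (as in the nonconvex case treated in Propositions~\ref{prop:mu1} and \ref{prop:mugec0}) is needed.
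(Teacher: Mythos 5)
Your argument is correct and coincides with the paper's own proof: the upper bound comes from the global stability $(\lambda GS^-)$ with $\lambda=0$ tested at $x=u^+(t)$ (with \eqref{eq:GS0} covering $t=0$), and the lower bound from \eqref{ineq-} via $V_\mc R(u;t-,t+)\ge\mc R(u^+(t)-u^-(t))$. No differences worth noting.
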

\begin{proof}
		We first observe that \eqref{eq:GS0} yields that $(\lambda GS^-)$ holds for all $t\in [0,T]$. Since we are in the convex case $\lambda=0$, this fact implies
	\begin{equation*}
		\mu(\{t\})=\mc E(t,u^-(t))-\mc E(t,u^+(t))\le\mc R(u^+(t)-u^-(t)),\qquad\text{for all }t\in J^{\rm e}_u.
	\end{equation*}
On the other hand, from \eqref{ineq-} we deduce
\begin{equation*}
	\mu(\{t\})=\mc E(t,u^-(t))-\mc E(t,u^+(t))\ge V_\mc R(u,t-,t+)\ge\mc R(u^+(t)-u^-(t)),\qquad\text{for all }t\in J^{\rm e}_u,
\end{equation*}
and so we conclude.
\end{proof}

\subsubsection{\underline{Uniformly convex case}}

In the last part of this long section we conclude the proof of Theorem~\ref{mainthm:unifconvex}. We will make use of the following lemma, whose proof can be found in \cite[Lemma~5.5]{GidRiv}. It shows that for a uniformly convex energy the global stability condition can be enhanced.
\begin{lemma}\label{lemma:strongstability}
	Assume \ref{hyp:E5'} and fix $t\in [0,T]$. If $\bar u\in U$ satisfies $\mc E(t,\bar u)\le \mc E(t,x)+\mc R(x-\bar u)$ for all $x\in U$, then it actually satisfies the stronger inequality
	\begin{equation*}
		\mc E(t,\bar u)+\frac\mu 2\|x-\bar u\|_X^2\le \mc E(t,x)+\mc R(x-\bar u),\quad\text{for all }x\in U.
	\end{equation*}
\end{lemma}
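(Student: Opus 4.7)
The plan is the standard convexity-interpolation trick, exploiting the positive one-homogeneity of $\mc R$ together with the uniform convexity of $\mc E(t,\cdot)$. Fix $t\in [0,T]$ and $x\in U$. Without loss of generality, assume $\mc E(t,x)<+\infty$, since otherwise the inequality to prove is trivial. For $\theta\in (0,1)$, introduce the convex combination
\begin{equation*}
x_\theta := (1-\theta)\bar u + \theta x = \bar u + \theta(x-\bar u)\in U.
\end{equation*}

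The first step is to apply the assumed global stability inequality with test point $x_\theta\in U$ in place of $x$, obtaining
\begin{equation*}
\mc E(t,\bar u)\le \mc E(t, x_\theta) + \mc R(x_\theta - \bar u).
\end{equation*}
Next, I would invoke \ref{hyp:E5'} to estimate $\mc E(t, x_\theta)$ from above by
\begin{equation*}
\mc E(t,x_\theta)\le \theta\mc E(t,x) + (1-\theta)\mc E(t,\bar u) - \frac{\mu}{2}\theta(1-\theta)\|x-\bar u\|_X^2,
\end{equation*}
and use the positive one-homogeneity of $\mc R$ (assumption \ref{hyp:R1}) to compute $\mc R(x_\theta-\bar u) = \mc R(\theta(x-\bar u)) = \theta\mc R(x-\bar u)$.

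Substituting these two relations into the stability inequality, subtracting $(1-\theta)\mc E(t,\bar u)$ from both sides (which is legitimate because $\mc E(t,\bar u)<+\infty$ by assumption, since global stability tested at any admissible $x$ would otherwise already yield the conclusion trivially), and then dividing the resulting inequality by $\theta>0$, I obtain
\begin{equation*}
\mc E(t,\bar u) + \frac{\mu}{2}(1-\theta)\|x-\bar u\|_X^2 \le \mc E(t,x) + \mc R(x-\bar u).
\end{equation*}
The final step is to pass to the limit $\theta\to 0^+$, which immediately yields the desired improved stability estimate. There is no serious obstacle: the argument is entirely algebraic once the convex combination is chosen, and the only care needed is to ensure that all terms are finite in order to manipulate them, which is guaranteed by the preliminary reduction to $\mc E(t,x)<+\infty$.
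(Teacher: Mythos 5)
Your proof is correct and follows the standard convexity-interpolation argument; the paper itself defers the proof to \cite[Lemma~5.5]{GidRiv}, where essentially this same computation (test the stability inequality at $x_\theta=\bar u+\theta(x-\bar u)$, use \ref{hyp:E5'} and the one-homogeneity of $\mc R$, cancel, divide by $\theta$, and let $\theta\to 0^+$) is carried out. The only delicate point, which you correctly flag, is the finiteness of $\mc E(t,\bar u)$ needed to cancel $(1-\theta)\mc E(t,\bar u)$; this indeed follows from the stability hypothesis together with the nonemptiness of $D$.
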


\begin{proof}[Proof of Theorem~\ref{mainthm:unifconvex}]

	In order to prove \eqref{eq:ensol} we just need to show that $u\in C([0,T];X)$. Indeed, since $U\hookrightarrow X$, this fact combined with $u\in B([0,T];U)$ implies $u\in C_{\rm w}([0,T];U)$. In particular, the embeddings \eqref{eq:embeddings} also ensure that $u$ is in $C_{\rm w}([0,T];V)\cap C([0,T];W)$. Since Theorem~\ref{mainthm:convex} applies under the current assumptions, the validity of \eqref{eq:ensol} now directly follows from \eqref{eq:essensol}.
	
	In order to prove that $u$ belongs to $C([0,T];X)$, let us fix $t\in [0,T]$ and take $s\in [0,t]$ such that
	\begin{itemize}
		\item $\mc E(s,u(s))\le\mc E(s,x)+\mc R(x-u(s))$ for all $x\in U$;
		\item $\mc E(t,u(t))+V_\mc R(u;s,t)\le \mc E(s,u(s))+\int_{s}^{t}\partial_t\mc E(r,u(r))\d r$.
	\end{itemize}
Notice that $s=0$ satisfies the above two conditions; moreover, from $(\lambda GS)$ and \eqref{eq:claim} the set of times which do not fulfil both of them has measure zero.

By using Lemma~\ref{lemma:strongstability} we thus obtain
\begin{align}\label{boundcont}
	\frac\mu 2\|u(t)-u(s)\|_X^2&\le \mc E(s,u(t))-\mc E(s,u(s))+\mc R(u(t)-u(s))\nonumber\\
	&\le \mc E(s,u(t))-\mc E(t,u(t))+\mc E(t,u(t))-\mc E(s,u(s))+V_\mc R(u;s,t)\nonumber\\
	&\le \int_{s}^{t}(\partial_t\mc E(r,u(r))-\partial_t\mc E(r,u(t)))\d r\nonumber\\
	&\le \int_{s}^{t}\gamma_{\bar C}(r)\omega_{\bar C}(|u(t)-u(r)|_W)\d r\le C \int_{s}^{t}\gamma_{\bar C}(r)\d r.
 \end{align}
Hence, for every $t\in [0,T]$ and almost every $s\le t$ we know that
\begin{equation}\label{eq:contX1}
	\|u(t)-u(s)\|_X^2\le C\int_{s}^{t}\gamma_{\bar C}(r)\d r.
\end{equation}
Inequality \eqref{eq:contX1} can be easily extended to all times $s\le t$ by continuity, and so we deduce that $u$ belongs to $C([0,T];X)$.

We now show that, assuming \ref{hyp:E4'}, the function $u$ is absolutely continuous with values in $X$. By arguing as in \eqref{boundcont}, by exploiting \eqref{lipschitzpartialt} this time we obtain
	\begin{equation*}
		\|u(t)-u(s)\|_X^2\le C\int_{s}^{t}\gamma_{\bar C}(r)\|u(t)-u(r)\|_X\d r,\quad\text{for all $0\le s\le t\le T$}.
	\end{equation*}
By using \cite[Lemma~5.6]{GidRiv} one then deduces
\begin{equation*}
	\|u(t)-u(s)\|_X\le C\int_{s}^{t}\gamma_{\bar C}(r)\d r,\quad\text{for all $0\le s\le t\le T$},
\end{equation*}
namely $u$ is in $AC([0,T];X)$.

Let us now assume $X\hookrightarrow Z$, from which $u$ belongs to $AC([0,T];Z)$, and $X$ is reflexive. By \eqref{eq:ensol} and Lemma~\ref{lemma:localstability} we know that for all $t\in [0,T]$ there exists $\xi(t)\in \partial^U\mc E(t,u(t))$ such that $-\xi(t)\in \partial^Z\mc R(0)$. We may now use Lemma~\ref{lemma:chainrule}: by computing the time-derivative in the energy balance in \eqref{eq:ensol} (we recall that in the current regular setting one has $V_\mc R(u;0,t)=\int_{0}^{t}\mc R(\dot{u}(r))\d r$) we obtain
\begin{equation*}
	\mc R(\dot{u}(t))=\langle-\xi(t),\dot{u}(t)\rangle_Z,\quad\text{for a.e. }t\in [0,T].
\end{equation*}
By means of \eqref{2.2mielke}, the above equality ensures that $-\xi(t)\in \partial^Z\mc R(\dot{u}(t))$ almost everywhere in $[0,T]$, and so \eqref{rieq} is satisfied and we conclude.
\end{proof}

\section{Discrete multiscale analysis}\label{sec:multiscale}
This section is devoted to the proof of Theorems~\ref{mainthm:nonconvexdiscr}, \ref{mainthm:convexdiscr} and \ref{mainthm:unifconvexdiscr}. Throughout the section we will always assume that \eqref{eq:embeddings}, \eqref{mass}, \eqref{viscosity}, \ref{hyp:R1} and \ref{hyp:E1}-\ref{hyp:E5} are in force. The additional assumptions used in order to prove each result will be specified when needed; in particular, the identification $D=U=V$ will be made at the very end (Proposition~\ref{prop:costdiscr}), since it is only needed for the nonconvex case.

 Let $\{u^k_{\tau,\eps}\}_{k\in\mc K^0_\tau}$, $\{v^k_{\tau,\eps}\}_{k\in\mc K^0_\tau}$ be the discrete evolution arising from the Minimizing Movements scheme \eqref{schemeincond} and from \eqref{eq:vdiscr}. As a first step we present the discrete energy-dissipation inequality they satisfy, as well as the uniform bounds that originate therefrom. The proof can be carried out, with minor adaptations, along the lines of \cite[Proposition~5.2]{RivScilSol} exploiting the Euler-Lagrange equation \eqref{EL}.

\begin{prop}\label{prop:inequality0}
	For every $m,n\in\mathcal{K}^0_\tau$ with $m\le n$ the following discrete energy-dissipation inequality holds true:
	\begin{equation}
		\begin{split}
			&\quad\frac{\varepsilon^2}{2}{\left|v^n_{\tau,\eps}\right|^2_{\mathbb{M}}} + \mc E(t^{n},u^{n}_{\tau,\eps}) + \sum_{k=m+1}^n\tau\left(\mc R_\varepsilon(v^k_{\tau,\eps}) + \mc R_\varepsilon^*\left(-\eps^2 \mathbb{M}\frac{v^k_{\tau,\eps}-v^{k-1}_{\tau,\eps}}{\tau} - \xi^k_{\tau,\eps}\right)\right)\\
			& \leq  \frac{\varepsilon^2}{2}{|v^m_{\tau,\eps}|^2_{\mathbb{M}}} + \mc E (t^m,u^m_{\tau,\eps}) + \sum_{k=m+1}^n \int_{t^{k-1}}^{t^k}\partial_t\mc E(r,u^{k-1}_{\tau,\eps})\,\mathrm{d}r +\frac{\lambda\tau}{2}\sum_{k=m+1}^n \tau|v^k_{\tau,\eps}|_{W}^2\,,
		\end{split}
		\label{eq:eniquality1}
	\end{equation}
	where $\xi^k_{\tau,\eps}$ has been introduced in \eqref{eq:EL2}.\\
	Furthermore, if $\varepsilon u_1^\varepsilon$ is uniformly bounded in $W$, $\mc E(0,u_0^\eps)$ is uniformly bounded and
	\begin{equation}\label{tauepsbdd}
		\frac \tau\eps\le \frac{\nu^2 c_1^2}{2\lambda},
\end{equation}
where $\nu,c_1>0$ are such that $|\cdot|_\V\ge \nu |\cdot|_V $ and $|\cdot|_V\ge c_1 |\cdot|_W $, then there exists $\widetilde C>0$, independent of $\varepsilon$ and $\tau$, such that 
\begin{equation*}
	\frac{\varepsilon^2}{2}{\left|v^n_{\tau,\eps}\right|^2_{\mathbb{M}}} + \mc E(t^{n},u^{n}_{\tau,\eps}) + \sum_{k=1}^n \tau\left(\mc R_\varepsilon(v^k_{\tau,\eps}) + \mc R_\varepsilon^*\left(-\eps^2 \mathbb{M}\frac{v^k_{\tau,\eps}-v^{k-1}_{\tau,\eps}}{\tau} - \xi^k_{\tau,\eps}\right)\right)\leq \widetilde C,
\end{equation*}
for every $n\in\mc K^0_\tau$.

In particular, one has
	\begin{itemize}
	\item[{$(i')$}] $\max\limits_{k\in\mc K^0_\tau}\mc E(t^k,u^k_{\tau,\eps})\leq \widetilde C$;
	\item[{$(ii')$}] $\max\limits_{k\in\mc K^0_\tau}\|u^k_{\tau,\eps}\|_U\leq \widetilde C$;
	\item[{$(iii')$}] $\max\limits_{k\in\mc K^0_\tau}\eps|v^k_{\tau,\eps}|_{\M}\leq \widetilde C$;
	\item[{$(iv')$}] $\displaystyle\sum_{k=1}^{T/\tau}\tau\|v^k_{\tau,\eps}\|_Z\leq \widetilde C$; 
	\item[{$(v')$}] $\displaystyle \varepsilon\sum_{k=1}^{T/\tau}\tau |v^k_{\tau,\eps}|_{V}^2\leq \widetilde C$.
\end{itemize}
If in addition \ref{hyp:E6} is in force, then there also holds
\begin{itemize}
	\item[{$(vi')$}] $\max\limits_{k\in\mc K_\tau}\|\xi^k_{\tau,\eps}\|_{U^*}\leq \widetilde C$;
	\item[{$(vii')$}]$\displaystyle \sum_{k=1}^{T/\tau}\tau \left\|\varepsilon^2\frac{v^k_{\tau,\eps} - v^{k-1}_{\tau,\eps}}{\tau}\right\|_{{U}^*}^2\leq \widetilde C$.
\end{itemize}
\end{prop}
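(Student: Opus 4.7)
The strategy is the classical Fenchel/chain-rule argument in the $\lambda$-convex setting, adapted to the one-step implicit Euler discretization; the main obstacle will be absorbing the $W$-norm error produced by $\lambda$-convexity into the discrete dissipation, and this is precisely what forces the calibration \eqref{tauepsbdd}.

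\emph{Derivation of \eqref{eq:eniquality1}.} Setting $\zeta^k_{\tau,\eps}:=-\eps^2\M(v^k_{\tau,\eps}-v^{k-1}_{\tau,\eps})/\tau-\xi^k_{\tau,\eps}$, the Euler--Lagrange equation \eqref{eq:EL2} yields $\zeta^k_{\tau,\eps}\in\partial^V\mc R_\eps(v^k_{\tau,\eps})$, so by the Fenchel identity \eqref{Fenchelprop}
\begin{equation*}
\tau\bigl(\mc R_\eps(v^k_{\tau,\eps})+\mc R_\eps^*(\zeta^k_{\tau,\eps})\bigr)=\tau\langle\zeta^k_{\tau,\eps},v^k_{\tau,\eps}\rangle_V.
\end{equation*}
I would then sum from $k=m+1$ to $n$. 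The inertial part telescopes by the symmetric identity $\langle\M(a-b),a\rangle_W=\tfrac12|a|^2_\M-\tfrac12|b|^2_\M+\tfrac12|a-b|^2_\M$, yielding $-\tfrac{\eps^2}{2}|v^n_{\tau,\eps}|^2_\M+\tfrac{\eps^2}{2}|v^m_{\tau,\eps}|^2_\M$ up to a nonnegative remainder one discards. For the subgradient part I would exploit $\lambda$-convexity through \eqref{lambdaconvexineq}, applied to $\xi^k_{\tau,\eps}\in\partial^U\mc E(t^k,u^k_{\tau,\eps})$ and tested at $u^{k-1}_{\tau,\eps}$, to get
\begin{equation*}
-\tau\langle\xi^k_{\tau,\eps},v^k_{\tau,\eps}\rangle_U \le \mc E(t^k,u^{k-1}_{\tau,\eps})-\mc E(t^k,u^k_{\tau,\eps})+\tfrac{\lambda\tau^2}{2}|v^k_{\tau,\eps}|^2_W.
\end{equation*}
Writing $\mc E(t^k,u^{k-1}_{\tau,\eps})=\mc E(t^{k-1},u^{k-1}_{\tau,\eps})+\int_{t^{k-1}}^{t^k}\partial_t\mc E(r,u^{k-1}_{\tau,\eps})\,\mathrm{d}r$ through \ref{hyp:E2}, the summation telescopes to $\mc E(t^m,u^m_{\tau,\eps})-\mc E(t^n,u^n_{\tau,\eps})$ plus the power integral, and combining the two contributions gives exactly \eqref{eq:eniquality1}.

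\emph{Uniform bounds.} I would apply \eqref{eq:eniquality1} with $m=0$. The hypothesis on $\eps u_1^\eps$ controls $\tfrac{\eps^2}{2}|v^0_{\tau,\eps}|^2_\M$ and $\mc E(0,u^0_{\tau,\eps})$ is uniformly bounded. The decisive step is the absorption of the remainder: using $|\cdot|_W\le c_1^{-1}\nu^{-1}|\cdot|_\V$ together with $\mc R_\eps(v)\ge\tfrac{\eps}{2}|v|^2_\V$,
\begin{equation*}
\frac{\lambda\tau}{2}\sum_{k=1}^n\tau|v^k_{\tau,\eps}|^2_W \le \frac{\lambda\tau}{c_1^2\nu^2\eps}\sum_{k=1}^n\tau\cdot\tfrac{\eps}{2}|v^k_{\tau,\eps}|^2_\V \le \tfrac{1}{2}\sum_{k=1}^n\tau\,\mc R_\eps(v^k_{\tau,\eps}),
\end{equation*}
where the final inequality is where \eqref{tauepsbdd} is used. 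This lets me bring half of the discrete dissipation to the left. The power integral is then closed by \ref{hyp:E2} and a discrete Gr\"onwall argument. From the resulting inequality, $(i')$, $(iii')$, $(v')$ are immediate; $(ii')$ follows from \ref{hyp:E3} and Remark~\ref{rmk:gronwall}, and $(iv')$ using also \eqref{Rbounds}.

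\emph{Subgradient bounds $(vi')$--$(vii')$.} Under \ref{hyp:E6}, $(vi')$ is an immediate consequence of $(i')$. For $(vii')$ I would rearrange \eqref{eq:EL2} as $\eps^2\M(v^k_{\tau,\eps}-v^{k-1}_{\tau,\eps})/\tau=-\eps\V v^k_{\tau,\eps}-\eta^k_{\tau,\eps}-\xi^k_{\tau,\eps}$ in $U^*$, take the $U^*$-norm squared, multiply by $\tau$ and sum: the three contributions are controlled respectively by $(v')$ for the viscous term, by \eqref{boundedness} (which yields $\|\eta^k_{\tau,\eps}\|_{Z^*}\le\rho_2$ and hence a uniform bound in $U^*$), and by $(vi')$. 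Throughout, the only truly delicate point is the one pinpointed above, namely the precise matching of $\tau/\eps$ to $\lambda$ in \eqref{tauepsbdd}, without which the nonconvex correction would dominate the dissipation and destroy the uniform estimate.
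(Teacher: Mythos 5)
Your proposal is correct and follows exactly the argument the paper intends (it defers to \cite[Proposition~5.2]{RivScilSol}): the Fenchel identity applied to $-\eps^2\M(v^k_{\tau,\eps}-v^{k-1}_{\tau,\eps})/\tau-\xi^k_{\tau,\eps}\in\partial^V\mc R_\eps(v^k_{\tau,\eps})$, the telescoping of the kinetic term via the symmetric identity for $|\cdot|_\M$, the $\lambda$-convexity inequality \eqref{lambdaconvexineq} tested at $u^{k-1}_{\tau,\eps}$ combined with \ref{hyp:E2}, and then absorption of the $W$-remainder into $\tfrac{\eps}{2}|\cdot|^2_\V\le\mc R_\eps$ using precisely \eqref{tauepsbdd}, followed by a discrete Gr\"onwall step. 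The derivations of $(i')$--$(vii')$ are all sound as written.
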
}

The next crucial lemma collects some estimates for the mismatch between the different interpolants introduced in \eqref{interpolants}. Before stating it we fix some additional notation which will be used several times in the sequel. Recalling \eqref{eq:EL2}, we set
\begin{subequations}
	\begin{align}
		&\overline\xi_{\tau,\varepsilon}(t):=\xi^{k}_{\tau,\eps},&&\text{for } t\in(t^{k-1},t^k],\qquad k\in\mathcal{K}_\tau\,;\label{eq:xitaueps}\\
		&\overline\eta_{\tau,\varepsilon}(t):=\eta^{k}_{\tau,\eps},&&\text{for } t\in(t^{k-1},t^k],\qquad k\in\mathcal{K}_\tau\,;\label{eq:etataueps}\\
		& \pi_\tau(t):=t^k,&&\text{for } t\in(t^{k-1},t^k],\qquad k\in\mathcal{K}^0_\tau.
	\end{align}
\end{subequations}

\begin{lemma}\label{lemma:mismatch}
	Assume that $\varepsilon u_1^\varepsilon$ is uniformly bounded in $W$, $\mc E(0,u_0^\eps)$ is uniformly bounded and \eqref{tauepsbdd}. Then there hold
	\begin{subequations}	
	\begin{gather}
		\sup\limits_{t\in [0,T]}\big(\|\overline{u}_{\tau,\varepsilon}(t)-\underline{u}_{\tau,\varepsilon}(t)\|_U+\|\widehat{u}_{\tau,\varepsilon}(t)-\overline{u}_{\tau,\varepsilon}(t)\|_U\big)+\sup\limits_{t\in [\tau,T]}\|\widetilde{u}_{\tau,\varepsilon}(t)-\widehat{u}_{\tau,\varepsilon}(t)\|_U\le C; \label{eq:mism1}\\
		 \sup\limits_{t\in [0,T]}\big(|\overline{u}_{\tau,\varepsilon}(t)-\underline{u}_{\tau,\varepsilon}(t)|_W+|\widehat{u}_{\tau,\varepsilon}(t)-\overline{u}_{\tau,\varepsilon}(t)|_W+|\widetilde{u}_{\tau,\varepsilon}(t)-\widehat{u}_{\tau,\varepsilon}(t)|_W\big)\le C\frac\tau\eps; \label{eq:mism2}\\
	 \|\overline{u}_{\tau,\varepsilon}-\underline{u}_{\tau,\varepsilon}\|_{L^1(0,T;Z)}+\|\widehat{u}_{\tau,\varepsilon}-\overline{u}_{\tau,\varepsilon}\|_{L^1(0,T;Z)}+\|\widetilde{u}_{\tau,\varepsilon}-\widehat{u}_{\tau,\varepsilon}\|_{L^1(0,T;Z)}\le C\tau;  \label{eq:mism3}\\
	 \|\overline{u}_{\tau,\varepsilon}-\underline{u}_{\tau,\varepsilon}\|_{L^2(0,T;V)}+\|\widehat{u}_{\tau,\varepsilon}-\overline{u}_{\tau,\varepsilon}\|_{L^2(0,T;V)}+\|\widetilde{u}_{\tau,\varepsilon}(t)-\widehat{u}_{\tau,\varepsilon}\|_{L^2(\tau,T;V)}\le C\frac{\tau}{\sqrt{\eps}}; \label{eq:mism4}\\
	 \|\dot{\widetilde{u}}_{\tau,\varepsilon}-\dot{\widehat{u}}_{\tau,\varepsilon}\|_{L^2(\tau,T;V)}\le \frac{C}{\sqrt{\eps}}. \label{eq:mism5}
	\end{gather}
	If in addition $\eps u_1^\eps$ is uniformly bounded in $V$, then there also hold
	\begin{equation} \label{eq:mism6}
		\sup\limits_{t\in [0,\tau]}|\widetilde{u}_{\tau,\varepsilon}(t){-}\widehat{u}_{\tau,\varepsilon}(t)|_V\le C,\quad\|\widetilde{u}_{\tau,\varepsilon}{-}\widehat{u}_{\tau,\varepsilon}\|_{L^2(0,\tau;V)}\le C\frac{\tau}{\sqrt{\eps}},\quad \|\dot{\widetilde{u}}_{\tau,\varepsilon}{-}\dot{\widehat{u}}_{\tau,\varepsilon}\|_{L^2(0,\tau;V)}\le \frac{C}{\sqrt{\eps}}.
	\end{equation}
Finally, if \ref{hyp:E6} is in force, then one has
\begin{equation}\label{eq:mism7}
	\|\dot{\widetilde{u}}_{\tau,\varepsilon}-\dot{\widehat{u}}_{\tau,\varepsilon}\|_{L^2(0,T;U^*)}\leq C\frac{\tau}{\eps^2}.
\end{equation}
\end{subequations}
\end{lemma}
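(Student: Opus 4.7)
The plan is to reduce every mismatch to an algebraic expression in the discrete quantities $v^k_{\tau,\eps}$, $v^k_{\tau,\eps}{-}v^{k-1}_{\tau,\eps}$, and $u^k_{\tau,\eps}{-}u^{k-1}_{\tau,\eps}$, and then apply termwise or summed the uniform bounds $(i')$--$(vii')$ of Proposition~\ref{prop:inequality0}, together with $\tau/\eps\le C$ coming from \eqref{tauepsbdd}. First I would write down the basic identities, valid on each interval $(t^{k-1},t^k]$ with $s:=t-t^{k-1}\in[0,\tau]$:
\begin{align*}
\overline u_{\tau,\eps}(t)-\underline u_{\tau,\eps}(t) &= \tau v^k_{\tau,\eps}, \qquad \widehat u_{\tau,\eps}(t)-\overline u_{\tau,\eps}(t) = -(t^k-t)\,v^k_{\tau,\eps},\\
\widetilde u_{\tau,\eps}(t)-\widehat u_{\tau,\eps}(t) &= -\tfrac{\tau}{2} v^{k-1}_{\tau,\eps}+(v^k_{\tau,\eps}-v^{k-1}_{\tau,\eps})\tfrac{s(s-2\tau)}{2\tau},\\
\dot{\widetilde u}_{\tau,\eps}(t)-\dot{\widehat u}_{\tau,\eps}(t) &= (v^k_{\tau,\eps}-v^{k-1}_{\tau,\eps})\tfrac{s-\tau}{\tau},
\end{align*}
obtained by direct computation using $u^k_{\tau,\eps}=u^{k-1}_{\tau,\eps}+\tau v^k_{\tau,\eps}$ and the telescoping identity $\widetilde u_{\tau,\eps}(t^k)=\tfrac{1}{2}(u^k_{\tau,\eps}+u^{k-1}_{\tau,\eps})$ (this last one is a short recursion starting from $\widetilde u_{\tau,\eps}(0)=\tfrac12(u^0_{\tau,\eps}+u^{-1}_{\tau,\eps})$).

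From these identities, \eqref{eq:mism1} and \eqref{eq:mism2} follow by taking $U$-- resp.~$W$--norms: for $U$ one exploits $(ii')$ to bound $\|u^k_{\tau,\eps}\|_U\le\widetilde C$ and hence $\|u^k_{\tau,\eps}-u^{k-1}_{\tau,\eps}\|_U\le 2\widetilde C$, while for $W$ one notes that $(iii')$ combined with the equivalence of $|\cdot|_\M$ and $|\cdot|_W$ on $W$ yields $|v^k_{\tau,\eps}|_W\le C/\eps$; the restriction $t\in[\tau,T]$ in the $U$--bound for $\widetilde u-\widehat u$ is needed because on $(0,\tau]$ the quantity $v^0_{\tau,\eps}=u_1^\eps$ appears and is only controlled in $V$. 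Estimates \eqref{eq:mism3} and \eqref{eq:mism4} come from integrating and summing: for $Z$, from $\int_{t^{k-1}}^{t^k}\|\overline u-\underline u\|_Z\,dr=\tau^2\|v^k\|_Z$ one gets $\|\overline u-\underline u\|_{L^1(0,T;Z)}\le\tau\sum_k\tau\|v^k_{\tau,\eps}\|_Z\le \widetilde C\tau$ by $(iv')$ (analogously for the other two pieces); for $V$, from $\int_{t^{k-1}}^{t^k}|\overline u-\underline u|_V^2\,dr=\tau^3|v^k|_V^2$ one gets $\|\overline u-\underline u\|_{L^2(0,T;V)}^2\le\tau^2\sum_k\tau|v^k_{\tau,\eps}|_V^2\le\widetilde C\tau^2/\eps$ by $(v')$; the restriction to $(\tau,T)$ for $\widetilde u-\widehat u$ is again because $v^0_{\tau,\eps}$ does not satisfy $(v')$. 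Estimate \eqref{eq:mism5} is immediate: from the pointwise bound $|\dot{\widetilde u}-\dot{\widehat u}|_V\le|v^k_{\tau,\eps}-v^{k-1}_{\tau,\eps}|_V$, squaring and integrating one finds $\|\dot{\widetilde u}-\dot{\widehat u}\|_{L^2(\tau,T;V)}^2\le 2\sum_{k\ge2}\tau(|v^k|_V^2+|v^{k-1}|_V^2)\le C/\eps$ again by $(v')$.

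The bounds on $(0,\tau)$ in \eqref{eq:mism6} need the extra hypothesis $\eps u_1^\eps$ bounded in $V$, which converts into $|v^0_{\tau,\eps}|_V\le C/\eps$; combined with $\tau|v^1_{\tau,\eps}|_V^2\le\widetilde C/\eps$ from $(v')$ and the fact that $\tau/\eps\le C$ by \eqref{tauepsbdd}, the three inequalities of \eqref{eq:mism6} follow by the same pattern as before, after grouping powers of $\tau/\eps$ carefully. Finally, \eqref{eq:mism7} is the most delicate point and is where I would expect the actual work: starting from $|\dot{\widetilde u}-\dot{\widehat u}|_{U^*}\le|v^k_{\tau,\eps}-v^{k-1}_{\tau,\eps}|_{U^*}$ and integrating gives $\|\dot{\widetilde u}-\dot{\widehat u}\|_{L^2(0,T;U^*)}^2\le\tfrac{\tau}{3}\sum_k\|v^k_{\tau,\eps}-v^{k-1}_{\tau,\eps}\|_{U^*}^2$, and here we invoke $(vii')$ in the crucial form $\sum_k\tau\,\|\eps^2(v^k-v^{k-1})/\tau\|_{U^*}^2\le\widetilde C$, i.e.\ $\sum_k\|v^k-v^{k-1}\|_{U^*}^2\le\widetilde C\tau/\eps^4$, yielding precisely $\|\dot{\widetilde u}-\dot{\widehat u}\|_{L^2(0,T;U^*)}\le C\tau/\eps^2$. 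The only real obstacle will be bookkeeping the endpoint interval $(0,\tau]$ everywhere, since there $v^0_{\tau,\eps}=u_1^\eps$ is controlled only through the assumption on $\eps u_1^\eps$; this is the reason the various suprema and $L^2$--norms in \eqref{eq:mism1}, \eqref{eq:mism4} exclude $(0,\tau)$ and why \eqref{eq:mism6} is formulated with the stronger hypothesis.
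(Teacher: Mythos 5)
Your proposal is correct and follows essentially the same route as the paper: explicit pointwise identities for the interpolant mismatches on each subinterval $(t^{k-1},t^k]$, followed by termwise application of the uniform bounds $(i')$--$(vii')$ of Proposition~\ref{prop:inequality0} together with \eqref{tauepsbdd}, with the first interval handled separately via the assumptions on $u_1^\eps$ exactly as in the paper (e.g.\ $\tau\|u_1^\eps\|_Z\le C\tau/\eps\le C$ for \eqref{eq:mism3}, and the $V$-bound on $\eps u_1^\eps$ for \eqref{eq:mism6}). The identities you state, including $\widetilde u_{\tau,\eps}(t^k)=\tfrac12(u^k_{\tau,\eps}+u^{k-1}_{\tau,\eps})$, check out, and your reading of $(vii')$ for \eqref{eq:mism7} gives precisely the paper's computation.
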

\begin{proof}
We first recall that by $(ii')$ and $(iii')$ of Proposition~\ref{prop:inequality0} we have
\begin{equation*}
\max\limits_{k\in\mc K_\tau}\tau\|v^k_{\tau,\eps}\|_U\leq  C \quad \mbox{ and } \quad \max\limits_{k\in\mc K^0_\tau}\tau|v^k_{\tau,\eps}|_{W}\leq C \frac{\tau}{\varepsilon} \,.
\end{equation*}
By a direct computation we now deduce
\begin{subequations}	
	\begin{gather}
		\sup\limits_{t\in [0,T]}\big(\|\overline{u}_{\tau,\varepsilon}(t)-\underline{u}_{\tau,\varepsilon}(t)\|_X+\|\widehat{u}_{\tau,\varepsilon}(t)-\overline{u}_{\tau,\varepsilon}(t)\|_X\big)\le \max\limits_{k\in\mc K_\tau}\tau\|v^k_{\tau,\eps}\|_X \quad \mbox{ for } X\in\{U,W\}\,, \nonumber
\\\sup\limits_{t\in [\tau,T]}\|\widetilde{u}_{\tau,\varepsilon}(t)-\widehat{u}_{\tau,\varepsilon}(t)\|_U\le 2\max\limits_{k\in\mc K_\tau}\tau\|v^k_{\tau,\eps}\|_U\,, \nonumber\\
		 \sup\limits_{t\in [0,T]}|\widetilde{u}_{\tau,\varepsilon}(t)-\widehat{u}_{\tau,\varepsilon}(t)|_W\le 2\max\limits_{k\in\mc K^0_\tau}\tau|v^k_{\tau,\eps}|_{W} \,, \nonumber
	\end{gather}
\end{subequations}
whence the validity of \eqref{eq:mism1} and \eqref{eq:mism2}. 
As regards \eqref{eq:mism3} we compute
\begin{equation*}
\begin{split}
 \|\overline{u}_{\tau,\varepsilon}-\underline{u}_{\tau,\varepsilon}\|_{L^1(0,T;Z)} & = \tau \sum_{k=1}^{T/\tau} \tau\|v^k_{\tau,\eps}\|_Z \,,  \\
\|\widehat{u}_{\tau,\varepsilon}-\overline{u}_{\tau,\varepsilon}\|_{L^1(0,T;Z)} & = \sum_{k=1}^{T/\tau} \|v^k_{\tau,\eps}\|_Z \int_{t^{k-1}}^{t^k}(\tau-(r-t^{k-1}))\,\mathrm{d}r = \frac{\tau}{2} \sum_{k=1}^{T/\tau} \tau\|v^k_{\tau,\eps}\|_Z \,,  \\
\|\widetilde{u}_{\tau,\varepsilon}-\widehat{u}_{\tau,\varepsilon}\|_{L^1(0,T;Z)} & \le \sum_{k=1}^{T/\tau} \frac{\|v^k_{\tau,\eps}\|_Z}{2\tau} \int_{t^{k-1}}^{t^k}(2\tau(r-t^{k-1})-(r-t^{k-1})^2)\,\mathrm{d}r \\
& \,\,\,\,\,\, + \sum_{k=1}^{T/\tau} \frac{\|v^{k-1}_{\tau,\eps}\|_Z}{2\tau} \int_{t^{k-1}}^{t^k}(r-t^{k-1}-\tau)^2 \,\mathrm{d}r \\
& = \frac{\tau}{3} \sum_{k=1}^{T/\tau} \left(\tau\|v^k_{\tau,\eps}\|_Z + \tau \frac{\|v^{k-1}_{\tau,\eps}\|_Z}{2} \right) \\
& \le \tau \left(  \tau \|u_1^\varepsilon\|_Z + \sum_{k=1}^{T/\tau}\tau\|v^k_{\tau,\eps}\|_Z\right),
\end{split} 
\end{equation*}
and we conclude by $(iv')$ in Proposition~\ref{prop:inequality0} and \eqref{tauepsbdd}. 

Estimate \eqref{eq:mism4} can be proved similarly:  
\begin{equation*}
\begin{split}
\int_0^T |\overline{u}_{\tau,\varepsilon}(r)-\underline{u}_{\tau,\varepsilon}(r)|_{V}^2\,\mathrm{d}r & = \sum_{k=1}^{T/\tau} \tau^3 |v^k_{\tau,\eps}|_V^2 = \frac{\tau^2}{\varepsilon}\sum_{k=1}^{T/\tau} \varepsilon \tau |v^k_{\tau,\eps}|_V^2\,,  \\
\int_0^T |\widehat{u}_{\tau,\varepsilon}(r)-\overline{u}_{\tau,\varepsilon}(r)|_{V}^2 \,\mathrm{d}r & = \sum_{k=1}^{T/\tau} |v^k_{\tau,\eps}|_V^2 \int_{t^{k-1}}^{t^k}(r-t^{k-1}-\tau)^2\,\mathrm{d}r \\
& = \frac{1}{3} \sum_{k=1}^{T/\tau} \tau^3 |v^k_{\tau,\eps}|_V^2 = \frac{\tau^2}{\varepsilon}\sum_{k=1}^{T/\tau} \frac{\varepsilon \tau}{3} |v^k_{\tau,\eps}|_V^2\,,  \\
\int_\tau^T|\widetilde{u}_{\tau,\varepsilon}(r)-\widehat{u}_{\tau,\varepsilon}(r)|_{V}^2 \,\mathrm{d}r & \le \sum_{k=2}^{T/\tau} \frac{|v^k_{\tau,\eps}|_V^2}{2\tau^2} \int_{t^{k-1}}^{t^k}(2\tau(r-t^{k-1})-(r-t^{k-1}))^2\,\mathrm{d}r \\
& \,\,\,\,\,\, + \sum_{k=2}^{T/\tau} \frac{|v^{k-1}_{\tau,\eps}|_V^2}{2\tau^2} \int_{t^{k-1}}^{t^k}(r-t^{k-1}-\tau)^4 \,\mathrm{d}r \\
&\le \sum_{k=2}^{T/\tau}\frac{\tau^3}{2}(|v^k_{\tau,\eps}|^2_V+|v^{k-1}_{\tau,\eps}|^2_V)\\
& \le \sum_{k=1}^{T/\tau} \tau^3 |v^k_{\tau,\eps}|_V^2 = \frac{\tau^2}{\varepsilon}\sum_{k=1}^{T/\tau} \varepsilon \tau |v^k_{\tau,\eps}|_V^2 \,, 
\end{split} 
\end{equation*}  
and we conclude by $(v')$ in Proposition~\ref{prop:inequality0}. 

As for \eqref{eq:mism5}, we have
\begin{equation*}
\begin{split}
\int_\tau^T |\dot{\widetilde{u}}_{\tau,\varepsilon}(r){-}\dot{\widehat{u}}_{\tau,\varepsilon}(r)|_{V}^2 \,\mathrm{d}r  & =  \sum_{k=2}^{T/\tau} \frac{|v^k_{\tau,\eps}-v^{k-1}_{\tau,\eps}|_V^2}{\tau^2} \int_{t^{k-1}}^{t^k}(r-t^{k-1}-\tau)^2 \,\mathrm{d}r =  \sum_{k=2}^{T/\tau} \frac{\tau}{3}|v^k_{\tau,\eps}-v^{k-1}_{\tau,\eps}|_V^2 \\
& \le \frac{2}{3} \sum_{k=2}^{T/\tau} \tau (|v^k_{\tau,\eps}|_V^2 + |v^{k-1}_{\tau,\eps}|_V^2) \le \frac{4}{3} \sum_{k=1}^{T/\tau} \tau |v^k_{\tau,\eps}|_V^2 \le \frac{C}{\varepsilon} \,.
\end{split}
\end{equation*}

If in addition $\eps u_1^\eps$ is uniformly bounded in $V$, then by similar computations
	\begin{equation*}
		\int_0^\tau|\widetilde{u}_{\tau,\varepsilon}(r){-}\widehat{u}_{\tau,\varepsilon}(r)|_{V}^2\,\mathrm{d}r \le \tau^3 (|v^1_{\tau,\eps}|_V^2 + |u_1^\eps|_V^2) \le \frac{\tau^2}{\varepsilon}\left(\frac{\tau}{\varepsilon}|\eps u_1^\eps|_V^2 + \sum_{k=1}^{T/\tau} \varepsilon \tau |v^k_{\tau,\eps}|_V^2 \right) \le C \frac{\tau^2}{\varepsilon} \,.
	\end{equation*}
Moreover, 
\begin{equation*}
|\widetilde{u}_{\tau,\varepsilon}(0){-}\widehat{u}_{\tau,\varepsilon}(0)|_{V} = \frac{\tau}{2} |u_1^\eps|_V \le C \frac{\tau}{\eps} \le C\,,
\end{equation*}
and
\begin{equation*}
\begin{split}
\sup\limits_{t\in (0,\tau]}|\widetilde{u}_{\tau,\varepsilon}(t){-}\widehat{u}_{\tau,\varepsilon}(t)|_V & \le \sup\limits_{t\in (0,\tau]} \frac{\tau}{2} \left(|v^1_{\tau,\eps}|_V \frac{t}{\tau}\left(2-\frac{t}{\tau}\right) + |u_1^\eps|_V\left(1-\frac{t}{\tau}\right)^2\right) \\
& \le C (\tau\|v^1_{\tau,\eps}\|_U + \tau |u_1^\eps|_V) \leq C \left(1+\frac{\tau}{\eps}\right)\le C \,.
\end{split}
\end{equation*}
Finally,
\begin{equation*}
\int_0^\tau |\dot{\widetilde{u}}_{\tau,\varepsilon}(r){-}\dot{\widehat{u}}_{\tau,\varepsilon}(r)|_{V}^2 \,\mathrm{d}r = \frac{\tau}{3} |v^1_{\tau,\eps} - u_1^\eps|_V^2 \le \frac{2\tau}{3} (|v^1_{\tau,\eps}|_V^2 + |u_1^\eps|_V^2 ) \le C \left(\frac{1}{\eps}+\frac{\tau}{\eps^2}\right) \le \frac{C}{\varepsilon} \,.  
\end{equation*}
This concludes the proof of \eqref{eq:mism6}. 

Assuming also \ref{hyp:E6}, we finally have
\begin{equation*}
	\int_{0}^{T}\|\dot{\widetilde{u}}_{\tau,\eps}(r)-\dot{\widehat{u}}_{\tau,\eps}\|_{U^*}^2\d r=\sum_{k=1}^{T/\tau}\frac{\tau^3}{3}\left\|\frac{v^k_{\tau,\eps}-v^{k-1}_{\tau,\eps}}{\tau}\right\|_{U^*}^2\le C\frac{\tau^2}{\eps^4},
\end{equation*}
by using $(vii')$ in Proposition~\ref{prop:inequality0}. So \eqref{eq:mism7} is proved and we conclude.
\end{proof}

Next lemma shows how the variation (and the $L^2$-norm) of the piecewise quadratic interpolant $\widetilde{u}_{\tau,\eps}$ can be controlled, up to vanishing terms, by the one of the piecewise affine interpolant $\widehat{u}_{\tau,\eps}$. This result will be crucial in Proposition~\ref{prop:costdiscr}.

\begin{lemma}
	Assume that $\varepsilon u_1^\varepsilon$ is uniformly bounded in $W$, $\mc E(0,u_0^\eps)$ is uniformly bounded and \eqref{tauepsbdd}. Then, for all $0\le s\le t\le T$ one has
	\begin{subequations}
		\begin{align}
			&\int_{\pi_\tau(s)}^{\pi_\tau(t)}\mc R(\dot{\widetilde{u}}_{\tau,\eps}(r))\d r\le \int_{\pi_\tau(s)}^{\pi_\tau(t)}\mc R(\dot{\widehat{u}}_{\tau,\eps}(r))\d r+C\frac\tau\eps; \label{eq:nodeineq1}  \\
			& \int_{\pi_\tau(s)}^{\pi_\tau(t)} |\dot{\widetilde{u}}_{\tau,\eps}(r)|^2_{\V}\d r\le \int_{\pi_\tau(s)}^{\pi_\tau(t)}|\dot{\widehat{u}}_{\tau,\eps}(r)|^2_\V\d r+\tau|\dot{\widehat{u}}_{\tau,\eps}(\pi_\tau(s))|^2_\V. \label{eq:nodeineq2}
		\end{align}
	In particular there holds
	\begin{equation}
		\int_{\pi_\tau(s)}^{\pi_\tau(t)}\mc R_\eps(\dot{\widetilde{u}}_{\tau,\eps}(r))\d r\le \int_{\pi_\tau(s)}^{\pi_\tau(t)}\mc R_\eps(\dot{\widehat{u}}_{\tau,\eps}(r))\d r+C\frac\tau\eps\left(1+|\eps\dot{\widehat{u}}_{\tau,\eps}(\pi_\tau(s))|^2_\V\right).
\label{eq:nodeineq3}
	\end{equation}
	\end{subequations}
\end{lemma}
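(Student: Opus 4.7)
The plan is to reduce all three inequalities to pointwise convexity-type estimates on each subinterval $(t^{k-1}, t^k]$, sum them with telescoping cancellations, and then control the single remaining boundary term via the uniform bound $(iii')$ from Proposition~\ref{prop:inequality0}. The key observation is that on $(t^{k-1}, t^k]$, setting $\theta_k(r) := (r-t^{k-1})/\tau$, one has $\dot{\widehat{u}}_{\tau,\eps} \equiv v^k_{\tau,\eps}$, whereas $\dot{\widetilde{u}}_{\tau,\eps}(r) = \theta_k(r) v^k_{\tau,\eps} + (1-\theta_k(r)) v^{k-1}_{\tau,\eps}$, so the discrepancy between the two interpolants is exactly a convex combination of two consecutive discrete velocities.

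For \eqref{eq:nodeineq1} I would exploit the convexity of $\mc R$ to get
\begin{equation*}
\mc R(\dot{\widetilde u}_{\tau,\eps}(r)) \le \theta_k(r)\mc R(v^k_{\tau,\eps}) + (1-\theta_k(r))\mc R(v^{k-1}_{\tau,\eps})
\end{equation*}
pointwise. Integrating on $(t^{k-1}, t^k]$ gives $\int \mc R(\dot{\widetilde u}_{\tau,\eps}) \le \frac{\tau}{2}(\mc R(v^k_{\tau,\eps}) + \mc R(v^{k-1}_{\tau,\eps}))$, to be compared with $\int \mc R(\dot{\widehat u}_{\tau,\eps}) = \tau\mc R(v^k_{\tau,\eps})$. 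Writing $\pi_\tau(s) = t^{k_s}$, $\pi_\tau(t) = t^{k_t}$ and summing over $k \in \{k_s+1,\dots,k_t\}$, the difference telescopes to $\frac{\tau}{2}(\mc R(v^{k_s}_{\tau,\eps}) - \mc R(v^{k_t}_{\tau,\eps})) \le \frac{\tau}{2} \mc R(v^{k_s}_{\tau,\eps})$. By \eqref{Rbounds}, the embedding $W\hookrightarrow Z$ and the norm-equivalence of $|\cdot|_\M$ with $|\cdot|_W$, this remainder is controlled by $C\tau|v^{k_s}_{\tau,\eps}|_\M \le C\tau/\eps$ thanks to $(iii')$.

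For \eqref{eq:nodeineq2} I would apply the same argument to the convex function $v \mapsto |v|_\V^2$, obtaining $\int_{t^{k-1}}^{t^k} |\dot{\widetilde u}_{\tau,\eps}|_\V^2 \le \frac{\tau}{2}(|v^k_{\tau,\eps}|_\V^2+|v^{k-1}_{\tau,\eps}|_\V^2)$. Summation and telescoping then leave the boundary remainder $\frac{\tau}{2} |v^{k_s}_{\tau,\eps}|_\V^2 \le \tau |\dot{\widehat u}_{\tau,\eps}(\pi_\tau(s))|_\V^2$, once I observe that $\dot{\widehat u}_{\tau,\eps}$ equals $v^{k_s}_{\tau,\eps}$ on $(t^{k_s-1}, t^{k_s}]$. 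Finally, \eqref{eq:nodeineq3} follows by adding $\frac{\eps}{2}$ times \eqref{eq:nodeineq2} to \eqref{eq:nodeineq1}, since $\mc R_\eps = \mc R + \frac{\eps}{2}|\cdot|_\V^2$: the extra contribution is $\frac{\eps\tau}{2}|\dot{\widehat u}_{\tau,\eps}(\pi_\tau(s))|_\V^2 = \frac{\tau}{2\eps}|\eps\dot{\widehat u}_{\tau,\eps}(\pi_\tau(s))|_\V^2$, exactly matching the claimed form. There is no real obstacle in this proof; the only mild subtlety is that the $\tau/\eps$-rate in \eqref{eq:nodeineq1} and \eqref{eq:nodeineq3} originates precisely from the $1/\eps$-growth of the boundary node value $|v^{k_s}_{\tau,\eps}|_\M$ provided by $(iii')$, rather than from the pointwise estimates on the subintervals themselves.
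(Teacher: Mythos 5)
Your proof is correct and follows essentially the same route as the paper's: decompose over the subintervals $(t^{k-1},t^k]$, bound the integral on each by $\tfrac{\tau}{2}(\cdot(v^k)+\cdot(v^{k-1}))$, telescope against $\sum\tau\,\cdot(v^k)$, and control the single boundary term $\tfrac{\tau}{2}\mc R(v^m)$ via $(iii')$ of Proposition~\ref{prop:inequality0}. The only cosmetic difference is that you invoke convexity of $\mc R$ and of $|\cdot|_\V^2$ where the paper uses subadditivity plus one-homogeneity and an exact computation of the quadratic integral, respectively; both yield the same per-interval bound.
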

\begin{proof}
We start proving \eqref{eq:nodeineq2}. Let $m,n\in \mc K^0_\tau$ be such that $\pi_\tau(t)=t^n$ and $\pi_\tau(s)=t^m$, then 
\begin{equation*}
\begin{split}
\int_{\pi_\tau(s)}^{\pi_\tau(t)} |\dot{\widetilde{u}}_{\tau,\eps}(r)|^2_{\V}\d r & = \sum_{k=m+1}^n \left(\frac{|v^k_{\tau,\eps}|_{\V}^2}{\tau^2}\int_{t^{k-1}}^{t^k}(r-t^{k-1})^2\,\mathrm{d}r +\frac{|v^{k-1}_{\tau,\eps}|_{\V}^2}{\tau^2}\int_{t^{k-1}}^{t^k}(r-t^{k-1}-\tau)^2\,\mathrm{d}r \right) \\
& \,\,\,\,\,\, - 2 \sum_{k=m+1}^n \frac{1}{\tau^2}\langle v^k_{\tau,\eps} , v^{k-1}_{\tau,\eps} \rangle_{\V} \int_{t^{k-1}}^{t^k}\left[(r-t^{k-1})^2-\tau(r-t^{k-1})\right]\,\mathrm{d}r \\
& = \sum_{k=m+1}^n \frac{\tau}{3} (|v^k_{\tau,\eps}|_{\V}^2 + |v^{k-1}_{\tau,\eps}|_{\V}^2 + \langle v^k_{\tau,\eps} , v^{k-1}_{\tau,\eps} \rangle_{\V}) \\
& \le \sum_{k=m+1}^n \frac{\tau}{2} (|v^k_{\tau,\eps}|_{\V}^2 + |v^{k-1}_{\tau,\eps}|_{\V}^2 ) \le \sum_{k=m+1}^n {\tau}|v^k_{\tau,\eps}|_{\V}^2 + \frac{\tau}{2} |v^m_{\tau,\eps}|_{\V}^2 \\
&= \int_{\pi_\tau(s)}^{\pi_\tau(t)}|\dot{\widehat{u}}_{\tau,\eps}(r)|^2_\V\d r+\tau|\dot{\widehat{u}}_{\tau,\eps}(\pi_\tau(s))|^2_\V.
\end{split}
\end{equation*}
As regards \eqref{eq:nodeineq1}, by performing similar computations, only exploiting the subadditivity and the positively one-homogeneity of $\mc R$, one obtains
\begin{equation*}
\begin{split}
\int_{\pi_\tau(s)}^{\pi_\tau(t)}\mc R(\dot{\widetilde{u}}_{\tau,\eps}(r))\d r & = \sum_{k=m+1}^n \int_{t^{k-1}}^{t^k} \mc R\left( \frac{v^k_{\tau,\eps}}{\tau}(r-t^{k-1}) + \frac{v^{k-1}_{\tau,\eps}}{\tau}(\tau-(r-t^{k-1})) \right)\,\mathrm{d}r \\
& \le \sum_{k=m+1}^n  \tau  \mc R(v^k_{\tau,\eps}) + \frac{\tau}{2} \mc R(v^m_{\tau,\eps}) \le \int_{\pi_\tau(s)}^{\pi_\tau(t)}\mc R(\dot{\widehat{u}}_{\tau,\eps}(r))\d r + C\frac{\tau}{\eps}|\eps v^m_{\tau,\eps}|_{W}\,,
\end{split}
\end{equation*}
and we conclude by exploiting $(iii')$ in Proposition~\ref{prop:inequality0}. Assertion \eqref{eq:nodeineq3} is then a consequence of \eqref{eq:nodeineq1}, \eqref{eq:nodeineq2} and the definition \eqref{eq:Reps} of $\mc R_\eps$. 
\end{proof}

\subsection{Compactness}

	By using the uniform bounds listed in Proposition~\ref{prop:inequality0} together with the mismatch estimates of Lemma~\ref{lemma:mismatch}, we are able to extract convergent subsequences from the sequence of interpolants \eqref{interpolants}.
\begin{prop}\label{prop:compactnessdiscr}
	Assume that $\varepsilon u_1^\varepsilon$ is uniformly bounded in $W$, $\mc E(0,u_0^\eps)$ is uniformly bounded and \eqref{tauepsbdd}. Then $\widehat{u}_{\tau,\eps},\overline{u}_{\tau,\eps},\underline{u}_{\tau,\eps}$ are uniformly bounded in $B([0,T];U)\cap BV([0,T];Z)$. Moreover $\widetilde{u}_{\tau,\eps}$ is bounded in $B([\tau,T];U)\cap BV([0,T];Z)$ uniformly in $\tau,\eps$.
	
	In particular, for any sequence $(\tau,\eps)\to (0,0)$ such that $\tau/\eps\to 0$ there exists a subsequence $(\tau_j,\eps_j)$ and there exists a function $u\colon [0,T]\to D$ such that $u\in B([0,T];U)\cap BV([0,T];Z)$ and
	\begin{itemize}
		\item[$(a')$] $\widehat{u}_{\tau_j,\eps_j}(t),\overline{u}_{\tau_j,\eps_j}(t),\underline{u}_{\tau_j,\eps_j}(t)\xrightharpoonup[j\to +\infty]{U}u(t),\quad\text{ for every }t\in [0,T]$;
		\item[$(a'')$] $\widetilde{u}_{\tau_j,\eps_j}(t)\xrightharpoonup[j\to +\infty]{U}u(t),\quad\text{ for every }t\in (0,T]$,\ \ \ \  and $\quad\widetilde{u}_{\tau_j,\eps_j}(0)\xrightarrow[j\to +\infty]{W}u(0)$;
		\item[$(b')$] $\epsj\dot{\widetilde{u}}_{\tau_j,\eps_j}(t),\epsj\dot{\widehat{u}}_{\tau_j,\eps_j}(t)\xrightarrow[j\to +\infty]{V}0,\quad\text{ for almost every }t\in [0,T]$;
		\item[$(c')$] $\displaystyle V_\mc R(u;s,t)\le\liminf_{j\to +\infty}\int_{\pi_{\tau_j}(s)}^{\pi_{\tau_j}(t)}\mc R(\dot{\widehat{u}}_{\tau_j,\eps_j}(r))\d r$,\ \ \ \  for all $0\le s\le t\le T$.
	\end{itemize}
If in addition $\eps u_1^\eps$ is uniformly bounded in $V$, then there also holds
\begin{equation*}
	\quad\widetilde{u}_{\tau_j,\eps_j}(0)\xrightharpoonup[j\to +\infty]{V}u(0).
\end{equation*}
\end{prop}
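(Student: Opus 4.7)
The plan is first to establish the uniform bounds for the four interpolants, then extract a common limit via the Helly-type Lemma~\ref{helly}, and finally deal with the derivative and $\mc R$-variation statements.

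\textbf{Step 1 (Uniform bounds).} From $(ii')$ of Proposition~\ref{prop:inequality0} the piecewise constant interpolant $\overline{u}_{\tau,\eps}$ is immediately bounded in $B([0,T];U)$ with a constant independent of $(\tau,\eps)$, and by \eqref{eq:mism1} in Lemma~\ref{lemma:mismatch} the same bound, up to an additive constant, holds for $\underline{u}_{\tau,\eps}$ and $\widehat{u}_{\tau,\eps}$ on $[0,T]$ and for $\widetilde{u}_{\tau,\eps}$ on $[\tau,T]$ (the restriction to $[\tau,T]$ is forced by the lack of a $U$-bound on $u_1^\eps$). For the $Z$-variation, I would rely on $(iv')$: the piecewise constant $\overline u_{\tau,\eps}$ has $V_Z(\overline u_{\tau,\eps};0,T)=\sum_k\tau\|v_{\tau,\eps}^k\|_Z\le \overline C/\rho_1$, and for the piecewise affine interpolants $\widehat u_{\tau,\eps}$ and $\widetilde u_{\tau,\eps}$ an elementary computation based on the triangle inequality in $Z$ gives exactly the same bound.

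\textbf{Step 2 (Extraction of a common limit).} Having the bounds above, a direct application of Lemma~\ref{helly} with $X=U$ and $Y=Z$ (both separable and $U$ reflexive) to $\overline{u}_{\tau,\eps}$ yields a subsequence $(\tau_j,\eps_j)\to(0,0)$ and a limit $u\in B([0,T];U)\cap BV([0,T];Z)$ such that $\overline u_{\tau_j,\eps_j}(t)\rightharpoonup u(t)$ in $U$ for every $t$. To propagate this to the other interpolants I would use the mismatch estimate \eqref{eq:mism2}, which gives $|\,\cdot\,|_W$-differences of order $\tau/\eps\to 0$: since all the interpolants stay in a fixed $U$-bounded set, convergence in $W$ to the same point together with uniform boundedness in $U$ forces weak-$U$ convergence to $u(t)$. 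The only subtle point is $\widetilde{u}_{\tau,\eps}(0)=u_0^\eps-(\tau/2)u_1^\eps$: writing $\tau u_1^\eps=(\tau/\eps)(\eps u_1^\eps)$ one sees that $\tau u_1^\eps\to 0$ in $W$ (just bounded in $W$), whence only strong $W$-convergence of $\widetilde u_{\tau_j,\eps_j}(0)$ to $u(0)$ can be asserted, matching $(a'')$; if moreover $\eps u_1^\eps$ is bounded in $V$, the same argument in $V$ upgrades this to weak $V$-convergence. Then $u(t)\in D$ for every $t$ follows from weak lower semicontinuity of $\mc E$ together with $(i')$ and \eqref{eq:ineqgronwall}.

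\textbf{Step 3 (Derivatives and variation liminf).} Estimate $(v')$ states that $\eps\int_0^T|\dot{\widehat u}_{\tau,\eps}|_\V^2\le \widetilde C$, so $\eps\dot{\widehat u}_{\tau,\eps}\to 0$ strongly in $L^2(0,T;V)$ (with rate $\sqrt{\eps}$), and a further diagonal subsequence gives pointwise a.e. convergence in $V$; the mismatch \eqref{eq:mism5} upgraded by multiplying by $\eps$ gives $\eps\|\dot{\widetilde u}_{\tau,\eps}-\dot{\widehat u}_{\tau,\eps}\|_{L^2(\tau,T;V)}\le C\sqrt{\eps}\to 0$, so the same conclusion holds for $\widetilde u$, proving $(b')$. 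Finally, for $(c')$, since $\widehat u_{\tau,\eps}$ is continuous piecewise affine, one-homogeneity and subadditivity of $\mc R$ yield $V_\mc R(\widehat u_{\tau_j,\eps_j};\pi_{\tau_j}(s),\pi_{\tau_j}(t))=\int_{\pi_{\tau_j}(s)}^{\pi_{\tau_j}(t)}\mc R(\dot{\widehat u}_{\tau_j,\eps_j}(r))\,\mathrm dr$; I would then combine the lower semicontinuity of the $\mc R$-variation under pointwise $Z$-convergence (valid by \eqref{eq:embeddings} and $(a')$) with the fact that $\pi_{\tau_j}(s)\to s$ and $\pi_{\tau_j}(t)\to t$ to conclude. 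The main obstacle I anticipate is the treatment of the initial time $t=0$ for $\widetilde u$, which requires careful use of the scaling $\tau/\eps\to 0$ to kill the $\tau u_1^\eps$ contribution in the correct topology.
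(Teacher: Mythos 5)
Your proposal is correct and follows essentially the same route as the paper: the uniform bounds come from Proposition~\ref{prop:inequality0} and Lemma~\ref{lemma:mismatch}, the common limit from Lemma~\ref{helly} plus identification of all interpolants through the $O(\tau/\eps)$ mismatch in $W$, $(b')$ from the $L^2$ bounds, and $(c')$ from lower semicontinuity of the $\mc R$-variation with a vanishing boundary layer. Two local remarks. For $(c')$ the paper writes $\int_s^t\le\int_{\pi_{\tau_j}(s)}^{\pi_{\tau_j}(t)}+\int_s^{\pi_{\tau_j}(s)}$ and kills the second integral using $(iii')$ (it is of order $\tau_j/\eps_j$), whereas you apply lower semicontinuity of the variation directly on the shifted interval $[\pi_{\tau_j}(s),\pi_{\tau_j}(t)]$; since $\pi_{\tau_j}(s)\ge s$ this interval does \emph{not} contain $[s,t]$, so in the partition-sum argument you must replace the endpoints $s,t$ by $\pi_{\tau_j}(s),\pi_{\tau_j}(t)$ and use that $\widehat{u}_{\tau_j,\eps_j}(\pi_{\tau_j}(s))=\overline{u}_{\tau_j,\eps_j}(s)$ and $\widehat{u}_{\tau_j,\eps_j}(\pi_{\tau_j}(t))=\overline{u}_{\tau_j,\eps_j}(t)$ converge to $u(s)$, $u(t)$ in $Z$ by $(a')$ — with that observation your variant closes. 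Finally, $\widetilde{u}_{\tau,\eps}$ is piecewise quadratic rather than affine (its derivative is the piecewise affine interpolant of the $v^k_{\tau,\eps}$), but your triangle-inequality computation still gives $V_Z(\widetilde{u}_{\tau,\eps};0,T)\le\sum_k\frac{\tau}{2}(\|v^k_{\tau,\eps}\|_Z+\|v^{k-1}_{\tau,\eps}\|_Z)$, and the extra term $\frac{\tau}{2}\|u_1^\eps\|_Z\le C\tau/\eps$ is bounded by \eqref{tauepsbdd}, so the bound is not literally "the same" as for $\widehat u_{\tau,\eps}$ but is uniform all the same (the paper obtains it instead from \eqref{eq:nodeineq1} and \eqref{eq:mism2}).
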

\begin{proof}
By $(ii')$ in Proposition~\ref{prop:inequality0} we know that $\overline{u}_{\tau,\eps},\underline{u}_{\tau,\eps}$ are uniformly bounded in $B([0,T];U)$. By \eqref{eq:mism1} we deduce the same for $\widehat{u}_{\tau,\eps}$, while we obtain that $\widetilde{u}_{\tau,\eps}$ is bounded in $B([\tau,T];U)$ uniformly in $\tau, \eps$. By $(iv')$ in Proposition~\ref{prop:inequality0}, we also know that $\widehat{u}_{\tau,\eps}$ is bounded in $BV([0,T];Z)$. Hence, by \eqref{eq:nodeineq1} and \eqref{eq:mism2} we infer $\widetilde{u}_{\tau,\eps}$ bounded in $BV([0,T];Z)$ as well. As regards $\overline{u}_{\tau,\eps}$ and $\underline{u}_{\tau,\eps}$, let us pick any finite partition $\{s^i\}$ of $[0,T]$. Then there holds
\begin{equation*}
\sum_i \|\overline{u}_{\tau,\eps}(s^i)-\overline{u}_{\tau,\eps}(s^{i-1})\|_Z \le \sum_{k=1}^{T/\tau} \|u^k_{\tau,\eps}-u^{k-1}_{\tau,\eps}\|_Z = \sum_{k=1}^{T/\tau} \tau \|v^k_{\tau,\eps}\|_Z \le C\,,
\end{equation*}
and an analogous estimate can be shown for $\underline{u}_{\tau,\eps}$. So both of them are bounded in $BV([0,T];Z)$. 

Now, consider any two sequences $\eps\to0$ and $\tau\to0$ such that $\frac{\tau}{\eps}\to0$. By using Lemma~\ref{helly} (together with a diagonal argument for $\widetilde{u}_{\tau,\eps}$) we then deduce that, up to subsequences, $(a')$ and $(a'')$ hold true. Indeed all the sequences converge to the same function $u$ due to \eqref{eq:mism2}. The fact that $u$ is $D$-valued follows by weak lower semicontinuity of $\mc E$ together with $(i')$ in Proposition~\ref{prop:inequality0}. The validity of the assertions about $\widetilde{u}_{\tau_j,\eps_j}(0)$ easily follows recalling that
\begin{equation*}
\widetilde{u}_{\tau_j,\eps_j}(0) = u_0^{\eps_j} - \frac{\tau_j}{2} u_1^{\eps_j} = \widehat{u}_{\tau_j,\eps_j}(0) - \frac{\tau_j}{2 \eps_j} \eps_j u_1^{\eps_j} \,. 
\end{equation*}
The validity of $(b')$ is a direct consequence of the inequalities
\begin{equation*}
\begin{split}
&\bullet\,\eps^2 \int_{0}^{T}|\dot{\widehat{u}}_{\tau,\eps}(r)|^2_V\d r = \eps \sum_{k=1}^{T/\tau} \eps\tau |v^k_{\tau,\eps}|^2_V \le C \eps\,, \\
&\bullet\,\eps \|\dot{\widetilde{u}}_{\tau,\eps}\|_{L^2(\tau,T;V)} \leq \eps \|\dot{\widetilde{u}}_{\tau,\eps}-\dot{\widehat{u}}_{\tau,\eps}\|_{L^2(\tau,T;V)} + \eps \|\dot{\widehat{u}}_{\tau,\eps}\|_{L^2(0,T;V)} \leq C\sqrt{\eps}\,,
\end{split}
\end{equation*}
where we used $(v')$ in Proposition~\ref{prop:inequality0} and \eqref{eq:mism5}. As regards $(c')$, we exploit the weak lower semicontinuity of the $\mc R$-variation so that we estimate
\begin{equation*}
\begin{split}
\displaystyle V_\mc R(u;s,t) & \le \liminf_{j\to +\infty}\int_{s}^{t}\mc R(\dot{\widehat{u}}_{\tau_j,\eps_j}(r))\d r \\
& \le\liminf_{j\to +\infty}\left(\int_{\pi_{\tau_j}(s)}^{\pi_{\tau_j}(t)}\mc R(\dot{\widehat{u}}_{\tau_j,\eps_j}(r))\d r + \int_{s}^{\pi_{\tau_j}(s)}\mc R(\dot{\widehat{u}}_{\tau_j,\eps_j}(r))\d r\right) \,.
\end{split}
\end{equation*}
We now conclude since by $(iii')$ in Proposition~\ref{prop:inequality0} we have
\begin{equation*}
\int_{s}^{\pi_{\tau_j}(s)}\mc R(\dot{\widehat{u}}_{\tau_j,\eps_j}(r))\d r \le C \int_{s}^{\pi_{\tau_j}(s)}|\dot{\widehat{u}}_{\tau_j,\eps_j}(r)|_W\d r \le C \frac{\pi_{\tau_j}(s)-s}{\eps_j} 
\le C \frac{\tau_j}{\eps_j} \xrightarrow[j\to +\infty]{} 0 \,. 
\end{equation*}
\end{proof}

\subsection{Limit passage in the stability condition}

	We now show how the minimality property of $\{u^k_{\tau,\eps}\}_{k\in \mc K_\tau}$ leads in the limit to the local stability condition \eqref{eq:LS}. Next proposition is the analogue of proposition~\ref{prop:estimate} in the discrete setting.
	\begin{prop}
		Assume that $\varepsilon u_1^\varepsilon$ is uniformly bounded in $W$, $\mc E(0,u_0^\eps)$ is uniformly bounded and \eqref{tauepsbdd}. Then for any function $g_\tau\colon (-\tau,T]\to U$ of the form
		\begin{equation*}
			g_\tau(t)=\sum_{k=0}^{T/\tau}g^k_\tau\bm 1_{(t^{k-1},t^k]}(t), \qquad\text{ with }g^k_\tau\in U,
		\end{equation*}
	there exists a constant $C_\tau>0$, depending on $\|g_\tau\|_{B([0,T];U)\cap BV([0,T];W)}$, such that for all $0\le s\le t\le T$ there holds
	\begin{equation}\label{eq:estimatediscr}
		\begin{aligned}
		&\quad\,\int_{\pi_\tau(s)}^{\pi_\tau(t)}\mc E(\pi_\tau(r),\overline{u}_{\tau,\eps}(r))\d r\\
		&\le \int_{\pi_\tau(s)}^{\pi_\tau(t)}\Big(\mc E(\pi_\tau(r),g_\tau(r))+\mc R(g_\tau(r)-\overline{u}_{\tau,\eps}(r))+\frac \lambda 2 |g_\tau(r)-\overline{u}_{\tau,\eps}(r)|_W^2\Big)\d r+C_\tau\sqrt{\eps}.
	\end{aligned}
	\end{equation}
	\end{prop}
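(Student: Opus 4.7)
The plan is to mimic the strategy of Proposition~\ref{prop:estimate} at the discrete level. Writing $\pi_\tau(s)=t^m$ and $\pi_\tau(t)=t^n$, I would test the discrete Euler--Lagrange equation \eqref{eq:EL2} at step $k\in\{m+1,\dots,n\}$ against $u^k_{\tau,\eps}-g^k_\tau\in U$. Exploiting that $\xi^k_{\tau,\eps}\in\partial^U\mc E(t^k,u^k_{\tau,\eps})$ and applying the $\lambda$-convexity inequality \eqref{lambdaconvexineq} yields
\[
\langle \xi^k_{\tau,\eps},u^k_{\tau,\eps}-g^k_\tau\rangle_U \;\ge\; \mc E(t^k,u^k_{\tau,\eps})-\mc E(t^k,g^k_\tau)-\tfrac{\lambda}{2}|g^k_\tau-u^k_{\tau,\eps}|_W^2\,,
\]
while $\eta^k_{\tau,\eps}\in\partial^Z\mc R(v^k_{\tau,\eps})\subseteq\partial^Z\mc R(0)$ together with the defining inequality of the convex subdifferential give $\langle\eta^k_{\tau,\eps},u^k_{\tau,\eps}-g^k_\tau\rangle_Z \ge -\mc R(g^k_\tau-u^k_{\tau,\eps})$. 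Multiplying the resulting inequality by $\tau$ and summing over $k=m+1,\dots,n$ produces \eqref{eq:estimatediscr} up to the two error terms
\[
I:=\eps^2\!\!\sum_{k=m+1}^n\!\!\langle\mathbb{M}(v^{k-1}_{\tau,\eps}-v^k_{\tau,\eps}),u^k_{\tau,\eps}-g^k_\tau\rangle_U\,,\qquad II:=\eps\!\!\sum_{k=m+1}^n\!\!\tau\langle\mathbb{V}v^k_{\tau,\eps},g^k_\tau-u^k_{\tau,\eps}\rangle_V\,,
\]
which I must show are bounded by $C_\tau\sqrt{\eps}$.

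The viscous contribution $II$ is the easier one. By Cauchy--Schwarz and the equivalence of $|\cdot|_\V$ with $|\cdot|_V$, together with $(ii')$ in Proposition~\ref{prop:inequality0} to bound $\|u^k_{\tau,\eps}\|_V$ and the assumption $g_\tau\in B([0,T];U)\subseteq B([0,T];V)$, one obtains
\[
|II|\le \sqrt{\eps}\,\Big(\eps\sum_{k=m+1}^n\tau|v^k_{\tau,\eps}|_\V^2\Big)^{1/2}\Big(\sum_{k=m+1}^n\tau|g^k_\tau-u^k_{\tau,\eps}|_\V^2\Big)^{1/2}\le C_\tau\sqrt{\eps}\,,
\]
where the first factor is controlled by $(v')$ in Proposition~\ref{prop:inequality0}.

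The main obstacle is the inertial term $I$, since the discrete second-time-derivative cannot be integrated by parts as in the continuous case. The remedy is Abel (discrete) summation by parts, which rewrites $I$ as boundary contributions at $k=m$ and $k=n$ plus a bulk sum $\eps^2\sum_{k=m+1}^{n-1}\langle\mathbb{M}v^k_{\tau,\eps},(u^{k+1}_{\tau,\eps}-u^k_{\tau,\eps})-(g^{k+1}_\tau-g^k_\tau)\rangle_W$, where the duality is now in $W$ thanks to \eqref{mass}. The boundary terms are controlled directly by $(iii')$ in Proposition~\ref{prop:inequality0} and are of order $\eps$. For the bulk, I would substitute $u^{k+1}_{\tau,\eps}-u^k_{\tau,\eps}=\tau v^{k+1}_{\tau,\eps}$ and use $|\cdot|_\M\le C|\cdot|_W\le C|\cdot|_V\le C|\cdot|_\V$ together with $(v')$ to bound $\eps^2\tau\sum|v^k_{\tau,\eps}|_\M |v^{k+1}_{\tau,\eps}|_\M\le C\eps$; the remaining piece involving $g^{k+1}_\tau-g^k_\tau$ is bounded by $\eps\cdot C \cdot V_W(g_\tau;0,T)$, once again using $(iii')$. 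Summing all contributions gives $|I|\le C_\tau\eps$, where $C_\tau$ depends on $\|g_\tau\|_{B([0,T];U)\cap BV([0,T];W)}$, and the proof is complete.
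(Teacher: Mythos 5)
Your proposal is correct and follows essentially the same route as the paper: testing \eqref{eq:EL2} against $u^k_{\tau,\eps}-g^k_\tau$, invoking \eqref{lambdaconvexineq} and $\eta^k_{\tau,\eps}\in\partial^Z\mc R(0)$, bounding the viscous sum by Cauchy--Schwarz with $(v')$, and handling the inertial sum by Abel summation with boundary terms controlled via $(ii')$--$(iii')$ and the bulk and $g$-increment terms controlled via $(v')$ and $V_W(g_\tau;0,T)$. The only difference is a harmless reindexing in the summation by parts.
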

	\begin{proof}
Without loss of generality, we may assume that $g^k_\tau\in D$ for every $k\in \mc K_\tau$. By testing \eqref{eq:EL2} by $u^k_{\tau,\eps}-g^k_\tau\in U$ and using \eqref{lambdaconvexineq}, for all $k\in \mc K_\tau$ we obtain
\begin{equation*}
\begin{split}
\tau \mc E(t^k,u^k_{\tau,\eps}) & \le \tau \mc E(t^k,g^k_\tau) + \tau \mc R(g^k_\tau - u^k_{\tau,\eps}) + \frac{\lambda}{2} \tau |g^k_\tau-u^k_{\tau,\eps}|_W^2 \\
& \,\,\,\,\,\, - \eps^2 \langle \mathbb{M}(v^k_{\tau,\eps}-v^{k-1}_{\tau,\eps}, u^k_{\tau,\eps}-g_\tau^k) \rangle_W  - \eps\tau \langle \V v^k_{\tau,\eps}, u^k_{\tau,\eps}-g_\tau^k \rangle_V \,.
\end{split}
\end{equation*}
By summing from $k=m+1$ to $n$, where $\pi_\tau(s)=t^m$ and $\pi_\tau(t)=t^n$, we obtain
\begin{equation*}
		\begin{aligned}
		\int_{\pi_\tau(s)}^{\pi_\tau(t)}\!\!\mc E(\pi_\tau(r),\overline{u}_{\tau,\eps}(r))\d r\le& \int_{\pi_\tau(s)}^{\pi_\tau(t)}\!\!\Big(\mc E(\pi_\tau(r),g_\tau(r))+\mc R(g_\tau(r){-}\overline{u}_{\tau,\eps}(r))+\frac \lambda 2 |g_\tau(r){-}\overline{u}_{\tau,\eps}(r)|_W^2\Big)\d r \\
& + \left| \sum_{k=m+1}^n \left(\eps^2\langle \mathbb{M}(v^k_{\tau,\eps}-v^{k-1}_{\tau,\eps}) , u^k_{\tau,\eps}-g_\tau^k \rangle_W + \eps\tau \langle \V v^k_{\tau,\eps}, u^k_{\tau,\eps}-g_\tau^k\rangle_V \right)  \right|.
	\end{aligned}
	\end{equation*}
We now estimate separately the terms within the absolute value by using $(ii')$, $(iii')$ and $(v')$ in Proposition~ \ref{prop:inequality0}. 

We have
\begin{equation*}
\begin{split}
&\quad\,\left| \sum_{k=m+1}^n \eps\tau \langle \V v^k_{\tau,\eps}, u^k_{\tau,\eps}-g_\tau^k\rangle_V \right|  \le C \sqrt{\eps} \sum_{k=m+1}^n \sqrt{\eps} \tau |v^k_{\tau,\eps}|_V (\|u^k_{\tau,\eps}\|_U + \|g_\tau^k\|_U) \\
& \le C \sqrt{\eps} (1+\|g_\tau\|_{B([0,T];U)}) \left(\sum_{k=m+1}^n \tau\right)^\frac{1}{2}\left(\sum_{k=m+1}^n \eps \tau |v^k_{\tau,\eps}|_V^2 \right)^\frac{1}{2} \le C (1+\|g_\tau\|_{B([0,T];U)}) \sqrt{\eps}. 
\end{split}
\end{equation*}
As for the first term, we first note that
\begin{equation*}
\begin{split}
\sum_{k=m+1}^n  \eps^2\langle \mathbb{M}(v^k_{\tau,\eps}-v^{k-1}_{\tau,\eps}) ,& u^k_{\tau,\eps}-g_\tau^k \rangle_W = \eps \bigg[ \langle \eps \mathbb{M}v^n_{\tau,\eps}  , u^n_{\tau,\eps}-g_\tau^n \rangle_W - \langle \eps \mathbb{M}v^m_{\tau,\eps} , u^m_{\tau,\eps}-g_\tau^m \rangle_W \\ & - \sum_{k=m+1}^n\left( \tau \langle \sqrt{\eps} \mathbb{M}v^{k-1}_{\tau,\eps}  , \sqrt{\eps} v^{k}_{\tau,\eps} \rangle_W 
 - \langle \eps \mathbb{M}v^{k-1}_{\tau,\eps}  , g_\tau^k - g_\tau^{k-1} \rangle_W\right) \bigg] \,.
\end{split}
\end{equation*}
The first two terms within square brackets can be estimated as
\begin{equation*}
 |\langle \eps \mathbb{M}v^i_{\tau,\eps}  , u^i_{\tau,\eps}-g_\tau^i \rangle_W |\le C \eps |v^i_{\tau,\eps}|_W (\|u^i_{\tau,\eps}\|_U + \|g_\tau^i\|_U) \le C (1+\|g_\tau\|_{B([0,T];U)})\,, \quad i=n,m\,,
\end{equation*}
while concerning the third one we have
\begin{equation*}
\begin{split}
&\quad\,\left|\sum_{k=m+1}^n \tau \langle \sqrt{\eps} \mathbb{M}v^{k-1}_{\tau,\eps}  , \sqrt{\eps} v^{k}_{\tau,\eps} \rangle_W \right|  \le C \sum_{k=m+1}^n (\sqrt{\tau \eps} |v^{k-1}_{\tau,\eps}|_W) (\sqrt{\tau \eps} |v^{k}_{\tau,\eps}|_W ) \\
&  \le C \left(\tau\eps |u_1^\eps|_W^2 + \sum_{k=1}^{T/\tau} \tau \eps |v^{k}_{\tau,\eps}|_W^2 \right)^\frac{1}{2} \left(\sum_{k=1}^{T/\tau}\tau \eps |v^{k}_{\tau,\eps}|_W^2 \right)^\frac{1}{2} 
 \le C .
\end{split}
\end{equation*}
Finally, we bound the last term as
\begin{equation*}
\left|\sum_{k=m+1}^n \langle \eps \mathbb{M}v^{k-1}_{\tau,\eps}  , g_\tau^k - g_\tau^{k-1} \rangle_W \right|\le C \sum_{k=m+1}^n \eps |v^{k-1}_{\tau,\eps}|_W  |g_\tau^k - g_\tau^{k-1}|_W \\
\le C \displaystyle V_W(g_\tau; 0,T),
\end{equation*}
and the proof is concluded.
\end{proof}

By sending $(\tau,\eps)\to (0,0)$ in \eqref{eq:estimatediscr}, as in Proposition~\ref{prop:lambdastab} we obtain the validity of the global $\lambda$-stability for the limit function $u$.
	\begin{prop}\label{prop:lambdastabdiscr}
			Assume that $\varepsilon u_1^\varepsilon$ is uniformly bounded in $W$, $\mc E(0,u_0^\eps)$ is uniformly bounded for a sequence $(\tau,\eps)\to (0,0)$ such that $\tau/\eps\to 0$. Then the limit function $u$ obtained in Proposition~\ref{prop:compactnessdiscr} satisfies $(\lambda GS)$, $(\lambda GS^\pm)$ and \eqref{globalstability}.
			
			Moreover it holds
			\begin{subequations}
			\begin{equation} \label{eq:(A)}
				\liminf_{j\to +\infty}\mc E(\pi_{\tau_j}(t), \overline{u}_{\tau_j,\eps_j}(t))=\mc E(t,u(t)),\qquad\text{for almost every }t\in [0,T].
			\end{equation}
		Assuming in addition \ref{hyp:E7}, up to a further subsequence, there holds
		\begin{equation}\label{eq:(B)}
			\lim_{j\to +\infty}\mc E(\pi_{\tau_j}(t), \overline{u}_{\tau_j,\eps_j}(t))=\mc E(t,u(t)),\qquad\text{for almost every }t\in [0,T].
		\end{equation}
	\end{subequations}
	In particular, if also \ref{hyp:E8} is in force, then
	\begin{equation}
		\overline{u}_{\tau_j,\eps_j}(t)\xrightarrow[j\to +\infty]{U}u(t),\quad\text{ for almost every }t\in [0,T].
\label{eq:(C)}
	\end{equation}
	\end{prop}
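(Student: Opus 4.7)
The plan is to mimic the strategy used for the continuous counterpart (Proposition~\ref{prop:lambdastab}), with the discrete stability estimate \eqref{eq:estimatediscr} replacing \eqref{eq:estimate}. First, I would fix $x\in D$ and specialize \eqref{eq:estimatediscr} to the constant choice $g_\tau(r)\equiv x$. Since then $\|g_\tau\|_{B([0,T];U)\cap BV([0,T];W)}$ is bounded independently of $\tau$, the constant $C_\tau$ can be replaced by a constant $C_x$ depending only on $x$, yielding
\begin{equation*}
\int_{\pi_\tau(s)}^{\pi_\tau(t)}\mc E(\pi_\tau(r),\overline{u}_{\tau,\eps}(r))\d r \le \int_{\pi_\tau(s)}^{\pi_\tau(t)}\Big(\mc E(\pi_\tau(r),x)+\mc R(x-\overline{u}_{\tau,\eps}(r))+\frac{\lambda}{2}|x-\overline{u}_{\tau,\eps}(r)|_W^2\Big)\d r+C_x\sqrt{\eps}.
\end{equation*}

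Next, I would let $(\tau_j,\eps_j)\to(0,0)$ with $\tau_j/\eps_j\to 0$ and pass to the limit. On the left-hand side, since $\pi_{\tau_j}(r)\to r$ and $\overline{u}_{\tau_j,\eps_j}(r)\rightharpoonup u(r)$ in $U$ by $(a')$ of Proposition~\ref{prop:compactnessdiscr}, the joint weak lower semicontinuity of $\mc E$ observed in Remark~\ref{rmk:gronwall} combined with Fatou's lemma produces $\int_s^t\mc E(r,u(r))\d r$ in the limit. On the right-hand side, the fact that $\overline{u}_{\tau_j,\eps_j}(r)\to u(r)$ strongly in $W$ (by \eqref{eq:embeddings} and the uniform $U$-bound) together with the Lipschitz continuity of $\mc R$ and the absolute continuity property \ref{hyp:E2} enables Lebesgue's Dominated Convergence Theorem, giving the limit integral. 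This yields, for every $0\le s\le t\le T$ and every $x\in U$,
\begin{equation*}
\int_s^t\mc E(r,u(r))\d r \le \int_s^t\liminf_{j\to+\infty}\mc E(\pi_{\tau_j}(r),\overline{u}_{\tau_j,\eps_j}(r))\d r \le \int_s^t\Big(\mc E(r,x)+\mc R(x-u(r))+\frac{\lambda}{2}|x-u(r)|_W^2\Big)\d r,
\end{equation*}
so the integrand inequality holds at every Lebesgue point, providing $(\lambda GS)$. The right/left versions $(\lambda GS^\pm)$ and the jump-free statement \eqref{globalstability} then follow exactly as in Proposition~\ref{prop:lambdastab}, by taking sequences of stable times converging to $t$ from the right and the left and invoking the weak lower semicontinuity of $\mc E$ together with \eqref{limitpm}.

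Choosing $x=u(t)$ in the displayed chain of inequalities above and evaluating at Lebesgue points of $r\mapsto\mc E(r,u(r))$ immediately gives \eqref{eq:(A)}, since the lower bound via weak lower semicontinuity is automatic. To upgrade to the full-limit statement \eqref{eq:(B)} under \ref{hyp:E7}, I would follow the approximation argument of Proposition~\ref{prop:lambdastab}: using Lemma~\ref{lemma:approximation}, approximate $u$ in $L^1(0,T;U)$ by simple functions $u_n$ valued in the closed sublevel $\{\mc E(0,\cdot)\le\widetilde C\}$ (uniform bound from $(i')$ of Proposition~\ref{prop:inequality0} and \eqref{eq:ineqgronwall}); plugging such $u_n$ into the discrete estimate \eqref{eq:estimatediscr} on $[0,T]$ and letting first $(\tau,\eps)\to(0,0)$ and then $n\to+\infty$ (using \ref{hyp:E7} to pass the $\mc E(r,u_n(r))$ term to $\mc E(r,u(r))$ via dominated convergence) produces $L^1$-convergence of $\mc E(\pi_{\tau_j}(\cdot),\overline{u}_{\tau_j,\eps_j}(\cdot))$ to $\mc E(\cdot,u(\cdot))$, whence \eqref{eq:(B)} a.e.\ on a further subsequence. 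Finally, \eqref{eq:(C)} follows at once from \eqref{eq:(B)} and the improved convergence assumption \ref{hyp:E8}.

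The main technical obstacle I anticipate is controlling the time mismatch $\pi_\tau(r)-r$: one needs that the integrand difference $\mc E(\pi_\tau(r),\overline{u}_{\tau,\eps}(r))-\mc E(r,\overline{u}_{\tau,\eps}(r))$ is harmless in the limit. This is handled via \ref{hyp:E2} and \eqref{eq:ineqgronwall} together with the uniform energy bound $(i')$ in Proposition~\ref{prop:inequality0}, which yield $|\mc E(\pi_\tau(r),u)-\mc E(r,u)|\le e^{\|b\|_{L^1}}(\widetilde C+1)\int_r^{\pi_\tau(r)}b(\rho)\d\rho$ uniformly on the relevant sublevels, an error that vanishes as $\tau\to 0$ by absolute continuity of the integral.
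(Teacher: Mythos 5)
Your proposal is correct and follows essentially the same route as the paper's proof: constant test functions in \eqref{eq:estimatediscr} combined with Fatou's Lemma and dominated convergence give the global $\lambda$-stability and \eqref{eq:(A)}, and the simple-function approximation of Lemma~\ref{lemma:approximation} upgrades this to \eqref{eq:(B)} under \ref{hyp:E7}. The one point to fix is that \eqref{eq:estimatediscr} only admits test functions that are piecewise constant on the partition $\Pi_\tau$, so the simple functions $u_n$ cannot be plugged in directly; as in the paper, you must first replace $u_n$ by its discretization $u_n^\tau(t)=\sum_k u_n(t^k)\bm 1_{(t^{k-1},t^k]}(t)$, whose $B([0,T];U)$-norm and $W$-variation are controlled by those of $u_n$ and which converges to $u_n(t)$ in $U$ for a.e.\ $t$ as $\tau\to 0$.
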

	\begin{proof}
Let $x\in D$ be fixed. By Fatou's Lemma and exploiting \eqref{eq:estimatediscr} with $g_\tau(t)\equiv x$ we deduce
\begin{equation*}
		\begin{aligned}
&\quad\,\int_{s}^{t}\mc E(r,u(r))\d r\leq \liminf_{j\to +\infty} \int_{\pi_{\tau_j}(s)}^{\pi_{\tau_j}(t)}\mc E(\pi_{\tau_j}(r),\overline{u}_{\tau_j,\eps_j}(r))\d r\\
		&\le \lim_{j\to +\infty} \left[\int_{\pi_{\tau_j}(s)}^{\pi_{\tau_j}(t)}\Big(\mc E(\pi_{\tau_j}(r),x)+\mc R(x-\overline{u}_{\tau_j,\eps_j}(r))+\frac \lambda 2 |x-\overline{u}_{\tau_j,\eps_j}(r)|_W^2\Big)\d r +C_x\sqrt{\eps_j} \right] \\
& = \int_{s}^{t}\Big(\mc E(r,x)+\mc R(x-u(r))+\frac \lambda 2 |x-u(r)|_W^2\Big)\d r \,,  
	\end{aligned}
\end{equation*}
where we used the strong convergence $\overline{u}_{\tau_j,\eps_j}(r)\xrightarrow[j\to +\infty]{W,Z}u(r)$ for all $r\in[0,T]$ and Lebesgue Dominated Convergence Theorem. Arguing as in Proposition~\ref{prop:lambdastab} we now obtain $(\lambda GS)$, $(\lambda GS^\pm)$, \eqref{globalstability} and \eqref{eq:(A)}. 

Assume now \ref{hyp:E7}. By Lemma~\ref{lemma:approximation} we can find a sequence $u_n:[0,T]\to F:=\{\mc E (0,\cdot) \leq \widetilde{C}\}$ of the form \eqref{simplefunction} such that 
$u_n\xrightarrow[n\to +\infty]{L^1(0,T;U)}u$. We then define
\begin{equation*}
			u_n^\tau(t) := \sum_{k=0}^{T/\tau}u_n(t^k)\bm 1_{(t^{k-1},t^k]}(t)\,, 
		\end{equation*} 
and we observe that there holds $\|u_n^\tau\|_{B([0,T];U)} \le C$, since $u_n$ is $F$-valued . Furthermore, since $u_n$ is a simple function, one has $\displaystyle V_W(u_n^\tau; 0,T) \le \displaystyle V_W(u_n ; 0,T)$ and 
\begin{equation*}
u_n^\tau (t) \xrightarrow[\tau\to 0]{U}u_n(t), \quad \mbox{ for almost every }t\in[0,T]\,. 
\end{equation*}
By arguing as before we thus obtain
\begin{equation*}
		\begin{aligned}
&\quad\,\int_{0}^{T}\mc E(r,u(r))\d r \leq \liminf_{j\to +\infty} \int_{0}^{T}\mc E(\pi_{\tau_j}(r),\overline{u}_{\tau_j,\eps_j}(r))\d r \leq \limsup_{j\to +\infty} \int_{0}^{T}\mc E(\pi_{\tau_j}(r),\overline{u}_{\tau_j,\eps_j}(r))\d r\\
		&\le \lim_{j\to +\infty} \left[\int_{0}^{T}\Big(\mc E(\pi_{\tau_j}(r),u_n^{\tau_j} (r))+\mc R(u_n^{\tau_j} (r)-\overline{u}_{\tau_j,\eps_j}(r))+\frac \lambda 2 |u_n^{\tau_j} (r)-\overline{u}_{\tau_j,\eps_j}(r)|_W^2\Big)\d r +C_n\sqrt{\eps_j} \right] \\
& = \int_{0}^{T}\Big(\mc E(r,u_n(r))+\mc R(u_n(r)-u(r))+\frac \lambda 2 |u_n(r)-u(r)|_W^2\Big)\d r \,. 
	\end{aligned}
\end{equation*}
We then conclude as in the proof of Proposition~\ref{prop:lambdastab}.   
	\end{proof}

We finally point out that, once the proof of Proposition~\ref{prop:compactnessdiscr} is achieved, one may immediately deduce that the limit function $u$ satisfies the conclusions of Lemmas~\ref{lemma:energy+-} and \ref{lemma:limz*}. Indeed the proof of these results makes only use of the global $\lambda$-stability condition.

\subsection{Limit passage in the discrete energy-dissipation inequality}

The aim of this section is performing the asymptotic analysis of the energy-dissipation inequality \eqref{eq:eniquality1} as $(\tau,\eps)\to (0,0)$.
\begin{prop}\label{prop:EBdiscr}
	Assume that $\varepsilon u_1^\varepsilon$ is uniformly bounded in $W$, $\mc E(0,u_0^\eps)$ is uniformly bounded for a sequence $(\tau,\eps)\to (0,0)$ such that $\tau/\eps\to 0$. Then the limit function $u$ obtained in Proposition~\ref{prop:compactnessdiscr} fulfils \eqref{eq:ineqpm} and \eqref{eq:claim}. If in addition $\eps u_1^\eps\xrightarrow[\eps\to 0]{W}0$ and $\mc E(0,u_0^\eps)\xrightarrow[\eps\to 0]{}\mc E(0,u_0)$, then \eqref{ineq-} holds true also for $s=0$ and \eqref{eneq1} holds as well.
	
	Moreover, assume that either $\lambda=0$ in \ref{hyp:E5} or \ref{hyp:E9}, \ref{hyp:E10} are in force. Then for every $0\le s\le t\le T$ there holds
	\begin{equation}\label{eq:EBdiscr}
		\mc E(t,u^+(t))+V_\mc R(u_{\rm co};s,t)+\sum_{r\in J_u^{\rm e}\cap [s,t]}\mu(\{r\})= \mc E(s,u^-(s))+\int_{s}^{t}\partial_t\mc E(r,u(r))\d r,
	\end{equation}
for a certain positive Radon measure $\mu$.
\end{prop}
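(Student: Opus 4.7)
The plan is to mimic the strategy of Section~\ref{sec:slowloading} (in particular Propositions~\ref{prop:ineqpm} and \ref{prop:prop6.11} and the subsequent corollary), starting this time from the discrete energy-dissipation inequality \eqref{eq:eniquality1} instead of \eqref{energyineq}. The compactness result of Proposition~\ref{prop:compactnessdiscr}, the discrete analogue of the energy-convergence Proposition~\ref{prop:lambdastabdiscr}, and the mismatch estimates of Lemma~\ref{lemma:mismatch} will be the main technical tools.

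The first step is to pass to the limit in \eqref{eq:eniquality1}, with $t^n=\pi_\tau(t)$ and $t^m=\pi_\tau(s)$ for $0\le s\le t\le T$. The $\lambda$-correction on the right-hand side is the novelty compared to the continuous case: exploiting the embedding $V\hookrightarrow W$ (with constant $c_1$) and the norm equivalence $|\cdot|_\V\ge \nu|\cdot|_V$, together with bound $(v')$ of Proposition~\ref{prop:inequality0}, one gets
\begin{equation*}
\frac{\lambda\tau}{2}\sum_{k=1}^{T/\tau}\tau|v^k_{\tau,\eps}|_W^2\le \frac{\lambda\tau}{2c_1^2\nu^2\eps}\,\eps\sum_{k=1}^{T/\tau}\tau|v^k_{\tau,\eps}|_\V^2\le \frac{\lambda\widetilde C}{2c_1^2\nu^2}\cdot\frac{\tau}{\eps}\xrightarrow[(\tau,\eps)\to(0,0)]{}0,
\end{equation*}
by the regime \eqref{eq:regime}. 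The remaining terms are treated as follows: on the left, we drop the nonnegative $\mc R_\eps^*$-contribution and use $\mc R_\eps(v)\ge \mc R(v)$, so that $\sum_{k=m+1}^n\tau\mc R_\eps(v^k_{\tau,\eps})\ge \int_{\pi_\tau(s)}^{\pi_\tau(t)}\mc R(\dot{\widehat u}_{\tau,\eps}(r))\d r$; this is handled in the limit by $(c')$ of Proposition~\ref{prop:compactnessdiscr}. The kinetic term $\tfrac{\eps^2}{2}|v^n_{\tau,\eps}|_\M^2$ is nonnegative and may be neglected, while the energy $\mc E(t^n,u^n_{\tau,\eps})$ is dealt with via lower semicontinuity and \eqref{eq:(A)}. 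On the right-hand side, the power integral passes to the limit using \ref{hyp:E4}, the $W$-convergence of $\underline u_{\tau,\eps}$ and Lebesgue's theorem.

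The second step is the correct choice of the approximating endpoints. Arguing as in the proof of Proposition~\ref{prop:ineqpm}, we pick sequences $s_k\to s^+$ (or $s^-$) and $t_k\to t^+$ at which the pointwise convergences $\overline u_{\tau_j,\eps_j}(s_k)\rightharpoonup u^\pm(s)$, $\overline u_{\tau_j,\eps_j}(t_k)\rightharpoonup u^+(t)$ hold, the kinetic terms vanish (by $(b')$ together with $|v_{\tau,\eps}^k|_\M\le C|v_{\tau,\eps}^k|_V$), and energy convergence of Proposition~\ref{prop:lambdastabdiscr} is available; if $\eps u_1^\eps\to 0$ in $W$ and $\mc E(0,u_0^\eps)\to\mc E(0,u_0)$ then $s=0$ is also admissible. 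Combining this with Lemma~\ref{lemma:energy+-} (which, as noted in the text, still applies in the present discrete setting) yields \eqref{ineq+} and \eqref{ineq-}. The intermediate estimate \eqref{eq:claim} follows at every Lebesgue point $s$ of $r\mapsto\mc E(r,u(r))$, and \eqref{eneq1} is just a reformulation via a monotone decomposition (positive Radon measure $\mu$), exactly as in the corollary of Section~\ref{subsec:enbal}.

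Finally, once \eqref{eneq1} is available, the identification of the diffuse part of $\mu$ with $V_\mc R(u_{\rm co};s,t)$ is obtained by repeating verbatim the argument of Proposition~\ref{prop:prop6.11}. That proof uses only the global $\lambda$-stability $(\lambda GS^\pm)$, the power continuity \ref{hyp:E4}, the $Z^*$-precompactness provided by Lemma~\ref{lemma:limz*} (which in turn requires \ref{hyp:E9} and \ref{hyp:E10}), and inequality \eqref{ineq-}; all of these have now been established for $u$. In the purely convex case $\lambda=0$ the quadratic remainder in \eqref{eq:ineqprinc} disappears and the Radon-Nikodym argument simplifies, so that \ref{hyp:E9}-\ref{hyp:E10} are not needed. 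I expect the main subtlety to lie precisely in the choice of the approximating time nodes $s_k,t_k$ at discontinuity points: one needs them both to escape the jump set of $u$ and to realize the strong $W$-convergence of $\eps v_{\tau,\eps}$ to zero, which is the discrete counterpart of property $(b)$ in Proposition~\ref{prop:compact} and must be extracted through a diagonal selection consistent with the mismatch controls of Lemma~\ref{lemma:mismatch}.
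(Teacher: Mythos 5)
Your proposal is correct and follows essentially the same route as the paper: the paper likewise rewrites the discrete energy-dissipation inequality \eqref{eq:eniquality1} in terms of the interpolants, absorbs the $\lambda$-correction via $(v')$ of Proposition~\ref{prop:inequality0} and the regime $\tau_j/\eps_j\to 0$, passes to the limit using $(b')$, $(c')$, \eqref{eq:(A)} and \ref{hyp:E4} to obtain \eqref{eq:claim}, and then derives \eqref{eq:ineqpm}, \eqref{eneq1} and \eqref{eq:EBdiscr} by repeating the arguments of Proposition~\ref{prop:ineqpm}, the subsequent corollary and Proposition~\ref{prop:prop6.11}, which depend only on properties already established for the limit function $u$. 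Your closing remark about the selection of approximating nodes is handled in the paper exactly as you anticipate, by choosing $s,t$ outside $\bigcup_j\Pi_{\tau_j}\cup J_u$ where the pointwise convergences hold and then invoking Lemma~\ref{lemma:energy+-}.
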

\begin{proof}
By arguing as in the proof of Proposition~\ref{prop:ineqpm}, it is enough to prove \eqref{eq:claim}, namely
\begin{equation*}
			\mc E(t,u(t))+V_\mc R(u;s,t)\le \mc E(s,u(s))+\int_{s}^{t}\partial_t\mc E(r,u(r))\d r,\quad \text{for all $t\in [0,T]$ and for a.e. $s\in [0,t]$,}
	\end{equation*}
with $s=0$ admissible if $\eps u_1^\eps\xrightarrow[\eps\to 0]{W}0$ and $\mc E(0,u_0^\eps)\xrightarrow[\eps\to 0]{}\mc E(0,u_0)$. 

By \eqref{eq:eniquality1}, rewritten in terms of the different interpolants, we have that for all $t\in[0,T]$ and for all $s\in[0,t]\backslash\Pi_{\tau_j}$ there holds 
\begin{equation*}
		\begin{split}
			\mc E(\pi_{\tau_j}(t),\overline{u}_{\tau_j,\eps_j}(t)) + \int_{\pi_{\tau_j}(s)}^{\pi_{\tau_j}(t)}&\mc R(\dot{\widehat{u}}_{\tau_j,\eps_j}(r))\d r  \leq  \frac{\varepsilon_j^2}{2}{|\dot{\widehat{u}}_{\tau_j,\eps_j}(s)|^2_{\mathbb{M}}} + \mc E(\pi_{\tau_j}(s),\overline{u}_{\tau_j,\eps_j}(s))  \\
& + \int_{\pi_{\tau_j}(s)}^{\pi_{\tau_j}(t)}\partial_t\mc E(r,\underline{u}_{\tau_j,\eps_j}(r))\,\mathrm{d}r +\frac{\lambda}{2}\frac{\tau_j}{\eps_j}\eps_j\int_{\pi_{\tau_j}(s)}^{\pi_{\tau_j}(t)}|\dot{\widehat{u}}_{\tau_j,\eps_j}(r)|_{W}^2\,\mathrm{d}r\,.
		\end{split}
			\end{equation*}
By taking $s\in[0,t]\backslash \left(\bigcup_{j\in\N}\Pi_{\tau_j}\right)$ such that \eqref{eq:(A)} and $(b')$ in Proposition~\ref{prop:compactnessdiscr} hold, by means of $(c')$ in Proposition~\ref{prop:compactnessdiscr} and $(v')$ in Proposition~\ref{prop:inequality0} we deduce 
\begin{equation*}
\begin{split}
			& \mc E(t,u(t))+V_\mc R(u;s,t) \\
& \,\,\,\,\,\, \le \liminf_{j\to +\infty} \left( \frac{\varepsilon_j^2}{2}{|\dot{\widehat{u}}_{\tau_j,\eps_j}(s)|^2_{\mathbb{M}}} + \mc E(\pi_{\tau_j}(s),\overline{u}_{\tau_j,\eps_j}(s)) + \int_{\pi_{\tau_j}(s)}^{\pi_{\tau_j}(t)}\partial_t\mc E(r,\underline{u}_{\tau_j,\eps_j}(r))\,\mathrm{d}r  + C \frac{\tau_j}{\eps_j}\right) \\
&\,\,\,\,\,\,  = \mc E(s,u(s))+\int_{s}^{t}\partial_t\mc E(r,u(r))\d r \,, 
\end{split}
	\end{equation*}
where we exploited \ref{hyp:E1}, \ref{hyp:E2}, \ref{hyp:E4} and the fact that $\underline{u}_{\tau_j,\eps_j}(r)\xrightarrow[j\to +\infty]{W}u(r)$ for every $r\in[0,T]$. 
Finally, for what concerns \eqref{eq:EBdiscr}, we point out that the results stated in Proposition~\ref{prop:prop6.11} hold true also in this setting, since their proof relies only on the properties of the limit function $u$. The proof is hence concluded. 
\end{proof}

\subsection{Proof of Theorems~\ref{mainthm:nonconvexdiscr}, \ref{mainthm:convexdiscr} and \ref{mainthm:unifconvexdiscr}}

We are now in a position to conclude the proof of the results stated in Section~\ref{subsec:minmov}. We begin with the simpler convex setting.
\begin{proof}[Proof of Theorem~\ref{mainthm:convexdiscr}]
	Using Proposition~\ref{prop:lambdastabdiscr} and \eqref{eq:EBdiscr} one is only left to show that
	\begin{equation*}
		\mu(\{t\})=\mc R(u^+(t)-u^-(t)),\qquad\text{for all }t\in J^{\rm e}_u.
	\end{equation*}
This can be done by the very same arguments of Proposition~\ref{prop:costconvex}.
\end{proof}

\begin{proof}[Proof of Theorem~\ref{mainthm:unifconvexdiscr}]
	By Proposition~\ref{prop:EBdiscr} we know that \eqref{eq:claim} holds. By Proposition~\ref{prop:lambdastabdiscr} we know that the global $\lambda$-stability is satisfied. Using these two properties, arguing exactly as in the proof of Theorem~\ref{mainthm:unifconvex} we first deduce that $u$ belongs to $C([0,T];X)$. Hence, inserting this information into \eqref{eq:EBdiscr} we obtain that $u$ is an energetic solution to \eqref{riprob}.
	Eventually, assuming \ref{hyp:E4'} we infer that $u$ is a classic solution to \eqref{riprob} by repeating the arguments in the proof of Theorem~\ref{mainthm:unifconvex}.
\end{proof}

	In the nonconvex case, in view of Proposition~\ref{prop:uppercost} the only missing point for obtaining Theorem~\ref{mainthm:nonconvexdiscr} is proving that the precise viscoinertial cost $c^{\M,\V}_0$ is a lower bound for the atomic measure $\mu$ provided by Proposition~\ref{prop:EBdiscr}. This requires in particular the assumptions $D=U=V$ and \ref{hyp:E11} as stated in the proposition below.

	\begin{prop}\label{prop:costdiscr}
		If $\lambda>0$ in \ref{hyp:E5}, assume $D=U=V$ together with \ref{hyp:E6}-\ref{hyp:E11}. Suppose furthermore that $\eps u^\eps_1\xrightarrow[\eps\to 0]{U}0$ and $u^\eps_0\xrightarrow[\eps\to 0]{U}u_0$ for some $(\tau,\eps)\to(0,0)$ such that $\tau/\eps\to 0$. Then
			\begin{equation*}
			\mu(\{t\})\ge c^{\M,\V}_0(t;u^-(t),u^+(t)),\quad\text{for all }t\in J_u^{\rm e}.
		\end{equation*}
	\end{prop}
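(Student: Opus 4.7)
Fix $t\in J_u^{\rm e}$ and $\alpha>0$; by \eqref{eq:cost0}, it suffices to prove $\mu(\{t\})\ge c_0^\alpha(t;u^-(t),u^+(t))$. The strategy parallels the proofs of Propositions~\ref{prop:mu1} and \ref{prop:mugec0}, with additional care devoted to the discrete nature of the scheme and to the mismatch between the piecewise constant and the piecewise quadratic interpolants appearing, respectively, in the subdifferential inclusion \eqref{eq:EL2} and in the set $G(0,\sigma)$ of admissible functions for the cost (note that $v_j$ must belong to $W^{2,2}(U^*)$, so $\widetilde u_{\tau,\eps}$ is the only natural candidate).

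\textbf{Discrete selection and passage to a contact-potential integral.}  Adapting the argument of Proposition~\ref{prop:mu1}, and using \eqref{eq:(B)}--\eqref{eq:(C)} together with $(b')$ of Proposition~\ref{prop:compactnessdiscr}, I would first select grid points $t_j^\pm=t^{k_j^\pm}_{\tau_j}\in\Pi_{\tau_j}$, with $t_j^\pm\to t^\pm$ (and $t_j^-\equiv 0$ if $t=0$), and extract a subsequence along which $\overline u_{\tau_j,\eps_j}(t_j^\pm)\to u^\pm(t)$ strongly in $U$, $\mc E(\pi_{\tau_j}(t_j^\pm),\overline u_{\tau_j,\eps_j}(t_j^\pm))\to\mc E(t,u^\pm(t))$, and $\eps_j v^{k_j^\pm}_{\tau_j,\eps_j}\to 0$ in $V=U$ (the latter being possible since $\eps\sum\tau|v^k|_V^2\le\widetilde C$ forces $\eps|v^k|_V$ to be small at many indices).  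Since $\tau_j/\eps_j\to 0$, this gives $\tau_j v^{k_j^\pm}\to 0$ in $U$, hence the crucial $U$-strong convergence $\widetilde u_{\tau_j,\eps_j}(t_j^\pm)\to u^\pm(t)$, because $\widetilde u_{\tau_j,\eps_j}(t^{k})=\tfrac12(u^{k}+u^{k-1})=\overline u_{\tau_j,\eps_j}(t^k)-\tfrac{\tau_j}{2}v^k_{\tau_j,\eps_j}$.  Applying \eqref{eq:eniquality1} between $k_j^-$ and $k_j^+$, replacing $\mc R_{\eps_j}(\dot{\widehat u})$ by $\mc R_{\eps_j}(\dot{\widetilde u})$ via \eqref{eq:nodeineq3} (which yields an error of order $\tau_j/\eps_j\to 0$), and bounding $\mc R_{\eps_j}+\mc R_{\eps_j}^*\ge p_\V$, I obtain, after letting $j\to+\infty$ and performing the change of variable $r=\eps_j\tau+t_j^-$,
\begin{equation*}
\mu(\{t\})\;\ge\;\limsup_{j\to+\infty}\int_{0}^{\sigma_j}p_\V\bigl(\dot v_j(\tau),\,-\M\ddot v_j(\tau)-\xi_j(\tau)\bigr)\,\mathrm d\tau,
\end{equation*}
where $\sigma_j:=(t_j^+-t_j^-)/\eps_j$, $v_j(\tau):=\widetilde u_{\tau_j,\eps_j}(\eps_j\tau+t_j^-)$ and $\xi_j(\tau):=\overline\xi_{\tau_j,\eps_j}(\eps_j\tau+t_j^-)$.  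The vanishing of the kinetic, power and $\lambda\tau$ error terms follows from the convergences above and from $\tau_j/\eps_j\to 0$.

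\textbf{Admissibility correction and boundary attachment.}  Since $\xi_j$ lives in $\partial^U\mc E(\pi_{\tau_j}(r),\overline u(r))$ while $v_j$ equals $\widetilde u(r)$, the pair does not directly satisfy \eqref{eq:adm2}.  I would apply \ref{hyp:E9} and then \ref{hyp:E11} and a measurable selection to write $\xi_j(\tau)=\widetilde\xi_j(\tau)+e_j(\tau)+f_j(\tau)$ with $\widetilde\xi_j(\tau)\in\partial^U\mc E(t,v_j(\tau))$, $\|e_j(\tau)\|_{Z^*}\le\widetilde\omega_M(|\pi_{\tau_j}(r)-t|)$ and $\|f_j(\tau)\|_{U^*}\le C_M\|\overline u(r)-\widetilde u(r)\|_U$, and then set $\widehat\xi_j:=\xi_j-f_j=\widetilde\xi_j+e_j$; since $\widetilde\omega_M(|\pi_{\tau_j}(r)-t|)\le\widetilde\omega_M(\tau_j+(t_j^+-t_j^-))\to 0$, the pair $(v_j,\widehat\xi_j)$ satisfies \eqref{eq:adm2} for $j$ large.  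I would then extend $v_j$ by the cubic attachment of Proposition~\ref{prop:mugec0} on $[0,1]$ and $[\sigma_j+1,\widetilde\sigma_j]$, enforcing the exact boundary values $u^\pm(t)$ with zero velocity (so \eqref{eq:adm3} holds with $\beta=0$), and complete $\widehat\xi_j$ on these boundary pieces via measurable selections from $\partial^U\mc E(t,\widetilde v_j(\cdot))$, which exist by \ref{hyp:E6}--\ref{hyp:E7} since $D=U$; the remaining \eqref{eq:adm1} and \eqref{eq:adm4} are then verified as in Proposition~\ref{prop:mugec0}, the latter being automatic in view of Remark~\ref{rmk:V=U}.

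\textbf{Error control and conclusion.}  The discrepancy between the $p_\V$-integrals with $\xi_j$ and with $\widehat\xi_j$ is controlled, via the Lipschitz character of $\dist_{\V^{-1}}(\cdot;\partial^Z\mc R(0))$ and the explicit formula \eqref{eq:contactpotexpl}, by
\begin{equation*}
\int_0^{\sigma_j}|\dot v_j|_\V\,|f_j|_{\V^{-1}}\,\mathrm d\tau\;\le\;C\,C_M\!\int_{t_j^-}^{t_j^+}\!|\dot{\widetilde u}_{\tau_j,\eps_j}|_U\,\|\overline u_{\tau_j,\eps_j}-\widetilde u_{\tau_j,\eps_j}\|_U\,\mathrm dr\;\le\;C\,\frac{\tau_j}{\eps_j}\;\longrightarrow\;0,
\end{equation*}
after applying Cauchy--Schwarz and invoking $(v')$ of Proposition~\ref{prop:inequality0} and \eqref{eq:mism4}--\eqref{eq:mism5}.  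The boundary contributions to the $p_\V$-integral vanish exactly as in Proposition~\ref{prop:mugec0}, using the $U$-strong convergences of $\widetilde u_{\tau_j,\eps_j}(t_j^\pm)$ and $\eps_j v^{k_j^\pm}$ together with \ref{hyp:E7} to verify the analogue of the bound \eqref{claimbound}.  Collecting all estimates, $\mu(\{t\})\ge c_0^\alpha(t;u^-(t),u^+(t))$ for every $\alpha>0$, which gives the conclusion.  The most delicate point is the third step: the mismatch estimate \eqref{eq:mism2} between $\overline u$ and $\widetilde u$ is of size $\tau/\eps$ in $W$ but only of order $1$ in $U$, so the $U^*$-correction from \ref{hyp:E11} cannot be absorbed into the $Z^*$-ball required by \eqref{eq:adm2}; this is precisely what motivates modifying $\xi_j$ itself (by subtracting $f_j$) before carrying out the admissibility check, so that the $U^*$ error is paid inside the cost integral, where \eqref{eq:mism4} provides the decisive quantitative smallness.
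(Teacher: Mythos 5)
Your proposal is correct and follows essentially the same route as the paper's proof: pass from the discrete energy-dissipation inequality to a $p_\V$-integral, trade $\dot{\widehat{u}}_{\tau,\eps}$ for $\dot{\widetilde{u}}_{\tau,\eps}$ via \eqref{eq:nodeineq3}, correct the subgradient with \ref{hyp:E11} paying the $U^*$-error inside the cost integral through the $L^2$-mismatch of order $\tau/\eps$, rescale, verify admissibility via \ref{hyp:E9}, and finish with the cubic boundary attachment of Proposition~\ref{prop:mugec0}. The only (cosmetic) differences are that you select $t_j^\pm$ directly on the grid, whereas the paper selects arbitrary times and separately shows $\|\widetilde{u}_{\tau_j,\eps_j}(\pi_{\tau_j}(t_j^\pm))-\widetilde{u}_{\tau_j,\eps_j}(t_j^\pm)\|_U\to0$, and that you apply \ref{hyp:E9} before \ref{hyp:E11} rather than after.
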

\begin{proof}
Fix $t\in J_u^{\rm e}$ and $\alpha>0$. We will prove that 
\begin{equation*}
\mu(\{t\})\ge c^\alpha_0(t;u^-(t),u^+(t)) \,.   
\end{equation*}
We start observing that, since $V=U$ and $\eps u^\eps_1\xrightarrow[\eps\to 0]{U}0$, by \eqref{eq:mism4} combined with \eqref{eq:mism6} we deduce $\|{\widetilde{u}}_{\tau,\eps}-{\overline{u}}_{\tau,\eps}\|_{L^2(0,T;U)}\xrightarrow[(\tau,\eps)\to(0,0)]{}0$. Since by \eqref{eq:(C)} we have $\overline{u}_{\tau_j,\eps_j}\xrightarrow[j\to +\infty]{L^2(0,T;U)}u$, we infer $\widetilde{u}_{\tau_j,\eps_j}\xrightarrow[j\to +\infty]{L^2(0,T;U)}u$. Up to possibly extracting further subsequences we thus obtain
\begin{equation*}
\widetilde{u}_{\tau_j,\eps_j}(s)\,,\, \overline{u}_{\tau_j,\eps_j}(s) \xrightarrow[j\to+\infty]{U}u(s), \quad \mbox{ for a.e. }s\in[0,T]\,. 
\end{equation*}
By using \eqref{eq:mism4} we also deduce
\begin{equation*}
\widehat{u}_{\tau_j,\eps_j}(s)\,,\, \underline{u}_{\tau_j,\eps_j}(s) \xrightarrow[j\to+\infty]{U}u(s), \quad \mbox{ for a.e. }s\in[0,T]\,. 
\end{equation*}
Moreover, from $(b')$ in Proposition~\ref{prop:compactnessdiscr} and \eqref{eq:(B)} we know that 
\begin{equation*}
\begin{split}
\eps_j\dot{\widehat{u}}_{\tau_j,\eps_j}(s)\,,\, \eps_j\dot{\widetilde{u}}_{\tau_j,\eps_j}(s) & \xrightarrow[j\to+\infty]{U} 0, \quad \mbox{ for a.e. }s\in[0,T]; \\
\mc E(\pi_{\tau_j}(s),\overline{u}_{\tau_j,\eps_j}(s)) & \xrightarrow[j\to+\infty]{} \mc E(s,u(s)), \quad \mbox{ for a.e. }s\in[0,T] \,. 
\end{split}
\end{equation*}
We also recall that
\begin{equation*}
\begin{split}
u(r)&\xrightharpoonup[r\to s^\pm]{U}u^\pm(s),\quad\text{ for every }s\in [0,T];\\
\mc E(r,u(r))&\xrightarrow[\substack{r\to s^\pm\\ r\notin J_u}]{}\mc E(s, u^\pm(s)),\quad\text{ for every }s\in [0,T] \,. 
\end{split}
\end{equation*}
So, by \ref{hyp:E2} we obtain $\mc E(s,u(r))\xrightarrow[\substack{r\to s^\pm\\ r\notin J_u}]{}\mc E(s, u^\pm(s))$, which by \ref{hyp:E8} implies 
\begin{equation*}
u(r)\xrightarrow[\substack{r\to s^\pm\\ r\notin J_u}]{U}u^\pm(s),\quad\text{ for every }s\in [0,T] \,.
\end{equation*}
By arguing as in \cite[Proposition~5.9]{SciSol18}, by a diagonal argument we can thus construct two sequences $t_j^+\xrightarrow[j\to+\infty]{}t^+$, $t_j^-\xrightarrow[j\to+\infty]{}t^-$ and extract further subsequences $\eps_j,\tau_j$ such that 
\begin{subequations}
\begin{gather}
\widetilde{u}_{\tau_j,\eps_j}(t_j^\pm)\,,\, \overline{u}_{\tau_j,\eps_j}(t_j^\pm)  \xrightarrow[j\to+\infty]{U}u^\pm(t); \label{eq:vanisheq2}\\
\eps_j\dot{\widetilde{u}}_{\tau_j,\eps_j}(t_j^\pm)\,,\,\eps_j\dot{\widehat{u}}_{\tau_j,\eps_j}(t_j^\pm)  \xrightarrow[j\to+\infty]{U} 0; \label{eq:vanisheq1}\\
\mc E(\pi_{\tau_j}(t_j^\pm),\overline{u}_{\tau_j,\eps_j}(t_j^\pm))  \xrightarrow[j\to+\infty]{} \mc E(t,u^\pm(t)) \,.
\end{gather}
\end{subequations}
Thus, we infer
\begin{equation*}
\begin{split}
\mu(\{t\}) & = \mc E(t,u^-(t)) - \mc E(t,u^+(t)) \\
& = \lim_{j\to+\infty} \biggl[\frac{\varepsilon_j^2}{2}{|\dot{\widehat{u}}_{\tau_j,\eps_j}(t_j^-)|^2_{\mathbb{M}}} + \mc E(\pi_{\tau_j}(t_j^-),\overline{u}_{\tau_j,\eps_j}(t_j^-)) - \frac{\varepsilon_j^2}{2}{|\dot{\widehat{u}}_{\tau_j,\eps_j}(t_j^+)|^2_{\mathbb{M}}} - \mc E(\pi_{\tau_j}(t_j^+),\overline{u}_{\tau_j,\eps_j}(t_j^+)) \\
& \,\,\,\,\,\,\,\, \,\,\,\,\,\, \,\, \,\,\,\,\,\,\,\,+ \int_{\pi_{\tau_j}(t_j^-)}^{\pi_{\tau_j}(t_j^+)}\partial_t\mc E(r,\underline{u}_{\tau_j,\eps_j}(r))\,\mathrm{d}r +\frac{\tau_j}{\eps_j}\frac{\lambda}{2}\int_{\pi_{\tau_j}(t_j^-)}^{\pi_{\tau_j}(t_j^+)}\eps_j|\dot{\widehat{u}}_{\tau_j,\eps_j}(r)|_{W}^2\,\mathrm{d}r  \biggr ] \,.
\end{split}
\end{equation*}
Observe that necessarily 
\begin{equation}
\pi_{\tau_j}(t_j^+) > \pi_{\tau_j}(t_j^-),  \quad \mbox{ for $j$ large enough},
\label{eq:star}
\end{equation}
otherwise the term within square brackets above is identically 0, and this leads to a contradiction since $\mu(\{t\})>0$ for $t\in J_u^e$. 
By using \eqref{eq:eniquality1}, rewritten in terms of the different interpolants, we obtain
\begin{equation*}
\mu(\{t\}) \geq \limsup_{j\to+\infty} \int_{\pi_{\tau_j}(t_j^-)}^{\pi_{\tau_j}(t_j^+)}\left(\mc R_{\varepsilon_j}(\dot{\widehat{u}}_{\tau_j,\eps_j}(r)) + \mc R_{\varepsilon_j}^*\left(-\eps_j^2 \mathbb{M}\ddot{\widetilde{u}}_{\tau_j,\eps_j}(r) - \overline{\xi}_{\tau_j,\eps_j}(r)\right)\right) \,\mathrm{d}r,
\end{equation*}
where $\overline{\xi}_{\tau,\eps}$ has been introduced in \eqref{eq:xitaueps}. By \eqref{eq:nodeineq3} we hence deduce
\begin{equation*}
\begin{split}
\mu(\{t\}) & \geq \limsup_{j\to+\infty} \int_{\pi_{\tau_j}(t_j^-)}^{\pi_{\tau_j}(t_j^+)}\left(\mc R_{\varepsilon_j}(\dot{\widetilde{u}}_{\tau_j,\eps_j}(r)) + \mc R_{\varepsilon_j}^*\left(-\eps_j^2 \mathbb{M}\ddot{\widetilde{u}}_{\tau_j,\eps_j}(r) - \overline{\xi}_{\tau_j,\eps_j}(r)\right)\right) \,\mathrm{d}r  \\
& \geq \limsup_{j\to+\infty} \int_{\pi_{\tau_j}(t_j^-)}^{\pi_{\tau_j}(t_j^+)} p_\V \left(\dot{\widetilde{u}}_{\tau_j,\eps_j}(r),-\eps_j^2 \mathbb{M}\ddot{\widetilde{u}}_{\tau_j,\eps_j}(r) - \overline{\xi}_{\tau_j,\eps_j}(r)\right)\,\mathrm{d}r \,.
\end{split}
\end{equation*}
Since $V=U$ and $\eps u^\eps_1\xrightarrow[\eps\to 0]{U}0$, by Proposition~\ref{prop:compactnessdiscr} and Lemma~ \ref{lemma:mismatch} both $\widetilde{u}_{\tau_j,\eps_j}$ and $\overline{u}_{\tau_j,\eps_j}$ are uniformly bounded in $B([0,T];U)$. Thus, by \ref{hyp:E11}, it holds that for all $r\in(0,T)$ there exists $\widetilde{\xi}_{\tau_j,\eps_j}(r) \in \partial^U\mc E(\pi_{\tau_j}(r),{\widetilde{u}}_{\tau_j,\eps_j}(r))$ such that 
\begin{equation*}
|\overline{\xi}_{\tau_j,\eps_j}(r)-\widetilde{\xi}_{\tau_j,\eps_j}(r)|_{U^*} \le C_M |\overline{u}_{\tau_j,\eps_j}(r)- \widetilde{u}_{\tau_j,\eps_j}(r)|_U . 
\end{equation*} 
We will then obtain
\begin{equation}\label{eq:tilde}
\mu(\{t\}) \geq \limsup_{j\to+\infty} \int_{\pi_{\tau_j}(t_j^-)}^{\pi_{\tau_j}(t_j^+)} p_\V \left(\dot{\widetilde{u}}_{\tau_j,\eps_j}(r),-\eps_j^2 \mathbb{M}\ddot{\widetilde{u}}_{\tau_j,\eps_j}(r) - \widetilde{\xi}_{\tau_j,\eps_j}(r)\right)\,\mathrm{d}r ,
\end{equation}
once we have proved that the difference
\begin{equation*}
 \int_{\pi_{\tau_j}(t_j^-)}^{\pi_{\tau_j}(t_j^+)} \left[p_\V \left(\dot{\widetilde{u}}_{\tau_j,\eps_j}(r),{-}\eps_j^2 \mathbb{M}\ddot{\widetilde{u}}_{\tau_j,\eps_j}(r) {-} \overline{\xi}_{\tau_j,\eps_j}(r)\right) - p_\V \left(\dot{\widetilde{u}}_{\tau_j,\eps_j}(r),{-}\eps_j^2 \mathbb{M}\ddot{\widetilde{u}}_{\tau_j,\eps_j}(r) {-} \widetilde{\xi}_{\tau_j,\eps_j}(r)\right) \right]\,\mathrm{d}r,
\end{equation*}
vanishes as $j\to+\infty$. Denoting the above integral by $I_j$, by the explicit expression \eqref{eq:contactpotexpl} of $p_\V$ we note that
\begin{equation*}
\begin{split}
|I_j| & \leq  \int_{\pi_{\tau_j}(t_j^-)}^{\pi_{\tau_j}(t_j^+)} |\dot{\widetilde{u}}_{\tau_j,\eps_j}(r)|_\V \biggl|\dist_{\V^{-1}}(-\eps_j^2 \mathbb{M}\ddot{\widetilde{u}}_{\tau_j,\eps_j}(r) - \overline{\xi}_{\tau_j,\eps_j}(r);\partial^Z\mc R(0)) \\
& \qquad \,\,\,\, \,\,\,\,\,\,\,\,\,\,\,\,\,\,  - \dist_{\V^{-1}}(-\eps_j^2 \mathbb{M}\ddot{\widetilde{u}}_{\tau_j,\eps_j}(r) - \widetilde{\xi}_{\tau_j,\eps_j}(r);\partial^Z\mc R(0))\biggr|\,\mathrm{d}r \\
& \leq\! C\!\! \int_{\pi_{\tau_j}(t_j^-)}^{\pi_{\tau_j}(t_j^+)}\!\!\! |\dot{\widetilde{u}}_{\tau_j,\eps_j}(r)|_U |\overline{\xi}_{\tau_j,\eps_j}(r) {-} \widetilde{\xi}_{\tau_j,\eps_j}(r)|_{U^*}\,\mathrm{d}r\! \leq\! C\!\!\int_{\pi_{\tau_j}(t_j^-)}^{\pi_{\tau_j}(t_j^+)}\!\!\! |\dot{\widetilde{u}}_{\tau_j,\eps_j}(r)|_U |\overline{u}_{\tau_j,\eps_j}(r) {-} \widetilde{u}_{\tau_j,\eps_j}(r)|_{U}\,\mathrm{d}r  \\
& \leq C\|\dot{\widetilde{u}}_{\tau_j,\eps_j}\|_{L^2(0,T;U)} \|\overline{u}_{\tau_j,\eps_j}- \widetilde{u}_{\tau_j,\eps_j}\|_{L^2(0,T;U)} .
\end{split}
\end{equation*}
Observing that by Lemma~\ref{lemma:mismatch} we have
\begin{equation*}
 \|\dot{\widetilde{u}}_{\tau_j,\varepsilon_j}\|_{L^2(0,T;U)} \le \|\dot{\widetilde{u}}_{\tau_j,\varepsilon_j}-\dot{\widehat{u}}_{\tau_j,\varepsilon_j}\|_{L^2(0,T;U)} + \|\dot{\widehat{u}}_{\tau_j,\varepsilon_j}\|_{L^2(0,T;U)}\le \frac{C}{\sqrt{\eps_j}},
\end{equation*}
and
\begin{equation*}
\|\widetilde{u}_{\tau_j,\varepsilon_j}-\overline{u}_{\tau_j,\varepsilon_j}\|_{L^2(0,T;U)}\le C\frac{\tau_j}{\sqrt{\eps_j}} \,,
\end{equation*}
we deduce that $|I_j|\le C \frac{\tau_j}{\eps_j}\xrightarrow[j\to +\infty]{}0$, and so \eqref{eq:tilde} is proved. 

We now set
\begin{equation*}
\sigma_j:= \frac{\pi_{\tau_j}(t_j^+) - \pi_{\tau_j}(t_j^-)}{\eps_j}\,,
\end{equation*}
which is strictly positive for $j$ large enough by virtue of \eqref{eq:star}, and, correspondingly,
\begin{equation*}
v_j(s):= \widetilde{u}_{\tau_j,\eps_j} (\eps_j s+\pi_{\tau_j}(t_j^-))\,, \quad \xi_j(s):=\widetilde{\xi}_{\tau_j,\eps_j}(\eps_j s+\pi_{\tau_j}(t_j^-)), \quad \mbox{ for }s\in[0,\sigma_j]\,,
\end{equation*} 
so that by a change of variable one has
\begin{equation*}
\begin{split}
\mu(\{t\}) \geq \limsup_{j\to+\infty} \int_{0}^{\sigma_j} p_\V \left(\dot{{v}}_{j}(s),- \mathbb{M}\ddot{{v}}_{j}(r) - {\xi}_{j}(s)\right)\,\mathrm{d}s \,. 
\end{split}
\end{equation*}
We now observe that
\begin{itemize}
\item $v_j\in W^{2,\infty}(0,\sigma_j;U)\subseteq G(0,\sigma_j)$ by Remark~\ref{rmk:V=U};
\item $\xi_j\in L^\infty(0,\sigma_j;U^*)$ since by \ref{hyp:E6} we have $\widetilde{\xi}_{\tau_j,\eps_j}\in L^\infty(0,T;U^*)$; 
\item the bound
\begin{equation*}
\begin{split}
\int_{0}^{\sigma_j} \|\dot{v}_j(s)\|_Z \,\mathrm{d}s = \int_{\pi_{\tau_j}(t_j^-)}^{\pi_{\tau_j}(t_j^+)} \|\dot{\widetilde{u}}_{\tau_j,\eps_j}(r)\|_Z \,\mathrm{d}r & \le \int_{\pi_{\tau_j}(t_j^-)}^{\pi_{\tau_j}(t_j^+)} \|\dot{\widehat{u}}_{\tau_j,\eps_j}(r)\|_Z \,\mathrm{d}r + C \frac{\tau_j}{\eps_j} \\
& \le \widetilde{C} + C \frac{\tau_j}{\eps_j} \le \frac{3}{2} \widetilde{C},
\end{split}
\end{equation*}
holds for $j$ large enough, where in the last inequality we exploited \eqref{eq:nodeineq1};  
\item by using \ref{hyp:E9}, there holds
\begin{equation*}
\begin{split}
&\quad\,\xi_j(s) \in  \partial^U\mc E(\pi_{\tau_j}(\eps_j s+\pi_{\tau_j}(t_j^-)), v_j(s)) \subseteq \partial^U\mc E(t, v_j(s)) + \overline{B}^{Z^*}_{\widetilde{\omega}_{\widetilde{C}}(|t-\pi_{\tau_j}(\eps_j s+\pi_{\tau_j}(t_j^-))|)} \\
& \subseteq \partial^U\mc E(t, v_j(s)) + \overline{B}^{Z^*}_{\widetilde{\omega}_{\widetilde{C}}(|t-\pi_{\tau_j}(t_j^-)| \vee |t-\pi_{\tau_j}(t_j^+)|)} \subseteq \partial^U\mc E(t, v_j(s)) + \overline{B}^{Z^*}_{\alpha},
\end{split}
\end{equation*}
for $j$ large enough. 
\end{itemize}
If we prove that
\begin{subequations}	
	\begin{gather}
v_j(0)\xrightarrow[j\to+\infty]{U}u^-(t)\,,\,\,\,\, v_j(\sigma_j)\xrightarrow[j\to+\infty]{U}u^+(t) \,, \label{eq:limit2}\\
 \dot{v}_j(0)\,,\, \dot{v}_j(\sigma_j)\xrightarrow[j\to+\infty]{U}0 \,, \label{eq:limit1}
\end{gather}
\end{subequations}
then we conclude by arguing exactly as in the proof of Proposition~\ref{prop:mugec0}. 

First we notice that $\dot{v}_j(0) = \eps_j \dot{\widetilde{u}}_{\tau_j,\eps_j}(\pi_{\tau_j}(t_j^-))$ and $\dot{v}_j(\sigma_j) = \eps_j \dot{\widetilde{u}}_{\tau_j,\eps_j}(\pi_{\tau_j}(t_j^+))$. Then, since by construction there holds $\dot{\widetilde{u}}_{\tau_j,\eps_j}(\pi_{\tau_j}(t_j^\pm)) = \dot{\widehat{u}}_{\tau_j,\eps_j}(t_j^\pm)$, by \eqref{eq:vanisheq1} we deduce \eqref{eq:limit1}. Second, we observe that ${v}_j(0) = {\widetilde{u}}_{\tau_j,\eps_j}(\pi_{\tau_j}(t_j^-))$, ${v}_j(\sigma_j) = {\widetilde{u}}_{\tau_j,\eps_j}(\pi_{\tau_j}(t_j^+))$, so, taking into account \eqref{eq:vanisheq2}, in order to prove \eqref{eq:limit2} it is enough to show that 
\begin{equation*}
|\widetilde{u}_{\tau_j,\eps_j}(\pi_{\tau_j}(t_j^\pm)) - \widetilde{u}_{\tau_j,\eps_j}(t_j^\pm) |_U \xrightarrow[j\to+\infty]{}0\,.
\end{equation*}

For this we note that, if $t^{k_j^\pm}:=\pi_{\tau_j}(t_j^\pm)$ for some $k_j^\pm\in\mathcal{K}_{\tau_j}$, then there holds
\begin{equation*}
\begin{split}
&\quad\,|\widetilde{u}_{\tau_j,\eps_j}(\pi_{\tau_j}(t_j^\pm)) - \widetilde{u}_{\tau_j,\eps_j}(t_j^\pm) |_U^2  \le \tau_j \int_{t^{k_j^\pm-1}}^{t^{k_j^\pm}} |\dot{\widetilde{u}}_{\tau_j,\eps_j}(r)|_U^2\,\mathrm{d}r \\
& = \tau_j \int_{t^{k_j^\pm-1}}^{t^{k_j^\pm}} \left|\frac{v^{k_j^\pm}_{\tau_j,\eps_j}}{\tau_j}(r-t^{k_j^\pm-1}) + \frac{v^{k_j^\pm-1}_{\tau_j,\eps_j}}{\tau_j}(\tau_j-(r-t^{k_j^\pm-1}))\right|_U^2\,\mathrm{d}r  \le C \tau_j^2 (|v^{k_j^\pm}_{\tau_j,\eps_j}|_U^2 + |v^{k_j^\pm-1}_{\tau_j,\eps_j}|_U^2)\\
&\le C \tau_j^2 \left(|u_1^{\eps_j}|_U^2 + \sum_{k=1}^{T/\tau_j}|v^{k}_{\tau_j,\eps_j}|_U^2\right)  \le C \frac{\tau_j}{\eps_j} \left(\frac{\tau_j}{\eps_j}+1\right)\xrightarrow[j\to +\infty]{}0\,.
\end{split}
\end{equation*}
This concludes the proof. 
\end{proof}

\appendix
\section{Existence of exact dissipative dynamic solutions in the case $V=U$}\label{App:Existence}
 This first appendix is devoted to the proof of the following theorem.
	\begin{thm}\label{thm:existence}
		Assume \eqref{eq:embeddings} with $V=U$. Let the mass and viscosity operators $\mathbb{M}$ and $\mathbb{V}$ satisfy \eqref{mass} and \eqref{viscosity}. Let the rate-independent dissipation potential $\mc R$ satisfy \ref{hyp:R1} and let the potential energy $\mc E$ satisfy \ref{hyp:E1}-\ref{hyp:E6}. Then for any $\eps>0$ and for any initial data $u_0^\eps\in D$, $u_1^\eps\in U$ there exists an exact dissipative dynamic solution $u^\eps$ to \eqref{mainprob} in the sense of Definition~\ref{def:dynsol}.
	\end{thm}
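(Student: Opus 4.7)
The proof strategy is to construct $u^\eps$ (for each fixed $\eps > 0$) via a time-incremental minimization scheme analogous to \eqref{schemeincond} and then pass to the limit as $\tau \to 0$. The special role of the hypothesis $V=U$ is to make the energy and dissipative spaces coincide, so that $\dot{u}^\eps$ becomes an admissible test for $\partial^U\mc E(t,u^\eps(t))$: this is precisely what enables the chain-rule formula of Lemma~\ref{lemma:crappendix} to apply to the limit and to promote the (discrete-inherited) energy-dissipation \emph{inequality} to an \emph{equality} valid for every $t \in [0,T]$.

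First I would construct, for $\tau > 0$ sufficiently small, the discrete trajectory $\{u^k_{\tau,\eps}\}_{k\in\mathcal{K}_\tau^0}$ via iterated minimization as in \eqref{schemeincond}. Existence of a minimizer at each step follows from the direct method using \ref{hyp:E1} and \ref{hyp:E3}, while $\lambda$-convexity \ref{hyp:E5} combined with a smallness constraint on $\tau/\eps$ ensures strict convexity and hence uniqueness. The Euler-Lagrange equation then takes the form \eqref{eq:EL2}, and a telescopic comparison between consecutive minimality inequalities produces the discrete energy-dissipation inequality \eqref{eq:eniquality1}. From this, using a Gr\"onwall argument based on \ref{hyp:E2} and the subgradient control \ref{hyp:E6}, I would derive uniform-in-$\tau$ bounds on all the interpolants \eqref{interpolants} and on the associated subgradient sequences $\xi^k_{\tau,\eps}$ and $\eta^k_{\tau,\eps}$, in the spirit of Proposition~\ref{prop:inequality0}.

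Second, I would pass to the limit $\tau\to 0$ (with $\eps$ fixed). A Helly-type extraction in the style of Lemma~\ref{helly}, combined with weak compactness in the reflexive spaces $L^2(0,T;U)$ and $L^2(0,T;U^*)$, provides a limit function $u^\eps$ having the regularity required by Definition~\ref{def:dynsol}, together with measurable selections $\xi^\eps(t) \in \partial^U\mc E(t, u^\eps(t))$ and $\eta^\eps(t) \in \partial^Z\mc R(\dot{u}^\eps(t))$. The subdifferential inclusions are transferred to the limit through \eqref{lambdaconvexineq} (for $\mc E$, exploiting \ref{hyp:E1}, \ref{hyp:E5}, and the compact embedding $U \hookrightarrow\hookrightarrow W$) and through convexity plus Ioffe-type lower semicontinuity (for $\mc R$, using \ref{hyp:R1} and \eqref{2.2mielke}). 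The limit of the Euler-Lagrange equation is \eqref{dineq}, while a lower-semicontinuity argument on each term of the discrete energy-dissipation inequality yields the $\le$ version of \eqref{eq:energybalance} on $[0,t]$ for every $t\in[0,T]$.

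The decisive and most delicate step is to upgrade this inequality to an equality and to show that it holds \emph{for every} $t\in[0,T]$. For this I would invoke the chain-rule formula of Lemma~\ref{lemma:crappendix}, whose applicability requires precisely $V=U$, so that $\dot{u}^\eps \in L^2(0,T;U)$ can be used as a test function for $\xi^\eps \in \partial^U\mc E(\cdot, u^\eps(\cdot))$. The lemma delivers the identity
\begin{equation*}
\int_s^t \langle \eps^2 \M \ddot{u}^\eps(r) + \xi^\eps(r), \dot{u}^\eps(r) \rangle_U\,\mathrm{d}r = \tfrac{\eps^2}{2}|\dot{u}^\eps(t)|_\M^2 + \mc E(t, u^\eps(t)) - \tfrac{\eps^2}{2}|\dot{u}^\eps(s)|_\M^2 - \mc E(s, u^\eps(s)) - \int_s^t \partial_t \mc E(r, u^\eps(r))\,\mathrm{d}r.
\end{equation*}
Plugging \eqref{dineq} into the integrand on the left-hand side and using the Fenchel-Young equality case in \eqref{Fenchelprop} reverses the previously obtained inequality and produces the required equality \eqref{eq:energybalance}. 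The same identity shows that $t \mapsto \tfrac{\eps^2}{2}|\dot{u}^\eps(t)|_\M^2 + \mc E(t, u^\eps(t))$ is absolutely continuous, so the exceptional set $N$ of Definition~\ref{def:dynsol} can be emptied out and the balance holds for every $t \in [0,T]$, producing an \emph{exact} dissipative dynamic solution. The main obstacle is exactly this last step: everything hinges on the validity of the nonsmooth chain rule, which is the reason for confining the appendix to the case $V=U$.
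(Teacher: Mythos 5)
Your overall architecture is the same as the paper's: discretize via \eqref{schemeincond} with $\eps$ fixed, derive the discrete energy--dissipation inequality and uniform bounds as in Proposition~\ref{prop:inequality0}, extract weak limits of the interpolants and of the subgradient selections, pass to the limit in the Euler--Lagrange equation, and finally use the chain rule of Lemma~\ref{lemma:crappendix} (which is where $V=U$ enters) together with \eqref{Fenchelprop} to obtain \eqref{eq:energybalance} as an equality for every $t$.

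There is, however, a genuine gap at the step you dispatch in one sentence: the identification of the weak-$*$ limits $\xi^\eps$ and $\eta^\eps$ as elements of $\partial^U\mc E(t,u^\eps(t))$ and $\partial^Z\mc R(\dot u^\eps(t))$. Passing to the limit in the $\lambda$-convexity inequality \eqref{lambdaconvexineq} requires the convergence of the duality product $\langle \overline{\xi}_{\tau_j,\eps}(r),\overline{u}_{\tau_j,\eps}(r)\rangle_U$, where $\overline{\xi}_{\tau_j,\eps}$ converges only weakly-$*$ in $L^\infty(0,T;U^*)$ and $\overline{u}_{\tau_j,\eps}(r)$ only weakly in $U$; the compact embedding $U\hookrightarrow\hookrightarrow W$ that you invoke does not help, because $\xi^\eps$ lives in $U^*$ and not in $W$. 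Likewise, identifying $\eta^\eps(t)\in\partial^Z\mc R(\dot u^\eps(t))$ via \eqref{2.2mielke} requires an upper bound on $\limsup_j\int\langle\overline{\eta}_{\tau_j,\eps},\dot{\widehat{u}}_{\tau_j,\eps}\rangle_Z$, again a product of two weakly convergent sequences, which "Ioffe-type lower semicontinuity" does not deliver (it gives the inequality in the wrong direction for this purpose). The paper devotes Lemmas~\ref{lemma:app1} and \ref{lemma:app2} precisely to these two products; their proofs are Minty-type arguments that test the Euler--Lagrange equation with $\overline{u}_{\tau_j,\eps}$ and $\dot{\widehat{u}}_{\tau_j,\eps}$ respectively, integrate the inertial term by parts, and use the Aubin--Lions lemma to upgrade $\dot{\widehat{u}}_{\tau_j,\eps}$ to a strongly convergent sequence in $L^2(0,T;W)$. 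Without an argument of this kind the subdifferential inclusions cannot be closed, and this — rather than the chain rule, which is comparatively routine once \ref{hyp:E6} and $V=U$ are available — is the real obstacle in the proof.
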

	We will make use of the following well-known lemmas, whose proof can be found in \cite[Ch. XVIII, §5, Lemma~6]{DL} and \cite{Simon}, respectively.
		\begin{lemma}\label{lem:lemma5}
			Let $X,Y$ be Banach spaces such that $X\hookrightarrow Y$. If $X$ is reflexive, then $$L^\infty(0,T;X)\cap C_{\rm w}([0,T];Y) \subset C_{\rm w}([0,T];X).$$
		\end{lemma}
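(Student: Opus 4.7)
\textbf{Proof plan for Lemma~\ref{lem:lemma5}.} Let $f \in L^\infty(0,T;X)\cap C_{\rm w}([0,T];Y)$ and set $M:=\|f\|_{L^\infty(0,T;X)}$. The plan has two stages: first upgrade the a.e. $X$-bound to an everywhere $X$-bound (together with $f(t)\in X$ for every $t$), then promote weak continuity from $Y$ to $X$ by a standard reflexivity/subsequence argument.

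\textbf{Step 1 (pointwise $X$-bound).} Since $X\hookrightarrow Y$, any functional $y^{*}\in Y^{*}$ restricts to an element of $X^{*}$, so weak convergence in $X$ implies weak convergence in $Y$. I would first observe that $\|f(t)\|_X\le M$ for every $t\in[0,T]$. Indeed, fix $t$ and choose $t_n\to t$ with $\|f(t_n)\|_X\le M$ for every $n$ (such points form a set of full measure, hence are dense in $[0,T]$). By reflexivity of $X$, up to a subsequence $f(t_n)\rightharpoonup x$ in $X$ for some $x\in X$ with $\|x\|_X\le M$ by weak lower semicontinuity of the norm. By the embedding, $f(t_n)\rightharpoonup x$ in $Y$ as well, while $C_{\rm w}([0,T];Y)$-membership gives $f(t_n)\rightharpoonup f(t)$ in $Y$. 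Uniqueness of the weak limit in $Y$ forces $x=f(t)$, so $f(t)\in X$ and $\|f(t)\|_X\le M$.

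\textbf{Step 2 (weak continuity in $X$).} Fix $t\in[0,T]$ and a sequence $t_n\to t$; I need to show $f(t_n)\rightharpoonup f(t)$ in $X$. It suffices to show that every subsequence of $\{f(t_n)\}$ admits a further subsequence converging weakly in $X$ to $f(t)$. Given any such subsequence (not relabelled), Step~1 yields $\|f(t_n)\|_X\le M$. By reflexivity of $X$, I extract a further subsequence with $f(t_n)\rightharpoonup x$ in $X$ for some $x\in X$. The embedding $X\hookrightarrow Y$ gives $f(t_n)\rightharpoonup x$ in $Y$, and on the other hand the hypothesis $f\in C_{\rm w}([0,T];Y)$ forces $f(t_n)\rightharpoonup f(t)$ in $Y$. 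Uniqueness of weak limits in $Y$ yields $x=f(t)$, concluding the proof.

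\textbf{Main subtlety.} The only delicate point is the passage from an essential bound (which is what $L^\infty(0,T;X)$ naturally provides) to a pointwise bound that is compatible with the already existing pointwise representative furnished by $C_{\rm w}([0,T];Y)$. This is exactly the role of Step~1, and it relies crucially on both the reflexivity of $X$ (to get compactness of bounded sequences in the weak topology) and the continuity of the embedding $X\hookrightarrow Y$ (to transfer $X$-weak limits to $Y$-weak limits, making them identifiable with the pointwise value $f(t)$). Once this is in place, Step~2 is a routine Urysohn-type subsequence argument.
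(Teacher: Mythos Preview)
Your proof is correct and follows the standard argument for this classical result. The paper does not provide its own proof of this lemma but simply cites \cite[Ch.~XVIII, \S5, Lemma~6]{DL}; your two-step scheme (upgrading the essential $X$-bound to a pointwise one via reflexivity and the $Y$-weak continuity, then using an Urysohn subsequence argument) is precisely the approach one finds there.
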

	
	\begin{lemma}[\textbf{Aubin-Lions}]\label{AubinLions}
		Let $X,Y,Z$ be Banach spaces such that $X\hookrightarrow\hookrightarrow Y\hookrightarrow Z$. Then
		\begin{itemize}
			\item[(1)] for $p\in [1,+\infty)$ and $q\in [1,+\infty]$ there holds
			\begin{equation*}
				\{u\in L^p(0,T;X):\, \dot{u}\in L^q(0,T;Z)\}\hookrightarrow\hookrightarrow L^p(0,T;Y);
			\end{equation*}
		\item[(2)] for $q\in (1,+\infty]$ there holds
		\begin{equation*}
			\{u\in L^\infty(0,T;X):\, \dot{u}\in L^q(0,T;Z)\}\hookrightarrow\hookrightarrow C([0,T];Y).
		\end{equation*}
		\end{itemize}
	\end{lemma}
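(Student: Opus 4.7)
The plan is to construct $u^\eps$ via a Minimizing Movements scheme, building on the incremental problem \eqref{schemeincond} with $\eps>0$ fixed. Choosing $\tau$ small enough so that $\mathcal{F}_{\tau,\eps}(t^k,\cdot, u^{k-1}_{\tau,\eps},u^{k-2}_{\tau,\eps})$ is strictly convex (using $\lambda$-convexity from \ref{hyp:E5} and smallness of $\tau/\eps$), minimizers exist and are unique; \ref{hyp:E1} together with the coercive quadratic terms in $\mathcal{F}_{\tau,\eps}$ yields weak-$U$ lower semicontinuity and boundedness from below. The Euler-Lagrange equation \eqref{EL} is satisfied, and arguing as in Proposition~\ref{prop:inequality0} one obtains the discrete energy-dissipation inequality \eqref{eq:eniquality1}, from which one deduces uniform (in $\tau$) versions of the bounds $(i')$--$(vii')$; assumption \ref{hyp:E6} is precisely what allows to control $\xi^k_{\tau,\eps}$ in $U^*$, yielding in turn a bound on the discrete second derivatives in $U^*$.

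Next I would introduce the standard interpolants \eqref{interpolants} and, using $V=U$, read off that $\widehat u_{\tau,\eps}$ is bounded in $W^{1,2}(0,T;U)$ and $\widetilde u_{\tau,\eps}$ in $W^{2,2}(0,T;U^*)$, with $\overline u_{\tau,\eps},\underline u_{\tau,\eps}$ bounded in $L^\infty(0,T;U)$ (the various mismatches being controlled as in Lemma~\ref{lemma:mismatch} with $\eps$ fixed). Since $U\hookrightarrow\hookrightarrow W\hookrightarrow U^*$, the Aubin-Lions Lemma~\ref{AubinLions} yields compactness: up to a subsequence $\tau_j\to 0$, there exists $u^\eps\in W^{1,2}(0,T;U)\cap W^{2,2}(0,T;U^*)$ such that all interpolants converge strongly in $C([0,T];W)$ and weakly in $W^{1,2}(0,T;U)$, while $\overline\xi_{\tau_j,\eps}\rightharpoonup \xi^\eps$ and $\overline\eta_{\tau_j,\eps}\rightharpoonup \eta^\eps$ weakly-$*$ in $L^\infty(0,T;U^*)$. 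Combining Lemma~\ref{lem:lemma5} with the boundedness in $L^\infty(0,T;U)$ and $L^\infty(0,T;W)$ then upgrades the regularity of $u^\eps$ to $C_{\rm w}([0,T];U)\cap C^1_{\rm w}([0,T];W)$, so that the initial conditions $u^\eps(0)=u_0^\eps$ and $\dot u^\eps(0)=u_1^\eps$ are inherited in the limit.

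To identify the limits as the desired selections, I would exploit the equivalent convex-subdifferential characterization granted by Lemma~\ref{lem:lemma3} (via the $\lambda$-shifted functional): the strong convergence $u_{\tau_j,\eps}\to u^\eps$ in $C([0,T];W)$, combined with \ref{hyp:E1} and \ref{hyp:E6}, passes the inequality \eqref{lambdaconvexineq} to the limit and yields $\xi^\eps(t)\in\partial^U\mc E(t,u^\eps(t))$ a.e.; a Minty-type argument analogously gives $\eta^\eps(t)\in\partial^Z\mc R(\dot u^\eps(t))$ a.e. This promotes \eqref{eq:EL2} to the dynamic inclusion \eqref{dineq}. By lower semicontinuity, passing to the limit in \eqref{eq:eniquality1} delivers the energy-dissipation \emph{inequality} $\le$ in \eqref{eq:energybalance} for every $t$.

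The final and most delicate step is to upgrade this inequality to the \emph{exact} equality for \emph{every} $t\in[0,T]$. Here the chain-rule formula of Lemma~\ref{lemma:crappendix} is the key tool: applied to $u^\eps$ with the selections $\eta^\eps,\xi^\eps$ and combined with \eqref{dineq} and \eqref{Fenchelprop}, it produces exactly the opposite $(\ge)$ inequality in \eqref{eq:energybalance}, closing it as a pointwise identity on the full interval. The main obstacle of the whole argument is the limit passage in the nonconvex subdifferential $\partial^U\mc E$: it hinges on the interplay between \ref{hyp:E5} (allowing the convex-subdifferential formulation via Lemma~\ref{lem:lemma3}), \ref{hyp:E6} (keeping $\xi^\eps_{\tau,\eps}$ in a weakly compact set of $U^*$), and the strong compactness supplied by Aubin-Lions; once this is established, the chain rule of Lemma~\ref{lemma:crappendix} provides the decisive upgrade from dissipative to \emph{exact} dissipative dynamic solution.
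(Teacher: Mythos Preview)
Your write-up is not a proof of the stated lemma. The statement in question is the classical Aubin--Lions compactness lemma, which the paper does not prove at all but simply cites from \cite{Simon}. What you have written is instead a proof sketch of Theorem~\ref{thm:existence} (existence of exact dissipative dynamic solutions in the case $V=U$), in which Aubin--Lions appears merely as one ingredient among several. So there is a basic mismatch: you are addressing the wrong statement.

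If the intended target was indeed Theorem~\ref{thm:existence}, then your outline is broadly aligned with the paper's Appendix~\ref{App:Existence}: both use the Minimizing Movements scheme \eqref{schemeincond}, the uniform bounds of Proposition~\ref{prop:inequality0}, compactness via Aubin--Lions, and the chain rule of Lemma~\ref{lemma:crappendix} to close the energy balance. The paper, however, is more careful about the identification of the limits $\xi^\eps$ and $\eta^\eps$: it does not rely on a direct passage to the limit in \eqref{lambdaconvexineq} alone, but proves two dedicated results (Lemmas~\ref{lemma:app1} and \ref{lemma:app2}) establishing convergence of the pairings $\int\langle\overline\xi_{\tau_j,\eps},\overline u_{\tau_j,\eps}\rangle_U$ and a $\limsup$ inequality for $\int\langle\overline\eta_{\tau_j,\eps},\dot{\widehat u}_{\tau_j,\eps}\rangle_Z$. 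These are nontrivial because the product of a weak-$*$ limit and a weak limit need not converge; the paper handles this via summation-by-parts identities and the strong $L^2(0,T;W)$ convergence of $\dot{\widehat u}_{\tau_j,\eps}$ obtained through an interpolation trick combining Aubin--Lions with \eqref{eq:mism7}. Your sketch glosses over precisely this point, and the phrase ``a Minty-type argument analogously gives $\eta^\eps(t)\in\partial^Z\mc R(\dot u^\eps(t))$'' hides the real work.
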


Let now $\widetilde{u}_{\tau,\eps}, \widehat{u}_{\tau,\eps}, \overline{u}_{\tau,\eps}, \underline{u}_{\tau,\eps}, \overline{\xi}_{\tau,\eps}, \overline{\eta}_{\tau,\varepsilon}$ be the interpolants of the discrete scheme \eqref{schemeincond} defined in \eqref{interpolants}, \eqref{eq:xitaueps} and \eqref{eq:etataueps}. By the uniform bounds of Proposition~\ref{prop:inequality0}, recalling that we are assuming $V=U$ and that $\eps$ in this appendix is considered fixed, we know that
\begin{itemize}
	\item $\mc E(\pi_\tau(\cdot),\overline{u}_{\tau,\eps})$ is uniformly bounded;
	\item $ \widehat{u}_{\tau,\eps}, \overline{u}_{\tau,\eps}, \underline{u}_{\tau,\eps}$ are uniformly bounded in $B([0,T];U)\cap BV([0,T];Z)$;
	\item $ \widehat{u}_{\tau,\eps}$ is uniformly bounded in $W^{1,2}(0,T;U)\cap W^{1,\infty}(0,T;W)$;
	\item $ \widetilde{u}_{\tau,\eps}$ is uniformly bounded in $W^{2,2}(0,T;U^*)$;
	\item $ \overline{\xi}_{\tau,\eps}$ is uniformly bounded in $L^{\infty}(0,T;U^*)$;
	\item $ \overline{\eta}_{\tau,\eps}$ is uniformly bounded in $L^{\infty}(0,T;Z^*)$.
\end{itemize}

By standard compactness arguments, together with Lemmas~\ref{helly} and \ref{lemma:mismatch}, we deduce that, up to a subsequence $\tau_j\to0$, there hold
\begin{itemize}
	\item $\widehat{u}_{\tau_j,\eps}(t), \overline{u}_{\tau_j,\eps}(t), \underline{u}_{\tau_j,\eps}(t)\xrightharpoonup[j\to +\infty]{U}u^\eps(t), \quad$ for all $t\in [0,T]$,
	\item $\widehat{u}_{\tau_j,\eps}\xrightharpoonup[j\to +\infty]{W^{1,2}(0,T;U)}u^\eps,\quad$ and $\quad\widehat{u}_{\tau_j,\eps}\xrightharpoonup[j\to +\infty]{W^{1,\infty}(0,T;W)\,\,\ast}u^\eps$,
	\item $\widetilde{u}_{\tau_j,\eps}\xrightharpoonup[j\to +\infty]{W^{2,2}(0,T;U^*)}u^\eps$,
	\item $\overline{\xi}_{\tau_j,\eps}\xrightharpoonup[j\to +\infty]{L^{\infty}(0,T;U^*)\,\,\ast}\xi^\eps,\quad$ and $\quad\overline{\eta}_{\tau_j,\eps}\xrightharpoonup[j\to +\infty]{L^{\infty}(0,T;Z^*)\,\,\ast}\eta^\eps$,
\end{itemize}
for some functions $u^\eps\in W^{1,2}(0,T;U)\cap W^{1,\infty}(0,T;W)\cap W^{2,2}(0,T;U^*)$, $\xi^\eps\in L^\infty(0,T;U^*)$ and $\eta^\eps\in L^\infty(0,T;Z^*).$ By using Lemma~\ref{lem:lemma5} we also deduce $\dot{u}^\eps\in C_{\rm w}([0,T];W)$, while by \ref{hyp:E1} we obtain that $\mc E(\cdot,u^\eps(\cdot))$ is bounded. In particular, $u^\eps$ is $D$-valued, hence the first request in Definition~\ref{def:dynsol} is satisfied.

Passing to the limit in \eqref{eq:EL2}, we now infer that $u^\eps, \xi^\eps$ and $\eta^\eps$ solve the Cauchy problem

\begin{equation}\label{eq:eqlimit}
	\begin{cases}
		\eps^2 \mathbb{M}\xepsdd(t)+\eps \mathbb{V}\xepsd(t)+\eta^\eps(t)+\xi^\eps(t)= 0, \quad\text{in $U^*$,}\quad \text{for a.e. }t\in [0,T],\\
		\xeps(0)=u^\eps_0,\quad\xepsd(0)=u^\eps_1.
	\end{cases}
\end{equation}

We conclude the proof of Theorem~\ref{thm:existence} once we show that
\begin{equation}\label{eq:claimappendix}
	\eta^\eps(t)\in\partial^Z\mc R(\dot{u}^\eps(t)),\quad\text{and}\quad\xi^\eps(t)\in\partial^U\mc E(t,u^\eps(t)),\quad\text{for almost every }t\in[0,T].
\end{equation}
Indeed, by using Lemma~\ref{lemma:crappendix}, the above inclusions will yield \eqref{eq:energybalance} as an equality for all $t\in [0,T]$.

The proof of \eqref{eq:claimappendix} will rely on the following two lemmas.
\begin{lemma}\label{lemma:app1}
	For almost every $t\in [0,T]$ there holds
	\begin{equation*}
		\lim_{j\to +\infty}\int_{0}^{\pi_{\tau_j}(t)}\langle\overline{\xi}_{\tau_j,\eps}(r),\overline{u}_{\tau_j,\eps}(r)\rangle_U\d r=\int_0^t\langle\xi^\eps(r),u^\eps(r)\rangle_U \d r.
	\end{equation*}
\end{lemma}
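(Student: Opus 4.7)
The plan is to combine a Minty-type monotonicity argument with the discrete energy-dissipation inequality to obtain matching $\liminf$ and $\limsup$ bounds.

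First, by the Euler--Lagrange relation \eqref{eq:EL2}, we have $\overline\xi_{\tau_j,\eps}(r)\in\partial^U\mc E(\pi_{\tau_j}(r),\overline u_{\tau_j,\eps}(r))$, and Lemma~\ref{lem:lemma3} recasts this as the $\lambda$-convex subdifferential inequality
\begin{equation*}
\langle\overline\xi_{\tau_j,\eps}(r),\overline u_{\tau_j,\eps}(r)-u^\eps(r)\rangle_U\ge \mc E(\pi_{\tau_j}(r),\overline u_{\tau_j,\eps}(r))-\mc E(\pi_{\tau_j}(r),u^\eps(r))-\tfrac{\lambda}{2}|\overline u_{\tau_j,\eps}(r)-u^\eps(r)|_W^2.
\end{equation*}
Integrating over $(0,\pi_{\tau_j}(t))$ and taking $\liminf$ I obtain the bound
$\liminf_j\int_0^{\pi_{\tau_j}(t)}\langle\overline\xi_{\tau_j,\eps},\overline u_{\tau_j,\eps}\rangle_U\,dr\ge\int_0^t\langle\xi^\eps,u^\eps\rangle_U\,dr$.
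In order to see this, I use the weak-$*$ convergence $\overline\xi_{\tau_j,\eps}\stackrel{*}{\rightharpoonup}\xi^\eps$ in $L^\infty(0,T;U^*)$ paired against the strongly convergent test functions $u^\eps\bm 1_{(0,\pi_{\tau_j}(t))}\to u^\eps\bm 1_{(0,t)}$ in $L^1(0,T;U)$; the strong $L^2(0,T;W)$-convergence $\overline u_{\tau_j,\eps}\to u^\eps$ (via Dominated Convergence applied to the pointwise $W$-convergence and the uniform $U$-bound); weak lower semicontinuity of $\mc E$ combined with Fatou's Lemma to get $\liminf_j\int_0^{\pi_{\tau_j}(t)}\mc E(\pi_{\tau_j}(r),\overline u_{\tau_j,\eps}(r))\,dr\ge\int_0^t\mc E(r,u^\eps(r))\,dr$; and the time-continuity of $\mc E$ granted by \ref{hyp:E2} and \eqref{eq:ineqgronwall}, which through Dominated Convergence yields $\int_0^{\pi_{\tau_j}(t)}\mc E(\pi_{\tau_j}(r),u^\eps(r))\,dr\to\int_0^t\mc E(r,u^\eps(r))\,dr$.

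For the matching $\limsup$ inequality, the plan is to exploit the discrete energy-dissipation inequality \eqref{eq:eniquality1} between $0$ and $\pi_{\tau_j}(t)$. Passing to $\liminf$ there, using weak lower semicontinuity of $|\cdot|_\M$, of the Fenchel terms $\mc R_\eps(\cdot)$ and $\mc R_\eps^*(\cdot)$ against the weak convergences $\dot{\widehat u}_{\tau_j,\eps}\rightharpoonup\dot u^\eps$ in $L^2(0,T;V)$ and $\eps^2\M\ddot{\widetilde u}_{\tau_j,\eps}+\overline\xi_{\tau_j,\eps}\rightharpoonup\eps^2\M\ddot u^\eps+\xi^\eps$ in $L^2(0,T;U^*)$, together with the vanishing of the discretization remainder $\tfrac{\lambda\tau_j}{2}\sum\tau_j|v^k_{\tau_j,\eps}|_W^2$ and the Gr\"onwall bound on $\partial_t\mc E(r,\underline u_{\tau_j,\eps})$, yields a limit energy-dissipation inequality for $(u^\eps,\xi^\eps,\eta^\eps)$. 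But the chain-rule identity from Lemma~\ref{lemma:crappendix} applied to the limit equation \eqref{eq:eqlimit} forces that inequality to be an equality, which in turn pins $\lim_j\mc E(\pi_{\tau_j}(t),\overline u_{\tau_j,\eps}(t))=\mc E(t,u^\eps(t))$ for a.e.\ $t$. Re-reading the Minty inequality as an equality in the limit gives the reverse bound $\limsup_j\int\langle\overline\xi_{\tau_j,\eps},\overline u_{\tau_j,\eps}\rangle\le\int_0^t\langle\xi^\eps,u^\eps\rangle_U\,dr$, closing the proof.

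The hard part will be the $\limsup$ step: both $\mc E(\pi_{\tau_j}(t),\overline u_{\tau_j,\eps}(t))$ and the Fenchel contributions are only lower semicontinuous under the available weak modes of convergence, so converting ``$\le$'' into ``$=$'' in the limit EDB requires the chain-rule identity from Appendix~\ref{App:Chainrule} together with the identifications $\eta^\eps\in\partial^Z\mc R(\dot u^\eps)$ and $\xi^\eps\in\partial^U\mc E(\cdot,u^\eps)$. These identifications must be handled simultaneously with the convergence of the pairing at hand---a circularity typically broken by performing the passage to the limit and the Minty argument jointly, relying crucially on the assumption $V=U$ (which enables the strong control of mismatch terms coming from Lemma~\ref{lemma:mismatch}) and on the subdifferential bound \ref{hyp:E6}.
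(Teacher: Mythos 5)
Your $\liminf$ half is exactly the paper's argument: the $\lambda$-convex subdifferential inequality from Lemma~\ref{lem:lemma3}, Fatou on the energy of $\overline u_{\tau_j,\eps}$, dominated convergence on the remaining terms, and the weak-$*$/strong pairing for $\int\langle\overline\xi_{\tau_j,\eps},u^\eps\rangle_U$. That part is fine.

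The $\limsup$ half has a genuine gap, on two counts. First, the circularity you flag at the end is not a technicality you can wave away: Lemma~\ref{lemma:crappendix} requires $\xi^\eps(t)\in\partial^U\mc E(t,u^\eps(t))$, and turning the limit energy-dissipation inequality into an equality additionally requires $-\eps^2\M\ddot u^\eps-\xi^\eps\in\partial^V\mc R_\eps(\dot u^\eps)$; but both identifications are established in this appendix only \emph{after}, and by means of, Lemmas~\ref{lemma:app1} and \ref{lemma:app2} (see the proof of \eqref{eq:claimappendix}). Saying the circularity is "typically broken by performing the passage to the limit and the Minty argument jointly" names the problem without solving it. Second, and more structurally, even if you granted the identifications and the convergence of energies, your closing step does not produce the required upper bound. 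The Minty/monotonicity inequality $\langle\overline\xi_{\tau_j,\eps}-\xi^\eps,\overline u_{\tau_j,\eps}-u^\eps\rangle_U\ge-\lambda|\overline u_{\tau_j,\eps}-u^\eps|_W^2$ and the subdifferential inequality both bound $\int\langle\overline\xi_{\tau_j,\eps},\overline u_{\tau_j,\eps}\rangle_U$ from \emph{below}; they never become equalities in the limit and cannot be "re-read" to give a $\limsup$ bound. Moreover, the energy-dissipation inequality controls the pairing of $\overline\xi_{\tau_j,\eps}$ with the \emph{velocity} $\dot{\widehat u}_{\tau_j,\eps}$ (that is the content of Lemma~\ref{lemma:app2}), not with the state $\overline u_{\tau_j,\eps}$, which is the quantity at stake here.

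The paper's route for \eqref{(beta)} is different: it solves $\langle\overline\xi_{\tau_j,\eps},\overline u_{\tau_j,\eps}\rangle_U$ from the discrete Euler--Lagrange equation \eqref{eq:EL2} tested with $\overline u_{\tau_j,\eps}$ itself. The rate-independent term passes to the limit by the weak-$*$ times strong $L^1(0,T;Z)$ pairing; the viscous term is handled by the discrete convexity estimate $\sum_k\langle\V(u^k-u^{k-1}),u^k\rangle_U\ge\frac12|u^n|_\V^2-\frac12|u^0|_\V^2$ followed by weak lower semicontinuity of $|\cdot|_\V$ (this is where the favourable sign comes from); and the inertial term requires proving the \emph{strong} convergence $\dot{\widehat u}_{\tau_j,\eps}\to\dot u^\eps$ in $L^2(0,T;W)$ via Aubin--Lions and an interpolation between $U$ and $U^*$, after which a summation by parts identifies its limit. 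The conclusion then follows from the limit equation \eqref{eq:eqlimit}. You would need to supply an argument of this kind (or another genuine upper bound on the weak-weak product) to close the proof.
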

\begin{lemma}\label{lemma:app2}
	For all $0\le a\le b\le T$ there holds
	\begin{equation*}
		\limsup_{j\to +\infty}\int_a^b\int_{0}^{\pi_{\tau_j}(t)}\langle \overline{\eta}_{\tau_j,\eps}(r), \dot{\widehat{u}}_{\tau_j,\eps}(r)\rangle_Z\d r\d t\le\int_a^b\int_0^t\langle\eta^\eps(r),\dot{u}^\eps(r)\rangle_Z\d r\d t.
	\end{equation*}
\end{lemma}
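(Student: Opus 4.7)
My plan is to test the discrete Euler--Lagrange inclusion \eqref{eq:EL2} with $v^k_{\tau,\eps}$ and match each resulting contribution with its counterpart obtained by pairing the limit equation \eqref{eq:eqlimit} with $\dot u^\eps$. Multiplying \eqref{eq:EL2} by $\tau$ and summing up to the index corresponding to $\pi_{\tau_j}(t)$ produces
\begin{equation*}
\int_0^{\pi_{\tau_j}(t)}\langle \overline\eta_{\tau_j,\eps}, \dot{\widehat u}_{\tau_j,\eps}\rangle_Z\d r = -\int_0^{\pi_{\tau_j}(t)}\Bigl[\langle \eps^2 \M \ddot{\widetilde u}_{\tau_j,\eps}, \dot{\widehat u}_{\tau_j,\eps}\rangle_U + \eps|\dot{\widehat u}_{\tau_j,\eps}|_\V^2 + \langle \overline\xi_{\tau_j,\eps}, \dot{\widehat u}_{\tau_j,\eps}\rangle_U\Bigr]\d r.
\end{equation*}
After integration in $t\in(a,b)$ and passage to $\limsup$, the claim reduces to bounding the $\liminf$ of each of the three bracketed time integrals from below by its continuous-in-time counterpart, and then using the limit equation to recover $\int\langle\eta^\eps,\dot u^\eps\rangle_Z$.

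The inertial term is handled via the discrete identity $\langle v^k-v^{k-1},v^k\rangle_\M=\tfrac12(|v^k|_\M^2-|v^{k-1}|_\M^2+|v^k-v^{k-1}|_\M^2)$, which, using $v^0_{\tau,\eps}=u_1^\eps$ and the fact that $\dot{\widehat u}_{\tau_j,\eps}(\pi_{\tau_j}(t))=\dot{\widehat u}_{\tau_j,\eps}(t)$ a.e., yields the pointwise estimate $\int_0^{\pi_{\tau_j}(t)}\langle\eps^2\M\ddot{\widetilde u}_{\tau_j,\eps},\dot{\widehat u}_{\tau_j,\eps}\rangle_U\d r\geq\tfrac{\eps^2}{2}|\dot{\widehat u}_{\tau_j,\eps}(t)|_\M^2-\tfrac{\eps^2}{2}|u_1^\eps|_\M^2$. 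Integration in $t$, weak lower semicontinuity of $|\cdot|_\M^2$ against the weak-$*$ convergence $\dot{\widehat u}_{\tau_j,\eps}\rightharpoonup\dot u^\eps$ in $L^\infty(0,T;W)$, and the kinetic-energy chain rule for $u^\eps\in W^{1,\infty}(0,T;W)\cap W^{2,2}(0,T;U^*)$ deliver $\int_a^b\int_0^t\langle\eps^2\M\ddot u^\eps,\dot u^\eps\rangle_U\d r\d t$ as the lower bound. The viscous contribution is analogous: since $\pi_{\tau_j}(t)\geq t$, Fatou's lemma combined with weak lower semicontinuity of the $L^2(0,s;V)$-norm under $\dot{\widehat u}_{\tau_j,\eps}\rightharpoonup\dot u^\eps$ in $L^2(0,T;V)$ gives the wanted bound.

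The delicate part is the subdifferential pairing. Testing the $\lambda$-convex subdifferential inequality of Lemma~\ref{lem:lemma3} at $\xi^k_{\tau,\eps}\in\partial^U\mc E(t^k,u^k_{\tau,\eps})$ with $w=u^{k-1}_{\tau,\eps}$, rearranging, summing over $k$, and telescoping $\sum_k[\mc E(t^k,u^k)-\mc E(t^{k-1},u^{k-1})]$ via \ref{hyp:E2}, I obtain
\begin{equation*}
\int_0^{\pi_{\tau_j}(t)}\langle\overline\xi_{\tau_j,\eps},\dot{\widehat u}_{\tau_j,\eps}\rangle_U\d r \geq \mc E(\pi_{\tau_j}(t),\overline u_{\tau_j,\eps}(t)) - \mc E(0,u_0^\eps) - \int_0^{\pi_{\tau_j}(t)}\partial_t\mc E(r,\underline u_{\tau_j,\eps}(r))\d r - C\tfrac{\tau_j}{\eps},
\end{equation*}
the vanishing error originating from $\tfrac\lambda2\sum_k\tau^2|v^k|_W^2\le C\tau_j/\eps$, controlled by bound $(v')$ of Proposition~\ref{prop:inequality0} together with the embedding $U\hookrightarrow W$ (recall $V=U$). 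After integration in $t$ and passage to $\liminf$, weak lower semicontinuity of $\mc E$ (\ref{hyp:E1}), pointwise strong $W$-convergence of $\underline u_{\tau_j,\eps}$ to $u^\eps$, and \ref{hyp:E4} with dominated convergence yield
\begin{equation*}
\liminf_j\int_a^b\int_0^{\pi_{\tau_j}(t)}\langle\overline\xi_{\tau_j,\eps},\dot{\widehat u}_{\tau_j,\eps}\rangle\d r\d t \geq \int_a^b\Bigl[\mc E(t,u^\eps(t))-\mc E(0,u_0^\eps)-\int_0^t\partial_t\mc E(r,u^\eps(r))\d r\Bigr]\d t.
\end{equation*}

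To identify this energetic lower bound with $\int_a^b\int_0^t\langle\xi^\eps,\dot u^\eps\rangle_U\d r\d t$, I invoke the chain-rule formula of Lemma~\ref{lemma:crappendix} applied to $u^\eps$ and to the measurable selection $\xi^\eps(\cdot)\in\partial^U\mc E(\cdot,u^\eps(\cdot))$; the latter inclusion is established in a preceding step by combining Lemma~\ref{lemma:app1} with the $\lambda$-convex subdifferential characterisation of Lemma~\ref{lem:lemma3}. Summing the three lower bounds, reversing the sign, and pairing \eqref{eq:eqlimit} with $\dot u^\eps$ produces exactly $\int_a^b\int_0^t\langle\eta^\eps,\dot u^\eps\rangle_Z\d r\d t$, completing the proof. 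The main obstacle is precisely this last step: the $\lambda$-convex estimate only yields an energetic lower bound and not the pairing $\int\langle\xi^\eps,\dot u^\eps\rangle_U$ directly, so the proof of \eqref{eq:claimappendix} must be organised so that $\xi^\eps\in\partial^U\mc E$ is derived from Lemma~\ref{lemma:app1} \emph{before} Lemma~\ref{lemma:app2} is applied, thereby unlocking the chain rule.
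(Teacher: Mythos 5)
Your proposal is correct and follows essentially the same route as the paper: test \eqref{eq:EL2} with the discrete velocity, lower-bound the inertial term by the telescoping kinetic-energy identity and the potential term by the $\lambda$-convexity inequality \eqref{lambdaconvexineq}, keep the viscous term via weak lower semicontinuity, and then identify the limit through Lemma~\ref{lemma:crappendix} combined with the limit equation \eqref{eq:eqlimit}. You also correctly pinpoint the one structural subtlety, namely that the inclusion $\xi^\eps\in\partial^U\mc E(\cdot,u^\eps(\cdot))$ must be secured via Lemma~\ref{lemma:app1} before the chain rule can be invoked, which is exactly how the paper organises the argument.
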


\begin{proof}[Proof of \eqref{eq:claimappendix}]
	We start proving that $\xi^\eps(t)\in\partial^U\mc E(t,u^\eps(t))$ for a.e. $t\in [0,T]$. 
	
	Since $\overline{\xi}_{\tau_j,\eps}(r)\in\partial^U\mc E(\pi_{\tau_j}(r),\overline{u}_{\tau_j,\eps}(r))$, by \eqref{lambdaconvexineq} we obtain that for all $u\in D$ there holds
	\begin{equation}\label{eq:first}
		\begin{aligned}
			&\quad\,\int_{\pi_{\tau_j}(s)}^{\pi_{\tau_j}(t)}\mc E(\pi_{\tau_j}(r), u)\d r\\
			&\ge \int_{\pi_{\tau_j}(s)}^{\pi_{\tau_j}(t)}\left(\mc E(\pi_{\tau_j}(r), \overline{u}_{\tau_j,\eps}(r))+\langle\overline{\xi}_{\tau_j,\eps}(r), u-\overline{u}_{\tau_j,\eps}(r)\rangle_U-\frac\lambda 2|u-\overline{u}_{\tau_j,\eps}(r)|^2_W\right)\d r,
		\end{aligned}
	\end{equation}
for all $0\le s\le t\le T$.

By using Fatou's Lemma, Lemma~\ref{lemma:app1} and Lebesgue Dominated Convergence Theorem on the first, second and third term in the right-hand side of \eqref{eq:first}, respectively, letting $j\to +\infty$ we deduce
\begin{equation*}
	\int_s^t \mc E(r,u)\d r\ge \int_s^t\left(\mc E(r,u^\eps(r))+\langle\xi^\eps(r), u-u^\eps(r)\rangle_U-\frac\lambda 2|u-u^\eps(r)|_W^2\right)\d r,
\end{equation*}
for almost every $0\le s\le t\le T$. By the arbitrariness of $s,t$ and $u\in D$ we conclude that $\xi^\eps(t)\in\partial^U\mc E(t,u^\eps(t))$ for a.e. $t\in [0,T]$.

Let us now show that $\eta^\eps(t)\in\partial^Z\mc R(\dot{u}^\eps(t))$ for a.e. $t\in [0,T]$. It is easy to see that $\eta^\eps(t)\in\partial^Z\mc R(0)$ for a.e. $t\in [0,T]$, whence by \eqref{2.2mielke} it is enough to show that 
\begin{equation}\label{eq:second}
	\mc R(\dot{u}^\eps(t))=\langle\eta^\eps(t),\dot{u}^\eps(t)\rangle_Z,\qquad\text{for a.e. }t\in [0,T],
\end{equation}
in order to conclude.

Since $\overline{\eta}_{\tau_j,\eps}(r)\in\partial^Z\mc R(\dot{\widehat{u}}_{\tau_j,\eps}(r))$, for all $0\le a\le b\le T$ we obtain
\begin{equation*}
	\int_a^b\int_{0}^{\pi_{\tau_j}(t)}\mc R(\dot{\widehat{u}}_{\tau_j,\eps}(r))\d r\d t=\int_a^b\int_{0}^{\pi_{\tau_j}(t)}\langle \overline{\eta}_{\tau_j,\eps}(r), \dot{\widehat{u}}_{\tau_j,\eps}(r)\rangle_Z\d r\d t.
\end{equation*}
By weak lower-semicontinuity on the left-hand side, together with Lemma~\ref{lemma:app2} for the right-hand side, letting $j\to +\infty$ we deduce
\begin{equation*}
	\int_a^b\int_0^t\mc R(\dot{u}^\eps(r))\d r\d t\le\int_a^b\int_0^t\langle\eta^\eps(r),\dot{u}^\eps(r)\rangle_Z\d r\d t.
\end{equation*}

Note that the opposite inequality is granted by the fact that $\eta^\eps(r)\in\partial^Z\mc R(0)$. Thus, by the arbitrariness of $a,b$ and by the continuity of the integrands we obtain
\begin{equation*}
	\int_0^t\mc R(\dot{u}^\eps(r))\d r=\int_0^t\langle\eta^\eps(r),\dot{u}^\eps(r)\rangle_Z\d r,\qquad\text{for every }t\in [0,T],
\end{equation*}
whence \eqref{eq:second}. The proof of \eqref{eq:claimappendix} is hence concluded.
\end{proof}
\begin{proof}[Proof of Lemma~\ref{lemma:app1}]
	We shall prove separately
	\begin{subequations}
		\begin{equation}\label{(alfa)}
				\liminf_{j\to +\infty}\int_{0}^{\pi_{\tau_j}(t)}\langle\overline{\xi}_{\tau_j,\eps}(r),\overline{u}_{\tau_j,\eps}(r)\rangle_U\d r\ge\int_0^t\langle\xi^\eps(r),u^\eps(r)\rangle_U \d r;
		\end{equation}
	\begin{equation}\label{(beta)}
			\limsup_{j\to +\infty}\int_{0}^{\pi_{\tau_j}(t)}\langle\overline{\xi}_{\tau_j,\eps}(r),\overline{u}_{\tau_j,\eps}(r)\rangle_U\d r\le\int_0^t\langle\xi^\eps(r),u^\eps(r)\rangle_U \d r.
	\end{equation}
	\end{subequations}
By \eqref{lambdaconvexineq} we infer that
\begin{equation*}
	\begin{split}
		&\quad\,\int_{0}^{\pi_{\tau_j}(t)}\langle\overline{\xi}_{\tau_j,\eps}(r),\overline{u}_{\tau_j,\eps}(r)\rangle_U\d r\\
		&\ge \int_{0}^{\pi_{\tau_j}(t)}\left(\mc E(\pi_{\tau_j}(r),\overline{u}_{\tau_j,\eps}(r))-\mc E(\pi_{\tau_j}(r),u^\eps(r))+\langle\overline{\xi}_{\tau_j,\eps}(r),u^\eps(r)\rangle_U-\frac \lambda 2|u^\eps(r)-\overline{u}_{\tau_j,\eps}(r)|_W^2\right)\d r.
	\end{split}
\end{equation*}
Recalling that $\overline{\xi}_{\tau_j,\eps}\xrightharpoonup[j\to +\infty]{L^{\infty}(0,T;U^*)\,\,\ast}\xi^\eps$, $\overline{u}_{\tau_j,\eps}\xrightarrow[j\to +\infty]{W}u^\eps$ pointwise and that $\pi_{\tau_j}(t)\ge t$, by using Fatou's Lemma on the first term of the right-hand side above, and Lebesgue Dominated Convergence on the remaining terms, by letting $j\to +\infty$ we deduce \eqref{(alfa)}.

The proof of \eqref{(beta)} is much more involved. We first test \eqref{eq:EL2} with $\overline{{u}}_{\tau_j,\eps}$, obtaining
\begin{equation*}
	\begin{split}
		\int_0^{\pi_{\tau_j}(t)}\langle\overline{\xi}_{\tau_j,\eps}(r),\overline{{u}}_{\tau_j,\eps}(r)\rangle_U \d r=&{-}\eps^2\int_0^{\pi_{\tau_j}(t)}\langle\M\ddot{\widetilde{u}}_{\tau_j,\eps}(r),\overline{{u}}_{\tau_j,\eps}(r)\rangle_U\d r\\&{-}\eps\int_0^{\pi_{\tau_j}(t)}\langle\V\dot{\widehat{u}}_{\tau_j,\eps}(r),\overline{{u}}_{\tau_j,\eps}(r)\rangle_U\d r{-}\int_0^{\pi_{\tau_j}(t)}\langle\overline{\eta}_{\tau_j,\eps}(r),\overline{{u}}_{\tau_j,\eps}(r)\rangle_Z\d r.
	\end{split}
\end{equation*}
Notice that $\overline{u}_{\tau_j,\eps}\xrightarrow[j\to +\infty]{W}u^\eps$ pointwise and $\overline{u}_{\tau_j,\eps}$ is bounded in $B([0,T];W)$; thus by Dominated Convergence Theorem there holds $\overline{u}_{\tau_j,\eps}\xrightarrow[j\to +\infty]{L^1(0,T;Z)}u^\eps$. Recalling that $\overline{\eta}_{\tau_j,\eps}\xrightharpoonup[j\to +\infty]{L^{\infty}(0,T;Z^*)\,\,\ast}\eta^\eps$, the last term above converges to $-\int_0^t\langle\eta^\eps(r),u^\eps(r)\rangle_Z\d r$.

As regards the second term, setting $t^n:=\pi_{\tau_j}(t)$, we observe that
\begin{equation*}
	\begin{split}
		{-}\int_0^{\pi_{\tau_j}(t)}\langle\V\dot{\widehat{u}}_{\tau_j,\eps}(r),\overline{{u}}_{\tau_j,\eps}(r)\rangle_U\d r&={-}\sum_{k=1}^n\langle\V(u^k_{\tau_j,\eps}{-}u^{k-1}_{\tau_j,\eps}), u^k_{\tau_j,\eps}\rangle_U\le {-}\sum_{k=1}^n\frac 12|u^k_{\tau_j,\eps}|_\V^2{-}\frac 12|u^{k-1}_{\tau_j,\eps}|_\V^2\\
		&=\frac 1 2|u_0^\eps|_\V^2{-}\frac 1 2|u^{n}_{\tau_j,\eps}|_\V^2 =\frac 1 2|u_0^\eps|_\V^2{-}\frac 1 2|\overline{{u}}_{\tau_j,\eps}(t)|_\V^2 .
	\end{split}
\end{equation*}
By weak lower semicontinuity of the norm we hence deduce
\begin{equation*}
	\limsup_{j\to +\infty}{-}\eps\int_0^{\pi_{\tau_j}(t)}\langle\V\dot{\widehat{u}}_{\tau_j,\eps}(r),\overline{{u}}_{\tau_j,\eps}(r)\rangle_U\d r\le \frac \eps 2|u_0^\eps|_\V^2{-}\frac \eps 2|u^\eps(t)|_\V^2={-}\eps\int_{0}^{t}\langle\V\dot{u}^\eps(r),u^\eps(r)\rangle_U\d r.
\end{equation*}

We now claim that
\begin{equation}\label{(claimapp)}
	\lim\limits_{j\to +\infty}\int_0^{\pi_{\tau_j}(t)}\langle\M\ddot{\widetilde{u}}_{\tau_j,\eps}(r),\overline{{u}}_{\tau_j,\eps}(r)\rangle_U\d r=\int_0^t\langle\M \ddot{u}^\eps(r), u^\eps(r)\rangle_U\d r,\qquad\text{for a.e. }t\in [0,T].
\end{equation}
Once the claim is proved we conclude the proof of \eqref{(beta)} by collecting the just obtained convergence results and using the equation \eqref{eq:eqlimit}.

In order to show \eqref{(claimapp)} we first recall that $\dot{\widetilde{u}}_{\tau,\eps}$ is bounded in $W^{1,2}(0,T;U^*)$ and in $L^2(0,T;U)$ by \eqref{eq:mism5} and \eqref{eq:mism6}. We notice that it is also bounded in $L^\infty(0,T;W)$; indeed by definition one has
\begin{equation*}
	\essup\limits_{t\in(0,T)}|\dot{\widetilde{u}}_{\tau,\eps}(t)|_W\le\max\limits_{k\in \mc K_\tau}(|v^k_{\tau,\eps}|_W+|v^{k-1}_{\tau,\eps}|_W)\le C,
\end{equation*}
where we used $(iii')$ in Proposition~\ref{prop:inequality0}.

By Lemma~\ref{AubinLions} we hence deduce $\dot{\widetilde{u}}_{\tau_j,\eps}\xrightarrow[j\to +\infty]{L^2(0,T;W)}\dot{u}^\eps$, which, by means of \eqref{eq:mism7}, also yields
\begin{equation}\label{(convapp)}
	\dot{\widehat{u}}_{\tau_j,\eps}\xrightarrow[j\to +\infty]{L^2(0,T;U^*)}\dot{u}^\eps.
\end{equation} 
By using Young's inequality, for all $\delta>0$ we now have
\begin{equation*}
	\int_0^T|\dot{\widehat{u}}_{\tau_j,\eps}(r)-\dot{u}^\eps(r)|^2_W\d r\le \delta\int_0^T|\dot{\widehat{u}}_{\tau_j,\eps}(r)-\dot{u}^\eps(r)|^2_U\d r+\frac 1\delta \int_0^T|\dot{\widehat{u}}_{\tau_j,\eps}(r)-\dot{u}^\eps(r)|^2_{U^*}\d r.
\end{equation*}
Hence, by letting first $j\to +\infty$ and then $\delta\to 0$, from \eqref{(convapp)} we finally obtain $\dot{\widehat{u}}_{\tau_j,\eps}\xrightarrow[j\to +\infty]{L^2(0,T;W)}\dot{u}^\eps$, and so, up to possibly extracting a further subsequence, one has
\begin{equation}\label{ptwsconv}
	\dot{\widehat{u}}_{\tau_j,\eps}(t)\xrightarrow[j\to +\infty]{W}\dot{u}^\eps(t),\qquad\text{for almost every } t\in [0,T].
\end{equation} 
By a summation by parts we now have
\begin{equation*}
	\begin{split}
		\int_0^{\pi_{\tau_j}(t)}\langle\M\ddot{\widetilde{u}}_{\tau_j,\eps}(r),\overline{{u}}_{\tau_j,\eps}(r)\rangle_U\d r&=\langle\M \dot{\widehat{u}}_{\tau_j,\eps}(t), \overline{u}_{\tau_j,\eps}(t)\rangle_W-\langle \M u_1^\eps, u_0^\eps\rangle_W\\
		&-\int_0^{\pi_{\tau_j}(t)}\langle\M\dot{\widehat{u}}_{\tau_j,\eps}(r) ,\dot{\widehat{u}}_{\tau_j,\eps}(r-\tau_j)\rangle_W\d r.
	\end{split}
\end{equation*}
So, by means of \eqref{ptwsconv}, for almost every $t\in [0,T]$ we finally deduce
\begin{equation*}
	\begin{split}
		\lim\limits_{j\to +\infty}\int_0^{\pi_{\tau_j}(t)}\langle\M\ddot{\widetilde{u}}_{\tau_j,\eps}(r),\overline{{u}}_{\tau_j,\eps}(r)\rangle_U\d r&=\langle \M\dot{u}^\eps(t),u^\eps(t)\rangle_W{-}\langle \M u_1^\eps, u_0^\eps\rangle_W{-}\int_0^t\langle\M \dot{u}^\eps(r),\dot{u}^\eps(r)\rangle_W\d r\\
		&= \int_0^t\langle\M \ddot{u}^\eps(r), u^\eps(r)\rangle_U\d r,
	\end{split}
\end{equation*}
and the proof of \eqref{(claimapp)} is complete.
\end{proof}
\begin{proof}[Proof of Lemma~\ref{lemma:app2}]
	By testing \eqref{eq:EL2} with $\dot{\widehat{u}}_{\tau_j,\eps}$, for every $t\in [0,T]$ we deduce
	\begin{equation}\label{(1)}
		\begin{split}
		&\quad\,-\int_0^{\pi_{\tau_j}(t)}\langle\overline{\eta}_{\tau_j,\eps}(r),\dot{\widehat{u}}_{\tau_j,\eps}(r)\rangle_Z\d r\\
		&=\eps^2\int_0^{\pi_{\tau_j}(t)}\langle\M\ddot{\widetilde{u}}_{\tau_j,\eps}(r),\dot{\widehat{u}}_{\tau_j,\eps}(r)\rangle_U\d r+\int_0^{\pi_{\tau_j}(t)}\langle\overline{\xi}_{\tau_j,\eps}(r),\dot{\widehat{u}}_{\tau_j,\eps}(r)\rangle_U \d r+\eps\int_0^{\pi_{\tau_j}(t)}|\dot{\widehat{u}}_{\tau_j,\eps}(r)|^2_\V\d r.
	\end{split}
	\end{equation}
Observe that, denoting $t^n:=\pi_{\tau_j}(t)$, one has 
\begin{equation}\label{(2)}
	\begin{split}
	\int_0^{\pi_{\tau_j}(t)}\langle\M\ddot{\widetilde{u}}_{\tau_j,\eps}(r),\dot{\widehat{u}}_{\tau_j,\eps}(r)\rangle_U\d r&=\sum_{k=1}^n\langle\M(v^k_{\tau_j,\eps}-v^{k-1}_{\tau_j,\eps}), v^k_{\tau_j,\eps}\rangle_W\ge \frac 12|v^n_{\tau_j,\eps}|_\M^2-\frac 12|u_1^\eps|_\M^2\\
	&=\frac 12|\dot{\widehat{u}}_{\tau_j,\eps}(t)|_\M^2-\frac 12|u_1^\eps|_\M^2,
\end{split}
\end{equation}
and that by \eqref{lambdaconvexineq} there also holds
\begin{equation}\label{(3)}
	\begin{split}
		&\quad\,\int_0^{\pi_{\tau_j}(t)}\langle\overline{\xi}_{\tau_j,\eps}(r),\dot{\widehat{u}}_{\tau_j,\eps}(r)\rangle_U \d r=\sum_{k=1}^n\langle\xi^k_{\tau_j,\eps},u^k_{\tau_j,\eps}-u^{k-1}_{\tau_j,\eps} \rangle_U\\
		&\ge \sum_{k=1}^n \mc E(t^k,u^k_{\tau_j,\eps})-\mc E(t^k,u^{k-1}_{\tau_j,\eps})-\frac{\lambda}{2}|u^k_{\tau_j,\eps}-u^{k-1}_{\tau_j,\eps}|_W^2\\
		&=\mc E(\pi_{\tau_j}(t), \overline{u}_{\tau_j,\eps}(t))-\mc E(0,u_0^\eps)-\int_0^{\pi_{\tau_j}(t)}\partial_t\mc E(r, \underline{u}_{\tau_j,\eps}(r))\d r-\frac{\lambda\tau_j}{2}\int_0^{\pi_{\tau_j}(t)}|\dot{\widehat{u}}_{\tau_j,\eps}(r)|_W^2\d r.
	\end{split}
\end{equation}
By plugging \eqref{(2)} and \eqref{(3)} into \eqref{(1)}, integrating between $a$ and $b$, after letting $j\to +\infty$ we obtain
\begin{equation*}
	\begin{split}
	&\quad\,\liminf_{j\to +\infty}-\int_a^b\int_{0}^{\pi_{\tau_j}(t)}\langle \overline{\eta}_{\tau_j,\eps}(r), \dot{\widehat{u}}_{\tau_j,\eps}(r)\rangle_Z\d r\d t\\
	&\ge \int_a^b\left(\frac{\eps^2}{2}|\dot{u}^\eps(t)|_\M^2{-}\frac{\eps^2}{2}|u_1^\eps|_\M^2{+}\mc E(t,u^\eps(t)){-}\mc E(0,u_0^\eps){-}\int_0^t\partial_t\mc E(r,u^\eps(r))\d r{+}\eps\int_0^t|\dot{u}^\eps(r)|^2_\V \d r\right)\d t\\
	&=-\int_a^b\int_0^t\langle\eta^\eps(r),\dot{u}^\eps(r)\rangle_Z\d r\d t,
\end{split}
\end{equation*}
where in the last equality we exploited Lemma~\ref{lemma:crappendix} together with equation \eqref{eq:eqlimit}. Thus we conclude.
\end{proof}

\section{Nonsmooth chain-rule formulas}\label{App:Chainrule}
In this second appendix we collect some useful nonsmooth chain-rule formulas we employed throughout the paper. Here we tacitly assume the validity of \eqref{eq:embeddings} and \eqref{mass}.

	\begin{lemma}\label{lemma:chainruleequality}
		Assume $V=U$ together with \ref{hyp:E1},\ref{hyp:E2},\ref{hyp:E5} and \ref{hyp:E6}. Then for all $t\in [0,T]$ and for all $v\in W^{1,2}(a,b;U)\cap W^{2,2}(a,b;U^*)$ such that $\sup\limits_{r\in [a,b]}\mc E(0,v(r))<+\infty$ and $\partial^U\mc E(t,v(r))\neq\emptyset$ for almost every $r\in [a,b]$, the map $r\mapsto\frac 12 |\dot v(r)|_\M^2+\mc E(t,v(r))$ is absolutely continuous in $[a,b]$. Moreover, for every $\xi_t\in \partial^U\mc E(t,v(\cdot))$ almost everywhere in $[a,b]$ there holds
		\begin{equation}\label{eq:chainruleequality}
			\frac{\d}{\d r}\left(\frac 12 |\dot v(r)|_\M^2+\mc E(t,v(r))\right)=\langle\M\ddot{v}(r)+\xi_t(r),\dot v(r)\rangle_U,\qquad\text{for a.e. }r\in [a,b].
		\end{equation}
	\end{lemma}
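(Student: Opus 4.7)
The plan is to split $\frac{1}{2}|\dot v(r)|_\M^2+\mc E(t,v(r))$ into its kinetic part $k(r):=\frac{1}{2}|\dot v(r)|_\M^2$ and its potential part $e(r):=\mc E(t,v(r))$, and to treat them separately. For $k$, since $\dot v\in L^2(a,b;U)$ and $\ddot v\in L^2(a,b;U^*)$ fit exactly the Gelfand triple $U\hookrightarrow W\hookrightarrow U^*$ with the weighted scalar product induced by $\M$ via \eqref{mass}, the classical Lions--Temam chain rule (see for instance \cite[Ch.~XVIII, \S1, Thm.~2]{DL}) applies and yields absolute continuity of $k$ on $[a,b]$ with
\begin{equation*}
	k'(r)=\langle\M\ddot v(r),\dot v(r)\rangle_U, \quad\text{for a.e. } r\in[a,b].
\end{equation*}

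For $e$, the key tool is the characterization from Lemma~\ref{lem:lemma3}: under \ref{hyp:E5}, one has $\xi\in\partial^U\mc E(t,u)$ if and only if
\begin{equation}\label{planeq:lambda}
	\mc E(t,y)\ge\mc E(t,u)+\langle\xi,y-u\rangle_U-\tfrac\lambda 2|y-u|_W^2\,,\quad\text{for every } y\in U\,.
\end{equation}
To establish absolute continuity of $e$ I first combine \ref{hyp:E2} with the assumption $\sup_{r\in[a,b]}\mc E(0,v(r))<+\infty$ and Remark~\ref{rmk:gronwall} to get a uniform bound $\mc E(t,v(r))\le C_1$; then \ref{hyp:E6} provides $\sup_{\xi\in\partial^U\mc E(t,v(r))}\|\xi\|_{U^*}\le C_2$ wherever $\partial^U\mc E(t,v(r))$ is nonempty. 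Writing \eqref{planeq:lambda} with $(u,y)=(v(r_1),v(r_2))$ and with its roles exchanged, and combining the two resulting bounds, yields
\begin{equation*}
	|e(r_2)-e(r_1)|\le C_2\|v(r_2)-v(r_1)\|_U+\tfrac\lambda 2|v(r_2)-v(r_1)|_W^2\,,
\end{equation*}
for a.e. $r_1,r_2\in[a,b]$. Since $v\in W^{1,2}(a,b;U)\subseteq AC([a,b];U)\cap W^{1,2}(a,b;W)$, the right-hand side is controlled by an $L^1$-type modulus of continuity, and $e$ extends (after modification on a null set, if needed) to an absolutely continuous function on $[a,b]$.

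To identify $e'(r)$ almost everywhere, I fix $r\in(a,b)$ at which $e$ is differentiable, $v$ is differentiable with values in $U$, and $\xi_t(r)\in\partial^U\mc E(t,v(r))$ is defined --- all of these conditions holding a.e. Applying \eqref{planeq:lambda} with $u=v(r)$ and $y=v(r+h)$ and dividing by $h$ gives, for $h>0$,
\begin{equation*}
	\frac{e(r+h)-e(r)}{h}\ge\left\langle\xi_t(r),\tfrac{v(r+h)-v(r)}{h}\right\rangle_U-\tfrac{\lambda}{2h}|v(r+h)-v(r)|_W^2\,,
\end{equation*}
together with the reverse inequality if $h<0$. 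By Jensen's inequality, $|v(r+h)-v(r)|_W^2\le|h|\int_r^{r+h}|\dot v(s)|_W^2\,ds$, so the correction term vanishes as $h\to 0$, and letting $h\to 0^\pm$ yields $e'(r)=\langle\xi_t(r),\dot v(r)\rangle_U$ almost everywhere. Adding this to the kinetic contribution proves \eqref{eq:chainruleequality}.

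The main technical obstacle is coping with the $\lambda$-convex (rather than strictly convex) correction: both for the absolute continuity of $e$ and for identifying its derivative via one-sided difference quotients, one crucially needs $|v(r+h)-v(r)|_W^2/|h|\to 0$, which relies on the continuous embedding $U\hookrightarrow W$ together with the $L^2$-regularity of $\dot v$ in $W$ inherited from $\dot v\in L^2(a,b;U)$. A pleasant feature of the argument is that only a single evaluation $\xi_t(r)$ of the selection is needed to pin down both one-sided derivatives, so no continuity of $r\mapsto\xi_t(r)$ is required beyond pointwise existence almost everywhere.
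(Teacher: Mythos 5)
Your proof follows essentially the same route as the paper's: the kinetic part via the standard chain rule in the Gelfand triple, the boundedness of $\mc E(t,v(\cdot))$ via \ref{hyp:E2} and \eqref{eq:ineqgronwall} feeding into \ref{hyp:E6} to get $\xi_t\in L^\infty(a,b;U^*)$, absolute continuity of $e$ from the two-sided $\lambda$-convexity inequality \eqref{lambdaconvexineq}, and identification of $e'$ by one-sided difference quotients with the quadratic $W$-remainder killed by $|v(r+h)-v(r)|_W^2\le |h|\int_r^{r+h}|\dot v(s)|_W^2\,\d s$.

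The one loose end is your remark that $e$ is absolutely continuous only \lq\lq after modification on a null set, if needed\rq\rq: the lemma asserts absolute continuity of the map itself, and your Lipschitz-type estimate is a priori only available for a.e.\ $r_1,r_2$ (since $\partial^U\mc E(t,v(r))$ is nonempty only for a.e.\ $r$). This is closed as in the paper: the one-sided bound $\mc E(t,v(r_2))-\mc E(t,v(r_1))\le C\|v(r_2)-v(r_1)\|_U$, valid for all $r_1$ and a.e.\ $r_2$, extends to every $r_2$ because $v\in W^{1,2}(a,b;U)\subseteq C([a,b];U)$ and $\mc E(t,\cdot)$ is (weakly, hence strongly) lower semicontinuous by \ref{hyp:E1}, and the term $\mc E(t,v(r_2))$ sits on the lower-semicontinuous side of the inequality; swapping the roles of $r_1$ and $r_2$ then gives the two-sided bound everywhere, so no modification is needed.
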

	\begin{proof}
		We first note that for any function $v\in W^{1,2}(a,b;U)\cap W^{2,2}(a,b;U^*)$ it is well known that $\frac 12 |\dot v(\cdot)|_\M^2$ is absolutely continuous with
		\begin{equation}\label{eq:derin}
			\frac{\d}{\d r}\frac 12 |\dot v(r)|_\M^2=\langle\M\ddot{v}(r),\dot v(r)\rangle_U,\qquad\text{for a.e. }r\in [a,b].
		\end{equation}
		Let us now focus on $\mc E(t,v(\cdot))$. Since by assumption we have $\sup\limits_{r\in [a,b]}\mc E(0,v(r))<+\infty$, an application of \eqref{eq:ineqgronwall} yields that $\mc E(t,v(\cdot))$ is bounded in $[a,b]$ as well. Hence, by \ref{hyp:E6}, we obtain that $\xi_t$ is in $L^\infty(a,b;U^*)$.
		By \eqref{lambdaconvexineq} we thus obtain that for every $r_1\in [a,b]$ and for almost every $r_2\in [a,b]$ there holds
		\begin{align}\label{chineqq}
			&\,\mc E(t,v(r_2))-\mc E(t,v(r_1))\nonumber\\
			\le&\, \langle\xi_t(r_2), v(r_2)-v(r_1)\rangle_U+\frac\lambda 2|v(r_2)-v(r_1)|_W^2\nonumber\\
			\le&\, \|\xi_t\|_{L^\infty(a,b;U^*)}|v(r_2)-v(r_1)|_U+C\lambda(|v(r_2)|_U+|v(r_1)|_U)|v(r_2)-v(r_1)|_U\\
			\le&\,(\|\xi_t\|_{L^\infty(a,b;U^*)}+C\lambda\|v\|_{C([a,b];U)})|v(r_2)-v(r_1)|_U.\nonumber
		\end{align}
		Recalling that $W^{1,2}(a,b;U)\subseteq C([a,b];U)$, the above inequality can be extended to all $r_2\in [a,b]$ by lower semicontinuity of $\mc E(t,\cdot)$, whence we deduce the absolutely continuity of $\mc E(t,v(\cdot))$ in $[a,b]$.
		By exploiting again \eqref{lambdaconvexineq}, for almost every $r\in [a,b]$ and for all $h\in\R$ small we have 
		\begin{equation}\label{eq:chh1}
			\mc E(t, v(r+h))-\mc E(t,v(r))\ge \langle\xi_t(r), v(r+h)-v(r)\rangle_U-\frac \lambda 2|v(r+h)-v(r)|_W^2.
		\end{equation}
		By dividing the above inequality by $h>0$ and $h<0$, and by sending $h\to 0$ we conclude the proof of \eqref{eq:chainruleequality} by using also \eqref{eq:derin}.
	\end{proof}

\begin{lemma}\label{lemma:chainrule}
	Assume \ref{hyp:E1}-\ref{hyp:E3}, \ref{hyp:E4'} and \ref{hyp:E5} with $\lambda=0$. Let the Banach space $X$ in \ref{hyp:E4'} be reflexive and such that $U \hookrightarrow X\hookrightarrow Z$. Let $u\in AC([0,T];X)$ be such that $\sup\limits_{t\in [0,T]}\mc E(t,u(t))<+\infty$, and let us suppose that for almost every $t\in [0,T]$ there exists $\xi(t)\in \partial^U\mc E(t,u(t))\cap Z^*$ such that $\essup\limits_{t\in [0,T]}\|\xi(t)\|_{Z^*}<+\infty$.
	
	Then the map $t\mapsto \mc E(t,u(t))$ is absolutely continuous in $[0,T]$ and there holds
	\begin{equation}\label{eq:chain}
		\frac{\d}{\d t}\mc E(t,u(t))=\partial_t \mc E(t,u(t))+\langle\xi(t),\dot{u}(t)\rangle_Z,\quad\text{for a.e. }t\in [0,T].
	\end{equation} 
\end{lemma}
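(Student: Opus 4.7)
The plan is first to show absolute continuity of $t\mapsto \mc E(t,u(t))$ and then to extract the chain rule at a.e.\ $t$ by exploiting convexity (since $\lambda=0$) via the subgradient $\xi(t)$ from \emph{one side only}, choosing the sign of the increment so that the resulting one-sided bound involves $\xi(t)$ and not $\xi$ at a nearby point. This is the crucial trick: we do not know anything about the regularity of $t\mapsto\xi(t)$, only that it is essentially bounded in $Z^*$, and so any argument that would need $\xi(t+h)\to\xi(t)$ must be avoided.

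For absolute continuity, I would decompose for $s\le t$
\begin{equation*}
\mc E(t,u(t))-\mc E(s,u(s)) = \int_s^t \partial_r\mc E(r,u(t))\,\mathrm dr + [\mc E(s,u(t))-\mc E(s,u(s))],
\end{equation*}
estimating the first integral by $C\int_s^t b(r)\,\mathrm dr$ via \ref{hyp:E2} and the uniform bound on $\mc E(\cdot,u(\cdot))$, and the bracket by sandwiching with the convex subdifferential inequalities $\langle\xi(s),u(t)-u(s)\rangle_Z\le \mc E(s,u(t))-\mc E(s,u(s))\le \mc E(t,u(t))-\mc E(t,u(s))+\int_s^t[\partial_r\mc E(r,u(s))-\partial_r\mc E(r,u(t))]\,\mathrm dr$, where the last bracket is controlled by \ref{hyp:E4'} and the upper bound uses the subgradient at $u(t)$ for time $t$. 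This yields a bound of the type $C(\|u(t)-u(s)\|_Z+\|u(t)-u(s)\|_X\int_s^t\gamma_M)+C\int_s^t b$, and since $X\hookrightarrow Z$ and $u\in AC([0,T];X)$, this gives absolute continuity.

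For the chain rule, fix a point $t\in(0,T)$ that is simultaneously a Lebesgue point for $b,\gamma_M,\partial_r\mc E(r,u(t))$, a point of differentiability of $u$ in $X$ and of $t\mapsto \mc E(t,u(t))$, and at which $\xi(t)\in\partial^U\mc E(t,u(t))\cap Z^*$ exists. For $h>0$, using the convex subdifferential inequality at $u(t)$ for time $t$ in the form $\mc E(t,u(t+h))-\mc E(t,u(t))\ge \langle\xi(t),u(t+h)-u(t)\rangle_Z$, I write
\begin{equation*}
\mc E(t+h,u(t+h))-\mc E(t,u(t)) \ge \int_t^{t+h}\partial_r\mc E(r,u(t+h))\,\mathrm dr + \langle\xi(t),u(t+h)-u(t)\rangle_Z.
\end{equation*}
Dividing by $h$ and passing to the limit, the integral tends to $\partial_t\mc E(t,u(t))$ (replacing $u(t+h)$ by $u(t)$ introduces an error $\le \|\dot u(t)\|_X\int_t^{t+h}\gamma_M\to 0$ by \ref{hyp:E4'}, and then Lebesgue differentiation applies), while $\langle\xi(t),h^{-1}(u(t+h)-u(t))\rangle_Z\to \langle\xi(t),\dot u(t)\rangle_Z$ since $u$ is differentiable in $X$ and $X\hookrightarrow Z$. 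This yields the inequality $E'(t)\ge \partial_t\mc E(t,u(t))+\langle\xi(t),\dot u(t)\rangle_Z$.

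For the opposite inequality I perform the symmetric argument with a backward increment $h=-k$, $k>0$: the same subdifferential inequality at $u(t)$ rearranged reads $\mc E(t,u(t))-\mc E(t,u(t-k))\le \langle\xi(t),u(t)-u(t-k)\rangle_Z$, whence
\begin{equation*}
\mc E(t,u(t))-\mc E(t-k,u(t-k))\le \langle\xi(t),u(t)-u(t-k)\rangle_Z + \int_{t-k}^t\partial_r\mc E(r,u(t-k))\,\mathrm dr.
\end{equation*}
Dividing by $k$ and letting $k\to 0^+$ with the same arguments gives $E'(t)\le \partial_t\mc E(t,u(t))+\langle\xi(t),\dot u(t)\rangle_Z$, concluding the proof. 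The main obstacle, as anticipated, is circumventing the lack of time-regularity of $\xi$; this is resolved by the asymmetric choice of sides above, which ensures that in each one-sided inequality the subgradient appearing in the limiting term is always $\xi(t)$ itself, rather than $\xi$ evaluated at a shifted time.
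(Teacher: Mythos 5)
Your proof follows essentially the same route as the paper's: absolute continuity comes from the same decomposition (one-sided control of the spatial increment via the convex subdifferential inequality with $\xi$, plus \ref{hyp:E2} and \ref{hyp:E4'} for the temporal part), and the chain rule is derived from the single convexity inequality at $(t,u(t))$ tested with increments of both signs, which is precisely the paper's device of dividing the same inequality by $h>0$ and $h<0$ so that only $\xi(t)$ itself ever appears. The one detail worth adding is that the estimate $\mc E(t,u(t))-\mc E(t,u(s))\le C\|u(t)-u(s)\|_Z$ initially holds only for a.e.\ $t$ and must be extended to all $s,t$ (via weak lower semicontinuity of $\mc E$, the fact that $u\in C_{\rm w}([0,T];U)$, and continuity of $u$ in $Z$) before one can conclude absolute continuity of the everywhere-defined map $t\mapsto\mc E(t,u(t))$.
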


\begin{proof}
	We first observe that by conditions \ref{hyp:E2} and \ref{hyp:E3} (see Remark~\ref{rmk:gronwall}) we deduce that $\sup\limits_{t\in [0,T]}\|u(t)\|_U<+\infty$. Since $u$ is (absolutely) continuous with values in $X$ and $U$ is embedded in $X$, this uniform bound implies that $u$ also belongs to $C_{\rm w}([0,T];U)$. Moreover, since $X\hookrightarrow Z$, from $u\in AC([0,T];X)$ we infer $u\in AC([0,T];Z)$.
	
	By convexity of $\mc E(t,\cdot)$, for almost every $t\in [0,T]$ and for all $s\in [0,T]$ we also obtain
	\begin{equation*}
		\mc E(t,u(t))-\mc E(t,u(s))\le \langle\xi(t),u(t)-u(s)\rangle_Z\le \|\xi (t)\|_{Z^*}\|u(t)-u(s)\|_Z\le C\|u(t)-u(s)\|_Z.
	\end{equation*}
	By weak lower semicontinuity of $\mc E$ together with the continuity of $u$ in $Z$ we now deduce
	\begin{equation*}
		\mc E(t,u(t))-\mc E(t,u(s))\le  C\|u(t)-u(s)\|_Z,\quad\text{for all }s,t\in [0,T].
	\end{equation*}
	Hence, exploiting \ref{hyp:E2} and \eqref{eq:ineqgronwall}, for all $s,t\in [0,T]$ there holds
	\begin{align}\label{eq:chineq}
		\mc E(t,u(t))-\mc E(s,u(s))&=	\mc E(t,u(t))-\mc E(t,u(s))+	\mc E(t,u(s))-\mc E(s,u(s))\nonumber\\
		&\le C \|u(t)-u(s)\|_Z+\left|\int_{s}^{t}b(r)e^B(\mc E(s,u(s))+1)\d r\right|\\
		&\le  C\left( \|u(t)-u(s)\|_Z+\left|\int_{s}^{t}b(r)\d r\right|\right),\nonumber
	\end{align}
	where we set $B:=\|b\|_{L^1(0,T)}$. Thus, the map $t\mapsto \mc E(t,u(t))$ belongs to $AC([0,T])$.
	
	In order to prove \eqref{eq:chain} we exploit again convexity, obtaining for almost every $t\in [0,T]$ and for all $h\in \R$ small enough the following inequality:
	\begin{align}\label{eq:chh2}
		&\mc E(t+h,u(t+h))-\mc E(t,u(t))\nonumber\\
		\ge& \int_{t}^{t+h}\partial_t\mc E(r,u(t+h))\d r+\langle\xi(t),u(t+h)-u(t)\rangle_Z\\
		=&\int_{t}^{t+h}\partial_t\mc E(r,u(t))\d r+\langle\xi(t),u(t+h)-u(t)\rangle_Z+\int_{t}^{t+h}(\partial_t\mc E(r,u(t+h))-\partial_t\mc E(r,u(t)))\d r.\nonumber
	\end{align}
	By dividing the above inequality by $h>0$ and $h<0$, and by sending $h\to 0$ we conclude by exploiting \eqref{lipschitzpartialt} and the fact that $u$ is absolutely continuous with values in the reflexive Banach space $X$.	
\end{proof}

\begin{lemma}\label{lemma:crappendix}
	Assume $V=U$ together with \ref{hyp:E1},\ref{hyp:E2},\ref{hyp:E4},\ref{hyp:E5} and \ref{hyp:E6}. Let the function $u\in W^{1,2}(a,b;U)\cap W^{2,2}(a,b;U^*)$ be such that $\sup\limits_{t\in [0,T]}\mc E(t,u(t))<+\infty$ and let $\xi\in \partial^U\mc E(\cdot,u(\cdot))$ almost everywhere in $[0,T]$.
	
	Then the map $t\mapsto \frac{\eps^2}{2}|\dot{u}(t)|^2_\M+\mc E(t,u(t))$ is absolutely continuous in $[0,T]$ and there holds
		\begin{equation}\label{eq:lastchain}
		\frac{\d}{\d t}\left(\frac{\eps^2}{2}|\dot{u}(t)|^2_\M+\mc E(t,u(t))\right)=\langle\eps^2\M\ddot{u}(t)+\xi(t),\dot u(t)\rangle_U+\partial_t\mc E(t,u(t)),\quad\text{for a.e. }t\in [0,T].
	\end{equation}
\end{lemma}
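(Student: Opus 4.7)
My plan is to mimic the proof of Lemma~\ref{lemma:chainruleequality}, adapting the argument to the time-dependent setting by carefully combining the $\lambda$-convexity encoded in \ref{hyp:E5} with the power control \ref{hyp:E2} and the power continuity \ref{hyp:E4}. The kinetic term $\frac{\eps^2}{2}|\dot{u}(t)|_\M^2$ is treated as in that lemma, exploiting the regularity $u\in W^{1,2}(a,b;U)\cap W^{2,2}(a,b;U^*)$ to obtain both its absolute continuity and the pointwise identity \eqref{eq:derin}. The real work is on the potential term $t\mapsto \mc E(t,u(t))$, which I will handle in two steps: first absolute continuity, then the pointwise derivative formula.

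For the \emph{absolute continuity} of $t\mapsto \mc E(t,u(t))$, I will first use the hypothesis $\sup_{t\in[a,b]}\mc E(t,u(t))<+\infty$ together with \ref{hyp:E6} to obtain the uniform bound $\|\xi\|_{L^\infty(a,b;U^*)}\le C$; the ambient bound on $\sup_{r\in[a,b]}\mc E(0,u(r))$ needed to invoke \ref{hyp:E6} follows from \eqref{eq:ineqgronwall}. For $a\le s\le t\le b$ I then split
\begin{equation*}
\mc E(t,u(t))-\mc E(s,u(s))=\bigl[\mc E(t,u(t))-\mc E(t,u(s))\bigr]+\bigl[\mc E(t,u(s))-\mc E(s,u(s))\bigr].
\end{equation*}
The first bracket is controlled from above by applying \eqref{lambdaconvexineq} with $\xi(t)\in\partial^U\mc E(t,u(t))$, obtaining a bound of the type $C|u(t)-u(s)|_U+\frac{\lambda}{2}|u(t)-u(s)|_W^2$ exactly as in \eqref{chineqq}; a lower bound is obtained by performing the symmetric splitting through $\mc E(s,u(t))$ and using $\xi(s)\in\partial^U\mc E(s,u(s))$. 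The second bracket (and its analogue in the symmetric splitting) is controlled by \ref{hyp:E2} as $\bigl|\int_s^t b(r)(\mc E(r,u(s))+1)\,\mathrm{d}r\bigr|\le C\int_s^t b(r)\,\mathrm{d}r$ after another application of Gr\"onwall. Since $u\in W^{1,2}(a,b;U)\subseteq AC([a,b];U)$, the resulting estimate yields absolute continuity of $\mc E(\cdot,u(\cdot))$.

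For the \emph{pointwise chain rule}, at any $t$ where $u$ is differentiable in $U$, both sides of the sought identity are defined, and the additional Lebesgue-point conditions below hold, I will use the single $\lambda$-convexity inequality
\begin{equation*}
\mc E(t+h,u(t+h))-\mc E(t,u(t))\ge \int_t^{t+h}\!\!\partial_t\mc E(r,u(t+h))\,\mathrm{d}r+\langle\xi(t),u(t+h)-u(t)\rangle_U-\tfrac{\lambda}{2}|u(t+h)-u(t)|_W^2,
\end{equation*}
obtained by writing $\mc E(t+h,u(t+h))-\mc E(t,u(t))=[\mc E(t+h,u(t+h))-\mc E(t,u(t+h))]+[\mc E(t,u(t+h))-\mc E(t,u(t))]$, using \ref{hyp:E2} on the first bracket and \eqref{lambdaconvexineq} at the pair $(t,u(t))$ on the second. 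Dividing by $h>0$ and letting $h\to 0^+$, the quadratic $W$-term vanishes since $h^{-1}|u(t+h)-u(t)|_W^2\to 0$; the $\xi(t)$-term tends to $\langle\xi(t),\dot u(t)\rangle_U$ since $\frac{u(t+h)-u(t)}{h}\to\dot u(t)$ in $U$; and the time-integral tends to $\partial_t\mc E(t,u(t))$ after a further decomposition
\begin{equation*}
\tfrac{1}{h}\!\int_t^{t+h}\!\!\!\partial_t\mc E(r,u(t+h))\,\mathrm{d}r=\tfrac{1}{h}\!\int_t^{t+h}\!\!\!\partial_t\mc E(r,u(t))\,\mathrm{d}r+\tfrac{1}{h}\!\int_t^{t+h}\!\!\![\partial_t\mc E(r,u(t+h))-\partial_t\mc E(r,u(t))]\,\mathrm{d}r,
\end{equation*}
where \ref{hyp:E4} bounds the second term by $\omega_M(|u(t+h)-u(t)|_W)\cdot\frac{1}{h}\int_t^{t+h}\gamma_M(r)\,\mathrm{d}r$, which vanishes as $h\to 0^+$ since $|u(t+h)-u(t)|_W\to 0$ (continuity in $W$ follows from $u\in AC([a,b];U)$ and $U\hookrightarrow W$). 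The same argument with $h<0$ gives the reverse inequality. Combining these with the a.e.~differentiability ensured by the just-proved absolute continuity yields \eqref{eq:lastchain}, once one adds the kinetic part handled via \eqref{eq:derin}.

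The \textbf{main technical obstacle} is the passage to the limit in $\int_t^{t+h}\partial_t\mc E(r,u(t+h))\,\mathrm{d}r$: one cannot directly replace $u(t+h)$ by $u(t)$ inside the integrand, and this is exactly where \ref{hyp:E4} is essential. The delicate point is that the modulus of continuity $\omega_M$ must kick in along the right topology, namely the weaker $W$-norm, which is precisely what \ref{hyp:E4} provides, while the continuity $|u(t+h)-u(t)|_W\to 0$ is a free consequence of the strong regularity $u\in W^{1,2}(a,b;U)$ and the embedding $U\hookrightarrow W$. All other steps are routine given the assumptions and the earlier lemmas.
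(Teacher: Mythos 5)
Your proposal is correct and follows essentially the same route as the paper: the kinetic term is treated via \eqref{eq:derin}, absolute continuity of $\mc E(\cdot,u(\cdot))$ is obtained by combining \eqref{lambdaconvexineq} with \ref{hyp:E2} and \ref{hyp:E6} exactly as in \eqref{chineqq} and \eqref{eq:chineq}, and the pointwise identity follows from the same one-sided $\lambda$-convexity inequality divided by $h>0$ and $h<0$, with \ref{hyp:E4} killing the remainder $\int_t^{t+h}(\partial_t\mc E(r,u(t+h))-\partial_t\mc E(r,u(t)))\,\mathrm{d}r$. The only cosmetic difference is that you spell out the symmetric splitting for the lower bound and the $h^{-1}|u(t+h)-u(t)|_W^2\to 0$ step, which the paper leaves implicit.
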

\begin{proof}
	As observed in Lemma~\ref{lemma:chainruleequality}, under these assumptions the map $\frac{\eps^2}{2} |\dot u(\cdot)|_\M^2$ is absolutely continuous with
	\begin{equation*}
		\frac{\d}{\d t}\frac{\eps^2}{2} |\dot u(t)|_\M^2=\langle\eps^2\M\ddot{u}(t),\dot u(t)\rangle_U,\qquad\text{for a.e. }t\in [0,T].
	\end{equation*}
By arguing as in \eqref{chineqq} and \eqref{eq:chineq} we deduce that for all $s,t\in [0,T]$ we have
	\begin{align*}
	\mc E(t,u(t))-\mc E(s,u(s))\le (\|\xi\|_{L^\infty(0,T;U^*)}+C\lambda\|u\|_{C([0,T];U)})  |u(t)-u(s)|_U+C\left|\int_{s}^{t}b(r)\d r\right|,
\end{align*}
whence also the map $t\mapsto \mc E(t,u(t))$ is absolutely continuous.

In order to prove \eqref{eq:lastchain}, we argue as in \eqref{eq:chh1} and \eqref{eq:chh2} obtaining for almost every $t\in [0,T]$ and for all $h$ small
\begin{align*}
	&\mc E(t+h,u(t+h))-\mc E(t,u(t))\\
	\ge&\int_{t}^{t+h}\partial_t\mc E(r,u(t))\d r+\langle\xi(t),u(t+h)-u(t)\rangle_U+\int_{t}^{t+h}(\partial_t\mc E(r,u(t+h))-\partial_t\mc E(r,u(t)))\d r\\
	&\,-\frac\lambda 2|u(t+h)-u(t)|^2_W.
\end{align*}
By dividing the above inequality by $h>0$ and $h<0$, and by sending $h\to 0$ we conclude by exploiting \ref{hyp:E4}, which implies the the third term in the last line vanishes.
\end{proof}

	\bigskip

	\noindent\textbf{Acknowledgements.} The authors are members of Gruppo Nazionale per l'Analisi Matematica, la Probabilit\`a e le loro Applicazioni (GNAMPA) of INdAM, whose support through the Project \lq\lq MATERIA: Metodi Analitici nella Trattazione di Evoluzioni Rate-independent e Inerziali e Applicazioni\rq\rq (CUP\_E55F22000270001) is acknowledged by	F. R. and F. S. 
	
	G. S. and F. S. have been supported by the project STAR
	PLUS 2020 – Linea 1 (21-UNINA-EPIG-172) “New perspectives in the Variational modeling of Continuum Mechanics”. 
	
	The authors have been funded by the Italian Ministry of Education, University and Research through the Project “Variational methods for stationary and evolution problems with singularities and interfaces” (PRIN 2017).
	
	\bigskip

	{\small
		
		\vspace{10pt} (Filippo Riva) Dipartimento di Matematica “Felice Casorati”, Università degli Studi di Pavia,\\ \textsc{Via Ferrata, 5, 27100, Pavia, Italy}
		\\ 
		\textit{e-mail address}: \textsf{filippo.riva@unipv.it}
		\par	
		
		\vspace{10pt} (Giovanni Scilla) Dipartimento di Matematica ed Applicazioni “R. Caccioppoli”, Università degli Studi di Napoli Federico II,\\
		\textsc{Via Cintia, Monte Sant'Angelo, 80126, Naples, Italy}
		\\
		\textit{e-mail address}: \textsf{giovanni.scilla@unina.it}
		\par
		
		\vspace{10pt} (Francesco Solombrino) Dipartimento di Matematica ed Applicazioni “R. Caccioppoli”, Università degli Studi di Napoli Federico II,\\
		\textsc{Via Cintia, Monte Sant'Angelo, 80126, Naples, Italy}
		\\
		\textit{e-mail address}: \textsf{francesco.solombrino@unina.it}
		\par
		
	}
	

\begin{thebibliography}{99}
		
		{\frenchspacing			
			
			\bibitem{AFP} {\sc L.~Ambrosio, N.~Fusco and D.~Pallara}, {\em Functions of Bounded Variation and Free Discontinuity Problems}, Oxford Mathematical Monographs, Clarendon Press, Oxford (2000).

            \bibitem{AGS2008} {\sc L.~Ambrosio, N.~Gigli, and G.~Savar\'{e}}, {\em Gradient flows in metric spaces and in the space of probability measures}, Lectures in Mathematics ETH Z\"{u}rich, Birkh\"{a}user Verlag, Basel, second~ed., 2008.
			
			\bibitem{BachoViscoplast}
			{\sc A.~Bacho}, {\em Abstract nonlinear evolution inclusions of second order with applications in visco-elasto-plasticity}, J. Differential Equations, 363 (2023), pp.~126--169.
			
			\bibitem{Brez}
			{\sc H.~Brezis}, {\em Operateurs Maximaux Monotones et Semi-groupes de Contractions dans les Espaces de Hilbert},  North--Holland Publishing Company Amsterdam, (1973).
				
			\bibitem{DMSap}
			{\sc G.~Dal Maso and F.~Sapio}, {\em Quasistatic limit of a dynamic viscoelastic model with memory}, Milan J. Math., 89 (2021), pp.~485--522.	
				
			\bibitem{DMSca}
			{\sc G.~Dal Maso and R.~Scala}, {\em Quasistatic evolution in perfect plasticity as limit of dynamic processes},  J. Dynam. Differential Equations, 26 (2014), pp.~915--954.

			\bibitem{DL} {\sc R. Dautray and J. L. Lions}, \emph{Mathematical Analysis and Numerical Methods for Science and Technology}, Vol.5: Evolution Problems I. Berlin etc., Springer-Verlag 1992.

			\bibitem{DiBen} {\sc E. DiBenedetto}, {\em $C^{1+\alpha}$ local regularity of weak solutions of degenerate elliptic equations},
			Nonlinear Anal. Theory Methods Appl. 7 (1983), pp.~827--850.
 
			\bibitem{GiaqMart}
			{\sc M.~Giaquinta and L.~Martinazzi}, {\em An Introduction to the Regularity Theory for Elliptic Systems, Harmonic Maps and Minimal Graphs}, Publications of the Scuola Normale Superiore, (2012).
			
			\bibitem{GidRiv}
			{\sc P.~Gidoni and F.~Riva}, {\em A vanishing inertia analysis for finite dimensional rate-independent systems with non-autonomous dissipation and an application to soft crawlers}, Calc. Var. PDEs, 60 (2021), art.~191.
			
			\bibitem{LazNar}
			{\sc G.~Lazzaroni and L.~Nardini}, {\em On the quasistatic limit of dynamic evolutions for a peeling test in dimension one}, J. Nonlinear Sci., 28 (2018), pp.~269--304.
			
			\bibitem{LazRosThomToad}
			{\sc G.~Lazzaroni, R.~Rossi, M.~Thomas and R.~Toader}, {\em Rate-independent damage in thermo-viscoelastic materials with inertia}, J. Dyn. Diff. Equat., 30 (2018), pp.~1311--1364.
			
			\bibitem{MielkPetr}
			{\sc  A.~Mielke, A.~Petrov and J.~A.~C.~Martins}, {\em Convergence of solutions of kinetic variational inequalities in the rate-independent quasistatic limit}, J. Math. Anal. Appl., 348 (2008), pp.~1012--1020.
			
			\bibitem{MielkRosSav09}
			{\sc A.~Mielke, R.~Rossi and G.~Savar\'e}, {\em Modeling solutions with jumps for rate-independent systems on metric spaces}, Discrete and Continuous Dynamical Systems, 25 (2009), pp.~585--615.
						
			\bibitem{MielkRosSav12}
			{\sc A.~Mielke, R.~Rossi and G.~Savar\'e}, {\em BV solutions and viscosity approximations of rate-independent systems}, ESAIM Control Optim. Calc. Var., 18 (2012), pp.~36--80.
						
			\bibitem{MielkRosSav16}
			{\sc A.~Mielke, R.~Rossi and G.~Savar\'e}, {\em Balanced viscosity (BV) solutions to infinite-dimensional rate-independent systems}, J.
			Eur. Math. Soc., 18 (2016), pp.~2107--2165.
			
			\bibitem{MielkRosmulti}
			{\sc A.~Mielke, R.~Rossi}, {\em Balanced-Viscosity solutions to infinite-dimensional multi-rate systems}, Arch. Rational Mech. Anal., 247, 53 (2023).
			
			\bibitem{MielkRoubbook}
			{\sc A.~Mielke and T.~Roub\'i\v cek}, {\em Rate-independent Systems: Theory and Application}, Springer--Verlag New York, (2015).
			
			\bibitem{MielkTheil1}
			{\sc  A.~Mielke and F.~Theil}, {\em A mathematical model for rate-independent phase transformations with hysteresis}, Proceedings of
			the Workshop on Models of Continuum Mechanics in Analysis and Engineering, D.~Alber, R.~Balean and R.~Farwig Eds., Shaker-Verlag, Aachen (1999) pp.~117--129.
			
			\bibitem{MielkTheil}
			{\sc  A.~Mielke and F.~Theil}, {\em On rate-independent hysteresis models}, NoDEA Nonlinear Differ. Equ. Appl., 11 (2004), pp.~151--189.

			\bibitem{MNRT} {\sc B. S. Mordukhovich, N. M. Nam, R. B. Rector and T. Tran}, {\em Variational Geometric Approach to Generalized Differential and Conjugate Calculi in Convex Analysis}, Set-Valued Var. Anal, 25 (2017), pp.~731--755. 
			
			\bibitem{Mor} {\sc J. J. Moreau}, \emph{Jump functions of a real interval to a Banach space}, Annales de la faculté des sciences de Toulouse 5 série, S10 (1989), pp.~77--91.

			\bibitem{Rind}
			{\sc F.~Rindler, S.~Schwarzacher and J.~J.~L.~Vel\'azquez}, {\em Two-Speed solutions to non-convex rate-independent systems}, Arch. for Rat. Mech. and Anal., 239 (2021), pp.~1667--1731.
			
			\bibitem{Rivquas}
			{\sc F.~Riva}, {\em On the approximation of quasistatic evolutions for the debonding of a thin film via vanishing inertia and viscosity}, J. Nonlinear Sci., 30 (2020), pp.~903--951.
			
			\bibitem{RivScilSol}
			{\sc F.~Riva, G.~Scilla and F.~Solombrino}, {\em The notions of Inertial Balanced Viscosity and Inertial Virtual Viscosity solution for rate-independent systems}, Adv. Calc. Var. (2022), https://doi.org/10.1515/acv-2021-0073.
			
			\bibitem{Rocka}
			{\sc R.~T.~Rockafellar}, {\em Convex Analysis}, Princeton University Press, 1970.
			
			\bibitem{RosThom}
			{\sc R.~Rossi and M.~Thomas}, {\em Coupling rate-independent and rate-dependent processes: existence results}, SIAM J. Math. Anal., 49 (2012), pp.~1419--1494.		
			
			\bibitem{Sca}
			{\sc R.~Scala}, {\em Limit of viscous dynamic processes in delamination as the viscosity and inertia vanish},  ESAIM: Control Optim. Calc. Var., 23 (2017), pp.~593--625.
			
			\bibitem{ScilSol}
			{\sc G.~Scilla and F.~Solombrino}, {\em A variational approach to the quasistatic limit of viscous dynamic evolutions in finite dimension}, J. Differential Equations, 267 (2019), pp.~6216--6264.
			
			\bibitem{SciSol18}
			{\sc G.~Scilla and F.~Solombrino}, {\em Multiscale analysis of singularly perturbed finite dimensional gradient flows: the minimizing movement approach}, Nonlinearity, 31 (2018), pp.~5036--5074.
			
			\bibitem{Simon}
			{\sc J.~Simon}, {\em Compact sets in the space $L^p(0,T;B)$}, Ann. Mat. Pura Appl., 146 (1987), pp.~65--96.
			
		}
	\end{thebibliography}
\end{document}